\newtheorem{theorem}{Theorem}[section]
\newtheorem{lemma}[theorem]{Lemma}
\newtheorem{proposition}[theorem]{Proposition}
\newtheorem{corollary}[theorem]{Corollary}
\newtheorem{claim}[theorem]{Claim}
\theoremstyle{definition}
\newtheorem{definition}[theorem]{Definition}
\newtheorem{notation}[theorem]{Notation}
\newtheorem{example}[theorem]{Example}
\newtheorem{remark}[theorem]{Remark}
\newenvironment{claimproof}[1][Proof of Claim]{\begin{proof}[#1]}{\end{proof}}
\numberwithin{equation}{section}
\newcommand{\BA}{\mathsf{BA}}
\newcommand{\Pos}{\mathsf{Pos}}
\newcommand{\Ctx}{\mathsf{Ctx}}
\newcommand{\LT}{\mathsf{LT}}
\newcommand{\F}{\mathbb{F}}
\newcommand{\cat}[1]{\mathsf{#1}}
\newcommand{\N}{\mathbb{N}}
\renewcommand{\L}{\mathcal{L}}
\newcommand{\T}{\mathcal{T}}
\renewcommand{\P}{\mathbf{P}}
\newcommand{\Hom}{\mathsf{Hom}}
\newcommand{\Set}{\mathsf{Set}}
\newcommand{\tmn}{\mathbf{1}}
\newcommand{\Free}{\mathrm{Free}}
\newcommand{\QA}{\cat{QA}}
\newcommand{\DoctBA}{\cat{Doct}_{\BA}}
\newcommand{\EA}{{\forall\mkern-2mu\exists}}
\newcommand{\fa}[2]{(\forall{#1})_{#2}\,}
\newcommand{\ex}[2]{(\exists{#1})_{#2}\,}
\newcommand{\ple}[1]{\langle#1\rangle}
\newcommand{\C}{\mathsf{C}}
\newcommand{\m}{\mathfrak{m}}
\newcommand{\n}{\mathfrak{n}}
\newcommand{\id}{\mathrm{id}}
\newcommand{\pr}{\mathrm{pr}}
\newcommand{\op}{^{\mathrm{op}}}
\newcommand{\R}{\mathbf{R}}
\newcommand{\D}{\cat{D}}
\title{Freely adding one layer of quantifiers to a Boolean doctrine}
\keywords{
Hyperdoctrine,
Herbrand's theorem,
Quantifier alternation depth,
Quantifier completion,
Algebraic logic,
Categorical logic,
First-order logic.
}
\subjclass[2020]{Primary: 03G30. Secondary: 03B10, 18C10, 08B20}
\author[M. Abbadini]{Marco Abbadini}
\address{University of Birmingham, School of Computer Science,
B15 2TT Birmingham (UK)}
\email{m.abbadini@bham.ac.uk}
\author[F. Guffanti]{Francesca Guffanti}
\address{University of Luxembourg,
Department of Computer Science,
6, avenue de la Fonte,
L-4364 Esch-Sur-Alzette (Luxembourg)}
\email{francesca.guffanti@uni.lu}
\begin{document}

\begin{abstract}
    \vspace{-0.3em}
    We describe the layer of quantifier alternation depth at most one of the quantifier completion of a Boolean doctrine over a small category.
    This amounts to a doctrinal version of Herbrand's theorem for formulas with quantifier alternation depth at most one modulo a universal theory.
    The resulting construction satisfies a universal property that makes it the free QA-one-step Boolean doctrine.   

    To achieve this version of Herbrand's theorem, we characterize, within the doctrinal setting, the classes $\mathcal{A}$ of quantifier-free formulas for which there is a model $M$ such that $\mathcal{A}$ is precisely the class of formulas whose universal closure is valid in $M$. 
\end{abstract}

\maketitle

\setcounter{tocdepth}{3}
\makeatletter
\def\l@subsection{\@tocline{2}{0pt}{2.5pc}{5pc}{}}
\def\l@subsubsection{\@tocline{2}{0pt}{5pc}{7.5pc}{}}
\makeatother

\vspace{-2em}

\tableofcontents

\section{Introduction}

A common measure of the complexity of a first-order formula is its quantifier alternation depth, which counts the number of blocks of alternating existential and universal quantifiers. 
This gives a stratification $\mathcal{F}_0 \subseteq \mathcal{F}_1 \subseteq \mathcal{F}_2 \subseteq \dots$ of the set $\mathcal{F}$ of all first-order formulas, where $\mathcal{F}_n$ consists of the formulas whose quantifier alternation depth is at most $n$.
We use a combination of this stratification with the algebraic approach to logic.
Among various algebraizations of first-order logic, we choose to work with
Lawvere's hyperdoctrines \cite{Lawvere69,Lawvere70}, of categorical flavor.
In contrast to other algebraic approaches to first-order logic (such as polyadic or cylindric algebras), formulas in hyperdoctrines are indexed by their set of free variables.
This allows to faithfully reflect the fact that quantifiers change the set of free variables.
To cover a range of different logical fragments and rules, many variations of Lawvere's original definition of hyperdoctrines have been introduced.
Among these, we place ourselves in the setting of \emph{first-order Boolean doctrines}, which capture the Boolean setting with quantifiers.\footnote{We don't include equality as part of the language of a first-order Boolean doctrine, and we treat the case with equality separately in \cref{s:equality}.}

In this paper, we address the following question: given $\mathcal{F}_0$, how can one construct the set $\mathcal{F}_1$ obtained by freely adding one layer of quantification?
We rephrase this question in doctrinal terms. Let $\P$ be a Boolean doctrine. Morally, $\P$ represents the collection of equivalence classes of quantifier-free formulas modulo a universal theory $\T$.
The Boolean doctrine $\P$ admits a quantifier completion $\P\hookrightarrow\P^\EA$, which freely adds quantifiers: in other words, $\P^\EA$ is a first-order Boolean doctrine with an appropriate universal property, and it represents the collection of equivalence classes of formulas modulo $\T$.
Now we can define a subfunctor $\P_1^\EA$ of $\P^\EA$ which represents the collection of equivalence classes of formulas with quantifier alternation depth $\leq 1$ modulo $\T$: simply define $\P_1^\EA$ as the collection of Boolean combinations of quantifications (in $\P^\EA$) of formulas in $\P$.
The question then is:
how can one construct $\P^\EA_1$ in terms of $\P$?

At first, we explicitly describe the fibers of $\P^\EA_1$ in terms of $\P$, and this amounts to a doctrinal version of Herbrand's theorem for formulas of quantifier alternation depth $\leq 1$ modulo a universal theory.\footnote{After a presentation of this work at the Logic Colloquium 2025, Joshua Wrigley proved that the restriction to existential formulas of this result continues to hold if one replaces the quantifier completion (which adds both existential and universal quantifier) of a Boolean doctrine by the existential completion (which only adds the existential quantifier) of a ``bounded distributive lattice''-doctrine \cite{WrigleyPreprint}.}
As a choice of style, our proof of this doctrinal version of Herbrand's theorem is completely within the formalism of doctrines, without relying on results from the classical approach to logic.%
\footnote{Thanks to the link between the classical first-order calculus and first-order Boolean doctrines, one could derive the doctrinal version of Herbrand's theorem from the classical one.
In this link, there is a small caveat: one has to consider a version of the classical first-order logic whose semantics allows the empty structure. In its sequent calculus, this logic makes use of sequents-in-contexts (which the doctrinal approach faithfully reflects), and is folklore among categorical logicians; we refer to \cite[Appendix~A]{AbbadiniGuffanti} for a short overview.
At times, this caveat may be delicate: for example, by allowing the empty structure, Henkin's proof of G\"odel completeness theorem requires an adjustment (see the discussion preceding \cref{t:extension-to-rich}).}
To provide the mentioned description of the fibers of $\P^\EA_1$, it is enough to characterize, within the doctrinal setting, when a finite conjunction of universal sentences entails a finite disjunction of universal sentences modulo a universal theory $\mathcal{T}$ (\cref{t:description-P1}).
We give an idea of the characterization with an example: given a universal theory $\mathcal{T}$, the condition
\[
    \forall y\,\alpha(y)
    \vdash_\mathcal{T} 
    \forall z\,\beta(z)
\]
(with $\alpha$ and $\beta$ quantifier-free) holds if and only if there are finitely many unary terms $t_1(z), \dots, t_n(z)$ such that
\[
    \bigwedge_{i = 1}^{n}\alpha(t_i(z))
    \vdash_\mathcal{T}
    \beta(z).
\]
The interest of this equivalence lies in the fact that, while the first condition concerns formulas with quantifier alternation depth $\leq 1$ and thus can be written in $\P_1^\EA$, the second condition only concerns quantifier-free formulas, and thus can be written in $\P$.
Thanks to some basic properties of Boolean algebras, we then deduce how to characterize when any Boolean combination of universal sentences entails any other Boolean combination of universal sentences modulo $\mathcal{T}$ (\cref{t:description-P1-forall-exists} and \cref{c:super-description-congruence-P-forall}).

To achieve these results, which are in \cref{s:Herbrand}, we need a detour about models (\cref{sec:completenes-first-layer}).
This detour, of its own interest, contains the technical core of the paper. 
Its main result  (\cref{t:characterization-ultrafilters}), when instantiated to the syntactic setting, amounts to the following:
given a universal theory $\T$, we characterize the classes $\mathcal{A}$ of quantifier-free formulas for which $\T$ has a model $M$ such that $\mathcal{A}$ is precisely the class of formulas whose universal closure is valid in $M$. 
The proof is inspired by Henkin's proof of the completeness theorem for first-order logic.

Then, in \cref{s:fibers-free,s:qa-free}, we turn the doctrinal version of Herbrand's theorem into the construction via generators and relations of a Boolean doctrine $\Free^\P_1$ (later proved to be isomorphic to $\P_1^\EA$). Without relying on Herbrand's theorem, we prove that the embedding $\P \hookrightarrow \Free^\P_1$ satisfies a universal property that makes it the free QA-one-step Boolean doctrine\footnote{QA-one-step Boolean doctrines were introduced in \cite[Def.~3.11]{AbbadiniGuffanti}.} over $\P$ (\cref{s:univ-prop}).
We conclude the paper by proving that, by Herbrand's theorem, $\P\hookrightarrow\P_1^\EA$ is isomorphic to $\P\hookrightarrow\Free_1^\P$ (\cref{t:section-6}); thus, $\P\hookrightarrow\P_1^\EA$ is the free QA-one step Boolean doctrine, and the definition of $\Free^\P_1$ provides a construction of $\P^\EA_1$ in terms of $\P$ via generators and relations.

\section{Preliminaries}

\subsection{Preliminaries on doctrines}
Hyperdoctrines were introduced by F.~W.~Lawvere \cite{Lawvere69,Lawvere70} to interpret both syntax and semantics of first-order theories in the same categorical setting. 
In this paper, we consider Boolean doctrines and first-order Boolean doctrines, which are variations of Lawvere's hyperdoctrines; points of departure are, among others, the fact that we impose all axioms of Boolean algebras and that we do not consider the equality predicate. (But we will treat the case of doctrines with equality in \cref{s:equality}.) Boolean doctrines contain enough structure to interpret all logical connectives, while \emph{first-order} Boolean doctrines require further structure allowing to interpret also the quantifiers.

Lawvere's doctrinal setting is amenable to a number of generalizations different from ours; for the interested reader we mention primary doctrines (where one can interpret $\top$ and $\land$) \cite{MaiettiRosolini13b,Pasquali15,EmPaRo20}, existential doctrines ($\top$, $\land$, $\exists$) \cite{MaiettiRosolini13b}, universal doctrines ($\top$, $\land$, $\forall$) \cite{Pasquali15}, elementary doctrines ($\top$, $\land$, $=$) \cite{MaiettiRosolini13,EmPaRo20} and first-order doctrines (all logical connectives with the axioms of Heyting algebras and quantifiers) \cite{EmPaRo20}.

\begin{notation}
    $\N$ denotes the set of natural numbers, including $0$.
\end{notation}

\begin{notation}
    We let $\BA$ denote the category of Boolean algebras and Boolean homomorphisms, and $\Pos$ the category of partially ordered sets and order-preserving functions.
\end{notation}

\begin{notation}
    We let $\tmn_\C$ (or simply $\tmn$) denote the terminal object (when it exists) of a category $\C$.
    For $X \in \C$, we denote by $!_X$ the unique morphism from $X$ to $\tmn$.
\end{notation}

\begin{definition}[Boolean doctrine]
    Given a category $\C$ with finite products, a \emph{Boolean doctrine over $\C$} is a functor $\P \colon \C\op \to  \BA$.
    The category $\C$ is called the \emph{base category of $\P$}.
    For each $X\in\C$, $\P(X)$ is called a \emph{fiber}. For each morphism $f\colon X'\to X$, the function $\P(f)\colon\P(X)\to\P(X')$ is called the \emph{reindexing along $f$}.
\end{definition}

\begin{definition}[Boolean doctrine morphism]
    A \emph{Boolean doctrine morphism} from $\P\colon\C\op\to \BA$ to $\mathbf{R}\colon \cat{D}\op\to \BA$ is a pair $(M,\m)$ with $M\colon \C\to\cat{D}$ a functor preserving finite products and $\m \colon \P\to \mathbf{R}\circ M\op $ a natural transformation.
    \[
        \begin{tikzcd}
            \C\op && {\cat{D}\op} \\
            \\
            & \BA
            \arrow["M\op", from=1-1, to=1-3]
            \arrow[""{name=0, anchor=center, inner sep=0}, "\P"', from=1-1, to=3-2]
            \arrow[""{name=1, anchor=center, inner sep=0}, "\R", from=1-3, to=3-2]
            \arrow["\m", curve={height=-6pt}, shorten <=8pt, shorten >=8pt, from=0, to=1]
        \end{tikzcd}
    \]

    Given Boolean doctrine morphisms $(M, \m) \colon \P\to\mathbf{R}$ and $(N,\mathfrak n) \colon \R \to \mathbf{S}$,
    \[
        \begin{tikzcd}
    	\C\op && {\cat{D}\op} && {\cat{E}\op} \\
    	\\
    	&& \BA
    	\arrow["M\op", from=1-1, to=1-3]
    	\arrow[""{name=0, anchor=center, inner sep=0}, "\P"', from=1-1, to=3-3]
    	\arrow[""{name=1, anchor=center, inner sep=0}, "\R", from=1-3, to=3-3]
    	\arrow["N\op", from=1-3, to=1-5]
    	\arrow[""{name=2, anchor=center, inner sep=0}, "{\mathbf{S}}", from=1-5, to=3-3]
    	\arrow["\m", curve={height=-6pt}, shorten <=8pt, shorten >=8pt, from=0, to=1]
    	\arrow["\n", curve={height=-6pt}, shorten <=8pt, shorten >=8pt, from=1, to=2]
        \end{tikzcd}
    \]
    their composite $(N, \n) \circ (M, \m) \colon \P \to \mathbf{S}$ is the pair $(N \circ M, \mathfrak{n}\circ\m) \colon \P \to \mathbf{S}$, where $N \circ M$ is the composite of the functors between the base categories, and the component at $X\in \C$ of the natural transformation $\mathfrak{n}\circ\m$ is defined as $(\mathfrak{n}\circ\m)_X = \mathfrak{n}_{M(X)}\circ\m_X$, i.e.\ the composite of the following functions:
    \[
    \P(X)\xrightarrow{\m_X}\mathbf{R}(M(X))\xrightarrow{\mathfrak{n}_{M(X)}}\mathbf{S}(NM(X)).
    \]
\end{definition}

Although 2-categorical aspects of doctrines would be very natural, we omit them for simplicity.

\begin{definition}[First-order Boolean doctrine]\label{d:bool_ex_doc}
    Given a category $\C$ with finite products, a \emph{first-order Boolean doctrine over $\C$} is a functor $\P \colon \C\op \to  \BA$ with the following properties.
    \begin{enumerate}
        \item {(Universal)} \label{i:h3}
        For all $X, Y \in \C$, letting $\pr^{X\times Y}_X \colon X \times Y \to X$ denote the projection onto the first coordinate, the function
        \[
        \P(\pr^{X\times Y}_X) \colon \P(X) \to \P(X \times Y)
        \]
        as an order-preserving map between posets has a right adjoint $\fa{Y}{X}$ (which is not required to be a Boolean homomorphism).
        This means that for every $\beta \in \P(X \times Y)$ there is a (necessarily unique) element $\fa{Y}{X} \beta \in \P(X)$ such that, for every $\alpha \in \P(X)$, 
        \[
            \alpha \leq \fa{Y}{X} \beta \ \text{ in } \P(X) \quad\iff \quad \P(\pr^{X\times Y}_X)(\alpha) \leq \beta \ \text{ in }\P(X \times Y).
        \] 
        
        \item
        (Beck-Chevalley) For any morphism $f\colon X'\to X$ in $\C$, the following diagram in $\Pos$ commutes. 
        \[
            \begin{tikzcd}
            {X} & {\P(X\times Y)} & {\P(X)} \\
            X' & {\P(X'\times Y)} & {\P(X')}
            \arrow["{\P(f\times\id_{Y})}", from=1-2, to=2-2, swap]
            \arrow["{\P(f)}", from=1-3, to=2-3]
            \arrow["{\fa{Y}{X'}}"', from=2-2, to=2-3]
            \arrow["{\fa{Y}{X}}", from=1-2, to=1-3]
            \arrow["f", from=2-1, to=1-1]
            \end{tikzcd}
        \]
    \end{enumerate}
\end{definition}

\begin{definition}[First-order Boolean doctrine morphism] \label{d:ABA-morphism}
    A \emph{first-order Boolean doctrine morphism} from $\P\colon\C\op\to \BA$ to $\mathbf{R} \colon \cat{D}\op\to \BA$ is a Boolean doctrine morphism $(M,\m)\colon \P\to\R$ such that, for all $X,Y\in\C$, the following diagram commutes.
     \begin{equation}\label{eq:preserves-forall}
         \begin{tikzcd}
             \P(X\times Y)\arrow[d,"\fa{Y}{X}"']\arrow[r,"\m_{X\times Y}"] & \R(M(X)\times M(Y))\arrow[d,"\fa{M(Y)}{M(X)}"]\\
             \P(X)\arrow[r,"\m_{X}"'] & \R(M(X))
         \end{tikzcd}
     \end{equation}
\end{definition}

\begin{remark}[Universality $\Leftrightarrow$ existentiality]
    In \cref{d:bool_ex_doc}, we required the existence of the right adjoint $\fa{Y}{X} \colon \P(X \times Y)\to\P(X)$ of $\P(\pr^{X\times Y}_X)$ (with the Beck-Chevalley condition) for all $X,Y\in\C$.
    In this Boolean case, this condition is equivalent to the existence of the left adjoint $\ex{Y}{X}$ (with the Beck-Chevalley condition).
    This is true because the two quantifiers are interdefinable: $\forall = \lnot \exists \lnot$ and $\exists = \lnot \forall \lnot$.
\end{remark}

Next, we describe the leading example: the first-order Boolean doctrine that describes a first-order theory.

\begin{example}[Syntactic doctrine]\label{fbf}
    Fix a first-order language $\L = (\mathbb{F},\mathbb{P})$ (without equality) and a theory $\T$ in the language $\L$.
    We define a first-order Boolean doctrine 
    \[
    \LT^{\L,\T} \colon \Ctx_\mathbb{F} \op\to \BA,
    \]
    called the \emph{syntactic doctrine of ($\L$ and) $\T$},
    as follows. ($\LT$ stands for ``Lindenbaum-Tarski algebra''.)
    An object of the base category $\Ctx_\mathbb{F}$ is a finite list of distinct variables (also called \emph{context}), and a morphism from $\vec x=(x_1,\dots, x_n)$ to $\vec y=(y_1,\dots, y_m)$ is an $m$-tuple
    \begin{equation*}
        (t_1(\vec x),\dots,t_m(\vec x))\colon (x_1,\dots, x_n) \to (y_1,\dots, y_m)
    \end{equation*}
    of terms in the context $\vec x$. The composition of morphisms in $\Ctx_\mathbb{F}$ is given by simultaneous substitutions.
    On objects, the functor $\LT^{\L,\T} \colon \Ctx_\mathbb{F}\op\to \BA$ maps a context $\vec x$ to the poset reflection of the preordered set of formulas with at most those free variables, ordered by provable consequence $\vdash_\T$ in $\T$, according to which $\alpha$ is below $\beta$ if and only if the sequent $\alpha \Rightarrow_{\vec x} \beta$ is provable from $\T$; here, the subscript $\vec x$ in the sequent symbol $\Rightarrow$ means that the sequent is considered in the context $\vec x$. (We refer for example to \cite[Appendix~A]{AbbadiniGuffanti} for the rules of the sequent calculus with contexts for classical first-order logic. A consequence of the slight difference between the calculus \emph{with} contexts and the usual calculus \emph{without} contexts is that the sequent $\Rightarrow_{()} \exists x \top$ is not provable in the former, in accordance with admitting the empty set as a possible model.)
    On morphisms, the functor $\LT^{\L,\T}$ maps a morphism $\vec{t}(\vec{x}) \colon \vec{x}\to\vec{y}$ to the substitution $[\vec{t}(\vec{x})/\vec{y}] \colon \LT^{\L,\T}(\vec y) \to \LT^{\L,\T}(\vec x)$. 
    
    The functor $\LT^{\L,\T}$ is a first-order Boolean doctrine.
    In particular, given finite lists of variables $\vec x$ and $\vec y$ with no common variables, and letting $\pr_1$ denote the projection morphism $\vec x\colon(\vec x,\vec y)\to\vec x$, the right adjoint to $\LT^{\L,\T}(\pr_1)\colon\LT^{\L,\T}(\vec x)\to\LT^{\L,\T}(\vec x,\vec y)$ is 
    \[
    \forall y_1\,\dots\,\forall y_m\colon\LT^{\L,\T}(\vec x,\vec y)\to\LT^{\L,\T}(\vec x).
    \]
    
    If no confusion arises, instead of $\LT^{\L,\T}$ we simply write $\LT^{\T}$, omitting the superscript ``$\L$''.
\end{example}

With this example in mind, we suggest thinking of the objects of the base category of a first-order Boolean doctrine as lists of variables, the morphisms as terms, the fibers as sets of formulas, the reindexings as substitutions, the Boolean operations as logical connectives, and the adjunctions between fibers as quantifiers.

\begin{remark}
    The setting of first-order Boolean doctrines encompasses also \emph{many-sorted} first-order theories.
    Indeed, any many-sorted first-order theory gives rise to a syntactic doctrine in an analogous way.
\end{remark}

The following example is useful in defining models of a first-order Boolean doctrine.

\begin{example}[Subset doctrine]
    The \emph{subset doctrine} is the contravariant power set functor $\mathscr{P} \colon \Set\op \to \BA$, which maps a set $X$ to its power set Boolean algebra $\mathscr{P}(X)$, and $f\colon X'\to X$ to the preimage function
    \[
    \mathscr{P}(f) \coloneqq f^{-1}[-] \colon \mathscr{P}(X) \to \mathscr{P}(X').
    \]
    It is a first-order Boolean doctrine. For $X, Y \in \Set$, the right adjoint to $\pr_X^{-1}[-]\colon\mathscr{P}(X)\to\mathscr{P}(X\times Y)$ is
    \begin{align*}
        \fa{Y}{X} \colon \mathscr{P}(X\times Y) & \longrightarrow \mathscr{P}(X)\\
        S & \longmapsto \{x \in X \mid \text{for all }y \in Y, \, (x, y) \in S\},
    \end{align*}
    and the left adjoint $\ex{Y}{X}$ is the direct image function, which maps $S \in \mathscr{P}(X\times Y)$ to $\pr_X[S] \in \mathscr{P}(X)$.
\end{example}

\begin{definition}[Propositional model]\label{d:bool-model}
    A \emph{propositional model of a Boolean doctrine $\P \colon \C\op \to  \BA$} is a Boolean doctrine morphism $(M,\m)\colon \P\to\mathscr{P}$, where $\mathscr{P}$ is the subset doctrine.
    \[
        \begin{tikzcd}
            \C\op && \Set\op \\
            \\
            & \BA
            \arrow["M\op", from=1-1, to=1-3]
            \arrow[""{name=0, anchor=center, inner sep=0}, "\P"', from=1-1, to=3-2]
            \arrow[""{name=1, anchor=center, inner sep=0}, "{\mathscr{P}}", from=1-3, to=3-2]
            \arrow["\m", curve={height=-6pt}, shorten <=7pt, shorten >=7pt, from=0, to=1]
        \end{tikzcd}
    \]
\end{definition}

We recall that a \emph{universal formula} is a formula of the form $\forall x_1\dots\forall x_n\,\alpha(x_1,\dots,x_n,y_1,\dots,y_m)$ with $\alpha$ quantifier-free, that a \emph{universal sentence} is the universal closure of a quantifier-free formula, i.e.\ a universal formula with no free variables, and that a \emph{universal theory} is a theory consisting of universal sentences.
The notion of a propositional model $(M, \m)$ of a Boolean doctrine $\P$ is best understood if one thinks of $\P$ as the collection of quantifier-free formulas modulo a universal theory $\T$, and $(M, \m)$ as a model of $\T$. (Some more details are given in \cref{r:propositional model-concretely} below.)

We use the terminology ``propositional model'' rather than ``model'', as we reserve the latter term for the appropriate concept in the context of \emph{first-order} Boolean doctrines.

\begin{definition}[Model]\label{d:univ-bool-model}
    A \emph{model of a first-order Boolean doctrine $\P \colon \C\op \to  \BA$} is a first-order Boolean doctrine morphism from $\P$ to the subset doctrine $\mathscr{P}$.
\end{definition}

\begin{remark}\label{r:model-classical}
    In the syntactic context, a model $(M, \m)$ of $\LT^\mathcal{T}$ corresponds precisely to a model of the theory $\mathcal{T}$ in the classical sense.
    The assignment of the functor $M$ on objects encodes the underlying set of the model, the assignment of $M$ on morphisms encodes the interpretation of the function symbols, and the natural transformation $\m$ encodes the interpretation of the relation symbols.
    In detail, the underlying set $\mathbb{M}$ of the model is the value of the functor $M$ at the object $(x)$ (the context with only one variable), the interpretation $\mathbb{I}(f)\colon \mathbb{M}^n\to\mathbb{M}$ of a function symbol $f$ of arity $n$ is the value of the functor $M$ at the morphism $f\colon (x_1,\dots,x_n)\to(y)$, and the interpretation $\mathbb{I}(Q)\subseteq \mathbb{M}^n$ of an atomic formula $Q$ of arity $n$ is $\m_{(x_1,\dots,x_n)}(Q(x_1,\dots,x_n))\in\mathscr{P}(M(x_1,\dots,x_n))$.
    
    Note that, in \cref{d:bool-model,d:univ-bool-model}, the functor $M$ might assign the empty set to some objects of $\C$. In the syntactic context, this means that we allow the empty model.
\end{remark}

\subsection{Quantifier completion of a Boolean doctrine}\label{sec:quantifier compl}

\begin{definition}
     A \emph{quantifier completion of a Boolean doctrine $\P\colon \C\op\to\BA$} is a Boolean doctrine morphism $(I,\mathfrak{i})\colon \P \to \P^\EA$, where $\P^\EA\colon {\C'}\op\to\BA$ is a first-order Boolean doctrine, with the following universal property: for every first-order Boolean doctrine $\R \colon \D \op \to \BA$ and every Boolean doctrine morphism $(M,\m) \colon \P \to \R$, there is a unique first-order Boolean doctrine morphism $(N,\n) \colon \P^\EA \to \R$ such that $(M,\m) = (N,\n) \circ (I, \mathfrak{i})$.
        \begin{equation}\label{d:univ-prof-forall}
        \begin{tikzcd}
             \P\arrow[r,"{(I,\mathfrak{i})}"',swap]\arrow[dr,"{(M,\m)}",swap] & \P^\EA\arrow[d,"{(N,\n)}"',dashed,swap]\\& \R
        \end{tikzcd}
        \end{equation}
\end{definition}

With a slight abuse of notation,
we might use the name quantifier completion to refer simply to the first-order Boolean doctrine $\P^\EA$, instead of the Boolean doctrine morphism $\P\to\P^\EA$.

\begin{theorem}[{\cite[Thm.~5.4]{AbbadiniGuffanti}}]
    Every Boolean doctrine over a small category has a quantifier completion.
\end{theorem}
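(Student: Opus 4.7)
The plan is to build $\P^\EA$ by a syntactic, Lindenbaum--Tarski style construction, keeping the same base category $\C$ and inductively adjoining formal Boolean combinations, reindexings, and universal quantifiers to the elements of the fibers of $\P$. More concretely, for each $X \in \C$ I would first generate a set $\widetilde{\P}(X)$ of formal ``first-order formulas in context $X$'' by closing the generators $\mathfrak{i}_X(\alpha)$, one for each $\alpha \in \P(X)$, under: formal Boolean operations within each fiber; formal reindexings $\widetilde{\P}(f) \colon \widetilde{\P}(Y) \to \widetilde{\P}(X)$ for each morphism $f \colon X \to Y$ of $\C$; and formal universal quantifiers $\widetilde{\forall}{}^Y_X \colon \widetilde{\P}(X \times Y) \to \widetilde{\P}(X)$ for each pair $X, Y \in \C$. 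Smallness of $\C$ is used exactly here, to keep $\widetilde{\P}(X)$ a set. Then define $\P^\EA(X)$ as the quotient of $\widetilde{\P}(X)$ by the smallest congruence (with respect to all the formal operations) that imposes precisely the relations needed so that $\P^\EA \colon \C\op \to \BA$ becomes a first-order Boolean doctrine and $(\id_\C, \mathfrak{i}) \colon \P \to \P^\EA$ becomes a Boolean doctrine morphism: each fiber is a Boolean algebra, each reindexing a Boolean homomorphism, the assignment $X \mapsto \P^\EA(X)$ functorial, $\mathfrak{i}$ compatible with Boolean operations and with reindexings coming from $\P$, each $\widetilde{\forall}{}^Y_X$ right adjoint to reindexing along $\pr_1 \colon X \times Y \to X$, and Beck--Chevalley satisfied.

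For the universal property, given a Boolean doctrine morphism $(M, \m) \colon \P \to \R$ into a first-order Boolean doctrine $\R$, I would extend it to $(N, \n) \colon \P^\EA \to \R$ by setting $N \coloneqq M$ on base categories and defining $\n$ recursively on formal formulas: atomic formulas via $\m$, formal Boolean connectives via the Boolean operations of $\R$, formal reindexings via the reindexings of $\R$, and formal universal quantifiers via the right adjoints in $\R$. Uniqueness is then immediate by induction on formulas, since any such extension is forced on the generators $\mathfrak{i}_X(\alpha)$ and must commute with all the formal operations.

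The main obstacle is to show that this recursively defined $\n$ descends to the quotient $\P^\EA$: every relation imposed in defining $\P^\EA$ must already hold in $\R$ when transported along $\n$. This is a soundness-type check. The clauses concerning Boolean-algebra axioms, functoriality, and compatibility of $\mathfrak{i}$ follow immediately from $\R$ being a Boolean doctrine and $(M, \m)$ a Boolean doctrine morphism. The subtler clauses are the adjunction for $\widetilde{\forall}{}^Y_X$ and the Beck--Chevalley condition, which hold because $\R$ is a \emph{first-order} Boolean doctrine; in particular the Beck--Chevalley clause is the most delicate, since it mixes reindexings along arbitrary morphisms of $\C$ with the freely adjoined quantifiers, and has to be verified step by step along the recursion defining $\n$.
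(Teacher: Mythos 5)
A preliminary remark: the paper does not prove this statement at all — it is imported verbatim from the cited reference — so there is no in-house argument to compare against. Judged on its own terms, your construction is the expected one (a syntactic free construction over the same base category, which is consistent with how the completion is described and used later in the paper), and its overall architecture is sound: generate formal formulas fiberwise, quotient, check soundness of the recursively defined $\n$, and get uniqueness by induction on formal formulas. Smallness of $\C$ is indeed used exactly where you say, to keep each $\widetilde{\P}(X)$ a set (note that the fibers must be generated by a single simultaneous induction over all objects of $\C$, since $\widetilde{\P}(X)$ refers to $\widetilde{\P}(Y)$ and $\widetilde{\P}(X\times Y)$ for every $Y$).

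The one step you pass over too quickly is the existence of ``the smallest congruence \dots{} that imposes precisely the relations needed''. Most of your clauses (Boolean-algebra axioms, functoriality of the reindexings, Beck--Chevalley, compatibility of $\mathfrak{i}$) are genuine equations between formal terms and generate a congruence in the usual way. But the adjunction clause for $\widetilde{\forall}{}^Y_X$ is not an equation: one of its two directions reads ``\emph{if} $\widetilde{\P}(\pr_1)(\alpha)\leq\beta$ holds in the quotient, \emph{then} $\alpha\leq\widetilde{\forall}{}^Y_X\beta$ holds in the quotient'', and its set of instances depends on the congruence you are in the middle of constructing. So you cannot literally ``impose relations'' once and be done. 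The repair is standard but should be said: view the whole system as a many-sorted algebra (sorts indexed by the objects of $\C$, operations the fiberwise Boolean operations, the formal reindexings and the formal quantifiers); each clause is then an identity or a quasi-identity, and the congruences whose quotient satisfies a quasi-identity form a nonempty intersection-closed family in the congruence lattice (the total congruence qualifies), so a least admissible congruence exists — equivalently, iterate a closure operator to a fixed point. With this in place the descent of $\n$ is exactly as you describe: the kernel of the recursively defined $\widetilde{\n}$ is admissible, because its quotient is isomorphic to the image of $\widetilde{\n}$ in $\R$, which is fiberwise a Boolean subalgebra closed under the reindexings and the right adjoints of $\R$ and hence inherits the adjunction and Beck--Chevalley from $\R$; therefore the kernel contains the least admissible congruence. (Beck--Chevalley is in fact the easy clause here — it is a plain equation whose image under $\widetilde{\n}$ holds by Beck--Chevalley in $\R$ — so it is less delicate than you suggest.) In short: correct approach, but make the ``smallest congruence'' step precise, since a naive purely equational reading of it would be wrong.
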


In \cite[Thm.~5.2 and Thm.~5.4]{AbbadiniGuffanti} we show more: we can suppose that the quantifier completion shares the same base category, the functor between the base categories is the identity, and the natural transformation is componentwise injective. I.e., every Boolean doctrine $\P$ over a small category $\C$ has a quantifier completion of the form $(\id_\C,\mathfrak{i})\colon \P \to \P^\EA$ (with $\C$ as the base category of $\P^\EA$) with $\mathfrak{i}$ componentwise injective.

We mention that, in general, the quantifier completion of a Boolean doctrine $\P$ differs from the existential completion (in the sense of \cite{Trotta2020}) of $\P$ as a primary doctrine; for the comparison we refer to \cite[Rem.~5.9]{AbbadiniGuffanti}.

\begin{corollary}[{\cite[Cor.~5.5]{AbbadiniGuffanti}}]
    The forgetful functor from the category of first-order Boolean doctrines over a small category to the category of Boolean doctrines over a small base has a left adjoint.
\end{corollary}

\begin{example}[{\cite[Prop.~5.7 and Cor.~5.8]{AbbadiniGuffanti}}]\label{ex:sanitycheck}
    Let $\mathcal{T}$ be a universal theory in the language $(\F,\mathbb{P})$, let $\LT^\mathcal{T}\colon\Ctx_\F\op\to\BA$ be the syntactic doctrine of $\mathcal{T}$ as in \cref{fbf}, let $\LT^\mathcal{T}_0$ be the subfunctor of $\LT^\mathcal{T}$ consisting of quantifier-free formulas modulo $\T$, and let $i\colon\LT^\mathcal{T}_0\hookrightarrow\LT^\mathcal{T}$ be the componentwise inclusion. Then $(\id_{\Ctx_\F},i)\colon\LT^\mathcal{T}_0\hookrightarrow\LT^\mathcal{T}$ is a quantifier completion of the Boolean doctrine $\LT^\mathcal{T}_0$.

    In the more general case in which $\T$ is an arbitrary first-order theory (not necessarily consisting of universal formulas), the quantifier completion 
    of $\LT^\mathcal{T}_0$
    is isomorphic to the syntactic doctrine $\LT^{\mathcal{U}}$, where $\mathcal{U}$ is the theory in the same language of $\T$ whose axioms are the universal sentences derivable from $\mathcal{T}$.
\end{example}

\begin{remark} \label{r:propositional model-concretely}
    Let $\T$ be a universal theory, and let $\LT^\T$ and $\LT_0^\T$ be as in \cref{ex:sanitycheck}. 
    By \cref{ex:sanitycheck}, $\LT^\T$ is the quantifier completion of $\LT^\T_0$. Therefore, the propositional models of $\LT^\T_0$ are in one-to-one correspondence with the models of the syntactic doctrine $\LT^\T$, which in turn are in one-to-one correspondence with the models of $\T$ in the classical sense (\cref{r:model-classical}).
\end{remark}

As mentioned above, we invite the reader to think of a Boolean doctrine $\P$ as the collection of quantifier-free formulas modulo a universal theory $\T$, and a propositional model of $\P$ as a classical model of $\T$.

\subsection{Adding constants to a doctrine}

It is common to treat some free variables as new constants added to the language.
In the doctrinal case, this is rendered by the following universal construction.

\begin{remark}[Adding constants to a doctrine]\label{r:const}
    Let $\P \colon \C\op \to \BA$ be a Boolean doctrine, and let $S \in \C$. We review from \cite{GuffConst} the construction that freely adds to $\P$ a constant of type $S$.
    
    Let $\C_S$ be the Kleisli category for the reader comonad $S \times - \colon \C \to \C$.
    The category $\C_S$ has the same objects as $\C$. For a pair of objects $X$ and $Y$ in $\C_S$ (equivalently, in $\C$), a morphism $f \colon X \rightsquigarrow Y$ in $\C_S$ is a morphism $f \colon S \times X \to Y$ in $\C$. The composite of $f \colon X \rightsquigarrow Y$ and $g \colon Y \rightsquigarrow Z$ is $g \circ \ple{\pr^{S\times X}_S,f} \colon X \rightsquigarrow Z$:
    \[S \times X \xrightarrow{\ple{\pr^{S\times X}_S,f}} S \times Y \xrightarrow{g} Z.\]
    The identity on the object $X$ in $\C_S$ is the morphism $X \rightsquigarrow X$ corresponding to the projection over $X$ in $\C$:
    \[S \times X \xrightarrow{\pr^{S\times X}_X}X.\]
    We remark that, in $\C_S$, there is a morphism $\tmn \rightsquigarrow S$, which corresponds to $\id_S\colon S\to S$ upon choosing $S$ as a product of $S$ and $\tmn$.
    
    The new doctrine $\P_S \colon {\C_S}\op\to\BA$ is defined as follows:
    \begin{center}
        the reindexing of \
        \begin{tikzcd}
        Y\\
        X\arrow[u,"f"', squiggly]
        \end{tikzcd}
        \ is \ 
        \begin{tikzcd}
        \P(S\times Y)\arrow[d,"{\P(\ple{ \pr^{S\times X}_S,f })}"]\\
        \P(S\times X).
        \end{tikzcd}
    \end{center}
    The Boolean doctrine $\P_S$ comes with a canonical Boolean doctrine morphism $(L_S,\mathfrak{l}_S)\colon \P \to \P_S$. The functor $L_S\colon \C \to \C_S$ maps a morphism $f \colon X \to Y$ to the morphism $ f\circ \pr^{S\times X}_X\colon X \rightsquigarrow Y$.
    The component at an object $X$ of the natural transformation $\mathfrak{l}_S$ is
    \begin{align*}
        (\mathfrak{l}_S)_X \colon \P(X) & \longrightarrow \P_S(X)=\P(S\times X)\\
                                \alpha &\longmapsto      \P(\pr^{S\times X}_X)(\alpha).
    \end{align*}
    The fibers of $\P_S$ inherit the Boolean structure of the fibers of $\P$.
    
    If the starting Boolean doctrine $\P$ has quantifiers, then $\P_S$ also has quantifiers and $(L_S,\mathfrak{l}_S)$ preserves them:
    for $X,Y\in \C_S$, the universal quantifier $((\forall_S)Y)_X \colon \P_S(X\times Y)\to \P_S(X)$ is
    \[
    \fa{Y}{S\times X}\colon \P(S\times X\times Y) \to \P(S\times X).
    \]
    All these facts are proved in \cite[Sec.~5]{GuffConst}.
    
    The Boolean doctrine morphism $(L_S,\mathfrak{l}_S)\colon \P \to \P_S$ and the morphism $\id_S\colon \tmn \rightsquigarrow S$ in $\C_S$ have the following universal property {\cite[Cor.~6.7 and Thm.~6.3(iii, vii)]{GuffConst}}:
    \begin{quote}
        For every Boolean doctrine $\mathbf{R}\colon\cat{D}\op\to\BA$, Boolean doctrine morphism $(M,\m)\colon \P\to \mathbf{R}$ and morphism $c \colon \tmn_\cat{D} \to M(S)$ in $\cat{D}$, there is a unique Boolean doctrine morphism $(N,\n) \colon \P_S \to \mathbf{R}$ such that $ (M,\m)=(N,\n)\circ(L_S,\mathfrak{l}_S)$ and $N(\id_S\colon \tmn \rightsquigarrow S)=c \colon \tmn_\cat{D} \to M(S)$.
    \end{quote}
\end{remark}

\begin{definition}[Propositional model at an object]\label{d:bool-mod-at-S}
    A \emph{propositional model of a Boolean doctrine $\P \colon \C\op \to  \BA$ at an object $S \in \C$} is a triple $(M,\m,s)$ where $(M,\m)\colon \P \to \mathscr{P}$ is a propositional model of $\P$ (i.e.\ a Boolean doctrine morphism from $\P$ to the subset doctrine $\mathscr{P}$) and $s \in M(S)$.
\end{definition}

Roughly speaking, a propositional model of $\P$ at $S$ is a propositional model of $\P$ together with a value assignment of $S$ in the model.
Up to a one-to-one correspondence, this is the same thing as a propositional model of the Boolean doctrine $\P_S$ obtained from $\P$ by adding a constant of type $S$. The following lemma describes the relationship between the interpretation of a formula in $\P_S$ in a propositional model of $\P_S$ with the interpretation of the associated formula in $\P$ in the associated propositional model of $\P$ at $S$.

\begin{lemma}\label{l:model-model-at-S}
    Let $\P \colon \C\op \to \BA$ be a Boolean doctrine, let $S\in\C$, let $\P_S$ be the Boolean doctrine obtained from $\P$ by adding a constant of type $S$, and let $(L_S,\mathfrak{l}_S)\colon \P \to \P_S$ be the canonical Boolean doctrine morphism. Let $(M,\m,s)$ be a propositional model of $\P$ at $S$ and $(N,\n)\colon \P_S\to\mathscr{P}$ the unique propositional model of $\P_S$ such that $ (M,\m)=(N,\n)\circ(L_S,\mathfrak{l}_S) $ and $N(\id_S\colon \tmn \rightsquigarrow S)\colon \{*\} \to M(S)$ is the function that sends $*$ to $s\in M(S)$. Let $Y\in\C$ and $\alpha\in \P(S\times Y)=\P_S(Y)$. Then, for all $y\in M(Y)$,
    \[
    y\in\n_Y(\alpha)\iff (s,y)\in\m_{S\times Y}(\alpha).
    \]
\end{lemma}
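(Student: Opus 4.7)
The plan is to identify the set $\n_Y(\alpha)$ via naturality of $\n$ along a single carefully chosen Kleisli morphism, then invoke the defining relation $\m=\n\circ\mathfrak{l}_S$ encoded in the hypothesis $(M,\m)=(N,\n)\circ(L_S,\mathfrak{l}_S)$.

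The first step is to produce the morphism to use. Consider the morphism $\iota\colon Y\rightsquigarrow S\times Y$ in $\C_S$ whose underlying $\C$-morphism is $\id_{S\times Y}$. Using the description of composition and products in $\C_S$ from \cref{r:const}, I would check that $\iota$ equals the pair $\ple{\sigma_Y,\id_Y}$, where $\id_Y$ denotes the identity on $Y$ in $\C_S$ (corresponding to $\pr_2\colon S\times Y\to Y$ in $\C$) and $\sigma_Y\colon Y\rightsquigarrow S$ is the $\C_S$-composition of $!_Y\colon Y\rightsquigarrow\tmn$ with $\id_S\colon\tmn\rightsquigarrow S$ (which unfolds to $\pr_1\colon S\times Y\to S$). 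Since the functor $N$ preserves finite products, sends $\id_Y$ to $\id_{M(Y)}$ (because $N\circ L_S=M$), and sends $\id_S\colon\tmn\rightsquigarrow S$ to the function $*\mapsto s$, one concludes that $N(\iota)\colon M(Y)\to M(S\times Y)$ is the map $y\mapsto(s,y)$.

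Next I would track $\iota$ through the doctrine. By \cref{r:const}, the reindexing $\P_S(\iota)\colon\P_S(S\times Y)\to\P_S(Y)$ equals $\P(\ple{\pr_1,\id_{S\times Y}})\colon\P(S\times S\times Y)\to\P(S\times Y)$. Applied to the element $(\mathfrak{l}_S)_{S\times Y}(\alpha)=\P(\pr_2)(\alpha)\in\P_S(S\times Y)$, this reindexing sends it back to $\alpha$ itself, because $\pr_2\circ\ple{\pr_1,\id_{S\times Y}}=\id_{S\times Y}$.

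Finally, I would apply naturality of $\n$ at $\iota$, evaluated at $\beta\coloneqq(\mathfrak{l}_S)_{S\times Y}(\alpha)$, obtaining
\[
\n_Y(\alpha)=\n_Y(\P_S(\iota)(\beta))=\mathscr{P}(N(\iota))(\n_{S\times Y}(\beta)).
\]
The identity $\m_{S\times Y}=\n_{S\times Y}\circ(\mathfrak{l}_S)_{S\times Y}$, which is part of the hypothesis $(M,\m)=(N,\n)\circ(L_S,\mathfrak{l}_S)$, rewrites $\n_{S\times Y}(\beta)$ as $\m_{S\times Y}(\alpha)$, and the description of $N(\iota)$ rewrites $\mathscr{P}(N(\iota))$ as preimage along $y\mapsto(s,y)$. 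Combining these yields $\n_Y(\alpha)=\{y\in M(Y)\mid(s,y)\in\m_{S\times Y}(\alpha)\}$, as claimed. The only delicate point is the factorization of $\iota$ as a pair and the consequent description of $N(\iota)$; once that is settled, the rest is a direct diagram chase.
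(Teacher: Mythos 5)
Your proof is correct: the factorization $\iota=\ple{\sigma_Y,\id_Y}$ holds because products in the Kleisli category $\C_S$ are computed as in $\C$ with pairing given by the underlying pairing, so $\ple{\pr_1,\pr_2}=\id_{S\times Y}$; the computations $N(\iota)=(y\mapsto(s,y))$, $\P_S(\iota)\bigl((\mathfrak{l}_S)_{S\times Y}(\alpha)\bigr)=\alpha$, and the final naturality chase all check out. Note that the paper does not prove this lemma itself but cites it from an earlier work (Theorem~4.6 of the reference), so there is no in-paper proof to compare against; your argument is a valid self-contained derivation.
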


\begin{proof}
    Consider the naturality diagram of $\n$ with respect to the morphism $\id_{S\times Y}\colon Y\rightsquigarrow S\times Y$ in $\C_S$:
    \begin{equation}\label{diag:model-model-at-S}
        \begin{tikzcd}
            {\P_S(S\times Y)} & {\mathscr{P}(N(S)\times N(Y))} \\
            {\P_S(Y)} & {\mathscr{P}(N(S)).}
            \arrow["{\n_{S\times Y}}", from=1-1, to=1-2]
            \arrow["{\n_Y}"', from=2-1, to=2-2]
            \arrow["{\P_S(\id_{S\times Y})}"', from=1-1, to=2-1]
            \arrow["{N(\id_{S\times Y})^{-1}[-]}", from=1-2, to=2-2]
        \end{tikzcd}
    \end{equation}
    Since $L_S$ is the identity on objects, $M$ and $N$ have the same value assignments on objects. The diagram on the left-hand side below commutes in $\C_S$ because the diagram on the right-hand side commutes in $\C$.
    \[
    \begin{tikzcd}[column sep=4.5em,row sep=2.7em]
        & Y &&& {S\times Y} \\
        \tmn & {S\times Y} && S & {S\times S\times Y} \\
        S && Y & S && Y
        \arrow["{\pr^{S \times S \times Y}_2}", squiggly, from=2-2, to=3-1]
        \arrow["{!_{S\times Y}}"',curve={height=15pt}, squiggly, from=1-2, to=2-1]
        \arrow["{\id_{S}}"', squiggly, from=2-1, to=3-1]
        \arrow["{\id_{S\times Y}}"{description}, squiggly, from=1-2, to=2-2]
        \arrow["{\pr^{S \times S \times Y}_{Y}}"', squiggly, from=2-2, to=3-3]
        \arrow["{\pr^{S\times Y}_Y}", curve={height=-25pt}, squiggly, from=1-2, to=3-3]
        \arrow["{\ple{\pr^{S \times Y}_S,\pr^{S \times Y}_S,\pr^{S \times Y}_Y}}"{description}, from=1-5, to=2-5]
        \arrow["{\pr^{S \times S \times Y}_2}", from=2-5, to=3-4]
        \arrow["{\pr^{S \times S \times Y}_Y}"', from=2-5, to=3-6]
        \arrow["{\pr^{S \times Y}_{S}}"', curve={height=20pt},from=1-5, to=2-4]
        \arrow["{\id_S}"', from=2-4, to=3-4]
        \arrow["{\pr^{S \times Y}_{Y}}", curve={height=-35pt}, from=1-5, to=3-6]
    \end{tikzcd}
    \]
    Since $N$ preserves finite products, the function $N(\id_{S\times Y})\colon N(Y)\to N(S)\times N(Y)$ maps an element $y\in N(Y)$ to $(s,y)\in N(S)\times N(Y)$.
    Moreover, we observe that
    \begin{equation} \label{eq:model-model-at-S}
    \P_S(\id_{S\times Y})((\mathfrak{l}_S)_{S\times Y}(\alpha))=\P(\ple{\pr^{S \times Y}_S,\pr^{S \times Y}_S,\pr^{S \times Y}_Y})(\P(\ple{\pr^{S \times S \times Y}_2,\pr^{S \times S \times Y}_{Y}})(\alpha))=\alpha.
    \end{equation}
    By \eqref{diag:model-model-at-S} and \eqref{eq:model-model-at-S} and since $\m=\n\circ\mathfrak{l}_S$, we have the following commuting diagram.
    \[\begin{tikzcd}
        {\P(S\times Y)} \\
        & {\P_S(S\times Y)} & {\mathscr{P}(M(S)\times M(Y))} \\
        & {\P_S(Y)} & {\mathscr{P}(M(S))}
        \arrow["{\n_{S\times Y}}", from=2-2, to=2-3]
        \arrow["{\n_Y}"', from=3-2, to=3-3]
        \arrow["{\P_S(\id_{S\times Y})}", from=2-2, to=3-2]
        \arrow["{N(\id_{S\times Y})^{-1}[-]}", from=2-3, to=3-3]
        \arrow["{(\mathfrak{l}_S)_{S\times Y}}", from=1-1, to=2-2]
        \arrow["{\id_{\P({S\times Y})}}"', curve={height=12pt}, from=1-1, to=3-2]
        \arrow["{\m_{S\times Y}}", curve={height=-18pt}, from=1-1, to=2-3]
    \end{tikzcd}\]
    Thus,
    \begin{align*}
        y\in\n_Y(\alpha)&\iff y\in\n_Y(\P_S(\id_{S\times Y})((\mathfrak{l}_S)_{S\times Y}(\alpha)))\\
        &\iff N(\id_{S\times Y})^{-1}[\m_{S\times Y}(\alpha)]\\
        &\iff (s,y)\in\m_{S\times Y}(\alpha).\qedhere
    \end{align*}
\end{proof}

\section{Characterization of classes of universally valid formulas} \label{sec:completenes-first-layer}

We recall one of the main goals of the paper.
Let $\P\colon \C\op \to \BA$ be a Boolean doctrine, with $\C$ small, and $(\id_\C,\mathfrak{i})\colon\P\hookrightarrow\P^\EA$ its quantifier completion.
We think of $\P$ as the set of quantifier-free formulas of $\P^\EA$. Let $\P^\EA_1$ be the subfunctor of $\P^\EA$ consisting of the ``formulas of quantifier alternation depth $\leq 1$''. 
Intuitively, $\P^\EA_1$ freely adds one layer of quantifier alternation depth to $\P$.
Our goal is to describe $\P^\EA_1$ explicitly, which we will do in \cref{s:Herbrand} with a doctrinal version of Herbrand's theorem.

For example, given $\alpha(x)$, $\beta(y)$, $\gamma(z)$ and $\delta(w)$ quantifier-free formulas (i.e.\ in the fibers of $\P$), when should $(\forall x\, \alpha(x)) \land (\forall y\, \beta(y))$ be below $(\forall z\, \gamma(z)) \lor (\forall w\, \delta(w))$ in $\P^\EA_1$?
Our approach to the question is via models.
In this light, the answer is: when every model of $\P$ satisfying $\forall x\, \alpha(x)$ and $\forall y\, \beta(y)$ also satisfies $\forall z\, \gamma(z)$ or $\forall w\, \delta(w)$; equivalently, when no model of $\P$ satisfies $\forall x\, \alpha(x)$, $\forall y\, \beta(y)$, $\lnot \forall z\, \gamma(z)$ and $\lnot \forall w\, \delta(w)$.
In this section, we take a detour and introduce the notion of a \emph{universal ultrafilter}, which captures axiomatically the set of universally valid formulas in a certain model.
The answer above can then be formulated as: when no universal ultrafilter contains $\alpha$ and $\beta$ but not $\gamma$ and $\delta$.
In turn, this amounts to an explicit condition in terms of $\P$, $\alpha$, $\beta$, $\gamma$ and $\delta$.

To illustrate the main result of the section, we introduce the following notation.
\begin{notation}\label{n:FM-IM-one-model}
    Let $\P \colon \C\op \to  \BA$ be a Boolean doctrine. For every propositional model $(M,\m)$ of $\P$ we define the family $(F_X^{(M, \m)})_{X \in\C}$ by setting
    \[
        F_X^{(M,\m)} \coloneqq \{\alpha \in \P(X) \mid \text{for all }x \in M(X),\, x \in \m_X(\alpha)\}.
    \]
\end{notation}

Roughly speaking, $(F^{(M,\m)}_X)_{X \in \C}$ consists of all the formulas whose ``universal closure'' is valid in the propositional model $(M,\m)$.

\begin{remark}\label{r:univ-vald-fmlas-one-model}
    We translate \cref{n:FM-IM-one-model} to the classical syntactic setting in light of \cref{r:propositional model-concretely}.
    Let $\T$ be a universal theory.
    To simplify, we take $\{x_1, x_2, \dots\}$ as the set of variables, and instead of taking contexts as arbitrary tuples of distinct variables, we consider only contexts of the type $(x_1, \dots, x_n)$ for some $n \in \N$.
    For every model $M$ of $\T$, we define the family $(F_n^M)_{n \in \N}$ by setting 
    \[
        F_n^M \coloneqq \{\alpha(x_1, \dots, x_{n}) \text{ quantifier-free} \mid M \vDash \forall x_1 \dots \forall x_n\, \alpha(x_1, \dots, x_n)\}.
    \]
    (For this to be an accurate translation of \cref{n:FM-IM-one-model}, we should quotient the above set by equivalence modulo $\T$; however, our discussion works equally well without quotienting.)
\end{remark}

The end result of this section (\cref{t:characterization-ultrafilters}) is, given a Boolean doctrine $\P \colon \C\op \to \BA$ with $\C$ small, a characterization of the families of the form $(F_X^{(M,\m)})_{X \in \C}$ for  $(M,\m)$ varying among the propositional models of $\P$: as we will prove, these families are captured axiomatically by the notion of a \emph{universal ultrafilter}.

\begin{definition} [Universal ultrafilter]\label{d:uf}
    A \emph{universal ultrafilter for a Boolean doctrine $\P \colon \C\op \to \BA$} is a family $(F_X)_{X \in \C}$, with $F_X \subseteq \P(X)$ for all $X \in \C$, with the following properties.
    \begin{enumerate}
        \item\label{forall-reindex-ultrafilter} 
        For all $f \colon X \to Y$ and $\alpha \in F_Y$, $\P(f)(\alpha) \in F_X$.
        
        \item\label{i:uf-filter} For all $X \in \C$, $F_X$ is a filter of $\P(X)$.
        
        \item\label{i:uf-join}
        For all $\alpha_1 \in \P(X_1)$ and $\alpha_2\in \P(X_2)$, if $\P(\pr^{X_1\times X_2}_{X_1})(\alpha_1)\lor \P(\pr^{X_1\times X_2}_{X_2})(\alpha_2)\in F_{X_1\times X_2}$ then $\alpha_1 \in F_{X_1}$ or $\alpha_2\in F_{X_2}$.
        
        \item \label{i:uf-bot}
        $\bot_{\P(\tmn)}\notin F_\tmn$. 
    \end{enumerate}
\end{definition}

\begin{remark} \label{r:uf-translation-to-classic}
    We translate \cref{d:uf} to the classical syntactic setting.
    A \emph{universal ultrafilter for a universal theory $\mathcal{T}$} is a family $(F_n)_{n \in \N}$, with $F_n$ a set of quantifier-free formulas with $x_1, \dots, x_n$ as (possibly dummy) free variables, with the following properties.
    \begin{enumerate}
        \item For any $n,m \in \N$, $\alpha(x_1, \dots, x_m) \in F_m$ and $m$-tuple $(f_i(x_1, \dots,x_n))_{i = 1, \dots, m}$ of $n$-ary terms, 
        \[
        \alpha(f_1(x_1, \dots, x_n), \dots, f_m(x_1, \dots, x_n)) \in F_n.
        \]
        
        \item For all $n \in \N$, 
        \begin{enumerate}
            \item 
            for all quantifier-free formulas $\alpha(x_1, \dots, x_n), \beta(x_1, \dots, x_n)$, if $\alpha(x_1, \dots, x_n) \vdash_\T \beta(x_1, \dots, x_n)$ and $\alpha(x_1, \dots, x_n) \in F_n$ , then $\beta(x_1, \dots, x_n) \in F_n$;
            
            \item 
            for all $\alpha_1(x_1, \dots, x_n), \alpha_2(x_1, \dots, x_n) \in F_n$ we have $\alpha_1(x_1, \dots, x_n) \land \alpha_2(x_1, \dots, x_n) \in F_n$;
            
            \item 
            $\top(x_1, \dots, x_n) \in F_n$ (where $\top(x_1, \dots, x_n)$ is the constant ``true'' with $n$ dummy variables).
        \end{enumerate}
            
        \item 
        For all $n_1,n_2 \in \N$ and all quantifier-free formulas $\alpha_1(x_1, \dots, x_{n_1})$ and $\alpha_2(x_1, \dots, x_{n_2})$, if 
        \[\alpha_1(x_1, \dots, x_{n_1}) \lor \alpha_2(x_{n_1 +1}, \dots, x_{n_1 + n_2}) \in F_{n_1 + n_2},\]
        then $\alpha_1(x_1, \dots, x_{n_1}) \in F_{n_1}$ or $\alpha_2(x_1, \dots, x_{n_2}) \in F_{n_2}$.
        
        \item $\bot \notin F_0$.
    \end{enumerate}
    
    For every model $M$ of $\T$, it is easy to check that the family $(F^M_n)_{n\in\N}$ defined by
    \[
    F^M_n \coloneqq \{ \alpha(x_1, \dots, x_{n}) \text{ quantifier-free}\mid M \vDash \forall x_1 \dots \forall x_n\, \alpha(x_1, \dots, x_n)\},
    \]
    is a universal ultrafilter in the sense above.
\end{remark}

The following is a reason for the name ``universal \emph{ultrafilter}''. A further one will be \cref{r:ultrafilter-as-pair}.

\begin{remark} \label{r:F_tmn-is-uf}
    Let $(F_X)_{X \in \C}$ be a universal ultrafilter for a Boolean doctrine $\P$.
    By \eqref{i:uf-filter}--\eqref{i:uf-bot} in \cref{d:uf}, $F_\tmn$ is a filter of $\P(\tmn)$ whose complement is an ideal, and so it is an ultrafilter of $\P(\tmn)$ (in the classical sense).
\end{remark}

Quite a few pages will be required to prove that universal ultrafilters characterize the families of the form $(F_X^{(M,\m)})_{X \in \C}$ for some propositional model $(M,\m)$ (\cref{t:characterization-ultrafilters}); here we start.

\subsection{Universal filters and ideals}

To study universal ultrafilters, the auxiliary notions of \emph{universal filters} and \emph{universal ideals} will come in handy.
To get an intuition about these, we extend \cref{n:FM-IM-one-model} from a single propositional model to a class of propositional models.

\begin{notation}\label{n:FM-IM}
    Let $\P \colon \C\op \to  \BA$ be a Boolean doctrine.
    For every class $\mathcal{M}$ of propositional models of $\P$ we define two families $(F_X^\mathcal{M})_{X \in \C}$ and $(I_X^\mathcal{M})_{X \in \C}$ by setting, for every $X \in \C$,
    \begin{align*}
        F_X^{\mathcal{M}} & \coloneqq \{\alpha \in \P(X) \mid \text{for all }(M,\m) \in \mathcal{M},\text{ for all }x \in M(X),\, x \in \m_X(\alpha)\},\\
        I_X ^{\mathcal{M}}& \coloneqq \{\alpha \in \P(X) \mid \text{for all }(M,\m) \in \mathcal{M},  \text{ not all }x \in M(X) \text{ satisfy } x \in \m_X(\alpha)\}.
    \end{align*}
\end{notation}
Roughly speaking, 
\begin{itemize}
    \item $F^\mathcal{M}=(F_X^{\mathcal{M}})_{X\in\C}$ consists of all the formulas whose ``universal closure'' is valid in all models in $\mathcal{M}$,
    \item $I^{\mathcal{M}}=(I_X^{\mathcal{M}})_{X\in\C}$ consists of all the formulas whose ``universal closure'' is invalid in all models in $\mathcal{M}$.
\end{itemize}

\begin{remark}\label{r:univ-vald-fmlas}
    We translate \cref{n:FM-IM} to the classical syntactic setting. Let $\T$ be a universal theory.
    For every class $\mathcal{M}$ of models of $\T$ we define the families $(F^\mathcal{M}_n)_{n \in \N}$ and $(I^\mathcal{M}_n)_{n \in \N}$ by setting
    \begin{align*}
        F_n^\mathcal{M} & \coloneqq \{ \alpha(x_1, \dots, x_{n}) \text{ quantifier-free}\mid \text{for all } M \in \mathcal{M},\ M \vDash \forall x_1 \dots \forall x_n\, \alpha(x_1, \dots, x_n)\},\\
        I_n^\mathcal{M} & \coloneqq \{ \alpha(x_1, \dots, x_{n}) \text{ quantifier-free}\mid \text{for all } M \in \mathcal{M},\ M \nvDash  \forall x_1 \dots \forall x_n\, \alpha(x_1, \dots, x_n)\}.
    \end{align*}
\end{remark}

\emph{Universal filters} (\cref{{d:universal filter}} below) are meant to characterize the families of the form $F^\mathcal{M}$ for some class $\mathcal{M}$ of propositional models, i.e., the classes of formulas that are universally valid in all members of a certain class of propositional models.
Likewise, \emph{universal ideals} (\cref{d:univ-ideal} below) are meant to characterize the families of the form $I^\mathcal{M}$ for some class $\mathcal{M}$ of propositional models, i.e.\ the classes of formulas that are universally invalid in all members of a certain class of propositional models. 
While we use universal filters and ideals to prove the main result of this section, we do not need these two characterizations, which we thus postpone to the appendix (\cref{t:characterization-filters,t:characterization-ideals}).

We start by introducing \emph{universal filters}.
As we have just announced, they are meant to characterize the families of the form $F^\mathcal{M}$ for some class $\mathcal{M}$ of propositional models (see \cref{n:FM-IM}).

\begin{definition}[Universal filter] \label{d:universal filter}
    A \emph{universal filter for a Boolean doctrine $\P \colon \C\op \to \BA$} is a family $(F_X)_{X \in \C}$, with $F_X \subseteq \P(X)$ for each $X \in \C$, with the following properties.
    \begin{enumerate}
        \item\label{forall-reindex} For all $f \colon X \to Y$ and $\alpha \in F_Y$, $\P(f)(\alpha) \in F_X$.
        \item For all $X \in \C$, $F_X$ is a filter of $\P(X)$.
    \end{enumerate}
\end{definition}

\begin{remark} 
    We translate \cref{d:universal filter} to the classical syntactic setting.
    A \emph{universal filter for a universal theory $\mathcal{T}$} is a family $(F_n)_{n \in \N}$, with $F_n$ a set of quantifier-free formulas with $x_1, \dots, x_n$ as (possibly dummy) free variables, with the following properties.
    \begin{enumerate}
        \item For any $n,m \in \N$, $\alpha(x_1, \dots, x_m) \in F_m$ and $m$-tuple $(f_i(x_1, \dots,x_n))_{i = 1, \dots, m}$ of $n$-ary terms, 
        \[
        \alpha(f_1(x_1, \dots, x_n), \dots, f_m(x_1, \dots, x_n)) \in F_n.
        \]
        \item For all $n \in \N$, 
        \begin{enumerate}
            \item for all quantifier-free formulas $\alpha(x_1, \dots, x_n)$ and $\beta(x_1, \dots, x_n)$, if $\alpha(x_1, \dots, x_n) \in F_n$ and $\alpha(x_1, \dots, x_n) \vdash_\T \beta(x_1, \dots, x_n)$, then $\beta(x_1, \dots, x_n) \in F_n$;
            \item for all $\alpha_1(x_1, \dots, x_n), \alpha_2(x_1, \dots, x_n) \in F_n$ we have $\alpha_1(x_1, \dots, x_n) \land \alpha_2(x_1, \dots, x_n) \in F_n$;
            \item $\top(x_1, \dots, x_n) \in F_n$.
        \end{enumerate}
    \end{enumerate}

    If the family $(F^\mathcal{M}_n)_{n\in\N}$ is defined by a class $\mathcal{M}$ of models as in \cref{r:univ-vald-fmlas}, it is easy to check that $(F^\mathcal{M}_n)_{n\in\N}$ satisfies the conditions above.
\end{remark}

\begin{example}
    Let $\P \colon \C\op \to \BA$ be a Boolean doctrine. The family $(\{\top_{\P(X)}\})_{X \in \C}$ is a universal filter for $\P$.
\end{example}

Now, we introduce \emph{universal ideals}. We recall that they are meant to characterize the families of the form $I^\mathcal{M}$ for some class $\mathcal{M}$ of propositional models (see \cref{n:FM-IM}).

\begin{definition}[Universal ideal] \label{d:univ-ideal}
    A \emph{universal ideal for a Boolean doctrine $\P \colon \C\op \to \BA$} is a family $(I_X)_{X \in \C}$, with $I_X \subseteq \P(X)$ for each $X \in \C$, with the following properties.
    \begin{enumerate}
            \item \label{i:conjunction} 
            For all $m \in \N$, $(f_j \colon X \to Y)_{j = 1, \dots, m}$ and $\alpha \in \P(Y)$, if $\bigwedge_{j = 1}^m \P(f_j)(\alpha) \in I_X$ then $\alpha \in I_Y$.
            
            \item \label{i:downward-closed}
            For all $X\in\C$, $I_X$ is downward closed.
            
            \item \label{i:ideal-closed-binary-joins} 
            For all $\alpha_1 \in I_{X_1}$ and $\alpha_2 \in I_{X_2}$, $\P(\pr^{X_1\times X_2}_{X_1})(\alpha_1) \lor \P(\pr^{X_1\times X_2}_{X_2})(\alpha_2) \in I_{X_1 \times X_2}$.
            
            \item \label{i:ideal-bot-tmn}
            $\bot_{\P(\tmn)} \in I_\tmn$.
        \end{enumerate}
\end{definition}

\begin{remark}
    We translate \cref{d:univ-ideal} to the classical syntactic setting.
    A \emph{universal ideal for a universal theory $\mathcal{T}$} is a family $(I_n)_{n \in \N}$, with $I_n$ a set of quantifier-free formulas with $x_1, \dots, x_n$ as (possibly dummy) free variables, with the following properties.
    \begin{enumerate}
        \item 
        For all $p,q,m \in \N$, every $(m\cdot q)$-tuple $(f_{j,k}(x_1, \dots, x_p))_{j \in \{1, \dots, m\},\, k \in \{1, \dots, q\}}$ of $p$-ary terms and every quantifier-free formula $\alpha(x_1, \dots, x_q)$, if 
        \[
            \bigwedge_{j = 1}^m \alpha(f_{j,1}(x_1, \dots, x_p), \dots, f_{j,q}(x_1, \dots, x_p)) \in I_p,
        \]
        then 
        \[
            \alpha(x_1, \dots, x_q) \in I_q.
        \]
        
        \item 
        For all $n \in \N$, all quantifier-free formulas $\alpha(x_1, \dots, x_n)$ and $\beta(x_1, \dots, x_n)$, if $\beta(x_1, \dots, x_n) \in I_n$ and $\alpha(x_1, \dots, x_n) \vdash_\T \beta(x_1, \dots, x_n)$, then $\alpha(x_1, \dots, x_n) \in I_n$.
        
        \item 
        For all $n_1,n_2 \in \N$, $\alpha_1(x_1, \dots, x_{n_1}) \in I_{n_1}$ and $\alpha_2(x_1, \dots, x_{n_2}) \in I_{n_2}$,  we have
        \[
        \alpha_1(x_1, \dots, x_{n_1}) \lor \alpha_2(x_{n_1 +1}, \dots, x_{n_1 + n_2}) \in I_{n_1 + n_2};
        \]
        
        \item $\bot \in I_0$.
    \end{enumerate}

    These conditions are satisfied by any family $(I^{\mathcal{M}}_n)_{n\in\N}$ defined by a class $\mathcal{M}$ of models as in \cref{r:univ-vald-fmlas}.
\end{remark}

\begin{remark}\label{r:ultrafilter-as-pair}
    The ultrafilters in a Boolean algebra are precisely those filters whose complement is an ideal.
    Similarly, the universal ultrafilters for a Boolean doctrine $\P$ are precisely the universal filters for $\P$ whose componentwise complement is a universal ideal for $\P$.
\end{remark}

\subsection{Filters and ideals generated}

In this subsection, we investigate universal filters and ideals generated by families of formulas. In particular, we will characterize when the universal filter $F$ generated by a family $A$ intersects the universal ideal $I$ generated by a family $B$. This is important because, roughly speaking, thinking the formulas in $A$ as formulas whose universal closure is ``true'' and those in $B$ as formulas whose universal closure is ``false'', the condition $F \cap I \neq \varnothing$ amounts to inconsistency.

For a Boolean doctrine $\P \colon \C\op \to \BA$ and a family $A = (A_X)_{X \in \C}$ with $A_X \subseteq \P(X)$, we define the \emph{universal filter generated by $A$} as the smallest universal filter containing $A$; this is well defined since an arbitrary componentwise intersection of universal filters is a universal filter.
The \emph{universal ideal generated by $A$} is defined analogously.

\begin{lemma}[Description of the universal filter and ideal generated by a family]\label{l:desc-filter-ideal-generated}
    Let $\P \colon \C\op \to \BA$ be a Boolean doctrine.
    Let $A = (A_X)_{X \in \C}$ be a family with $A_X \subseteq \P(X)$ for each $X \in \C$.
    \begin{enumerate}
        \item \label{i:descr-filter-generated} The universal filter for $\P$ generated by $A$ is the family $(F_X)_{X \in \C}$ where $F_X$ is the set of $\varphi \in \P(X)$ such that there are $Y_1,\dots,Y_n\in\C$, $\alpha_1\in A_{Y_1},\dots,\alpha_n\in A_{Y_n}$ and $(f_i\colon X\to Y_i)_{i= 1, \dots, n}$ such that, in $\P(X)$, 
        \[
        \bigwedge_{i=1}^n\P(f_i)(\alpha_i)\leq\varphi.
        \]

        \item \label{i:desc-generaliz-ideal}
        The universal ideal for $\P$ generated by $A$ is the family $(I_X)_{X \in \C}$ where $I_X$ is the set of $\varphi \in \P(X)$ such that there are $Y_1,\dots,Y_n\in\C$, $\alpha_1\in A_{Y_1},\dots,\alpha_n\in A_{Y_n}$ and $(f_j\colon\prod_{i=1}^n Y_i\to X)_{j = 1, \dots, m}$ such that, in $\P(\prod_{i=1}^n Y_i)$,
    \[
        \bigwedge_{j = 1}^m\P(f_j)(\varphi)\leq\bigvee_{i=1}^n\P(\pr^{\Pi_l Y_l}_{Y_i})(\alpha_i).
    \]
    \end{enumerate}
\end{lemma}

\begin{proof}
    \eqref{i:descr-filter-generated}. It is easily seen that the family $F = (F_X)_{X \in \C}$ contains $A = (A_X)_{X \in \C}$ and is contained in any universal filter containing $A$. 
    We are left to show that $F$ is a universal filter.
    The family $F$ is closed under reindexings because, if $g \colon Y \to X$ is a morphism in $\C$ and $\bigwedge_{i=1}^n\P(f_i)(\alpha_i)\leq\varphi$, then
    \[
        \bigwedge_{i=1}^n\P(f_i \circ g)(\alpha_i) =\P(g)\mleft(\bigwedge_{i=1}^n\P(f_i)(\alpha_i)\mright) \leq \P(g)(\varphi).
    \]
    It is easy to see that, for each $X \in \C$, $F_X$ is upward closed, is closed under binary meets, and contains $\top_{\P(X)}$ (take $n = 0$).

    \eqref{i:desc-generaliz-ideal}. The family $(I_X)_{X \in \C}$ contains $(A_X)_{X \in \C}$: indeed, for $X \in \C$ and $\varphi \in A_X$, take $n = 1$, $m = 1$ and $f$ the identity on $X$.
    Moreover, it is easily seen that any universal ideal containing $(A_X)_{X \in \C}$ contains $(I_X)_{X \in \C}$.
    
    We are left to show that $(I_X)_{X \in \C}$ is a universal ideal.
    
    Let $(f_j\colon X\to Y)_{j=1,\dots,m}$ and $\alpha\in \P(Y)$ with $\bigwedge_{j = 1}^m \P(f_i)(\alpha) \in I_X$.
    There are $Z_1,\dots,Z_n \in \C$, $\beta_1\in A_{Z_1},\dots,\beta_n\in A_{Z_n}$ and $(g_k\colon \prod_{i=1}^nZ_i\to X)_{k=1,\dots p}$ such that
    \[
        \bigwedge_{k=1}^p\P(g_k)\mleft(\bigwedge_{j = 1}^m \P(f_i)(\alpha)\mright)\leq\bigvee_{i=1}^n\P(\pr^{\Pi_l Z_l}_{Z_i})(\beta_i).
    \]
    Therefore,
    \[
        \bigwedge_{j \in \{1, \dots, m\},\, k \in \{1, \dots, p\}}\P(f_i \circ g_k)(\alpha)\leq\bigvee_{i=1}^n\P(\pr^{\Pi_l Z_l}_{Z_i})(\beta_i).
    \]
    Thus, by definition of $I$, $\alpha\in I_Y$.
    
    For each $X \in \C$, $I_X$ is clearly downward closed.
    
    Let $\alpha_1\in I_{X_1}$ and $\alpha_2\in I_{X_2}$.
    By definition of $(I_X)_{X \in \C}$, there are objects $(Y_i)_{i=1,\dots,n}$, $(Z_k)_{k=1,\dots,p}$, elements $(\beta_i\in A_{Y_i})_{i=1,\dots,n}$, $(\gamma_k\in A_{Z_k})_{k=1,\dots,p}$ and morphisms $(f_j\colon\prod_{i=1}^nY_i\to X_1)_{j=1,\dots,m}$, $(g_h\colon\prod_{k=1}^pZ_k\to X_2)_{h=1,\dots,q}$ such that
    \[
    \bigwedge_{j=1}^m\P(f_j)(\alpha_1)\leq\bigvee_{i=1}^n\P(\pr^{\Pi_l Y_l}_{Y_i})(\beta_i)\quad\text{and}\quad\bigwedge_{h=1}^q\P(g_h)(\alpha_2)\leq\bigvee_{k=1}^p\P(\pr^{\Pi_l Z_l}_{Z_k})(\gamma_k).
    \]
    Then $\P(\pr^{X_1\times X_2}_{X_1})(\alpha_1)\lor\P(\pr^{X_1\times X_2}_{X_2})(\alpha_2)\in I_{X_1\times X_2}$: indeed, consider the morphisms
    \[(f_j\times g_h\colon(\prod_{i=1}^nY_i)\times(\prod_{k=1}^pZ_k)\to X_1\times X_2)_{j\in\{1,\dots,m\},\,h\in\{1\dots,q\}}\]
    and compute:
    \begin{align*}
        &\bigwedge_{j=1}^m \bigwedge_{h=1}^q \P(f_j\times g_h)\big(\P(\pr^{X_1\times X_2}_{X_1})(\alpha_1) \lor \P(\pr^{X_1\times X_2}_{X_2})(\alpha_2)\big)\\
        & = \bigwedge_{j=1}^m \bigwedge_{h=1}^q \big(\P(\pr^{\Pi_l Y_l\times \Pi_l Z_l}_{\Pi_l Y_l})(\P(f_j)(\alpha_1)) \lor \P(\pr^{\Pi_l Y_l\times \Pi_l Z_l}_{\Pi_l Z_l})(\P(g_h)(\alpha_2))\big)\\
        & = \P(\pr^{\Pi_l Y\times \Pi_l Z_l}_{\Pi_l Y_l})\mleft(\bigwedge_{j=1}^m\P(f_j)(\alpha_1)\mright)\lor\P(\pr^{\Pi_l Y_l\times \Pi_l Z_l}_{\Pi_l Z_l})\mleft(\bigwedge_{h=1}^q\P(g_h)(\alpha_2)\mright)\\
        & \leq \P(\pr^{\Pi_l Y_l\times \Pi_l Z_l}_{\Pi_l Y_l})\mleft(\bigvee_{i=1}^n\P(\pr^{\Pi_l Y_l}_{Y_i})(\beta_i)\mright)\lor\P(\pr^{\Pi_l Y_l\times \Pi_l Z_l}_{\Pi_l Z_l})\mleft(\bigvee_{k=1}^p\P(\pr^{\Pi_l Z_l}_{Z_k})(\gamma_k)\mright)\\
        & = \mleft(\bigvee_{i=1}^n\P(\pr^{\Pi_l Y_l\times \Pi_l Z_l}_{Y_i})(\beta_i)\mright)\lor \mleft(\bigvee_{k=1}^p\P(\pr^{\Pi_l Y_l\times \Pi_l Z_l}_{Z_k})(\gamma_k)\mright).
    \end{align*}
    
    Finally, $\bot_{\P(\tmn)}$ belongs to $I_\tmn$: take $n = 0$, $m=1$ and $f_1=\id_\tmn$.
\end{proof}

\begin{lemma}[When a filter intersects a generated ideal]\label{l:id-gen-intersects-filter}
    Let $F$ be a universal filter for a Boolean doctrine $\P \colon \C\op \to \BA$ and $B = (B_X)_{X \in \C}$ a family with $B_X \subseteq \P(X)$ for all $X \in \C$.
    The following are equivalent.
    \begin{enumerate}
        \item \label{i:intersects}
        $F$ intersects the universal ideal for $\P$ generated by $B$.
        
        \item \label{i:condition-for-intersects}
        There are $Z_1, \dots, Z_m \in\C$, $\beta_1 \in B_{Z_1}$, \ldots, $\beta_m \in B_{Z_m}$ such that 
        \[
             \bigvee_{j=1}^m\P(\pr_{Z_j}^{\Pi_{i} Z_i})(\beta_j)\in F_{\Pi_{j} Z_j}.
        \]
    \end{enumerate}
\end{lemma}
\begin{proof}
    \eqref{i:intersects} $\Rightarrow$ \eqref{i:condition-for-intersects}.
    Suppose that the filter $F$ intersects the ideal $I$ generated by $B$, i.e.\ that there are $X \in \C$ and $\varphi \in F_X \cap I_X$.
    By \cref{l:desc-filter-ideal-generated}(\ref{i:desc-generaliz-ideal}), from $\varphi \in I_X$ we deduce the existence of $Z_1,\dots,Z_m\in\C$, $\beta_1\in B_{Z_1},\dots,\beta_m\in B_{Z_m}$ and $(g_k\colon\prod_{j=1}^m Z_j\to X)_{k = 1, \dots, p}$ such that,
    in $\P(\prod_{j=1}^m Z_j)$,
    \[
        \bigwedge_{k = 1}^p\P(g_k)(\varphi)\leq\bigvee_{j=1}^m\P(\pr_{Z_j}^{\Pi_i Z_i})(\beta_j).
    \]
    Since $F$ is closed under reindexings and finite meets and is upward closed, we get $\bigvee_{j=1}^m\P(\pr_{Z_j}^{\Pi_{i} Z_i})(\beta_j)\in F_{\Pi_j Z_j}$, as desired.

     \eqref{i:condition-for-intersects} $\Rightarrow$ \eqref{i:intersects}. This is straightforward from the closure properties of universal ideals.
\end{proof}

\begin{corollary}\label{c:id-gen-intersects-filter}
    Let $F$ be a universal filter for a Boolean doctrine $\P \colon \C\op \to \BA$ and $B = (B_X \subseteq \P(X))_{X \in \C}$ a family such that, for all  $Z_1, \dots, Z_m \in\C$, $\beta_1 \in B_{Z_1}$, \ldots, $\beta_m \in B_{Z_m}$, we have $\bigvee_{j=1}^m\P(\pr^{\Pi_{i} Z_i}_{Z_j})(\beta_j)\in B_{\Pi_{j} Z_j}$. Then $F$ intersects the universal ideal for $\P$ generated by $B$ if and only if $F$ intersects $B$.
\end{corollary}

\begin{lemma}[When a generated filter intersects a generated ideal]\label{l:intersects}
    Let $\P \colon \C\op \to \BA$ be a Boolean doctrine, and let $A = (A_X)_{X \in \C}$ and $B = (B_X)_{X \in \C}$ be families with $A_X \subseteq \P(X)$ and $B_X \subseteq \P(X)$ for each $X \in \C$.
    The following are equivalent.
    \begin{enumerate}
        \item
        The universal filter for $\P$ generated by $A$ intersects the universal ideal for $\P$ generated by $B$.
        
        \item
        There are $Y_1, \dots, Y_n, Z_1, \dots, Z_m \in\C$, $\alpha_1 \in A_{Y_1}$, \ldots, $\alpha_n\in A_{Y_n}$, $\beta_1 \in B_{Z_1}$, \ldots, $\beta_m \in B_{Z_m}$, and $(f_{i} \colon \prod_{j = 1}^m {Z_j} \to Y_i)_{i = 1, \dots, n}$ such that (in $\P(\prod_{j = 1}^m Z_j)$)
        \[
             \bigwedge_{i = 1}^n\P(f_{i})(\alpha_i) \leq \bigvee_{j=1}^m\P(\pr_{Z_j}^{\Pi_l Z_l})(\beta_j).
        \]
    \end{enumerate}
\end{lemma}
\begin{proof}
    The universal filter $F$ for $\P$ generated by $A$ intersects the universal ideal for $\P$ generated by $B$ if and only if, by \cref{l:id-gen-intersects-filter}, there are $Z_1, \dots, Z_m \in\C$, $\beta_1 \in B_{Z_1}$, \ldots, $\beta_m \in B_{Z_m}$ such that 
    \begin{equation} \label{e:in-filter}
        \bigvee_{j=1}^m\P(\pr_{Z_j}^{\Pi_l Z_l})(\beta_j)\in F_{\Pi_j Z_j}.
    \end{equation}
    By the description of the universal filter generated by a family (\cref{l:desc-filter-ideal-generated}), \eqref{e:in-filter} holds if and only if there are $Y_1,\dots,Y_n\in\C$, $\alpha_1\in A_{Y_1},\dots,\alpha_n\in A_{Y_n}$ and $(f_i\colon \prod_j Z_j\to Y_i)_{i= 1, \dots, n}$ such that, in $\P(X)$, 
    \[
    \bigwedge_{i=1}^n\P(f_i)(\alpha_i) \leq \bigvee_{j=1}^m\P(\pr_{Z_j}^{\Pi_l Z_l})(\beta_j). \qedhere
    \]
\end{proof}

\begin{lemma}[When a finitely generated filter intersects a finitely generated ideal] \label{l:intersect-multiple}
    Let $\P \colon \C\op \to \BA$ be a Boolean doctrine, ${\bar{i}}, {\bar{j}} \in \N$, $Y_1, \dots, Y_{\bar{i}}, Z_1,\dots, Z_{\bar{j}} \in \C$, $(\alpha_i \in \P(Y_i))_{i = 1, \dots, {\bar{i}}}$, and $(\beta_j \in \P(Z_j))_{j = 1, \dots, {\bar{j}}}$.
    The following are equivalent.
    \begin{enumerate}
        \item \label{i:intersect-multiple-1}
        The universal filter generated by $\alpha_1, \dots, \alpha_{\bar{i}}$ intersects the universal ideal generated by $\beta_1,\dots,\beta_{\bar{j}}$.
        
        \item \label{i:intersect-multiple-2}
        There are $n \in \N$, $l_1, \dots, l_n \in \{1, \dots, {\bar{i}}\}$, and $(g_{i} \colon \prod_{j=1}^{\bar{j}} Z_{j} \to Y_{l_i})_{i = 1, \dots, n}$ such that in $\P(\prod_{j=1}^{\bar{j}} Z_{j})$
        \[
        \bigwedge_{i = 1}^{n}\P(g_i)(\alpha_{l_i}) \leq \bigvee_{j=1}^{\bar{j}}\P(\pr^{\Pi_p Z_p}_{Z_j})(\beta_j).
        \]
    \end{enumerate}
\end{lemma}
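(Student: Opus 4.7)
The plan is to derive this as a direct specialization of \cref{l:intersects} applied to the finite families $A = (A_X)_{X \in \C}$ and $B = (B_X)_{X \in \C}$ with $A_X \coloneqq \{\alpha_i \mid Y_i = X\}$ and $B_X \coloneqq \{\beta_j \mid Z_j = X\}$. The only non-trivial point is that in \cref{l:intersects} the disjunction on the right-hand side of the inequality is over some \emph{chosen} tuple of elements of $B$, whereas in the present statement it is fixed to be over \emph{all} $\beta_1,\dots,\beta_{\bar j}$.

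The direction \eqref{i:intersect-multiple-2} $\Rightarrow$ \eqref{i:intersect-multiple-1} is immediate: the inequality in \eqref{i:intersect-multiple-2} is precisely of the shape required by condition \eqref{i:condition-for-intersects} of \cref{l:intersects} (with $m = \bar j$, choosing the $\alpha$'s as $\alpha_{l_1},\dots,\alpha_{l_n}\in A_{Y_{l_1}},\dots,A_{Y_{l_n}}$ and the $\beta$'s as all of $\beta_1,\dots,\beta_{\bar j}$), so that lemma yields the desired intersection.

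For \eqref{i:intersect-multiple-1} $\Rightarrow$ \eqref{i:intersect-multiple-2}, I apply \cref{l:intersects} to the finite families $A$ and $B$ above. This produces some $n' \in \N$, indices $l_1,\dots,l_{n'}\in\{1,\dots,\bar i\}$, a subset of indices $k_1,\dots,k_m\in\{1,\dots,\bar j\}$, and morphisms $f'_i\colon \prod_{s=1}^m Z_{k_s} \to Y_{l_i}$ such that
\[
    \bigwedge_{i=1}^{n'}\P(f'_i)(\alpha_{l_i}) \leq \bigvee_{s=1}^m \P(\pr_s)(\beta_{k_s}) \quad \text{in } \P\mleft(\prod_{s=1}^m Z_{k_s}\mright).
\]
To lift this inequality to the full product $\prod_{j=1}^{\bar j} Z_j$, I take the tupling morphism $\pi \coloneqq \langle \pr_{k_1},\dots,\pr_{k_m}\rangle \colon \prod_{j=1}^{\bar j} Z_j \to \prod_{s=1}^m Z_{k_s}$ and reindex along $\pi$. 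Using functoriality of $\P$ and the identity $\pr_s \circ \pi = \pr_{k_s}$, this yields
\[
    \bigwedge_{i=1}^{n'}\P(f'_i \circ \pi)(\alpha_{l_i}) \leq \bigvee_{s=1}^m \P(\pr_{k_s})(\beta_{k_s}) \leq \bigvee_{j=1}^{\bar j}\P(\pr_j)(\beta_j),
\]
so setting $n \coloneqq n'$ and $g_i \coloneqq f'_i \circ \pi$ gives condition \eqref{i:intersect-multiple-2}.

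I do not anticipate a substantive obstacle: the whole content is the reindexing trick via $\pi$, which is the standard device to pad a product with extra dummy factors. The rest is a bookkeeping matching of condition \eqref{i:intersect-multiple-2} with condition \eqref{i:condition-for-intersects} of \cref{l:intersects}.
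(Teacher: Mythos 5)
Your proof is correct and follows essentially the same route as the paper: both directions are obtained by specializing \cref{l:intersects} to the finitely generated families, and the key step in \eqref{i:intersect-multiple-1} $\Rightarrow$ \eqref{i:intersect-multiple-2} is precisely the paper's reindexing along $\ple{\pr_{k_1},\dots,\pr_{k_m}}$ to pass from $\prod_{s}Z_{k_s}$ to the full product $\prod_{j}Z_j$, followed by enlarging the disjunction. No gaps.
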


\begin{proof}
    By \cref{l:intersects}, the universal filter generated by $\alpha_1, \dots, \alpha_{\bar{i}}$ intersects the universal ideal generated by $\beta_1,\dots,\beta_{\bar{j}}$ if and only if there are $n,m \in \N$, $l_1, \dots, l_n \in \{1, \dots, {\bar{i}}\}$, $k_1, \dots, k_m \in \{1, \dots, {\bar{j}}\}$, and $(f_i \colon \prod_{h = 1}^{m} Z_{k_h} \to Y_{l_i})_{i = 1, \dots, n}$, such that, in $\prod_{h = 1}^{m} Z_{k_h}$,
    \begin{equation} \label{eq:hypothesis-bigwedge-bigvee}
        \bigwedge_{i = 1}^n\P(f_{i})(\alpha_{l_i}) \leq \bigvee_{h=1}^m\P(\pr^{\Pi_{q} Z_{k_q}}_{Z_{k_h}})(\beta_{k_h}).
    \end{equation}

    Therefore, the implication $\eqref{i:intersect-multiple-2}\Rightarrow \eqref{i:intersect-multiple-1}$ holds (take $m = {\bar{j}}$, $k_h=h$ and $g_i = f_i$).
    For the implication $\eqref{i:intersect-multiple-1}\Rightarrow \eqref{i:intersect-multiple-2}$, suppose \eqref{eq:hypothesis-bigwedge-bigvee} holds.
    For each $i = \{1, \dots, n\}$, set $g_i \colon \prod_{j=1}^{\bar{j}} Z_{j}  \to Y_{l_i}$ as the composite 
    \[
        \prod_{j=1}^{\bar{j}} Z_{j} \xrightarrow{\ple{\pr^{\Pi_p Z_p}_{Z_{k_1}},\dots,\pr^{\Pi_p Z_p}_{Z_{k_m}}}} \prod_{h = 1}^{m} Z_{k_h} \xrightarrow{f_i}  Y_{l_i}.
    \]
    Therefore, we get, in $\P(\prod_{j=1}^{\bar{j}} Z_{j})$,
    \begin{align*}
        \bigwedge_{i = 1}^n\P(g_i)(\alpha_{l_i})&=\bigwedge_{i = 1}^n\P(f_i \circ \ple{\pr^{\Pi_{p} Z_{p}}_{Z_{k_1}},\dots,\pr^{\Pi_{p} Z_{p}}_{Z_{k_m}}})(\alpha_{l_i})\\
        &=\bigwedge_{i = 1}^n\P(\ple{\pr^{\Pi_{p} Z_{p}}_{Z_{k_1}},\dots,\pr^{\Pi_{p} Z_{p}}_{Z_{k_m}}})(\P(f_{i})(\alpha_{l_i}))\\
        &=\P(\ple{\pr^{\Pi_{p} Z_{p}}_{Z_{k_1}},\dots,\pr^{\Pi_{p} Z_{p}}_{Z_{k_m}}})\mleft(\bigwedge_{i = 1}^n\P(f_{i})(\alpha_{l_i})\mright)\\
        & \leq \P(\ple{\pr^{\Pi_{p} Z_{p}}_{Z_{k_1}},\dots,\pr^{\Pi_{p} Z_{p}}_{Z_{k_m}}})\mleft(\bigvee_{h=1}^m\P(\pr^{\Pi_{q} Z_{k_q}}_{Z_{k_h}})(\beta_{k_h})\mright) && \text{by \eqref{eq:hypothesis-bigwedge-bigvee}}
        \\
        & = \bigvee_{h=1}^m\P(\ple{\pr^{\Pi_{p} Z_{p}}_{Z_{k_1}},\dots,\pr^{\Pi_{p} Z_{p}}_{Z_{k_m}}})(\P(\pr^{\Pi_{q} Z_{k_q}}_{Z_{k_h}})(\beta_{k_h}))\\
        & = \bigvee_{h=1}^m\P(\pr^{\Pi_{q} Z_{k_q}}_{Z_{k_h}} \circ {\ple{\pr^{\Pi_{p} Z_{p}}_{Z_{k_1}},\dots,\pr^{\Pi_{p} Z_{p}}_{Z_{k_m}}}})(\beta_{k_h})\\
        & = \bigvee_{h=1}^m\P(\pr^{\Pi_{p} Z_{p}}_{Z_{k_h}})(\beta_{k_h})\\
        & \leq \bigvee_{j=1}^q \P(\pr^{\Pi_{p} Z_{p}}_{Z_j})(\beta_j). &&\qedhere
    \end{align*}
\end{proof}

For the reader interested in the translation of the condition \eqref{i:intersect-multiple-2} in \cref{l:intersect-multiple} to the classical syntactic setting, we refer to \eqref{i:explicit-sequent} in \cref{r:sequent-classical-setting} below.

\begin{lemma} \label{l:char-gen-intersects}
    Let $\P \colon \C\op \to \BA$ be a Boolean doctrine, $F = (F_X)_{X \in \C}$ a universal filter for $\P$, $I = (I_X)_{X \in \C}$ a universal ideal for $\P$, $Y \in \C$ and $\alpha \in \P(Y)$.
    \begin{enumerate}
        \item \label{i:intersect-1}
        The universal filter generated by $F$ and $\alpha$ intersects $I$ (in some fiber) if and only if there are $X \in \C$, $n\in\N$, $(f_i \colon X \to Y)_{i= 1, \dots, n}$ and $\beta \in F_X$ such that $\beta \land \bigwedge_{i = 1}^n\P(f_i)(\alpha) \in I_X$.
        
        \item \label{i:intersect-2}
        The universal ideal generated by $I$ and $\alpha$ intersects $F$ (in some fiber) if and only if there is $X \in \C$ with $I_X \cap F_X \neq \varnothing$ or there are $Z\in\C$, $\gamma \in I_Z$ such that $\P(\pr^{Y\times Z}_Y)(\alpha) \lor \P(\pr^{Y\times Z}_Z)(\gamma) \in F_{Y \times Z}$. 
    \end{enumerate}
\end{lemma}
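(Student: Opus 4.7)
\emph{Plan.} The strategy is to apply the explicit descriptions of the generated universal filter (resp.\ ideal) from \cref{l:desc-filter-ideal-generated} to the family $F$ augmented by $\alpha$ (resp.\ $I$ augmented by $\alpha$), and then to use the closure properties of $F$ (resp.\ $I$) to separate the contribution of $\alpha$ from that of $F$ (resp.\ $I$).

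For \eqref{i:intersect-1}, by \cref{l:desc-filter-ideal-generated}\eqref{l:descr-filter-generated}, every element of the generated filter at fiber $X$ sits above a finite meet $\bigwedge_i \P(g_i)(\alpha_i)$ in which each $\alpha_i$ either lies in some $F_{Y_i}$ or equals $\alpha$ (with $Y_i = Y$). Since $F$ is a universal filter (closed under reindexings and finite meets, and containing $\top$ in each fiber), the contributions from $F$ can be bundled into a single $\beta \in F_X$ (with $\beta = \top_{\P(X)}$ if there are none), and the remaining indices supply morphisms $f_1, \dots, f_n \colon X \to Y$ with $\beta \wedge \bigwedge_{i=1}^n \P(f_i)(\alpha) \leq \varphi$. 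Intersection with $I_X$ is then equivalent, by downward closure of $I_X$ (\cref{d:univ-ideal}\eqref{i:downward-closed}), to $\beta \wedge \bigwedge_{i=1}^n \P(f_i)(\alpha) \in I_X$; the reverse implication is immediate since that element itself belongs to the generated filter.

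For \eqref{i:intersect-2}, I unfold \cref{l:desc-filter-ideal-generated}\eqref{l:desc-generaliz-ideal}: an element $\varphi \in F_X$ in the generated ideal yields $Y_1, \dots, Y_n$, elements $\alpha_i \in I_{Y_i} \cup \{\alpha\}$ (with $Y_i = Y$ whenever $\alpha_i = \alpha$), and morphisms $(f_j \colon \prod_i Y_i \to X)_j$ such that $\bigwedge_j \P(f_j)(\varphi) \leq \bigvee_i \P(\pr_i)(\alpha_i)$. Since $F$ is a universal filter, the left-hand side lies in $F_{\prod_i Y_i}$, forcing the right-hand side there as well. Partition $\{1,\dots,n\}$ into $S \coloneqq \{i \mid \alpha_i = \alpha\}$ and its complement; set $Z \coloneqq \prod_{i \notin S} Y_i$ (empty product equal to $\tmn$) and $\gamma \coloneqq \bigvee_{i \notin S} \P(\pr_i^Z)(\alpha_i)$ (empty join equal to $\bot_{\P(\tmn)}$). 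Iterating \cref{d:univ-ideal}\eqref{i:ideal-closed-binary-joins} and using \cref{d:univ-ideal}\eqref{i:ideal-bot-tmn}, we have $\gamma \in I_Z$. If $S = \varnothing$, then $\gamma \in F_{\prod_i Y_i} \cap I_{\prod_i Y_i}$, yielding the first disjunct. Otherwise, reindexing along the morphism $h \colon Y \times Z \to \prod_i Y_i$ whose $i$-th component is $\pr_1^{Y\times Z}$ for $i \in S$ and $\pr_i^Z \circ \pr_2^{Y\times Z}$ for $i \notin S$ transforms $\bigvee_i \P(\pr_i)(\alpha_i)$ into $\P(\pr_1)(\alpha) \lor \P(\pr_2)(\gamma) \in F_{Y \times Z}$, the second disjunct. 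The reverse implications in both cases are straightforward from \cref{l:desc-filter-ideal-generated}\eqref{l:desc-generaliz-ideal}.

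The main obstacle is the bookkeeping in \eqref{i:intersect-2}, in particular the uniform handling of the degenerate sub-cases $S = \varnothing$ (where $\alpha$ is not used and we fall into the first disjunct) and $S^c = \varnothing$ (where only $\alpha$ is used and $Z = \tmn$, $\gamma = \bot_{\P(\tmn)}$); these are smoothed over by the empty product convention and by the axiom $\bot_{\P(\tmn)} \in I_\tmn$ from \cref{d:univ-ideal}\eqref{i:ideal-bot-tmn}.
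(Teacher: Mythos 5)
Your proposal is correct and follows essentially the same route as the paper: the paper's proof is a one-line deferral to \cref{l:intersects} together with the closure properties of universal filters and ideals, and your argument simply unfolds that same machinery (via \cref{l:desc-filter-ideal-generated}, on which \cref{l:intersects} is itself based), with the bundling of the $F$-contributions into a single $\beta$ and of the $I$-contributions into a single $\gamma$ being exactly the intended use of those closure properties. The careful treatment of the degenerate cases $S=\varnothing$ and $S^c=\varnothing$ is a welcome addition.
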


\begin{proof}
    This is straightforward from \cref{l:intersects} and the closure properties of universal filters and ideals.
\end{proof}

\subsection{Universal ultrafilter lemma}

One version of the classical ultrafilter lemma is: in a Boolean algebra, every filter disjoint from an ideal $I$ can be extended to a prime filter disjoint from $I$ (see \cite[Thm.~6]{Stone1938} in the larger context of lattices, and see also the earlier result by G.~Birkhoff \cite[Thm.~21.1]{Birkhoff1933}).
We give an analogous version in our context.

\begin{theorem}[Universal ultrafilter lemma] \label{t:gen-ult-lem}
    Let $\P \colon \C\op \to \BA$ be a Boolean doctrine, $(F_X)_{X \in \C}$ a universal filter for $\P$ and $(I_X)_{X \in \C}$ a universal ideal for $\P$. Suppose that, for all $X\in\C$, $F_X\cap I_X=\varnothing$. There is a universal ultrafilter for $\P$ that, componentwise, extends $F$ and is disjoint from $I$.
\end{theorem}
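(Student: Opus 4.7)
The plan is to imitate the classical ultrafilter lemma by Zorn's lemma. First, I would consider the poset $\mathcal{P}$ whose elements are the universal filters $F'$ with $F \subseteq F'$ (componentwise) and $F'_X \cap I_X = \varnothing$ for every $X \in \C$, ordered by componentwise inclusion. The poset is nonempty (it contains $F$), and the componentwise union of any chain is again in $\mathcal{P}$, since the two axioms of \cref{d:universal filter} involve only finitely many elements at once and disjointness with $I$ is preserved under arbitrary unions. Zorn's lemma then yields a maximal element $F^* \in \mathcal{P}$.

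Next, I would verify that $F^*$ is a universal ultrafilter. Conditions \eqref{forall-reindex-ultrafilter} and \eqref{i:uf-filter} of \cref{d:uf} are immediate because $F^*$ is a universal filter. For \eqref{i:uf-bot}, note that $\bot_{\P(\tmn)} \in I_\tmn$ by \cref{d:univ-ideal}\eqref{i:ideal-bot-tmn}, so $\bot_{\P(\tmn)} \notin F^*_\tmn$. The real work is in the prime-like condition \eqref{i:uf-join}. Assuming $\P(\pr_1)(\alpha_1) \vee \P(\pr_2)(\alpha_2) \in F^*_{X_1 \times X_2}$ with $\alpha_k \notin F^*_{X_k}$ for $k = 1, 2$, the maximality of $F^*$ forces the universal filter generated by $F^*$ together with $\alpha_k$ to meet $I$; \cref{l:char-gen-intersects}\eqref{i:intersect-1} then supplies $Z_k \in \C$, $\beta_k \in F^*_{Z_k}$ and morphisms $(f_i^{(k)}\colon Z_k \to X_k)_{i=1,\dots,n_k}$ such that $\beta_k \wedge \bigwedge_{i} \P(f_i^{(k)})(\alpha_k) \in I_{Z_k}$.

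The idea for deriving a contradiction is to transport both witnesses to the common fiber $\P(Z_1 \times Z_2)$. Reindex $\P(\pr_1)(\alpha_1) \vee \P(\pr_2)(\alpha_2)$ along $\ple{f_i^{(1)}\circ\pr_1,\, f_j^{(2)}\circ\pr_2}\colon Z_1 \times Z_2 \to X_1 \times X_2$ for every pair $(i,j)$ and meet over all such pairs; by Boolean distributivity the result collapses to $\P(\pr_1)\mleft(\bigwedge_i \P(f_i^{(1)})(\alpha_1)\mright) \vee \P(\pr_2)\mleft(\bigwedge_j \P(f_j^{(2)})(\alpha_2)\mright)$, and therefore belongs to $F^*_{Z_1 \times Z_2}$. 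Meeting further with $\P(\pr_1)(\beta_1) \wedge \P(\pr_2)(\beta_2) \in F^*_{Z_1\times Z_2}$, one obtains an element which, by another short distributive step, is bounded above by $\P(\pr_1)\mleft(\beta_1 \wedge \bigwedge_i \P(f_i^{(1)})(\alpha_1)\mright) \vee \P(\pr_2)\mleft(\beta_2 \wedge \bigwedge_j \P(f_j^{(2)})(\alpha_2)\mright)$, which lies in $I_{Z_1 \times Z_2}$ by \cref{d:univ-ideal}\eqref{i:ideal-closed-binary-joins} and hence by \eqref{i:downward-closed} so does the element itself. This contradicts $F^*_{Z_1 \times Z_2} \cap I_{Z_1 \times Z_2} = \varnothing$. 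The main obstacle is precisely this final Boolean-distributive manipulation: one has to route the witnesses from the two sides to a common base object via reindexings and check that the distributivity makes the terms line up correctly; the rest is standard Zorn bookkeeping.
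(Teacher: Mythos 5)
Your proof is correct, and it takes a mildly but genuinely different route from the paper's. The paper runs Zorn's lemma over \emph{pairs} $(G,J)$, with $G$ a universal filter extending $F$, $J$ a universal ideal extending $I$, and $G,J$ componentwise disjoint; maximality of the pair yields that the filter generated by $G$ and an undecided $\alpha$ meets $J$ \emph{and} that $G$ meets the ideal generated by $J$ and $\alpha$, so both parts \eqref{i:intersect-1} and \eqref{i:intersect-2} of \cref{l:char-gen-intersects} are invoked, and the conclusion is reached via componentwise complementarity and \cref{l:ultrafilter-as-pair}. You instead run Zorn only over universal filters extending $F$ and disjoint from the \emph{fixed} $I$, and verify the four clauses of \cref{d:uf} head-on; the only nontrivial clause is the prime condition \eqref{i:uf-join}, for which you apply \cref{l:char-gen-intersects}\eqref{i:intersect-1} twice (once per $\alpha_k$) and never need part \eqref{i:intersect-2}. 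Your concluding computation in $\P(Z_1\times Z_2)$ is sound: the collapse $\bigwedge_{i,j}(a_i\lor b_j)=(\bigwedge_i a_i)\lor(\bigwedge_j b_j)$ holds in any distributive lattice, the resulting meet lies in $F^*_{Z_1\times Z_2}$ as a finite meet of reindexed filter elements, and after the final distribution it is bounded above by $\P(\pr_1)\bigl(\beta_1\land\bigwedge_i\P(f_i^{(1)})(\alpha_1)\bigr)\lor\P(\pr_2)\bigl(\beta_2\land\bigwedge_j\P(f_j^{(2)})(\alpha_2)\bigr)\in I_{Z_1\times Z_2}$, hence lies in $I_{Z_1\times Z_2}$ by downward closure; the degenerate cases $n_k=0$ cause no trouble. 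What each approach buys: yours is the more literal doctrinal transcription of the classical Birkhoff--Stone argument (a filter maximal among those missing a fixed ideal is prime) and is slightly more economical, using only one half of \cref{l:char-gen-intersects}; the paper's symmetric pair formulation delivers complementarity for free via \cref{l:ultrafilter-as-pair} and keeps filter and ideal on an equal footing. One point worth making explicit, as the paper does, is that the collection you apply Zorn to is in general a proper class (no smallness of $\C$ is assumed in the statement), so one should invoke Zorn's lemma for classes.
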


\begin{proof}
    Let $\mathcal{A}$ be the class of pairs $((G_X)_{X\in\C},(J_X)_{X\in C})$ where $G$ is a universal filter for $\P$ that extends $F$ componentwise, $J$ is a universal ideal for $\P$ that extends $I$ componentwise, and $G$ and $J$ are componentwise disjoint.
    We order $\mathcal{A}$ by componentwise inclusion. Any nonempty chain of $\mathcal{A}$ has an upper bound: the componentwise union.
    So, by Zorn's lemma for classes, $(F, I)$ is below some maximal element $(G,J)$.
    
    We prove that $G$ and $J$ are componentwise complementary. Since  $G$ and $J$ are componentwise disjoint, it is enough to show $\P(Y)=G_Y\cup J_Y$ for all $Y\in\C$.
    By way of contradiction, suppose this is not the case. So, there are $Y\in\C$ and $\alpha\in \P(Y)$ such that $\alpha\notin G_Y\cup J_Y$.
    Let $G'$ be the universal filter generated by $G$ and $\alpha$, and $J'$ the universal ideal generated by $J$ and $\alpha$.
    By maximality of $(G, J)$ in $\mathcal{A}$, $G'$ intersects $J$ and $G$ intersects $J'$.
    By \cref{l:char-gen-intersects}\eqref{i:intersect-1}, there are $X\in \C$, $(f_i\colon X\to Y)_{i=1,\dots, n}$, $\beta\in G_X$ such that
    \[
        \beta\land\bigwedge_{i = 1}^n\P(f_i)(\alpha)\in J_X.
    \]
    By \cref{l:char-gen-intersects}\eqref{i:intersect-2}, and since $G$ and $J$ are componentwise disjoint, there are $Z \in \C$ and $\gamma \in J_Z$ such that 
    \[
    \P(\pr^{Y\times Z}_Y)(\alpha)\lor\P(\pr^{Y\times Z}_Z)(\gamma)\in G_{Y\times Z}.
    \]
    The universal filter $G$ is closed under reindexings: thus, for every $i=1,\dots,n$,
    \[
        \P(\pr^{X\times Z}_X)\P(f_i)(\alpha)\lor\P(\pr^{X\times Z}_Z)(\gamma) = \P(f_i \times \id_Z)(\P(\pr^{Y\times Z}_Y)(\alpha) \lor \P(\pr^{Y\times Z}_Z)(\gamma)) \in G_{X\times Z},
    \]
    where we have reindexed $\P(\pr^{Y\times Z}_Y)(\alpha)\lor\P(\pr^{Y\times Z}_Z)(\gamma)$ along $f_i\times\id_Z\colon X\times Z\to Y\times Z$.
    Moreover, since $\beta\in G_X$,
    \[\P(\pr^{X\times Z}_X)(\beta)\in G_{X\times Z}.\]
    By distributivity of the lattice $\P(X\times Z)$, in $\P(X\times Z)$ we have
    \begin{align}
        & \P(\pr^{X\times Z}_X)(\beta)\land \bigwedge_{i=1}^n\big(\P(\pr^{X\times Z}_X)\P(f_i)(\alpha)\lor\P(\pr^{X\times Z}_Z)(\gamma)\big) \label{eq:G}\\
        & = \P(\pr^{X\times Z}_X)(\beta)\land \big(\P(\pr^{X\times Z}_X)\big(\bigwedge_{i=1}^n\P(f_i)(\alpha)\big)\lor\P(\pr^{X\times Z}_Z)(\gamma)\big) \notag\\
        & = \big(\P(\pr^{X\times Z}_X)(\beta)\land\P(\pr^{X\times Z}_X)\big(\bigwedge_{i=1}^n\P(f_i)(\alpha)\big)\big)\lor\big(\P(\pr^{X\times Z}_X)(\beta)\land\P(\pr^{X\times Z}_Z)(\gamma)\big)\notag\\
        & = \P(\pr^{X\times Z}_X)\big(\beta\land\bigwedge_{i=1}^n\P(f_i)(\alpha)\big)\lor \big(\P(\pr^{X\times Z}_X)(\beta)\land\P(\pr^{X\times Z}_Z)(\gamma)\big).\notag\\
        & \leq \P(\pr^{X\times Z}_X)\big(\beta\land\bigwedge_{i=1}^n\P(f_i)(\alpha)\big)\lor \P(\pr^{X\times Z}_Z)(\gamma). \label{eq:J}
    \end{align}
    Since $G_{X\times Z}$ is a filter, the conjunction in \eqref{eq:G} belongs to $G_{X\times Z}$. Moreover, the element in \eqref{eq:J} belongs to $J_{X\times Z}$. Since $J_{X\times Z}$ is downward closed, the element in \eqref{eq:G} belongs to $J_{X\times Z}$, as well, and so $G_{X\times Z}\cap J_{X\times Z}\neq\varnothing$, a contradiction.
    Thus, by \cref{r:ultrafilter-as-pair}, $G$ is a universal ultrafilter, and it has the desired property.
\end{proof}

\begin{remark}
    In our proof, we did not use the classical ultrafilter lemma for Boolean algebras.
    In turn, the latter follows from \cref{t:gen-ult-lem}: take $\C$ as the trivial category with one object and one morphism.
\end{remark}

\subsection{Richness of a Boolean doctrine with respect to a universal ultrafilter}\label{s:rich}

We want to build models out of universal ultrafilters. To do so, we take inspiration from Henkin's proof of the completeness theorem for first-order logic \cite{Henkin1949}.
Recall that a maximally consistent deductively closed first-order theory $\mathcal{T}$ is \emph{rich} if for every formula $\exists x \,\beta(x) \in \mathcal{T}$ there is a nullary term\footnote{I.e., a term-definable constant, also known as a ground term.} $c$ (a ``witness'') such that $\beta(c) \in \mathcal{T}$. 
We can easily find a model of a rich theory---namely, the set of all nullary terms, with the obvious interpretation of the function and predicate symbols.
We take inspiration from this definition to define \emph{richness} for a Boolean doctrine with respect to a universal ultrafilter. 
Recall that the formulas in the universal ultrafilter are meant to be those whose universal closure is valid (in a certain model). Richness says that, for every formula $\alpha(x)$ whose universal closure is not valid (i.e.\ not belonging to the universal ultrafilter), there is a constant $c$ witnessing this failure (i.e.\ with $\alpha(c)$ not belonging to the universal ultrafilter).

\begin{definition}[Richness]\label{d:rich}
    A Boolean doctrine $\P \colon \C\op \to \BA$ is \emph{rich} with respect to a universal ultrafilter $(F_X)_{X\in\C}$ for $\P$ if for all $X \in \C$ and $\alpha \in \P(X) \setminus F_X$ there is $c \colon \tmn \to X$ such that $\P(c)(\alpha)\notin F_\tmn$.
\end{definition}

\begin{remark} \label{r:rich-translation-to-classic}
    We translate \cref{d:rich} to the classical syntactic setting.
    A universal theory $\mathcal{T}$ is \emph{rich} with respect to a universal ultrafilter (in the sense of
    \cref{r:uf-translation-to-classic}) $(F_n)_{n \in \N}$ for $\T$ if, for every $n \in \N$ and every quantifier-free formula $\alpha(x_1, \dots, x_n) \notin F_n$, there are nullary terms $c_1, \dots, c_n$ such that $\alpha(c_1, \dots, c_n) \notin F_0$. 
\end{remark}

The following shows how to obtain a model out of a universal ultrafilter in the rich case.

\begin{proposition} \label{p:rich-has-model}
    Let $\P \colon \C\op \to \BA$ be a Boolean doctrine that is rich with respect to a universal ultrafilter $(F_X)_{X \in \C}$ for $\P$.
    There is a propositional model $(M,\m)$ of $\P$ such that, for all $X\in\C$,
    \[
        F_X = \{\alpha \in \P(X) \mid \text{for all }x \in M(X),\, x \in \m_X(\alpha)\}.
    \]
\end{proposition}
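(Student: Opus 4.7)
The plan is to adapt Henkin's construction by using $\Hom_\C(\tmn, -)$ as the underlying set functor of the model, with membership in the ultrafilter $F_\tmn$ governing interpretation.

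Concretely, I would define $M \colon \C \to \Set$ to be the representable functor $\Hom_\C(\tmn,-)$, so that $M(X)$ is the set of ``term-definable constants of type $X$''. Since representables preserve all limits, $M$ preserves finite products, as required of the underlying functor of a Boolean doctrine morphism. For the natural transformation, I would set, for each $X\in\C$,
\[
    \m_X \colon \P(X) \to \mathscr{P}(M(X)), \qquad \m_X(\alpha) \coloneqq \{c\colon\tmn\to X \mid \P(c)(\alpha)\in F_\tmn\}.
\]
By \cref{r:F_tmn-is-uf}, $F_\tmn$ is an ultrafilter of $\P(\tmn)$, so its characteristic function $\P(\tmn)\to\{0,1\}$ is a Boolean homomorphism. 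For each fixed $c\in M(X)$ the reindexing $\P(c)\colon \P(X)\to\P(\tmn)$ is a Boolean homomorphism; composing with the characteristic function of $F_\tmn$ yields that evaluation of $\m_X$ at each $c$ is Boolean, hence $\m_X$ itself is a Boolean homomorphism. Naturality of $\m$ is a routine check: for $f\colon Y\to X$, $c\in M(Y)$ and $\alpha\in\P(X)$, $c\in\m_Y(\P(f)(\alpha))$ iff $\P(f\circ c)(\alpha)\in F_\tmn$ iff $M(f)(c)\in\m_X(\alpha)$.

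It remains to verify the defining equality. The inclusion
\[
F_X \subseteq \{\alpha \in \P(X) \mid \forall c \in M(X),\ c \in \m_X(\alpha)\}
\]
is immediate from closure of the universal ultrafilter under reindexings (\cref{d:uf}\eqref{forall-reindex-ultrafilter}): if $\alpha\in F_X$ then $\P(c)(\alpha)\in F_\tmn$ for every $c\colon\tmn\to X$. For the reverse inclusion, suppose $\alpha\in\P(X)\setminus F_X$; this is exactly where richness enters, providing some $c\colon\tmn\to X$ with $\P(c)(\alpha)\notin F_\tmn$, i.e.\ $c\notin\m_X(\alpha)$.

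The substantive content is really the verification of the defining equality; the rest is a formal packaging. The main obstacle, and the reason richness is assumed, is precisely the $\supseteq$ direction, since without witnesses there would in general be formulas outside $F_X$ none of whose ``instantiations'' $\P(c)(\alpha)$ escape $F_\tmn$. Once that obstacle is removed by the hypothesis, the construction goes through smoothly.
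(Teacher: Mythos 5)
Your construction is exactly the one in the paper: $M = \Hom_\C(\tmn,-)$, $\m_X(\alpha) = \{c\colon\tmn\to X \mid \P(c)(\alpha)\in F_\tmn\}$, with the forward inclusion from closure under reindexings and the reverse from richness. The only difference is that you verify directly that $(M,\m)$ is a propositional model (correctly, via the ultrafilter $F_\tmn$ and naturality), where the paper delegates this to a cited result; the argument is correct.
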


\begin{proof}
    Set $M \coloneqq \Hom(\tmn, -) \colon \C \to \Set$ and $\m \colon \P \to \mathscr{P}\circ M\op$ as the natural transformation whose component at $X \in \C$ is the function
    \begin{align*}
        \m_X \colon \P(X) & \longrightarrow \mathscr{P}(\Hom(\tmn,X))\\
        \alpha & \longmapsto \{c \colon \tmn \to X \mid \P(c)(\alpha)\in F_\tmn\}.
    \end{align*}
    The fact that $\m_X$ is a Boolean homomorphism is easily proved using that $F_\tmn$ is an ultrafilter (by \cref{r:F_tmn-is-uf}) and that $\P(c)$ is a Boolean homomorphism for each $c \colon \tmn \to X$.
    We prove naturality of $\m$: let $X, X' \in \C$, $\alpha\in \P(X)$, and $f\colon X'\to X$ and $c\colon\tmn\to X'$ morphisms in $\C$. We have
    \begin{align*}
        c\in \m_{X'}(\P(f)(\alpha))
        & \iff \P(c)(\P(f)(\alpha))\in F_\tmn\\
        & \iff\P(f \circ c)(\alpha)\in F_\tmn\\
        & \iff f \circ c\in\m_X(\alpha)\\
        & \iff c\in(f\circ - )^{-1}[\m_{X}(\alpha)].
    \end{align*}
    
    To conclude, let $X \in \C$, and let us prove $F_X = \{\alpha \in \P(X) \mid \text{for all }x \in M(X),\, x \in \m_X(\alpha)\}$.
    We first prove the left-to-right inclusion.
    Let $\alpha\in F_X$ and $x \in M(X) = \Hom(\tmn, X)$.
    We shall prove $x \in \m_X(\alpha)$.
    We have $\m_X(\alpha)=\{ c \colon \tmn \to X \mid \P(c)(\alpha)\in F_\tmn\}$.
    So, it is enough to prove $\P(x)(\alpha) \in F_\tmn$.
    This follows from $\alpha\in F_X$ since $F$ is closed under reindexings by \cref{d:uf}\eqref{forall-reindex-ultrafilter}.
    Conversely, suppose $\alpha\notin F_X$. By definition of richness, there is $c\colon\tmn\to X$ such that $\P(c)(\alpha) \notin F_\tmn$, so that $c\notin\m_X(\alpha)$.
\end{proof}

We want to get a model out of a universal ultrafilter also in the non-rich case.
To do so, we will produce a rich theory out of the non-rich one.
For this, the idea is to extend the language $\C$ by adding new constants meant to witness the failure of universal closures of the formulas not belonging to the universal ultrafilter $F$.
Once the constants are added, we interpret the formulas in $F$ in the extended language $\C'$, producing a new class of formulas $G$. However, $G$ may fail to be a universal ultrafilter in the extended language because of the new formulas involving the new constants.
To repair this, we will use the universal ultrafilter lemma (which relies on the axiom of choice) to extend $G$ to a universal ultrafilter $F'$ in the extended language (\cref{l:extension}).
But now we may lack witnesses for some new formulas (involving new constants) not belonging to $F'$.
This calls for an iterative process, where at each step we add new constants and use the universal ultrafilter lemma: the rich theory will be obtained as a colimit after $\omega$ many steps (\cref{t:extension-to-rich}).

The following lemma addresses a single iteration of this process.
Together with the universal ultrafilter lemma, it forms the technical core of the paper.

\begin{lemma} [Extension to richness: first step]\label{l:extension}
    Let $(F_X)_{X \in \C}$ be a universal ultrafilter for a Boolean doctrine $\P \colon \C\op \to \BA$, with $\C$ small.
    There are a small category $\C'$ with the same objects of $\C$, a Boolean doctrine $\P'\colon {\C'}\op \to \BA$, a Boolean doctrine morphism $(R,\mathfrak r) \colon \P\to \P'$ such that $R \colon \C \to \C'$ is the identity on objects, and a universal ultrafilter $(F'_X)_{X \in {\C}'}$ for $\P'$ with the following properties.
    \begin{enumerate}
        \item \label{i:extension-embedding}
        For all $X\in\C$, $F_X=\mathfrak{r}^{-1}_X[F'_{X}]$.
        
        \item \label{i:extension-richness}
        For all $X \in \C$ and $\alpha \in \P(X) \setminus F_X$, there is a morphism $c \colon \tmn_{\C'} \to X$ in $\C'$ with $\P'(c)(\mathfrak{r}_X(\alpha))\notin F'_{\tmn_{\C'}}$.
    \end{enumerate}
\end{lemma}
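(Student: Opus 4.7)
The plan is to construct $\C'$ and $\P'$ by freely adjoining, for each pair $(X,\alpha)$ with $X \in \C$ and $\alpha \in \P(X) \setminus F_X$, a witness constant $c_{X,\alpha}\colon \tmn_{\C'}\to X$, and then to invoke the universal ultrafilter lemma (\cref{t:gen-ult-lem}) to produce $F'$. Concretely, I would well-order the (small) set of such pairs and iteratively apply the Kleisli construction of \cref{r:const}: at successor stages apply $\P_S$ for the next pair, and at limit stages take the filtered colimit of the resulting chain of Boolean doctrines with identity-on-objects base functors. This yields $\P'\colon (\C')\op \to \BA$ with the same objects as $\C$ together with a Boolean doctrine morphism $(R,\mathfrak{r})\colon \P \to \P'$ in which $R$ is identity on objects, and a distinguished morphism $c_{X,\alpha}\colon \tmn_{\C'} \to X$ in $\C'$ for each pair. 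A key bookkeeping point for what follows is that any element of $\P'(\tmn_{\C'})$, and any morphism $\tmn_{\C'}\to Y$ in $\C'$, can be represented in $\P(W)$ for some finite product $W$ of types of added constants, with distinct added constants corresponding to distinct coordinates of $W$.

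Define on $\P'$ the universal filter $G$ generated by $\{\mathfrak{r}_Y(\beta) : Y \in \C,\, \beta \in F_Y\}$ and the universal ideal $J$ generated by the family concentrated at $\tmn_{\C'}$ consisting of $\{\P'(c_{X,\alpha})(\mathfrak{r}_X(\alpha)) : X \in \C,\, \alpha \in \P(X) \setminus F_X\}$. Once $G \cap J = \varnothing$ componentwise is known, \cref{t:gen-ult-lem} supplies a universal ultrafilter $F' \supseteq G$ disjoint from $J$. Condition~\eqref{i:extension-richness} is then immediate by taking the witness $c = c_{X,\alpha}$; for~\eqref{i:extension-embedding}, the inclusion $F_X \subseteq \mathfrak{r}_X^{-1}[F'_X]$ follows from $\mathfrak{r}[F] \subseteq F'$, while conversely if $\alpha \in \P(X) \setminus F_X$ satisfied $\mathfrak{r}_X(\alpha) \in F'_X$, then closure of $F'$ under reindexings (\cref{d:uf}\eqref{forall-reindex-ultrafilter}) would force $\P'(c_{X,\alpha})(\mathfrak{r}_X(\alpha)) \in F'_{\tmn_{\C'}}$, contradicting~\eqref{i:extension-richness}.

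The technical heart is proving $G \cap J = \varnothing$. If it fails, then since $J$'s generating family sits at $\tmn_{\C'}$, \cref{l:intersects} yields $Y_i \in \C$, $\alpha'_i \in F_{Y_i}$ ($i=1,\dots,n$), distinct pairs $(X_j,\delta_j)$ with $\delta_j \in \P(X_j) \setminus F_{X_j}$ and associated distinct constants $c_j = c_{X_j,\delta_j}$ ($j=1,\dots,m$), and $f_i\colon \tmn_{\C'} \to Y_i$ in $\C'$ such that in $\P'(\tmn_{\C'})$
\[
    \bigwedge_{i=1}^n \P'(f_i)(\mathfrak{r}_{Y_i}(\alpha'_i)) \leq \bigvee_{j=1}^m \P'(c_j)(\mathfrak{r}_{X_j}(\delta_j)).
\]
Pick a finite set $\Gamma_0$ of added constants containing every $c_j$ and representing every $f_i$, and set $W = V_1 \times \dots \times V_k$ for the types $V_h$ of the constants in $\Gamma_0$; each $f_i$ then corresponds to some $\tilde f_i\colon W \to Y_i$ in $\C$, and each $c_j$ corresponds to the projection $\pr_{h_j}\colon W \to V_{h_j} = X_j$, with $j \mapsto h_j$ injective because distinct constants give distinct coordinates. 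Unpacking the inequality to $\P(W)$, it becomes
\[
    \bigwedge_{i=1}^n \P(\tilde f_i)(\alpha'_i) \leq \bigvee_{h=1}^k \P(\pr_h)(\gamma_h),
\]
where $\gamma_{h_j} = \delta_j$ for $j = 1, \dots, m$ and $\gamma_h = \bot_{\P(V_h)}$ for every $h$ not in the image of $j \mapsto h_j$. The left-hand side lies in $F_W$ by closure under reindexings and finite meets, hence the right-hand side does too, and iterating \cref{d:uf}\eqref{i:uf-join} forces $\gamma_h \in F_{V_h}$ for some $h$. This is impossible: when $h = h_j$ we have $\gamma_h = \delta_j \notin F_{X_j}$ by construction; otherwise $\gamma_h = \bot_{\P(V_h)}$ cannot lie in $F_{V_h}$, because the very fact that a constant of type $V_h$ was adjoined means some $\mu \in \P(V_h) \setminus F_{V_h}$ exists, so $F_{V_h}$ is a proper filter and therefore omits $\bot$. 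The contradiction establishes the disjointness.

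The main obstacle I anticipate is the careful bookkeeping linking the abstract inequality in $\P'(\tmn_{\C'})$ to the concrete one in $\P(W)$: in particular, verifying that the transfinite iteration of $\P_S$ represents morphisms $\tmn_{\C'}\to Y$ and elements of $\P'(\tmn_{\C'})$ via finite $\C$-products of added-constant types, and that distinct added constants determine distinct coordinates of $W$, so that $j\mapsto h_j$ really is injective. Once this is in place, the rest of the proof follows smoothly from the universal ultrafilter lemma.
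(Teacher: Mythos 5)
Your proposal is correct and follows the same overall architecture as the paper's proof: adjoin a witness constant for each pair $(X,\alpha)$ with $\alpha\in\P(X)\setminus F_X$, generate a universal filter $G$ from $\mathfrak{r}[F]$ and a universal ideal $J$ from the witnessed failures, prove componentwise disjointness by descending the inequality of \cref{l:intersects} to a single fiber $\P(W)$ of the original doctrine and exploiting the join-splitting axiom of universal ultrafilters, and finish with \cref{t:gen-ult-lem}. There are two genuine (and sound) variations. First, you organize the construction of $\P'$ as a transfinite iteration of single-constant Kleisli steps rather than the paper's directed colimit over finite subsets of pairs; both realizations give the same filtered colimit, and your ``key bookkeeping point'' (morphisms $\tmn_{\C'}\to Y$ and elements of $\P'(\tmn_{\C'})$ are represented at finite products $W$ of added types, with distinct constants occupying distinct coordinates) is exactly what the paper verifies explicitly, so the flagged obstacle is real but surmountable. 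Second, your $J$ is smaller: the paper also throws $(\mathfrak{r}_X[I_X])_{X\in\C}$ into the generators of $J$ and uses this to prove the reverse inclusion of condition~\eqref{i:extension-embedding}, whereas you generate $J$ only from the elements $\P'(c_{X,\alpha})(\mathfrak{r}_X(\alpha))$ at $\tmn_{\C'}$ and recover condition~\eqref{i:extension-embedding} by reindexing $F'$ along $c_{X,\alpha}$ and invoking condition~\eqref{i:extension-richness}. This buys a cleaner disjointness computation (all generators of $J$ live at $\tmn_{\C'}$, so $\prod_j Z_j=\tmn_{\C'}$ in \cref{l:intersects}, and the unused coordinates of $W$ are handled by padding with $\bot_{\P(V_h)}$, which lies outside $F_{V_h}$ precisely because a constant of type $V_h$ was only adjoined when $F_{V_h}$ is proper). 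The argument is complete once the colimit bookkeeping is written out.
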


Before starting the proof, we give an informal outline of it.
For every context $X \in \C$ and formula $\sigma \in I_X\coloneqq \P(X) \setminus F_X$, we add to the language a constant $c_\sigma$ in the context $X$, and denote by $\P'$ the doctrine obtained from $\P$ with this expansion of the language.
We denote by $G$ the universal filter for $\P'$ generated by the formulas in $F$ seen in this new language together with the formulas $\lnot\sigma(c_\sigma/X)$ obtained by substituting $c_\sigma$ for $X$ in $\lnot\sigma$, for each context $X \in \C$ and formula $\sigma \in I_X$.
Moreover, we denote by $J$ the universal ideal for $\P'$ generated by the formulas belonging to $I_X$ (for $X$ ranging over contexts) seen in this new language. We prove that $G$ and $J$ are disjoint, and then apply the universal ultrafilter lemma to extend $G$ to a universal ultrafilter $F'$ disjoint from $J$.

\begin{proof}[Proof of \cref{l:extension}]
    Let $\mathcal{A}$ be the set of finite subsets of $\{(X,\alpha)\mid X \in \C,\, \alpha \in \P(X)\setminus F_X \}$, partially ordered by inclusion. The poset $\mathcal{A}$ is directed.

    For $\bar{X} \in \mathcal{A}$, we write $H_{\bar{X}} \coloneqq \prod_{(X, \alpha) \in \bar{X}} X$.
    
    We define an $\mathcal{A}$-shaped diagram in $\DoctBA$. 
    \[
    \begin{tikzcd}
        \mathcal{A} && \DoctBA \\
        {\bar{X}} && {{\P}_{H_{\bar{X}}}\colon \C_{H_{\bar{X}}}\op\to\BA} \\
        {\bar{Y}} && {{\P}_{H_{\bar{Y}}}\colon \C_{H_{\bar{Y}}}\op\to\BA}
        \arrow[from=1-1, to=1-3, "D"]
        \arrow["\subseteq"{marking}, draw=none,, from=2-1, to=3-1]
        \arrow[maps to, from=2-1, to=2-3]
        \arrow[maps to, from=3-1, to=3-3]
        \arrow["{(L_{\bar{X}\bar{Y}},\mathfrak{l}_{\bar{X}\bar{Y}})}", from=2-3, to=3-3]
    \end{tikzcd}
    \]    
    where $D(\bar{X})$ is the Boolean doctrine ${\P}_{H_{\bar{X}}}\colon {\C_{H_{\bar{X}}}\op\to\BA}$ obtained from ${\P}$ by adding a constant of type $H_{\bar{X}}$. In particular $D(\varnothing)=\P$. For every $\bar{X}\in\mathcal{A}$, $D(\varnothing\subseteq\bar{X})$ is the canonical Boolean doctrine morphism $(L_{\bar{X}},\mathfrak{l}_{\bar{X}})\colon {\P}\to {\P}_{H_{\bar{X}}}$. Moreover, for every $\bar{X}\subseteq\bar Y$, $D(\bar{X}\subseteq\bar{Y})$ is the unique Boolean doctrine morphism $(L_{\bar{X}\bar{Y}},\mathfrak{l}_{\bar{X}\bar{Y}})\colon {\P}_{H_{\bar{X}}}\to{\P}_{H_{\bar{Y}}}$ such that
    \[    (L_{\bar{X}\bar{Y}},\mathfrak{l}_{\bar{X}\bar{Y}})\circ(L_{\bar{X}},\mathfrak{l}_{\bar{X}})=(L_{\bar{Y}},\mathfrak{l}_{\bar{Y}})\quad\text{ and }\quad L_{\bar{X}\bar{Y}}(\id_{H_{\bar{X}}}\colon \tmn {\overset{H_{\bar{X}}}{\rightsquigarrow} }H_{\bar{X}})= \pr_{H_{\bar X}}^{H_{\bar Y}}\colon \tmn\overset{H_{\bar{Y}}}{\rightsquigarrow}H_{\bar{X}},
    \]
    which is defined by the universal property of $\P_{H_{\bar{X}}}$.
    We refer to \cref{r:const} for more details about the construction that adds a constant of a certain type.

    Let ${\P'}\colon {\C'}\op\to\BA$ be the colimit of the directed diagram $D$ in $\DoctBA$, computed as in \cite[Sec.~2.2 and 3.1]{GuffRich}. The objects in the base category $\C'$ are those of $\C$, since for every $\bar{X}, \bar{Y} \in \mathcal{A}$ the functor $L_{\bar{X}\bar{Y}}$ is the identity on objects. A morphism $[(\bar{X},f)]_{A,B}$ in $\C'$ from $A$ to $B$---written as $[(\bar{X},f)]_{A,B} \colon A\dashrightarrow B$---is the equivalence class of a pair $(\bar{X},f)$ where $\bar{X}\in \mathcal{A}$ and $f \colon H_{\bar{X}}\times A \to B$ is a morphism in $\C$, with respect to the equivalence relation $\sim_{A,B}$ defined as follows: $( \bar{X},f \colon H_{\bar{X}} \times A \to B) \sim_{A,B} (\bar{Y},g \colon H_{\bar{Y}} \times A \to B)$ if and only if there is $\bar{Z}\in \mathcal{A}$ with $\bar{X}\subseteq\bar{Z}\supseteq\bar{Y}$ making the following diagram commute.
    \[
        \begin{tikzcd}
            && {H_{\bar X}\times A} \\
            {H_{\bar Z}\times A} && 
            && B \\
            && {H_{\bar Y}\times A}
            \arrow["{\pr_{H_{\bar{X}}}^{H_{\bar{Z}}}\times \id_{A}}", from=2-1, to=1-3]
            \arrow["f", from=1-3, to=2-5]
            \arrow["\pr^{H_{\bar Z}}_{H_{\bar Y}}\times \id_{A}"', from=2-1, to=3-3]
            \arrow["{g}"', from=3-3, to=2-5]
        \end{tikzcd}
    \]
    
    For every object $A$, the fiber $\P'(A)$ is the colimit of $D$ in $\BA$ restricted to the fibers.     
    Explicitly, recalling that filtered colimits of Boolean algebras are computed in $\Set$, the Boolean algebra $\P'(A)$ is the set of equivalence classes $[(\bar X,\varphi)]_{A}$ of pairs $(\bar X,\varphi)$ where $\bar X\in\mathcal{A}$ and $\varphi\in \P(H_{\bar X}\times A)$
    with respect to the equivalence relation $\sim_A$ defined as follows: ${(\bar{X},\varphi \in \P(H_{\bar X}\times A))} \sim_A {(\bar{Y}, \psi \in \P(H_{\bar Y}\times A))}$ if and only if there is $\bar{Z}\in \mathcal{A}$ with $\bar{X}\subseteq\bar{Z}\supseteq\bar{Y}$ such that in $\P(H_{\bar Z}\times A)$
    \[
    \P(\pr_{H_{\bar{X}} \times A}^{H_{\bar{Z}} \times A})(\varphi)=\P(\pr_{H_{\bar{Y}} \times A}^{H_{\bar{Z}} \times A})(\psi).
    \]
    We will write $[a,b]$ instead of $[(a,b)]$.
    
    The reindexings are defined via common upper bounds: for $[{\bar{X}, f}]_{A,B}\colon A\dashrightarrow B$ and $[{\bar{Y},\psi}]_B\in\P'(B)$, we set
    \[
    \P'([{\bar{X}, f}]_{A,B})([\bar{Y}, \psi]_B)=[\bar{Z},{\P(\ple{\pr^{H_{\bar Z}\times A}_{H_{\bar Y}},f\circ\pr^{H_{\bar Z} \times A}_{H_{\bar X} \times A}})(\psi)}]_A\in\P'(A),
    \]
    where $\bar{Z}$ is any element of $\mathcal{A}$ such that $\bar{X}\subseteq\bar{Z}\supseteq\bar{Y}$.
    \[\begin{tikzcd}
        {H_{\bar Z}\times A} &&& {H_{\bar X}\times A} & B \\
        {H_{\bar Z}\times B} &&& {H_{\bar Y}\times B}
        \arrow["{\pr^{H_{\bar Z} \times A}_{H_{\bar X} \times A}}", from=1-1, to=1-4]
        \arrow["f", from=1-4, to=1-5]
        \arrow["{\pr^{H_{\bar Z} \times B}_{H_{\bar Y} \times B}}"', from=2-1, to=2-4]
        \arrow["{\ple{\pr^{H_{\bar Z}\times A}_{H_{\bar Z}},f\circ\pr^{H_{\bar Z} \times A}_{H_{\bar X} \times A} }}"', from=1-1, to=2-1]
    \end{tikzcd}\]

   We call $(R,\mathfrak{r})\colon\P\to\P'$ the colimit map from $D(\varnothing) = \P$ to the colimit $\P'$.
In particular, recalling that the objects of $\C$ and $\C'$ are the same, for every object $A \in \C$ we have $R(A) = A$, for every morphism $f \colon A \to B$ in $\C$ we have $R(f) = [\varnothing, f]_{A,B}$, and for every $A \in \C$ and $\alpha \in \P(A)$ we have $\mathfrak{r}_A(\alpha) = [\varnothing, \alpha]_A$.
Moreover, for every $A \in \C$, $\alpha \in \P(A)$ and $\bar{X} \in \mathcal{A}$, 
\[
\mathfrak{r}_A(\alpha) = [\bar{X},\P(\pr_{A}^{H_{\bar{X}} \times A})(\alpha)]_A.
\]

    For $X\in\C$, we set $I_X \coloneqq \P(X)\setminus F_X$, and, for $\sigma \in I_X$, $c_\sigma$ denotes the morphism $[\{(X,\sigma)\},\id_X]_{\tmn,X} \colon \tmn \dashrightarrow X$ in $\C'$.
    We let $G$ be the universal filter for $\P'$ generated by $(\mathfrak{r}_X[F_X])_{X\in\C}$ and the following subset of $\P'(\tmn)$:
    \[
        \{\lnot\P'(c_\sigma)(\mathfrak{r}_X(\sigma))\mid X \in \C,\ \sigma \in I_X\}.
    \]
    We let $J$ be the universal ideal generated by $(\mathfrak{r}_X[I_X])_{X\in\C}$.
    Our next goal is to show that $G$ and $J$ are componentwise disjoint.

    \begin{claim} \label{cl:filter-generated}
        For every $X \in \C$, an element $\varphi' \in \P'(X)$ belongs to $G_X$ if and only if there are $\bar{S}\in\mathcal{A}$ and $\varphi \in \P(H_{\bar{S}} \times X)$ such that $\varphi' = [\bar{S}, \varphi]_X$ and
        \begin{equation}\label{eq:bohh}
            \varphi \lor \bigvee_{(S, \sigma) \in \bar{S}} \P(\pr_{S}^{H_{\bar{S}} \times X})(\sigma) \in F_{H_{\bar{S}} \times X}.
        \end{equation}
    \end{claim}
    \begin{claimproof}
        By \cref{l:desc-filter-ideal-generated}\eqref{i:descr-filter-generated}, for $X \in \C$, $G_X$ is the set of $\varphi' \in \P'(X)$ such that there are $n,p \in \N$, $Y_1,\dots,Y_n, V_1,\dots, V_p \in \C$, $(\alpha_i\in F_{Y_i})_{i=1,\dots,n}$, $(\sigma_k\in I_{V_k})_{k=1,\dots,p}$ and $([\bar{U}_i,f_i]_{X,Y_i}\colon X \dashrightarrow Y_i)_{i=1,\dots,n}$ such that in $\P'(X)$
        \begin{equation*}
            \bigwedge_{i=1}^n \P'([\bar{U}_i,f_i])(\mathfrak{r}_{Y_i}(\alpha_i)) \land \bigwedge_{k=1}^p\lnot \P'(!_{X})\P'(c_{\sigma_k})(\mathfrak{r}_{V_k}(\sigma_k))
            \leq \varphi',
        \end{equation*}
        or, equivalently,
        \begin{equation}\label{eq:bohh1}
            \bigwedge_{i=1}^n \P'([\bar{U}_i,f_i])(\mathfrak{r}_{Y_i}(\alpha_i))  
            \leq \varphi' \lor \P'(!_{X})\mleft(\bigvee_{k=1}^p \P'(c_{\sigma_k})(\mathfrak{r}_{V_k}(\sigma_k))\mright),
        \end{equation}
        which means, unfolding the definitions, that there is an upper bound $\bar{S}$ of $\bar{U}_1 \cup \dots \cup \bar{U}_n \cup \{(V_1,\sigma_1),\dots,(V_p,\sigma_p)\}$ in $\mathcal{A}$ and $\varphi \in \P(H_{\bar{S}} \times X)$ such that $\varphi' = [\bar{S}, \varphi]_X$ and
        \begin{equation} \label{eq:unfolded}
            \bigwedge_{i=1}^n\P(f_i \circ \pr^{H_{\bar S} \times X}_{H_{\bar{U}_i} \times X})(\alpha_i) \leq \varphi \lor \bigvee_{k=1}^p\P(\pr^{H_{\bar{S}} \times X}_{V_k})(\sigma_k).
        \end{equation}

        Let $\varphi'$ possess this property, and let us prove that the formula \eqref{eq:bohh} holds.
        Using that $\alpha_i\in F_{Y_i}$ for each $i$, that universal filters are closed under reindexings and conjunctions and are upwards closed, and that $\bigvee_{k=1}^p\P(\pr^{H_{\bar{S}}\times X}_{V_k})(\sigma_k)\leq\bigvee_{(S, \sigma) \in \bar{S}} \P(\pr_{S}^{H_{\bar{S}} \times X})(\sigma)$, we obtain \eqref{eq:bohh}.

         Conversely, let $\bar{S}\in\mathcal{A}$ and $\varphi\in \P(H_{\bar S}\times X)$ be such that \eqref{eq:bohh} holds. 
         Then $[\bar S,\varphi]\in G_X$, because \eqref{eq:bohh1} is obtained immediately by taking $n = 1$, $Y_1 = H_{\bar{S}} \times X$, $(V_1, \sigma_1), \dots, (V_p, \sigma_p)$ an enumeration of the elements of $\bar{S}$, $\alpha_1 = \varphi \lor \bigvee_{(S, \sigma) \in \bar{S}} \P(\pr_{S}^{H_{\bar{S}} \times X})(\sigma)$, $\bar{U}_1 = \bar{S}$ and $f_1$ the identity in $\C$ on $H_{\bar{S}} \times X$. 
    \end{claimproof}

    We now show that $(\mathfrak{r}_X[I_X])_{X\in\C}$ is closed under ``joins over disjoint variables'': for all $Z_1,\dots, Z_m \in \C$ and $(\gamma_j\in I_{Z_j})_{j=1,\dots,m}$, using the fact that $R$ preserves products and using naturality of $\mathfrak{r}$, we have
    \begin{align*}
        \bigvee_{j=1}^m\P'(\pr^{\Pi_hZ_h}_{Z_j})(\mathfrak{r}_{Z_j}(\gamma_j))&=\bigvee_{j=1}^m\P'(R(\pr^{\Pi_hZ_h}_{Z_j}))(\mathfrak{r}_{Z_j}(\gamma_j)) \\
        &= \bigvee_{j=1}^m \mathfrak{r}_{\Pi_h Z_h} \P(\pr^{\Pi_hZ_h}_{Z_j})(\gamma_j)\\
        &= \mathfrak{r}_{\Pi_j Z_j}\mleft(\bigvee_{j=1}^m\P(\pr^{\Pi_hZ_h}_{Z_j})(\gamma_j)\mright),
    \end{align*}
    and we recall that $\bigvee_{j=1}^m\P(\pr^{\Pi_hZ_h}_{Z_j})(\gamma_j) \in I_{\Pi_h Z_h}$.

    Therefore, by \cref{c:id-gen-intersects-filter}, $G$ intersects $J$ if and only if $G$ intersects the family $(\mathfrak{r}_X[I_X])_{X\in\C}$.
    Therefore, by \cref{cl:filter-generated}, $G$ intersects $J$ if and only if there are $X \in \C$, $\gamma \in I_X$ and, $\bar{S} \in \mathcal{A}$ and $\varphi \in \P(H_{\bar{S}} \times X)$ such that $\mathfrak{r}_X(\gamma) = [\bar{S}, \varphi]_X$ and
    \begin{equation} \label{eq:join-in-F}
        \varphi \lor \bigvee_{(S, \sigma) \in \bar{S}} \P(\pr_{S}^{H_{\bar{S}} \times X})(\sigma) \in F_{H_{\bar{S} }\times X}.
    \end{equation}
    To prove that $G$ does not intersect $J$, we assume that this condition holds and we seek a contradiction.
    Since $\mathfrak{r}_X(\gamma) = [\bar{S}, \varphi]_X$, there is an upper bound $\bar{U}$ of $\bar{S}$ such that 
    \begin{equation} \label{eq:link-between-gamma-and-phi}
        \P(\pr_X^{H_{\bar{U}} \times X})(\gamma) = \P(\pr^{H_{\bar{U}} \times X}_{H_{\bar{S}} \times X})(\varphi).
    \end{equation}
    Then, since $F$ is closed under reindexings, applying $\P(\pr_{H_{\bar{S}} \times X}^{H_{\bar{U}} \times X})$ to the element in \eqref{eq:join-in-F} and using \eqref{eq:link-between-gamma-and-phi}, we get
    \[
        \P(\pr^{H_{\bar{U}} \times X}_{{X}})(\gamma) \lor \bigvee_{(S, \sigma) \in \bar{S}} \P(\pr_{S}^{H_{\bar{U}} \times X})(\sigma) \in F_{H_{\bar{U}} \times X}.
    \]
    Then, since $F$ is upwards closed and $\bar{S} \subseteq \bar{U}$, we have
    \begin{equation} \label{eq:in-F}
        \P(\pr^{H_{\bar{U}} \times X}_{X})(\gamma) \lor \bigvee_{(U, \mu) \in \bar{U}} \P(\pr_{U}^{H_{\bar{U}} \times X})(\mu) \in F_{H_{\bar{U}} \times X}.
    \end{equation}
    The element in \eqref{eq:in-F} is a disjunction of reindexings of elements of $I$ along projections; by \cref{d:univ-ideal}\eqref{i:ideal-closed-binary-joins}, it belongs to $I_{H_{\bar{U}} \times X}$.
    This shows that $F$ intersects $I$, a contradiction.
    Hence, $G$ does not intersect $J$.
    
    Since $G$ and $J$ are disjoint, by \cref{t:gen-ult-lem} there is a universal ultrafilter $F'$ for $\P'$ that extends $G$ and is disjoint from $J$.    
    We check that $F'$ satisfies the desired properties. 
    Since $F'$ extends $G$, which extends $(\mathfrak{r}_X[F_X])_{X\in\C}$, for every $X\in \C$ we have $F_X\subseteq \mathfrak{r}_X^{-1}[F'_X]$. For the converse inclusion, let $\alpha\in \P(X)$ and suppose $\alpha\notin F_X$, i.e.\ $\alpha\in I_X$. Then $\mathfrak{r}_X(\alpha)\in J_X$, which implies  $\mathfrak{r}_X(\alpha)\notin F'_X$ because $J_X$ and $F'_X$ are disjoint.
    This proves that for all $X \in \C$ we have $F_X = \mathfrak{r}_X^{-1}[F'_X]$, which is condition \eqref{i:extension-embedding} in the statement.
    We are left to check the condition \eqref{i:extension-richness} of the statement, i.e.\ that for all $X\in\C$ and $\alpha\in\P(X)\setminus F_X$ there is a morphism $c \colon \tmn \dashrightarrow X$ in $\C'$ such that $\P'(c)(\mathfrak{r}_X(\alpha))\notin F'_\tmn$. Since $\alpha\in I_X$, we can take $c\coloneqq c_\alpha=[\{(X,\alpha)\}, \id_X]_{\tmn,X}$. Recall that $\lnot \P'(c_\alpha)(\mathfrak{r}_X(\alpha))\in\P'(\tmn)$ is a generator of $G$, and so $\lnot \P'(c_\alpha)(\mathfrak{r}_X(\alpha)) \in F'_\tmn$; since $F'_\tmn$ is an ultrafilter of the Boolean algebra $\P'(\tmn)$ (\cref{r:F_tmn-is-uf}), $\P'(c_\alpha)(\mathfrak{r}_X(\alpha))\notin F'_\tmn$, as desired.
\end{proof}

In the following theorem, we show how to produce a rich theory from an arbitrary one.
We accomplish this using \cref{l:extension} $\omega$ many times.
The desired rich theory is obtained as the colimit.

This $\omega$-step construction differs slightly from Henkin’s proof of G\"odel's completeness theorem, where many constants are added all at once to the language, and is instead in line with the proof of G\"odel's completeness theorem in \cite[Thm.~3.7]{Johnstone1987}, which uses a similar $\omega$-step construction.
The difference with Henkin's proof lies in our more economical approach: we only add constants that witness specific existential statements. In some cases, this may result in no new constants being added at all, as, for instance, if we start with a theory whose only model is the empty structure; in this case, adding constants would lead to inconsistency.
This cautious approach reflects the fact that---roughly speaking---we allow empty models.

\begin{theorem} [Extension to richness] \label{t:extension-to-rich}
    Let $(F_X)_{X \in \C}$ be a universal ultrafilter for a Boolean doctrine $\P \colon \C\op \to \BA$ with $\C$ small.
    There are a category $\C'$ with the same objects of $\C$, a Boolean doctrine $\P'\colon {\C'}\op \to \BA$, a Boolean doctrine morphism $(R,\mathfrak r) \colon \P\to \P'$ such that $R \colon \C \to \C'$ is the identity on objects, and a universal ultrafilter $(F'_X)_{X \in {\C}'}$ for $\P'$ such that $\P'$ is rich with respect to $(F'_X)_{X\in\C'}$, and such that, for all $X\in\C$, $F_X=\mathfrak{r}^{-1}_X[F'_{X}]$.
\end{theorem}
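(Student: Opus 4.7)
The plan is to iterate \cref{l:extension} $\omega$ many times and take a filtered colimit in $\DoctBA$. Starting from $(\P_0, F^0) \coloneqq (\P, F)$, apply the first-step lemma recursively to $(\P_n, F^n)$ to produce a Boolean doctrine $\P_{n+1} \colon \C_{n+1}\op \to \BA$ (with $\C_{n+1}$ small), a Boolean doctrine morphism $(R_n, \mathfrak{r}_n) \colon \P_n \to \P_{n+1}$ with $R_n$ identity on objects, and a universal ultrafilter $F^{n+1}$ for $\P_{n+1}$ satisfying \eqref{i:extension-embedding} and \eqref{i:extension-richness} of the lemma. Let $\P' \colon {\C'}\op \to \BA$ be the colimit of the sequence $\P_0 \to \P_1 \to \P_2 \to \cdots$ in $\DoctBA$, with colimit maps $(S_n, \mathfrak{s}_n) \colon \P_n \to \P'$, and set $(R, \mathfrak{r}) \coloneqq (S_0, \mathfrak{s}_0)$. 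Since each $R_n$ is identity on objects, so is each $S_n$, and $\C'$ has the same objects as $\C$. As in the proof of \cref{l:extension}, morphisms of $\C'$ and elements of each fiber $\P'(X)$ are equivalence classes of data coming from some stage $n$, with two representatives identified as soon as they agree at some later stage.

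Define the candidate universal ultrafilter by
\[
    F'_X \coloneqq \{[\alpha]_n \in \P'(X) : \alpha \in F^n_X \text{ for some representative } \alpha \text{ at some stage } n\}
\]
for every $X \in \C'$. Well-definedness on equivalence classes follows from iterating \eqref{i:extension-embedding}: two representatives $\alpha \in \P_n(X)$ and $\beta \in \P_m(X)$ of the same element of $\P'(X)$ become equal in some common $\P_k(X)$, and membership in the $F$-family is preserved both forward (by construction of the chain) and backward (by \eqref{i:extension-embedding}) along the sequence of $\mathfrak{r}_\ell$'s.

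It then remains to verify that $F'$ is a universal ultrafilter, that $F_X = \mathfrak{r}_X^{-1}[F'_X]$ for all $X \in \C$, and that $\P'$ is rich with respect to $F'$. All four axioms of \cref{d:uf} involve only finitely many formulas and morphisms; since our colimit is sequential, any such finite bundle of data can be represented at a single stage $n$, where the axiom holds for $F^n$, and the conclusion can be pushed along $\mathfrak{s}_n$ to $\P'$. The equality $F_X = \mathfrak{r}_X^{-1}[F'_X]$ similarly reduces to iterating \eqref{i:extension-embedding}: if $\mathfrak{r}_X(\alpha) \in F'_X$ then its image in some $\P_k(X)$ lies in $F^k_X$, and iterated backward transfer along \eqref{i:extension-embedding} yields $\alpha \in F_X$. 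For richness, given $[\alpha]_n \in \P'(X) \setminus F'_X$, any representative $\alpha \in \P_n(X)$ lies outside $F^n_X$, so \eqref{i:extension-richness} at stage $n$ supplies a constant $c \colon \tmn \to X$ in $\C_{n+1}$ with $\P_{n+1}(c)(\mathfrak{r}_n(\alpha)) \notin F^{n+1}_\tmn$; then $S_{n+1}(c)$ is the required witness in $\C'$, since by naturality of $\mathfrak{s}_{n+1}$ we have $\P'(S_{n+1}(c))([\alpha]_n) = [\P_{n+1}(c)(\mathfrak{r}_n(\alpha))]_{n+1} \notin F'_\tmn$.

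The main obstacle is the colimit bookkeeping: one must show $F'$ is well-defined on equivalence classes and closed under the relevant reindexings, meets, and joins, and that backwards transfer along \eqref{i:extension-embedding} through arbitrarily many stages behaves coherently. This all works cleanly because every axiom of \cref{d:uf} and of \cref{d:rich} involves only finitely many pieces of data, and countable sequential colimits in $\DoctBA$ allow any such finite bundle to be realized at a single stage $n$, where the first-step lemma has already installed the required property.
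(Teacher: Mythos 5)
Your proposal is correct and follows essentially the same route as the paper: iterate \cref{l:extension} $\omega$ times, take the sequential colimit, define $F'$ as the union of the images of the $F^n$, and verify the axioms by representing any finite bundle of data at a common stage, using the forward and backward transfer of membership supplied by \eqref{i:extension-embedding}. The only cosmetic difference is that the paper verifies the ultrafilter property by showing $F'$ is a universal filter whose complement $I' = \bigcup_n \mathfrak{q}^n[I^n]$ is a universal ideal and invoking \cref{l:ultrafilter-as-pair}, whereas you check the conditions of \cref{d:uf} directly; both amount to the same bookkeeping.
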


\begin{proof}
    We define a sequence $(\P^n \colon (\C^{n})\op \to \BA)$ of Boolean doctrines with $\C^n$ small and having the same objects of $\C$, together with a sequence $((R^n, \mathfrak{r}^n) \colon \P^n \to \P^{n +1})_{n \in \N}$ of Boolean doctrine morphisms where each $R^n\colon \C^n \to \C^{n+1}$ is the identity on objects, and a sequence $(F^n)_{n \in \N}$ with $F^n$ a universal ultrafilter for $\P^n$ with the following properties.
    \begin{enumerate}
        \item \label{i:extension-base} 
        $\C^0 = \C$, $\P^0=\P$ and $F^0 = F$.
        
        \item \label{i:extension-embedding-inductive}
        For all $n \in \N$ and $X\in\C^n$, $F^n_X = {(\mathfrak{r}^n_X)}^{-1}[F^{n +1}_{X}]$.
        
        \item \label{i:extension-richnessinductive}
        For all $n\in\N$, $X \in \C^n$ and $\alpha \in \P^n(X) \setminus F^n_X$, there is a morphism $c \colon \tmn_{\C^{n+1}} \to X$ in $\C^{n+1}$ such that $\P^{n+1}(c)(\mathfrak{r}^n_X(\alpha))\notin F^{n+1}_{\tmn_{\C^{n+1}}}$.
    \end{enumerate}
    
    We define these sequences inductively (with the aid of the axiom of dependent choice).
    For the base case, set $\C^0 \coloneqq \C$, $\P^0 \coloneqq \P$ and $F^0 \coloneqq F$.
    For the inductive case, for any $n \in \N$, given $\C^n$, $\P^n$ and $F^n$, apply \cref{l:extension} to produce $\C^{n+1}$, $\P^{n+1}$, $(R^{n}, \mathfrak{r}^n)$ and $F^{n+1}$.
    This gives us the sequences with the desired properties.
    The sequence allows us to define a directed diagram of Boolean doctrines indexed by the poset $(\N, \leq)$ of natural numbers:
    \[
        \P^0 \xrightarrow{(R^0, \mathfrak{r}^0)} \P^1 \xrightarrow {(R^1, \mathfrak{r}^1)} \P^2\to\dots\to\P^n\xrightarrow {(R^n, \mathfrak{r}^n)}\P^{n+1}\to\dots
    \]
    For every $n\leq m\in\N$, we call $(R^{n;m},\mathfrak{r}^{n;m})$ the composite
    \[
    (R^{m-1},\mathfrak{r}^{m-1}) \circ \dots \circ (R^{n+1},\mathfrak{r}^{n+1})\circ (R^n,\mathfrak{r}^n) \colon \P^n \longrightarrow \P^m.
    \]
    
    Let $\P'\colon{\C'}\op\to\BA$ be the colimit of this diagram, and $(Q^n,\mathfrak{q}^n)\colon \P^n\to\P'$ the colimit morphism for each $n\in\N$. In particular, we define the desired Boolean doctrine morphism $(R,\mathfrak{r}) \colon \P \to \P'$ as $(R,\mathfrak{r})\coloneqq (Q^0,\mathfrak{q}^0)$.
    
    We collect here some properties of this colimit. 
    We can choose the colimit in a way such that $\C'$ has the same objects of $\C$, and for each $n$ the functor $Q^n \colon \C \to \C'$ is the identity on objects.
    Since the colimit category $\C'$ is computed in $\mathbf{Cat}$ by \cite[Sec.~2.2]{GuffRich}, for every morphism $g \colon X \to Y$ in $\C'$ there are $n\in \N$ and $f\colon X\to Y$ in $\C^n$ such that $g = Q^n(f)$.
    \[
    \begin{tikzcd}
        {\P^0(X)} & {\P^1(X)} & {\P^2(X)} & \dots & {\P^n(X)} & \dots \\
        \\
        && {\P'(X)}
        \arrow["{\mathfrak{r}^0_X}", from=1-1, to=1-2]
        \arrow["{\mathfrak{r}^2_X}", from=1-3, to=1-4]
        \arrow["{\mathfrak{r}^{n-1}_X}", from=1-4, to=1-5]
        \arrow["{\mathfrak{r}^n_X}", from=1-5, to=1-6]
        \arrow["{\mathfrak{q}^0_X}"'{description}, from=1-1, to=3-3]
        \arrow["{\mathfrak{r}^1_X}", from=1-2, to=1-3]
        \arrow["{\mathfrak{q}^1_X}"'{description}, from=1-2, to=3-3]
        \arrow["{\mathfrak{q}^2_X}"'{description}, from=1-3, to=3-3]
        \arrow["{\mathfrak{q}^n_X}"'{description}, from=1-5, to=3-3]
        \arrow["{\mathfrak{r}^{0;2}_X}", curve={height=-18pt}, from=1-1, to=1-3]
    \end{tikzcd}
    \]
    
    Moreover, for every $X \in \C$ and $\beta \in \P'(X)$, there are $n \in \N$ and $\alpha \in \P^n(X)$ such that $\beta = \mathfrak{q}^n_X(\alpha)$.
    Furthermore, for every $\alpha\in \P^n(X)$ and $\beta\in \P^m(X)$,
    \[
        \mathfrak{q}^n_X(\alpha)\leq \mathfrak{q}^m_X(\beta)\iff \text{there is } k\geq n,m\text{ such that }\mathfrak{r}^{n;k}_X(\alpha)\leq\mathfrak{r}^{m;k}_X(\beta).
    \]
        
    For each $X\in\C$, we set $F'_X\coloneqq \bigcup_{n\in\N}\mathfrak{q}^n_X[F^n_X]$ and $I'_X\coloneqq \bigcup_{n\in\N}\mathfrak{q}^n_X[I^n_X]$, where $I^n_X\coloneqq \P^n(X)\setminus F^n_X$.
    
    Next, we prove that $F'$ is a universal filter and $I'$ a universal ideal.
    Roughly speaking, these facts hold because universal filters and ideals are defined by closure conditions involving finitely many elements, and thus they are preserved in a directed colimit.

    We first prove that $F'$ is a universal filter. Let $f \colon X\to Y$ be a morphism in $\C'$ and $\alpha\in F'_Y$. 
    We prove that $\P'(f)(\alpha)\in F'_X$. There are $n,m\in\N$, $g\colon X\to Y$ in $\C^n$ and $\beta\in F^m_Y$ such that $Q^n(g)=f$ and $\mathfrak{q}^m_Y(\beta)=\alpha$. We take $k\geq n,m$ and compute
    \[
        \P'(f)(\alpha) = \P'(Q^n(g))(\mathfrak{q}^m_Y(\beta))
        =\P'(Q^k(R^{n;k}(g)))(\mathfrak{q}^k_Y(\mathfrak{r}^{m;k}_Y(\beta)))=\mathfrak{q}^k_X(\P^k(R^{n;k}(g))(\mathfrak{r}^{m;k}_Y(\beta))).
    \]
    By \eqref{i:extension-embedding-inductive}, $\mathfrak{r}^{m;k}_Y(\beta)\in F^k_Y$. Then, since $F^k$ is closed under reindexing, $\P^k(R^{n;k}(g))(\mathfrak{r}^{m;k}_Y(\beta))\in F^k_X$; hence, by definition of $F_X'$, $\P'(f)(\alpha) = \mathfrak{q}^k_X(\P^k(R^{n;k}(g))(\mathfrak{r}^{m;k}_Y(\beta)))\in F'_X$.
    
    We show that, for every $X\in\C$, $F'_X$ is a filter. Since $\top_{\P(X)}\in F^0_X$, we have $\top_{\P'(X)} = \mathfrak{q}^0_X(\top_{\P(X)})\in F'_X$.     
    Let $n,m\in\N$, $\alpha\in F^n_X$ and $\beta\in F^m_X$. For $k\geq n,m$ we have
    \begin{equation}\label{eq:q-meet}
        \mathfrak{q}^n_X(\alpha) \land \mathfrak{q}^m_X(\beta) = \mathfrak{q}^k_X(\mathfrak{r}^{n;k}_X(\alpha)) \land \mathfrak{q}^k_X(\mathfrak{r}^{m;k}_X(\beta)) = \mathfrak{q}^k_X(\mathfrak{r}^{n;k}_X(\alpha) \land \mathfrak{r}^{m;k}_X(\beta)).
    \end{equation}
    By \eqref{i:extension-embedding-inductive}, we deduce  $\mathfrak{r}^{n;k}_X(\alpha),\mathfrak{r}^{m;k}_X(\beta) \in F^k_X$. Since $F^k_X$ is a filter, the conjunction in \eqref{eq:q-meet} belongs to $F^k_X$.
    Thus, $F'_X$ is closed under binary meets.
    We next show that $F'_X$ is upward closed.
    Let $n,m\in\N$, $\alpha\in F^n_X$ and $\beta\in\P^m(X)$ be such that $\mathfrak{q}^n_X(\alpha)\leq\mathfrak{q}^m_X(\beta)$. There is $k\geq n,m$ such that $\mathfrak{r}^{n;k}_X(\alpha)\leq\mathfrak{r}^{m;k}_X(\beta)$. By \eqref{i:extension-embedding-inductive}, $\mathfrak{r}^{n;k}_X(\alpha)\in F^k_X$, and hence $\mathfrak{r}^{m;k}_X(\beta)\in F^k_X$. Thus, $\mathfrak{q}^m_X(\beta)=\mathfrak{q}^k_X(\mathfrak{r}^{m;k}_X(\beta))\in F'_X$, as desired.
    This shows that $F'$ is a universal filter.

    We now show that $I'$ is a universal ideal.
    Let $n,m\in\N$, $X,Y\in\C$, $(f_j\colon X\to Y)_{j=1,\dots,m}$ morphisms in $\C'$ and $\alpha\in \P^n(Y)$ with $\bigwedge_{j=1}^m\P'(f_j)(\mathfrak{q}_Y^n(\alpha))\in I'_X$.
    We prove that $\mathfrak{q}^n_Y(\alpha)\in I'_Y$. For all $j=1,\dots,m$, there are $n_j\in\N$ and a morphism $g_j \colon X \to Y$ in $\C^{n_j}$ with $f_j=Q^{n_j}(g_j)$. So, for all $k\geq n,n_1,\dots,n_m$, we compute:
    \begin{align*}
        \bigwedge_{j=1}^m\P'(f_j)(\mathfrak{q}_Y^n(\alpha))&= \bigwedge_{j=1}^m\P'(Q^{n_j}(g_j))(\mathfrak{q}^n_Y(\alpha))\\
        &=\bigwedge_{j=1}^m\P'(Q^k(R^{n_j;k}(g_j)))(\mathfrak{q}^k_Y(\mathfrak{r}^{n;k}_Y(\alpha)))\\
        &=\bigwedge_{j=1}^m\mathfrak{q}^k_X(\P^k(R^{n_j;k}(g_j))(\mathfrak{r}^{n;k}_Y(\alpha)))\\
        &=\mathfrak{q}^k_X\mleft(\bigwedge_{j=1}^m\P^k(R^{n_j;k}(g_j))(\mathfrak{r}^{n;k}_Y(\alpha))\mright).
    \end{align*}
    Since $\mathfrak{q}^k_X\mleft(\bigwedge_{j=1}^m\P^k(R^{n_j;k}(g_j))(\mathfrak{r}^{n;k}_Y(\alpha))\mright)\in I'_X$, there are $t\in\N$ and $\beta\in I^t_X$ such that, in $\P'(X)$,
    \[
        \mathfrak{q}^k_X\mleft(\bigwedge_{j=1}^m\P^k(R^{n_j;k}(g_j))(\mathfrak{r}^{n;k}_Y(\alpha))\mright)=\mathfrak{q}^t_X(\beta).
    \]
    Hence there is $s\geq k,t$ such that
    \[
        \mathfrak{r}^{k;s}_X\mleft(\bigwedge_{j=1}^m\P^k(R^{n_j;k}(g_j))(\mathfrak{r}^{n;k}_Y(\alpha))\mright)=\mathfrak{r}^{t;s}_X(\beta).
    \]
    Moreover,
    \begin{align*}
        \mathfrak{r}^{k;s}_X\mleft(\bigwedge_{j=1}^m \P^k(R^{n_j;k}(g_j))(\mathfrak{r}^{n;k}_Y(\alpha))\mright) & = \bigwedge_{j=1}^m \mathfrak{r}^{k;s}_X(\P^k(R^{n_j;k}(g_j))(\mathfrak{r}^{n;k}_Y(\alpha)))\\
        & = \bigwedge_{j=1}^m\P^s(R^{k;s}(R^{n_j;k}(g_j)))(\mathfrak{r}^{k;s}_Y(\mathfrak{r}^{n;k}_Y(\alpha)))\\
        & = \bigwedge_{j=1}^m\P^s(R^{n_j;s}(g_j))(\mathfrak{r}^{n;s}_Y(\alpha)).
    \end{align*}
    We observe that $\mathfrak{r}^{t;s}_X(\beta)\in I^s_X$. Indeed, if we had $\mathfrak{r}^{t;s}_X(\beta)\notin I^s_X$, then we would have $\mathfrak{r}^{t;s}_X(\beta)\in F^s_X$, and hence, by \eqref{i:extension-embedding-inductive}, $\beta\in F^t_X$, a contradiction.
    
    It follows that $\bigwedge_{j=1}^m\P^s(R^{n_j;s}(g_j))(\mathfrak{r}^{n;s}_Y(\alpha))\in I^s_X$. Since $I^s$ is a universal ideal for $\P^s$, we get $\mathfrak{r}^{n;s}_Y(\alpha)\in I^s_Y$. Hence, $\mathfrak{q}^n_Y(\alpha) = \mathfrak{q}^s_Y(\mathfrak{r}^{n;s}_Y(\alpha)) \in I'_Y$, as desired.

    The proof that $I'$ is componentwise downward closed is similar to the proof that $F'$ is componentwise upward closed seen above.
    To check the condition \eqref{i:ideal-closed-binary-joins} in \cref{d:univ-ideal}, we take $\alpha_1\in I^n_{X_1}$ and $\alpha_2\in I^m_{X_2}$, and we prove that $\P'(\pr^{X_1\times X_2}_{X_1})(\mathfrak{q}^n_{X_1}(\alpha_1))\lor\P'(\pr^{X_1\times X_2}_{X_2})(\mathfrak{q}^m_{X_2}(\alpha_2))\in I'_{X_1\times X_2}$. Let $k\geq n,m$.
    Using again the condition \eqref{i:extension-embedding-inductive}, we have $\mathfrak{r}^{n;k}_{X_1}(\alpha_1)\in I^k_{X_1}$ and $\mathfrak{r}^{m;k}_{X_2}(\alpha_2)\in I^k_{X_2}$. Since $I^k$ is a universal ideal for $\P^k$, it follows that $\P^k(\pr^{X_1\times X_2}_{X_1})(\mathfrak{r}^{n;k}_{X_1}(\alpha_1))\lor\P^k(\pr^{X_1\times X_2}_{X_2})(\mathfrak{r}^{m;k}_{X_2}(\alpha_2))\in I^k_{X_1\times X_2}$, and hence
    \[
    \mathfrak{q}^k_{X_1\times X_2}(\P^k(\pr^{X_1\times X_2}_{X_1})(\mathfrak{r}^{n;k}_{X_1}(\alpha_1))\lor\P^k(\pr^{X_1\times X_2}_{X_2})(\mathfrak{r}^{m;k}_{X_2}(\alpha_2)))\in I'_{X_1\times X_2}.
    \]
    Moreover,
    \begin{align*}
        &\mathfrak{q}^k_{X_1\times X_2}(\P^k(\pr^{X_1\times X_2}_{X_1})(\mathfrak{r}^{n;k}_{X_1}(\alpha_1))\lor\P^k(\pr^{X_1\times X_2}_{X_2})(\mathfrak{r}^{m;k}_{X_2}(\alpha_2)))\\
        &=\mathfrak{q}^k_{X_1\times X_2}(\P^k(\pr^{X_1\times X_2}_{X_1})(\mathfrak{r}^{n;k}_{X_1}(\alpha_1)))\lor\mathfrak{q}^k_{X_1\times X_2}(\P^k(\pr^{X_1\times X_2}_{X_2})(\mathfrak{r}^{m;k}_{X_2}(\alpha_2)))\\
        &=\P'(\pr^{X_1\times X_2}_{X_1})(\mathfrak{q}^k_{X_1}(\mathfrak{r}^{n;k}_{X_1}(\alpha_1)))\lor\P'(\pr^{X_1\times X_2}_{X_2})(\mathfrak{q}^k_{X_2}(\mathfrak{r}^{m;k}_{X_2}(\alpha_2)))\\
        &=\P'(\pr^{X_1\times X_2}_{X_1})(\mathfrak{q}^n_{X_1}(\alpha_1))\lor\P'(\pr^{X_1\times X_2}_{X_2})(\mathfrak{q}^m_{X_2}(\alpha_2)).
    \end{align*}

    To conclude, the proof that the condition \eqref{i:ideal-bot-tmn} in \cref{d:univ-ideal} is met is similar to the proof that $F'$ contains the top elements of each fiber.
    This shows that $I'$ is indeed a universal ideal for $\P'$.

    We next show that, for every $X \in \C$, $F'_X$ and $I'_X$ are complementary. 
    Fix $X \in \C$. 
    To prove $F'_X \cup I'_X=\P'(X)$, let $\alpha\in \P'(X)$.
    There are $n\in\N$ and $\beta \in \P^n(X)$ such that $\alpha=\mathfrak{q}^n_X(\beta)$. 
    Since $F^n_X$ and $I^n_X$ are complementary in $\P^n(X)$, $\beta \in F^n_X$ or $\beta \in I^n_X$, and hence $\alpha\in F'_X$ or $\alpha\in I'_X$. 
    This proves $F'_X \cup I'_X=\P'(X)$. 
    We now show that $F'_X \cap I'_X = \varnothing$. 
    We take $\alpha\in F'_X\cap I'_X$ and we seek a contradiction. 
    Since $\alpha\in F'_X$, there are $n\in\N$ and $\beta \in F^n_X$ such that $\alpha=\mathfrak{q}^n_X(\beta)$. 
    Since $\alpha\in I'_X$, there are $m\in\N$ and $\gamma \in I^m_X$ such that $\alpha=\mathfrak{q}^m_X(\gamma)$. 
    Since $\mathfrak{q}^n_X(\beta)=\mathfrak{q}^m_X(\gamma)$, there is $k\geq n,m$ such that $\mathfrak{r}^{n;k}_X(\beta)=\mathfrak{r}^{m;k}_X(\gamma)$, contradicting $F^k_X \cap I^k_X = \varnothing$.

    This proves that $F'$ is a universal ultrafilter (see \cref{r:ultrafilter-as-pair}).

    We prove that $\P'$ is rich with respect to $F'$. Let $X\in \C$ and $\alpha \in \P'(X) \setminus F'_X$.
    We seek a morphism $c \colon \tmn \to X$ in $\C'$ such that  $\P'(c)(\alpha)\notin F'_\tmn$. Since $\alpha\notin F'_X$, we have $\alpha\in I'_X$. So there are $n\in \N$ and $\beta\in I^n_X$ such that $\alpha = \mathfrak{q}^n_X(\beta)$. Since $\beta \in \P^n(X)\setminus F^n_X$, we use the property \eqref{i:extension-richnessinductive} to get a morphism $d\colon \tmn \to X$ in $\C^n$ such that $\P^n(d)(\beta)\notin F^n_\tmn$, and so $\P^n(d)(\beta)\in I^n_\tmn$. Setting $c \coloneqq Q^n(d)$, we have
    \[
    \P'(c)(\alpha)= \P'(Q^n(d))(\mathfrak{q}^n_X(\beta))=\mathfrak{q}^n_\tmn ( \P^n(d)(\beta)) \in I'_\tmn.
    \]
    Therefore, $\P'(c)(\alpha)\notin F'_\tmn$, as desired.

    Finally, we prove $F_X=\mathfrak{r}_X^{-1}[F'_{X}]$ for each $X\in\C$. 
    Let $\alpha\in \P(X)$. If $\alpha\in F_X$, then $\mathfrak{r}_X(\alpha)=\mathfrak{q}^0_X(\alpha) \in F'_X$ by definition. Conversely, if $\alpha\notin F_X$, then $\alpha\in I_X$, so $\mathfrak{r}_X(\alpha)=\mathfrak{q}^0_X(\alpha)\in I'_X$, and thus $\mathfrak{r}_X(\alpha)\notin F'_X$.
\end{proof}

\subsection{Characterization of classes of universally valid formulas}
We have all the ingredients to prove the main theorem of this section, which shows that universal ultrafilters are precisely the classes of universally valid formulas.

\begin{theorem}\label{t:characterization-ultrafilters}
    Let $\P \colon \C\op \to \BA$ be a Boolean doctrine, with $\C$ small, and let $F = (F_X)_{X \in \C}$ be a family with $F_X \subseteq \P(X)$ for each $X \in \C$.
    The following are equivalent.
    \begin{enumerate}
        \item \label{i:exists-model}
        There is a propositional model $(M,\m)$ of $\P$ such that, for every $X \in \C$,
        \[
        F_X=\{\alpha \in \P(X) \mid \text{for all }x \in M(X),\, x \in \m_X(\alpha)\}.
        \] 
        \item \label{i:is-universal-ultrafilter}
        $F$ is a universal ultrafilter for $\P$.
    \end{enumerate}
\end{theorem}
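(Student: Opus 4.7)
The plan is to verify one direction by a direct check from the definitions, and to obtain the other (the substantive direction) by combining the two main results of the section: \cref{t:extension-to-rich} (extension to richness) and \cref{p:rich-has-model} (existence of a model for rich doctrines).

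For the implication \eqref{i:exists-model} $\Rightarrow$ \eqref{i:is-universal-ultrafilter}, I would unwind the definition of a Boolean doctrine morphism $(M,\m) \colon \P \to \mathscr{P}$ and check the four axioms of \cref{d:uf}. Naturality of $\m$ gives $\m_X(\P(f)(\alpha)) = M(f)^{-1}[\m_Y(\alpha)]$, which delivers closure under reindexings. The fact that each $\m_X$ is a Boolean homomorphism gives that $F_X$ is upward closed, closed under binary meets, and contains $\top$. For the ``prime'' condition \eqref{i:uf-join}, given $x_i \in M(X_i) \setminus \m_{X_i}(\alpha_i)$ for $i = 1,2$, the pair $(x_1, x_2) \in M(X_1) \times M(X_2) = M(X_1 \times X_2)$ (because $M$ preserves products) witnesses that $\P(\pr_1)(\alpha_1) \lor \P(\pr_2)(\alpha_2) \notin F_{X_1 \times X_2}$, using naturality of $\m$ once more. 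Finally, since $M$ preserves the terminal object, $M(\tmn)$ is a singleton $\{*\}$, and $\m_\tmn(\bot) = \varnothing$ yields $\bot \notin F_\tmn$.

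For the nontrivial implication \eqref{i:is-universal-ultrafilter} $\Rightarrow$ \eqref{i:exists-model}, I would argue as follows. Apply \cref{t:extension-to-rich} to $(F_X)_{X \in \C}$ to produce a Boolean doctrine $\P' \colon {\C'}\op \to \BA$, a Boolean doctrine morphism $(R, \mathfrak{r}) \colon \P \to \P'$ with $R$ the identity on objects, and a universal ultrafilter $(F'_X)_{X \in \C'}$ for $\P'$ such that $\P'$ is rich with respect to $F'$ and $F_X = \mathfrak{r}_X^{-1}[F'_X]$ for every $X \in \C$. Then apply \cref{p:rich-has-model} to $\P'$ and $F'$ to obtain a propositional model $(M', \m') \colon \P' \to \mathscr{P}$ satisfying
\[
F'_X = \{\beta \in \P'(X) \mid \text{for all } x \in M'(X),\, x \in \m'_X(\beta)\}
\]
for every $X \in \C'$. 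Finally, define the propositional model $(M,\m) \coloneqq (M', \m') \circ (R, \mathfrak{r}) \colon \P \to \mathscr{P}$ of $\P$. For every $X \in \C$ we then have $M(X) = M'(X)$ and $\m_X(\alpha) = \m'_X(\mathfrak{r}_X(\alpha))$, so
\[
\{\alpha \in \P(X) \mid \text{for all } x \in M(X),\, x \in \m_X(\alpha)\} = \mathfrak{r}_X^{-1}[F'_X] = F_X,
\]
as required.

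The main obstacle is of course entirely absorbed into the previously proved machinery: the delicate constructions are the universal ultrafilter lemma \cref{t:gen-ult-lem}, the single-step extension to richness \cref{l:extension}, and the $\omega$-iteration \cref{t:extension-to-rich}. Once these are in place, the proof of the theorem itself is purely a matter of threading the pieces together and verifying that composition with $(R, \mathfrak{r})$ respects the description of $F$, which is immediate from the identities $M = M' \circ R$ and $\m = \m' \circ \mathfrak{r}$ together with the property $F_X = \mathfrak{r}_X^{-1}[F'_X]$ furnished by \cref{t:extension-to-rich}.
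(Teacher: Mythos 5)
Your proposal is correct and follows essentially the same route as the paper: the forward direction is the same direct verification of the four axioms of \cref{d:uf} (naturality for reindexing, Boolean homomorphism for the filter conditions, product preservation for the primality condition, and the terminal object for $\bot \notin F_\tmn$), and the converse is obtained exactly as in the paper by composing the model furnished by \cref{p:rich-has-model} with the morphism from \cref{t:extension-to-rich}. Your explicit remark that $M(\tmn)$ is a nonempty singleton is a slight (and welcome) elaboration of a point the paper leaves implicit when concluding $\bot_{\P(\tmn)} \notin F_\tmn$ from $\m_\tmn(\bot_{\P(\tmn)}) = \varnothing$.
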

\begin{proof}
    \eqref{i:exists-model} $\Rightarrow$ \eqref{i:is-universal-ultrafilter}. We check that the conditions in \cref{d:uf} are satisfied.
    
    First, we prove that $F$ is closed under reindexings. Take a morphism $f\colon X\to Y$ and $\alpha\in F_Y$, i.e.\ $\m_Y(\alpha)=M(Y)$. Then $\P(f)(\alpha)\in F_X$ if and only if $\m_X(\P(f)(\alpha))=M(X)$. By naturality of $\m$,
    \[\m_X(\P(f)(\alpha))=M(f)^{-1}[\m_Y(\alpha)]=M(f)^{-1}[M(Y)]=M(X).\]
    Thus, $F$ is closed under reindexing.

    Second, we prove that $F$ is fiberwise a filter. For every $X\in\C$ we have $F_X=\m_X^{-1}[\{M(X)\}]=\m_X^{-1}[\{\top_{\mathscr{P}(M(X))}\}]$, and this is a filter since it is the preimage under the Boolean homomorphism $\m_X$ of the filter $\{M(X)\}$ of $\mathscr{P}(M(X))$.
    
    Next, let $\alpha_1 \in \P(X_1) \setminus F_{X_1}$ and $\alpha_2 \in \P(X_2) \setminus F_{X_2}$.
    Then, there are $x_1\in M(X_1)\setminus \m_{X_1}(\alpha_1)$ and $x_2\in M(X_2)\setminus \m_{X_2}(\alpha_2)$.
    We show that $\P(\pr^{X_1\times X_2}_{X_1})(\alpha_1)\lor \P(\pr^{X_1\times X_2}_{X_2})(\alpha_2)\notin F_{X_1\times X_2}$.
    We have
    \begin{align*}
        &\m_{X_1\times X_2}(\P(\pr^{X_1\times X_2}_{X_1})(\alpha_1)\lor \P(\pr^{X_1\times X_2}_{X_2})(\alpha_2))\\
        &= \m_{X_1\times X_2}(\P(\pr^{X_1\times X_2}_{X_1})(\alpha_1))\cup\m_{X_1\times X_2}( \P(\pr^{X_1\times X_2}_{X_2})(\alpha_2))\\
        &=(\pr^{M(X_1)\times M(X_2)}_{M(X_1)})^{-1}[\m_{X_1}(\alpha_1)]\cup (\pr^{M(X_1)\times M(X_2)}_{M(X_1)})^{-1}[\m_{X_2}(\alpha_2)].
    \end{align*}
    Observe that the element $(x_1,x_2)\in M(X_1)\times M(X_2)=M(X_1\times X_2)$ does not belong to $\m_{X_1\times X_2}(\P(\pr^{X_1\times X_2}_{X_1})(\alpha_1)\lor \P(\pr^{X_1\times X_2}_{X_2})(\alpha_2))$.
    
    Finally, we have $\bot_{\P(\tmn)}\notin F_\tmn$ since $\m_\tmn(\bot_{\P(\tmn)})=\varnothing$.
    
    \eqref{i:is-universal-ultrafilter} $\Rightarrow$ \eqref{i:exists-model}.
    By \cref{t:extension-to-rich}, there are a category $\C'$, a Boolean doctrine $\P'\colon {\C'}\op \to \BA$, a Boolean doctrine morphism $(R,\mathfrak r) \colon \P\to \P'$ and a universal ultrafilter $(F'_Y)_{Y \in {\C}'}$ for $\P'$ such that $\P'$ is rich with respect to $(F'_Y)_{Y\in\C'}$, and moreover, for all $X\in\C$, $F_X=\mathfrak{r}^{-1}_X[F'_{R(X)}]$.
    By \cref{p:rich-has-model}, there is a propositional model $(M',\m')$ of $\P'$ such that, for all $X\in\C$,
    \[
    F'_Y = \{\alpha \in \P(Y) \mid \text{for all }x \in M'(Y),\, x \in \m'_Y(\alpha)\}.
    \]
    Let $(M, \m)$ be the composite $(M', \m') \circ (R, \mathfrak{r})$ of the morphisms $(M', \m')$ and $(R, \mathfrak{r})$. 
    Clearly, $(M, \m)$ is a propositional model of $\P$.
    Moreover, for every $X \in \C$ and $\alpha \in \P(X)$,
    \begin{align*}
        \alpha \in F_X & \iff \mathfrak{r}_X(\alpha) \in F'_{R(X)}\\
        & \iff \text{for all }x \in M'(R(X)),\, x \in \m'_{R(X)}(\mathfrak{r}_X(\alpha))\\
        & \iff \text{for all }x \in M(X),\, x \in \m_X(\alpha). \qedhere
    \end{align*}
\end{proof}

Note that the model obtained from the universal ultrafilter is not canonical: indeed, its existence was established using the extension to richness, which uses the axiom of choice.

\begin{remark}
    We translate \cref{t:characterization-ultrafilters} to the classical syntactic setting.
    Let $\mathcal{T}$ be a universal theory.
    Let $(F_n)_{n \in \N}$ be a family with $F_n$ a set of quantifier-free formulas with $x_1, \dots, x_n$ as free (possibly dummy) variables.
    The following are equivalent.
    \begin{enumerate}
        \item There is a model $M$ of $\mathcal{T}$ such that, for every $n \in \N$,
        \[
        F_n = \{\alpha(x_1, \ldots, x_n) \text{ quantifier-free} \mid M \vDash \forall x_1 \ldots\, \forall x_n \alpha(x_1, \dots, x_n)\}.
        \]

        \item $(F_n)_{n \in \N}$ is a universal ultrafilter for $\mathcal{T}$ (in the sense of \cref{r:uf-translation-to-classic}).
    \end{enumerate}
\end{remark}

\begin{corollary} \label{c:sequent}
    Let $\P \colon \C\op \to \BA$ be a Boolean doctrine, with $\C$ small.
    Let $\bar{i}, \bar{j} \in \N$, $Y_1, \dots, Y_{\bar{i}}, Z_1,\dots, Z_{\bar{j}} \in \C$, $(\alpha_i \in \P(Y_i))_{i = 1, \dots, \bar{i}}$, and $(\beta_j \in \P(Z_j))_{j = 1, \dots, {\bar{j}}}$.
    The following are equivalent.
    \begin{enumerate}
        \item \label{i:sequent-model}
        For every propositional model $(M, \mathfrak{m})$ of $\P$, if for every $i \in \{1, \dots, \bar{i}\}$ we have $\m_{Y_i}(\alpha_i)=M(Y_i)$, then there is $j \in \{1, \dots, {\bar{j}}\}$ such that $\m_{Z_j}(\beta_j)=M(Z_j)$.
        
        \item \label{i:sequent-terms}
        There are $n \in \N$, $l_1, \dots, l_n \in \{1, \dots, \bar{i}\}$, and $(g_{i} \colon \prod_{j=1}^{\bar{j}} Z_{j} \to Y_{l_i})_{i = 1, \dots, n}$ such that
        in $\P(\prod_{j=1}^{\bar{j}} Z_{j})$
        \[
        \bigwedge_{i = 1}^{n}\P(g_i)(\alpha_{l_i}) \leq \bigvee_{j=1}^{\bar{j}}\P(\pr^{\Pi_p Z_p}_{Z_j})(\beta_j).
        \]
    \end{enumerate}
\end{corollary}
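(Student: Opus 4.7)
The approach is to chain together the characterization of universal ultrafilters via models (\cref{t:characterization-ultrafilters}) with the explicit combinatorial description of when a finitely generated universal filter intersects a finitely generated universal ideal (\cref{l:intersect-multiple}). The idea is that (1) is a statement about all propositional models, which by \cref{t:characterization-ultrafilters} can be rephrased as a statement about all universal ultrafilters, and then via the universal ultrafilter lemma as a statement about a disjointness of generated structures, which finally coincides with (2) by \cref{l:intersect-multiple}.

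I would begin by reformulating condition (1) in terms of universal ultrafilters. For any propositional model $(M, \m)$, by \cref{n:FM-IM-one-model} the equality $\m_{Y_i}(\alpha_i) = M(Y_i)$ is exactly $\alpha_i \in F^{(M, \m)}_{Y_i}$. Hence (1) holds iff no propositional model $(M, \m)$ of $\P$ simultaneously satisfies $\alpha_i \in F^{(M, \m)}_{Y_i}$ for every $i$ and $\beta_j \notin F^{(M, \m)}_{Z_j}$ for every $j$. Since \cref{t:characterization-ultrafilters} identifies the families $F^{(M, \m)}$ with the universal ultrafilters for $\P$, this is equivalent to: there is no universal ultrafilter $(F_X)_{X \in \C}$ for $\P$ with $\alpha_i \in F_{Y_i}$ for all $i$ and $\beta_j \notin F_{Z_j}$ for all $j$.

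Next, I would translate this into disjointness of generated structures. By \cref{l:ultrafilter-as-pair}, a universal ultrafilter $F$ with the above property corresponds to a componentwise complementary pair whose filter part contains each $\alpha_i$ and whose ideal part contains each $\beta_j$. By the universal ultrafilter lemma (\cref{t:gen-ult-lem}), such a complementary pair exists iff there exists any componentwise disjoint pair of a universal filter containing the $\alpha_i$'s and a universal ideal containing the $\beta_j$'s; and by minimality, this occurs iff the universal filter generated by $\{\alpha_i\}_i$ and the universal ideal generated by $\{\beta_j\}_j$ are themselves componentwise disjoint. Therefore (1) holds iff this generated filter and ideal intersect at some fiber.

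Finally, \cref{l:intersect-multiple} gives exactly the combinatorial criterion for such an intersection: it occurs iff there are $n \in \N$, indices $l_1, \dots, l_n \in \{1, \dots, \bar{i}\}$, and morphisms $(g_i \colon \prod_{j=1}^{\bar{j}} Z_j \to Y_{l_i})_{i = 1, \dots, n}$ with $\bigwedge_{i=1}^n \P(g_i)(\alpha_{l_i}) \leq \bigvee_{j=1}^{\bar{j}} \P(\pr_j)(\beta_j)$, which is precisely (2). I do not expect a serious obstacle here: the technical heart (extension to richness and the ultrafilter lemma) is already absorbed into \cref{t:characterization-ultrafilters,t:gen-ult-lem,l:intersect-multiple}, so the work is essentially bookkeeping the chain of equivalences above.
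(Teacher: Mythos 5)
Your proposal is correct and follows essentially the same route as the paper: reformulate (1) via \cref{t:characterization-ultrafilters} as a statement about universal ultrafilters, reformulate (2) via \cref{l:intersect-multiple} as the intersection of the generated filter and ideal, and bridge the two negations using \cref{l:ultrafilter-as-pair} in one direction and the universal ultrafilter lemma (\cref{t:gen-ult-lem}) in the other. No gaps.
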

\begin{proof}
    By \cref{t:characterization-ultrafilters}, condition \eqref{i:sequent-model} is equivalent to 
    \begin{enumerate}[label = (1'), ref = 1']
        \item \label{i:model'}
        For every universal ultrafilter $(F_X)_{X \in \C}$, if for every $i \in \{1, \dots, \bar{i}\}$ we have $\alpha_i \in F_{Y_i}$, then there is $j \in \{1, \dots, \bar{j}\}$ such that $\beta_j \in F_{Z_j}$.
    \end{enumerate}

    By \cref{l:intersect-multiple}, condition \eqref{i:sequent-terms} is equivalent to 
    \begin{enumerate}[label = (2'), ref = 2']
        \item \label{i:intersects'}The universal filter generated by $\alpha_1, \dots, \alpha_{\bar{i}}$ intersects the universal ideal generated by $\beta_1,\dots,\beta_{\bar{j}}$.
    \end{enumerate}
    
    We prove that \eqref{i:model'} is equivalent to \eqref{i:intersects'}.
    To do so we prove that the negation of \eqref{i:model'} is equivalent to the negation of \eqref{i:intersects'}.

    \begin{enumerate}[label = ($\lnot$1'), ref = $\lnot$1']
        \item \label{i:not-model'}
        There is a universal ultrafilter $(F_X)_{X \in \C}$ such that for all $i \in \{1, \dots, \bar{i}\}$ we have $\alpha_i \in F_{Y_i}$ and for all $j \in \{1, \dots, \bar{j}\}$ we have $\beta_j \notin F_{Z_j}$.
    \end{enumerate}

    \begin{enumerate}[label = ($\lnot$2'), ref = $\lnot$2']
        \item \label{i:not-intersects'}The universal filter generated by $\alpha_1, \dots, \alpha_{\bar{i}}$ and the universal ideal generated by $\beta_1,\dots,\beta_{\bar{j}}$ are fiberwise disjoint.
    \end{enumerate}

    \eqref{i:not-model'} $\Rightarrow$ \eqref{i:not-intersects'}. 
    This is immediate since a universal ultrafilter is a universal filter whose fiberwise complement is a universal ideal.

    \eqref{i:not-intersects'} $\Rightarrow$ \eqref{i:not-model'}. This follows from \cref{t:gen-ult-lem}.
\end{proof}

\begin{remark}\label{r:sequent-classical-setting}
    We translate \cref{c:sequent} to the classical syntactic setting.
    Let $\mathcal{T}$ be a universal theory.
    Let $\bar{i}, \bar{j} \in \N$, let $p_1, \dots, p_{\bar{i}}, q_1, \dots, q_{\bar{j}} \in \N$, let $(\alpha_i(x_1, \dots, x_{p_i}))_{i = 1, \dots, \bar{i}}$ and $(\beta_j(x_1, \dots, x_{q_j}))_{j = 1, \dots, \bar{j}}$ be tuples of quantifier-free formulas.
    The following are equivalent.
    \begin{enumerate}
        \item \label{i:sequent}For every model $M$ of $\mathcal{T}$ we have 
        \[
        M \vDash \mleft(\bigwedge_{i =1}^{\bar{i}} \forall x_1 \dots \forall x_{p_i} \, \alpha_i(x_1, \dots, x_{p_i})\mright) \rightarrow \mleft(\bigvee_{j=1}^{\bar{j}} \forall x_1 \dots \forall x_{q_j}\, \beta_j(x_1, \dots, x_{q_j})\mright).
        \]

       \item \label{i:explicit-sequent}
       There are $n \in \N$, $l_1, \dots, l_n \in \{1, \dots, \bar{i}\}$ and terms $(g^{h}_{i}(x_1, \dots, x_{\Sigma_{j}q_j}))_{i \in\{ 1, \dots, n\},\,h\in\{1,\dots,p_{l_i}\}}$ such that
        \begin{align} \label{eq:sequent-explicit}
        \bigwedge_{i = 1}^{n}\alpha_{l_i}( g^{1}_{i}(x_1, \dots, x_{\Sigma_{j}q_j}), \dots, g^{p_{l_i}}_{i}(x_1, \dots, x_{\Sigma_{j}q_j})) \vdash_\mathcal{T} \bigvee_{j = 1}^{\bar{j}}\beta_{j}( x_{1 + \Sigma_{t =1}^{j-1} q_t}, \dots, x_{\Sigma_{t =1}^{j} q_t}).
        \end{align}
    \end{enumerate}
\end{remark}
The unpleasant game of subscripts in \eqref{i:explicit-sequent} is just a way to ensure that the disjuncts $\beta_j$ on the right-hand side of \eqref{eq:sequent-explicit} have no variables in common.
Note that, by the soundness and completeness theorems for first-order logic, \eqref{i:sequent} is equivalent to 
\[
    \bigwedge_{i =1}^{\bar{i}} \forall x_1 \dots \forall x_{p_i} \, \alpha_i(x_1, \dots, x_{p_i}) \vdash_\mathcal{T} \bigvee_{j=1}^{\bar{j}} \forall x_1 \dots \forall x_{q_j}\, \beta_j(x_1, \dots, x_{q_j}).
\]

\Cref{r:sequent-classical-setting} characterizes when a finite conjunction of universal sentences implies a finite disjunction of universal sentences modulo a universal theory.

\begin{example}\label{ex:forall-alpha-implies-beta}
    For example, if $\alpha(x)$ is a quantifier-free formula and $\beta$ is a quantifier-free closed formula, when does $\forall x\, \alpha(x)$ imply $\beta$ modulo a given universal theory $\mathcal{T}$? 
    To be more precise, we are in the setting of \cref{r:sequent-classical-setting} with $\bar{i} = 1$, $p_1=1$, $\bar{j} = 1$, $q_1 = 0$, $\alpha(x_1)$ a quantifier-free formula, and $\beta$ a closed quantifier-free formula.
    \Cref{r:sequent-classical-setting} tells us that $\forall x\, \alpha(x) \vdash_\mathcal{T} \beta$ occurs precisely when there are $n \in \N$ and nullary terms $g_1, \dots, g_n$ (i.e.\ term-definable constants) such that
    \begin{align*}
    \bigwedge_{i = 1}^{n}\alpha(g_{i})\vdash_\mathcal{T} \beta.
    \end{align*}
    In other words, if we know that $\alpha(x)$ holds for all $x$, the only way to prove $\beta$ is to instantiate $\alpha(x)$ on finitely many term definable constants $g_1, \dots, g_n$ and then prove $\beta$ from $\alpha(g_1) \dots, \alpha(g_n)$.
\end{example}

\begin{remark}
    Note that, in \cref{ex:forall-alpha-implies-beta} above, it is important that $n$ can also be given the value $0$. For example, if $\beta = \top$ and the language has no term-definable constants, it is true that $\forall x\, \alpha (x)$ implies $\top$, but we cannot instantiate $\alpha (x)$ in any term-definable constants, and so we need permission to take $n = 0$.
\end{remark}

\begin{remark}
    Note that, in \cref{ex:forall-alpha-implies-beta} above, it is also important that we are allowed to take $n\geq2$. For example, let $\mathcal{L}$ be the language with two constant symbols $\{a,b\}$ and a unary predicate symbol $R$, and let $\mathcal{T}=\{R(a)\lor R(b)\}$. When does the theory $\mathcal{T}$ prove the formula $\exists x\, R(x)$? Or equivalently, when does $\forall x\, \lnot R(x)$ imply $\bot$ modulo $\mathcal{T}$? By \cref{ex:forall-alpha-implies-beta} with $\alpha=\lnot R(x)$ and $\beta=\bot$, the ways to prove $\bot$ would be:
    \begin{enumerate}
        \item \label{i:H1} $\top \vdash_{\mathcal{T}} \bot$;
        \item \label{i:H2}
        $\lnot R(a) \vdash_{\mathcal{T}} \bot$;
        \item \label{i:H3}
        $\lnot R(b) \vdash_{\mathcal{T}} \bot$;
        \item \label{i:H4}
        $\lnot R(a)\land \lnot R(b) \vdash_{\mathcal{T}} \bot$.
    \end{enumerate}
    However,
    \begin{itemize}
        \item \eqref{i:H1} does not hold (as witnessed by the model $M=\{\bar{a}\}$, $\mathbb{I}(a)=\mathbb{I}(b)=\bar{a}$, $\mathbb{I}(R)=\{\bar{a}\}$);
        \item \eqref{i:H2} is equivalent to $\lnot R(a)\land (R(a)\lor R(b))\vdash \bot$, which in turn is equivalent to $R(b)\vdash R(a)$, which does not hold (take $M=\{\bar{a},\bar{b}\}$, $\mathbb{I}(a)=\bar{a}$, $\mathbb{I}(b)=\bar{b}$, $\mathbb{I}(R)=\{\bar{b}\}$);
        \item \eqref{i:H3} is equivalent to $\lnot R(b)\land (R(a)\lor R(b))\vdash \bot$, which in turn is equivalent to $R(a)\vdash R(b)$, which does not hold (take $M=\{\bar{a},\bar{b}\}$, $\mathbb{I}(a)=\bar{a}$, $\mathbb{I}(b)=\bar{b}$, $\mathbb{I}(R)=\{\bar{a}\}$);
        \item \eqref{i:H4} is equivalent to $\lnot R(a)\land \lnot R(b)\land (R(a)\lor R(b)) \vdash \bot$, which in turn is equivalent to $R(a)\lor R(b) \vdash R(a)\lor R(b)$, which holds.
    \end{itemize}
    So, in this example, it is necessary to take $n\geq 2$.
\end{remark}

\begin{example}\label{ex:forall-alpha-implies-forall-beta}
    For example, if $\alpha(x)$ and $\beta(y)$ are quantifier-free formulas, when does $\forall x\, \alpha(x)$ imply $\forall y\,\beta(y)$ modulo a given universal theory $\mathcal{T}$? 
    To be more precise, we are in the setting of \cref{r:sequent-classical-setting} with $\bar{i} = 1$, $p_1=1$, $\bar{j} = 1$, $q_1 = 1$, $\alpha(x_1), \beta(x_1)$ quantifier-free formulas.
    \Cref{r:sequent-classical-setting} tells us that $\forall x\, \alpha(x) \vdash_\mathcal{T} \forall y\,\beta(y)$ occurs precisely when there are $n \in \N$ and unary terms $g_1(y), \dots, g_n(y)$ such that
    \begin{align*}
    \bigwedge_{i = 1}^{n}\alpha(g_{i}(y))\vdash_\mathcal{T} \beta(y).
    \end{align*}
    In other words, if we know that $\alpha(x)$ holds for all $x$, the only way to prove $\beta(y)$ for an arbitrary $y$ is to instantiate $\alpha(x)$ on finitely many terms $g_1(y), \dots, g_n(y)$ depending solely on $y$ and then prove $\beta(y)$ from $\alpha(g_1(y)), \dots, \alpha(g_n(y))$.
\end{example}

\begin{example}\label{ex:forall-alpha-implies-forall-beta-forall-gamma}
    For example, if $\alpha(x)$, $\beta(y)$ and $\gamma(z)$ are quantifier-free formulas, when does $\forall x\, \alpha(x)$ imply $\forall y\,\beta(y)\lor \forall z\,\gamma(z)$ modulo a given universal theory $\mathcal{T}$? 
    To be more precise, we are in the setting of \cref{r:sequent-classical-setting} with $\bar{i} = 1$, $p_1=1$, $\bar{j} = 2$, $q_1 = q_2= 1$, $\alpha(x_1), \beta(x_1), \gamma(x_1)$ quantifier-free formulas.
    \Cref{r:sequent-classical-setting} tells us that $\forall x\, \alpha(x) \vdash_\mathcal{T} \forall y\,\beta(y)\lor \forall z\,\gamma(z)$ occurs precisely when there are $n \in \N$ and binary terms $g_1(y,z), \dots, g_n(y,z)$ such that
    \begin{align*}
    \bigwedge_{i = 1}^{n}\alpha(g_{i}(y,z))\vdash_\mathcal{T} \beta(y) \lor \gamma(z),
    \end{align*}
    where here it is important that $y$ and $z$ are distinct variables.
    Note that $\forall y\,\beta(y)\lor \forall z\,\gamma(z)$ is equivalent to $\forall y\,\forall z\,(\beta(y)\lor \gamma(z))$ (using that $y$ and $z$ are distinct).
    Then, if we know that $\alpha(x)$ holds for all $x$, the only way to prove $\beta(y)\lor \gamma(z)$ for arbitrary $y$ and $z$ is to instantiate $\alpha(x)$ on finitely many terms $g_1(y,z), \dots, g_n(y,z)$ depending solely on $y$ and $z$ and then prove $\beta(y)\lor \gamma(z)$ from $\alpha(g_1(y,z)) \dots, \alpha(g_n(y,z))$.
\end{example}

\begin{example}\label{ex:forall-alpha-forall-beta-implies-gamma}
    For example, if $\alpha_1(x)$ and $\alpha_2(y)$ are quantifier-free formulas and $\beta$ is a quantifier-free closed formula, when does $\forall x\, \alpha_1(x) \land \forall y\, \alpha_2(y)$ imply $\beta$ modulo a given universal theory $\mathcal{T}$? 
    To be more precise, we are in the setting of \cref{r:sequent-classical-setting} with $\bar{i} = 2$, $p_1 = p_2 =1$, $\bar{j} = 1$, $q_1 = 0$, $\alpha_1(x_1), \alpha_2(x_1)$ quantifier-free formulas, and $\beta$ a closed quantifier-free formula.
    \Cref{r:sequent-classical-setting} tells us that $\forall x\, \alpha_1(x) \land \forall y\, \alpha_2(y) \vdash_\mathcal{T} \beta$ occurs precisely when there are $n \in \N$, $l_1,\dots,l_n\in\{1,2\}$ and nullary terms $(g_i)_{i = 1,\dots,n}$ (i.e.\ term-definable constants) such that
    \begin{align*}
    \bigwedge_{i = 1}^{n}\alpha_{l_i}(g_{i})\vdash_\mathcal{T} \beta.
    \end{align*}
    Equivalently, this happens when there are $n_1,n_2 \in \N$, and nullary terms $(f_i)_{i= 1,\dots,n_1}$, $(f'_j)_{j = 1,\dots,n_2}$ (i.e.\ term-definable constants) such that
    \begin{align*}
    \bigwedge_{i = 1}^{n_1}\alpha_{1}(f_{i})\land \bigwedge_{j = 1}^{n_2}\alpha_{2}(f'_{j})\vdash_\mathcal{T} \beta.
    \end{align*}
    In other words, if we know that $\alpha_1(x)$ holds for all $x$ and that $\alpha_2(y)$ holds for all $y$, the only way to prove $\beta$ is to instantiate $\alpha_1(x)$ on finitely many constants $f_1, \dots, f_{n_1}$ and  $\alpha_2(y)$ on finitely many constants $f'_1, \dots, f'_{n_2}$ and then prove $\beta$ from $\alpha_1(f_1) \dots, \alpha_1(f_{n_1}), \alpha_2(f'_1) \dots, \alpha_2(f'_{n_2})$.
\end{example}

\begin{theorem} \label{t:sequent-at-S}
    Let $\P \colon \C\op \to \BA$ be a Boolean doctrine, with $\C$ small.
    Let $\bar{i}, \bar{j} \in \N$, $S,Y_1, \dots, Y_{\bar{i}}$, $Z_1,\dots, Z_{\bar{j}} \in \C$, $(\alpha_i \in \P(S\times Y_i))_{i = 1, \dots, \bar{i}}$, and $(\beta_j \in \P(S\times Z_j))_{j = 1, \dots, {\bar{j}}}$.
    The following are equivalent.
    \begin{enumerate}
        \item 
        For every propositional model $(M, \mathfrak{m}, s)$ of $\P$ at $S$, if for every $i \in \{1, \dots, \bar{i}\}$ we have that for every $y\in M(Y_i)$ $(s,y)\in \m_{S\times Y_i}(\alpha_i)$, then there is $j \in \{1, \dots, {\bar{j}}\}$ such that for every $z\in M(Z_j)$ $(s,z)\in \m_{S\times Z_j}(\beta_j)$.
        
        \item 
        There are $n \in \N$, $l_1, \dots, l_n \in \{1, \dots, \bar{i}\}$, and $(g_{i} \colon S\times \prod_{j=1}^{\bar{j}} Z_{j} \to Y_{l_i})_{i = 1, \dots, n}$ such that
        in $\P(S\times \prod_{j=1}^{\bar{j}} Z_{j})$
        \[
        \bigwedge_{i = 1}^{n}\P(\langle \pr^{S\times \Pi_p Z_p}_{S}, g_i \rangle)(\alpha_{l_i}) \leq \bigvee_{j=1}^{\bar{j}}\P(\pr^{S\times \Pi_p Z_p}_{S\times Z_j})(\beta_j).
        \]
    \end{enumerate}
\end{theorem}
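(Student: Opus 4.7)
The plan is to reduce the theorem to \cref{c:sequent} applied to the Boolean doctrine $\P_S$ obtained from $\P$ by freely adding a constant of type $S$, as described in \cref{r:const}. The base category $\C_S$ is small and inherits finite products from $\C$ via the identity-on-objects functor $L_S \colon \C \to \C_S$: for any $Z_1, \dots, Z_{\bar{j}} \in \C$, the product $\prod_{j=1}^{\bar{j}} Z_j$ in $\C$ is also a product in $\C_S$, with projections $L_S(\pr_j)$. The formulas $\alpha_i \in \P(S \times Y_i)$ and $\beta_j \in \P(S \times Z_j)$ live naturally in $\P_S(Y_i)$ and $\P_S(Z_j)$ respectively.

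By the universal property of $\P_S$ recalled in \cref{r:const}, propositional models of $\P$ at $S$ are in bijection with propositional models of $\P_S$, and under this bijection \cref{l:model-model-at-S} tells us that ``$\alpha$ is universally true in the model of $\P_S$'' corresponds to ``$\alpha$ is universally true in the model of $\P$ at $S$ with first coordinate fixed to $s$''. Therefore condition~(1) of the theorem is equivalent to: for every propositional model $(N, \n)$ of $\P_S$, if $\alpha_i \in F_{Y_i}^{(N,\n)}$ for all $i$, then there is $j$ such that $\beta_j \in F_{Z_j}^{(N,\n)}$ (in the sense of \cref{n:FM-IM-one-model}).

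Applying \cref{c:sequent} to $\P_S$ (which is legitimate because $\C_S$ is small and has finite products), this is in turn equivalent to the existence of $n \in \N$, $l_1, \dots, l_n \in \{1, \dots, \bar{i}\}$, and morphisms $(h_i \colon \prod_{j=1}^{\bar{j}} Z_j \rightsquigarrow Y_{l_i})_{i = 1, \dots, n}$ in $\C_S$ such that
\[
\bigwedge_{i=1}^n \P_S(h_i)(\alpha_{l_i}) \leq \bigvee_{j=1}^{\bar{j}} \P_S(L_S(\pr_j))(\beta_j) \quad \text{in } \P_S\bigl(\textstyle\prod_{j} Z_j\bigr) = \P\bigl(S \times \textstyle\prod_{j} Z_j\bigr).
\]
Translating the Kleisli morphisms to $\C$: each $h_i$ is a morphism $g_i \colon S \times \prod_j Z_j \to Y_{l_i}$ in $\C$, and by the reindexing formula in \cref{r:const}, $\P_S(h_i)(\alpha_{l_i}) = \P(\ple{\pr_1, g_i})(\alpha_{l_i})$. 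Similarly, $L_S(\pr_j)$ corresponds to $\pr_j \circ \pr_2 \colon S \times \prod_k Z_k \to Z_j$, so $\P_S(L_S(\pr_j))(\beta_j) = \P(\ple{\pr_1, \pr_{j+1}})(\beta_j)$ after identifying $\pr_{j+1}$ with the $(j+1)$st projection out of $S \times Z_1 \times \dots \times Z_{\bar{j}}$. This yields exactly condition~(2).

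The only delicate aspect is purely notational bookkeeping: one has to keep track of how products and projections in the Kleisli category $\C_S$ correspond to those in $\C$ carrying an extra $S$-factor, and how the reindexing $\P_S(h)$ unfolds to a reindexing in $\P$ along $\ple{\pr_1, g}$. No new technical ingredient beyond \cref{r:const}, \cref{l:model-model-at-S} and \cref{c:sequent} is needed.
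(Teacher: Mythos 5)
Your proposal is correct and follows exactly the paper's own route: the paper proves this theorem in one line by applying \cref{c:sequent} to the doctrine $\P_S$ obtained by adding a constant of type $S$ and invoking \cref{l:model-model-at-S}, which is precisely your argument with the Kleisli bookkeeping spelled out. No discrepancies.
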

\begin{proof}
    This follows from \cref{c:sequent} applied to the Boolean doctrine $\P_S$ obtained from $\P$ by adding a constant of type $S$ and from \cref{l:model-model-at-S}.
\end{proof}

\begin{remark}
    We translate \cref{t:sequent-at-S} to the classical syntactic setting.
    For this, we fix $k \in \N$.
    Let $\{s_1, \dots, s_k, x_1, x_2, \dots\}$ be a set of variables and $\mathcal{T}$ a universal theory.
    Let $\bar{i}, \bar{j} \in \N$, let $p_1, \dots, p_{\bar{i}}, q_1, \dots, q_{\bar{j}} \in \N$, for each $i = 1, \dots, \bar{i}$ let $\alpha_i(s_1, \dots, s_k, x_1, \dots, x_{p_i})$ be a quantifier-free formula, and for each $j = 1, \dots, \bar{j}$ let $\beta_j(s_1, \dots, s_k, x_1, \dots, x_{q_{j}})$ be a quantifier-free formula.
    The following are equivalent.
    \begin{enumerate}
        \item
        For every model $M$ of $\mathcal{T}$ and $c_1, \dots, c_k \in M$, the formula
        \[
        \mleft(\bigwedge_{i=1}^{\bar{i}} \forall x_1 \dots \forall x_{p_i}\, \alpha_{i}(s_1, \dots, s_k, x_1, \dots, x_{p_i})\mright) \rightarrow \mleft(\bigvee_{j=1}^{\bar{j}} \forall x_1 \dots \forall x_{q_j}\, \beta_j(s_1, \dots, s_k, x_1, \dots, x_{q_j})\mright).
        \]
        is valid in $M$ under the variable assignment $[(s_i \mapsto c_i)_{i = 1, \dots, k}]$.

       \item There are $n \in \N$, $l_1, \dots, l_n \in \{1, \dots, \bar{i}\}$ and terms $(g^{h}_{i}(s_1, \dots, s_k,x_1, \dots, x_{\Sigma_{j}q_j}))_{i \in\{ 1, \dots, n\},\,h\in\{1,\dots,p_{l_i}\}}$ such that
        \begin{align*}
        &\bigwedge_{i = 1}^{n}\alpha_{l_i}(s_1, \dots, s_k, g^{1}_{i}(s_1, \dots, s_k,x_1, \dots, x_{\Sigma_{j}q_j}), \dots, g^{p_{l_i}}_{i}(s_1, \dots, s_k,x_1, \dots, x_{\Sigma_{j}q_j})) \\
        &\vdash_\mathcal{T} \bigvee_{j = 1}^{\bar{j}}\beta_{j}(s_1, \dots, s_k, x_{1 + \Sigma_{t =1}^{j-1} q_t}, \dots, x_{\Sigma_{t =1}^{j} q_t}).
        \end{align*}
    \end{enumerate}
\end{remark}

\section{Herbrand's theorem}\label{s:Herbrand}

Let us quickly recall that a simple case of the classical version of Herbrand's theorem states that given a universal theory $\T$ and a quantifier-free formula $\alpha(x)$, the condition $\top \vdash_\mathcal{T} \exists x\, \alpha(x)$ holds if and only if there are term-definable constants $c_1,\dots c_n$ such that $\top \vdash_\mathcal{T} \bigvee_{k = 1}^{n}\alpha(c_k)$.
Herbrand's theorem essentially allows a certain kind of reduction of first-order logic to propositional logic: while in the first condition there is an existential formula, in the second condition only quantifier-free formulas are involved.

In this section, we obtain a doctrinal version of Herbrand's theorem for formulas of quantifier alternation depth at most one.
Let $\P$ be a Boolean doctrine over a small category $\C$, and let $(\id_\C,\mathfrak{i})\colon\P\hookrightarrow\P^\EA$ be its quantifier completion; we think of $\P$ as the set of quantifier-free formulas in $\P^\EA$. We will describe explicitly the functor $\P^\EA_1$ consisting of the ``formulas in $\P^\EA$ of quantifier alternation depth $\leq 1$'', introduced in the following notation.

\begin{notation}[$\P^\EA_1$]\label{n:first-layer}
    For a Boolean doctrine $\P \colon \C \op \to \BA$, with $\C$ small, let $\P^\EA_1$ be the subfunctor of $\P^\EA$ defined as follows: for every $S\in\C$, the fiber $\P^\EA_1(S)$ is the Boolean subalgebra of $\P^\EA(S)$ generated by the union of the images of $\P(S\times Y)$ under $\fa{Y}{S}\colon \P^\EA(S\times Y)\to\P^\EA(S)$, for $Y$ ranging over objects of $\C$.
\end{notation}

Intuitively, $\P^\EA_1$ freely adds one layer of quantifier alternation to $\P$.
It is easy to see that reindexings of $\P^\EA$ restrict to $\P^\EA_1$, and thus $\P^\EA_1$ defines a subfunctor of $\P^\EA$.

The main results of this section are \cref{t:description-P1,t:description-P1-forall-exists}, which characterize the order in the fibers of $\P^\EA_1$ in terms of $\P$, providing a doctrinal version of Herbrand's theorem. 
To this aim, we use results from our detour in 
\cref{sec:completenes-first-layer}.

Getting back to the question ``When should a formula $(\forall x\, \alpha(x)) \land (\forall y\, \beta(y))$ be below a formula $(\forall z\, \gamma(z)) \lor (\forall w\, \delta(w))$?'' proposed at the beginning of \cref{sec:completenes-first-layer}, we will obtain the following answer:
\[(\forall x\, \alpha(x)) \land (\forall y\, \beta(y))\leq (\forall z\, \gamma(z)) \lor (\forall w\, \delta(w))\]
\[\Updownarrow\]
\begin{center}
    every model of $\P$ satisfying $\forall x\, \alpha(x)$ and $\forall y\, \beta(y)$ also satisfies $\forall z\, \gamma(z)$ or $\forall w\, \delta(w)$
\end{center}
\[\Updownarrow\]
\begin{center}
    there are terms $t_1(z,w),\dots,t_n(z,w),s_1(z,w),\dots,s_m(z,w)$ such that $\bigwedge_{i=1}^n\alpha(t_i(z,w)) \land \bigwedge_{j=1}^m\beta(s_j(z,w)) \leq \gamma(z)\lor\delta(w)$.
\end{center}
(It is important here that $z$ and $w$ are distinct variables.)

The interest of this equivalence lies in the fact that, while the first condition concerns formulas with quantifier alternation depth $\leq 1$ and thus can be written in $\P_1^\EA$, the last condition only concerns quantifier-free formulas, and thus can be written in $\P$.

\begin{lemma}\label{l:universal-join-universal}
    Let $\P \colon \C\op \to  \BA$ be a first-order Boolean doctrine, $X,Y \in \C$, $\alpha,\gamma\in\P(X)$ and $\beta\in \P(X \times Y)$.
    \[
    \alpha \leq \gamma \lor \fa{Y}{X} \beta \text{ in $\P(X)$} \Longleftrightarrow \P(\pr^{X\times Y}_X)(\alpha) \leq \P(\pr^{X\times Y}_X)(\gamma) \lor \beta \text{ in $\P(X \times Y)$}.
    \]
\end{lemma}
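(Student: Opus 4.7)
The plan is to reduce both sides of the equivalence to the adjunction defining $\forall^Y_X$ by passing through complements. The key algebraic fact I will use is the standard Boolean identity $a \le b \lor c \iff a \land \lnot b \le c$, valid in any Boolean algebra, combined with the fact that $\P(\pr_1) \colon \P(X) \to \P(X \times Y)$ is a Boolean homomorphism (so it preserves $\land$ and $\lnot$).

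Concretely, I would argue as follows. Starting from the left-hand side, $\alpha \le \gamma \lor \forall^Y_X \beta$ in $\P(X)$ is equivalent, by the Boolean identity, to $\alpha \land \lnot \gamma \le \forall^Y_X \beta$ in $\P(X)$. By the defining adjunction $\P(\pr_1) \dashv \forall^Y_X$ from \cref{d:bool_ex_doc}\eqref{i:h3}, this is equivalent to $\P(\pr_1)(\alpha \land \lnot \gamma) \le \beta$ in $\P(X\times Y)$. Since $\P(\pr_1)$ is a Boolean homomorphism, $\P(\pr_1)(\alpha \land \lnot \gamma) = \P(\pr_1)(\alpha) \land \lnot \P(\pr_1)(\gamma)$, so the inequality becomes $\P(\pr_1)(\alpha) \land \lnot \P(\pr_1)(\gamma) \le \beta$. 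Applying the Boolean identity once more, this is equivalent to $\P(\pr_1)(\alpha) \le \P(\pr_1)(\gamma) \lor \beta$ in $\P(X\times Y)$, which is the right-hand side.

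There is no real obstacle here: the only ingredients are the adjunction defining the universal quantifier, the fact that reindexings are Boolean homomorphisms (which is built into the definition of a Boolean doctrine since the functor lands in $\BA$), and elementary manipulation in Boolean algebras. The statement is in fact the Boolean incarnation of Frobenius reciprocity for $\forall$. I would write the argument as a short chain of ``iff'' steps with the adjunction step marked as the one place where the universal property is used.
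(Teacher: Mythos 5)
Your proof is correct and is essentially identical to the paper's own argument: both rewrite $\alpha \leq \gamma \lor \forall^Y_X\beta$ as $\alpha \land \lnot\gamma \leq \forall^Y_X\beta$, apply the adjunction $\P(\pr_1) \dashv \forall^Y_X$, use that $\P(\pr_1)$ is a Boolean homomorphism, and then reverse the Boolean rewriting. No differences worth noting.
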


\begin{proof}
    \begin{align*}
        &\alpha \leq \gamma \lor \fa{Y}{X} \beta  &&\text{in $\P(X)$}\\
        & \Longleftrightarrow  \alpha \land \lnot \gamma \leq \fa{Y}{X} \beta &&\text{in $\P(X)$}\\
        &\Longleftrightarrow \P(\pr^{X\times Y}_X)(\alpha\land \lnot \gamma) \leq \beta &&\text{in $\P(X \times Y)$}\\
        &\Longleftrightarrow \P(\pr^{X\times Y}_X)(\alpha) \land \lnot \P(\pr^{X\times Y}_X)(\gamma) \leq \beta &&\text{in $\P(X \times Y)$}\\
        &\Longleftrightarrow \P(\pr^{X\times Y}_X)(\alpha) \leq \P(\pr^{X\times Y}_X)(\gamma) \lor \beta &&\text{in $\P(X \times Y)$}.\qedhere
    \end{align*}
\end{proof}

\begin{lemma}[$\forall$ distributes over $\bigvee$ with disjoint variables]\label{l:forall-join-univ-at-S}
    Let $\P \colon \C\op \to \BA$ be a first-order Boolean doctrine, let $S,X_1,\dots,X_n\in\C$, and let $\alpha_i\in \P(S\times  X_i)$ for $i=1,\dots,n$. Then, in $\P(S)$,
    \[
    \bigvee_{i=1}^n \fa{X_i}{S} \alpha_i = \fa{\Pi_j X_j}{S}\mleft(\bigvee_{i=1}^n\P(\pr^{S\times \Pi_j X_j}_{S\times X_i})(\alpha_i)\mright).
    \]
\end{lemma}

\begin{proof}
    For $n=0$, we shall check that $\bot_{\P(S)} = \fa{\tmn}{S}\bot_{\P(S)}$,
    but this follows from the fact that $\fa{\tmn}{S}$ is the right adjoint of the identity on $\P(S)$, and hence it is the identity.
    
    For $n=1$, the statement is trivially true.
    
   Let $n=2$. We shall obtain the equality
    \begin{equation} \label{eq:binary}
        \fa{X_1}{S} \alpha_1 \lor\fa{X_2}{S} \alpha_2 = \fa{X_1\times X_2}{S}(\P(\pr^{S \times X_1\times X_2}_{S \times X_1})(\alpha_1)\lor\P(\pr^{S \times X_1\times X_2}_{S \times X_2})(\alpha_2)).
    \end{equation}
    By definition of $\forall$, \eqref{eq:binary} holds if and only if for all $\beta\in \P(S)$ we have 
    \[
      \beta \leq \fa{X_1}{S} \alpha_1 \lor\fa{X_2}{S} \alpha_2 \iff \P(\pr^{S \times X_1\times X_2}_S)(\beta)\leq \P(\pr^{S \times X_1\times X_2}_{S \times X_1})(\alpha_1)\lor\P(\pr^{S \times X_1\times X_2}_{S \times X_2})(\alpha_2).
    \]
    So, let $\beta \in \P(S)$. We have the following equivalences:
    \begin{align*}
        &\P(\pr^{S \times X_1\times X_2}_S)(\beta)\leq \P(\pr^{S \times X_1\times X_2}_{S \times X_1})(\alpha_1)\lor\P(\pr^{S \times X_1\times X_2}_{S \times X_2})(\alpha_2)\\
    &\iff\P(\pr_{S \times  X_1}^{S \times X_1\times X_2})(\P(\pr^{S \times X_1}_{S})(\beta))
    \leq \P(\pr^{S \times X_1\times X_2}_{S \times X_1})(\alpha_1)\lor \P(\pr^{S \times X_1 \times X_2}_{S \times X_2})(\alpha_2)\\
    &\iff\P(\pr^{S \times X_1}_{S})(\beta)
    \leq \alpha_1\lor \fa{X_2}{S \times X_1}\P(\pr^{S \times X_1 \times X_2}_{S \times X_2})(\alpha_2)&&\text{by \cref{l:universal-join-universal}}\\
    &\iff\P(\pr^{S \times X_1}_{S})(\beta)
    \leq \alpha_1\lor \P(\pr^{S \times X_1}_{S})(\fa{X_2}{S}\alpha_2)&&\text{by Beck-Chevalley}\\
    &\iff\beta\leq \fa{X_1}{S}\alpha_1\lor \fa{X_2}{S}\alpha_2&&\text{by \cref{l:universal-join-universal}.}\\
    \end{align*}
    
    The statement (for an arbitrary $n$) follows by induction.
\end{proof}

\begin{lemma} \label{l:implication}
    Let $\P\colon\C\op \to \BA$ be a Boolean doctrine, let $\R \colon \cat{D} \op \to \BA$ be a first-order Boolean doctrine, and let $(M, \m) \colon \P \to \R$ be a Boolean doctrine morphism. Let $\bar{i}, \bar{j}\in\N$, let $S, Y_1, \dots, Y_{\bar{i}}, Z_1, \dots, Z_{\bar{j}} \in \C$, $(\alpha_i \in \P(S \times Y_i))_{i = 1, \dots,  \bar{i}}$ and $(\beta_j \in \P(S \times Z_j))_{j = 1, \dots,  \bar{j}}$. Suppose there are $n \in \N$, $l_1, \dots, l_n \in \{1, \dots, \bar{i}\}$ and $(g_{i} \colon S \times \prod_{j =1}^{\bar{j}}Z_j \to Y_{l_i})_{i = 1, \dots, n}$ such that in $\P(S \times \prod_{j =1}^{\bar{j}}Z_j)$
    \[
        \bigwedge_{i = 1}^{n}\P(\ple{\pr^{S\times \Pi_h Z_h}_S, g_i})(\alpha_{l_i}) \leq \bigvee_{j = 1}^{\bar{j}}\P(\pr^{S\times \Pi_h Z_h}_{S\times Z_j})(\beta_j).
    \]
    Then in $\R(M(S))$ we have
    \[
        \bigwedge_{i=1}^{\bar{i}} \fa{M(Y_i)}{M(S)}\m_{S\times Y_i}(\alpha_{i}) \leq \bigvee_{j=1}^{\bar{j}} \fa{M(Z_j)}{M(S)}\m_{S\times Z_j}(\beta_{j}).
    \]
\end{lemma}
\begin{proof}
    First, observe that in $\R(M(S))$ we have
    \begin{equation}\label{eq:index}
        \bigwedge_{i=1}^{\bar{i}} \fa{M(Y_i)}{M(S)}\m_{S\times Y_i}(\alpha_{i}) \leq \bigwedge_{i=1}^{n} \fa{M(Y_{l_i})}{M(S)}\m_{S\times Y_{l_i}}(\alpha_{l_i}).
    \end{equation}
    For every $i=1,\dots,n$, by the adjunction $\R(\pr^{M(S)\times M(Y_{l_i})}_{M(S)})\dashv \fa{M(Y_{l_i})}{M(S)}$ we have in $\R(M(S)\times M(Y_{l_i}))$
    \[
    \R(\pr^{M(S)\times M(Y_{l_i})}_{M(S)})(\fa{M(Y_{l_i})}{M(S)}(\m_{S\times Y_{l_i}}(\alpha_{l_i})))\leq \m_{S\times Y_{l_i}}(\alpha_{l_i}).
    \]
    Then, we apply $\R(\ple{\pr^{M(S)\times \Pi_h M(Z_h)}_{M(S)},M(g_i)})$ to both sides of the inequality to get in $\R(M(S) \times \prod_{j =1}^{\bar{j}}M(Z_j))$
    \begin{align}
        & \R(\pr^{M(S)\times \Pi_h M(Z_h)}_{M(S)})(\fa{M(Y_{l_i})}{M(S)}(\m_{S\times Y_{l_i}}(\alpha_{l_i}))) \notag\\
        & = \R(\ple{\pr^{M(S)\times \Pi_h M(Z_h)}_{M(S)},M(g_i)})(\R(\pr^{M(S)\times M(Y_{l_i})}_{M(S)})(\fa{M(Y_{l_i})}{M(S)}(\m_{S\times Y_{l_i}}(\alpha_{l_i}))))\notag\\
        & \leq \R(\ple{\pr^{M(S)\times \Pi_h M(Z_h)}_{M(S)},M(g_i)})(\m_{S\times Y_{l_i}}(\alpha_{l_i}))\notag\\
        & = \m_{S\times \Pi_h Z_h}(\P(\ple{\pr^{S\times \Pi_h Z_h}_{S},g_i})(\alpha_{l_i})).\label{eq:R-forall-m-P-g}
    \end{align}
    It follows that in $\R(M(S) \times \prod_{j =1}^{\bar{j}}M(Z_j))$
    \begin{align*}
        & \R(\pr^{M(S)\times \Pi_h M(Z_h)}_{M(S)})\mleft(\bigwedge_{i=1}^{\bar{i}} \fa{M(Y_i)}{M(S)}\m_{S\times Y_i}(\alpha_{i})\mright) \\
        & \leq \R(\pr^{M(S)\times \Pi_h M(Z_h)}_{M(S)})\mleft(\bigwedge_{i=1}^{n} \fa{M(Y_{l_i})}{M(S)}\m_{S\times Y_{l_i}}(\alpha_{l_i})\mright) &&\text{by \eqref{eq:index}}\\
        & \leq \bigwedge_{i=1}^{n}\m_{S\times \Pi_h Z_h}(\P(\ple{\pr^{S\times \Pi_h Z_h}_{S},g_i})(\alpha_{l_i})) &&\text{by \eqref{eq:R-forall-m-P-g}}\\
        & \leq \m_{S\times \Pi_h Z_h}\mleft(\bigvee_{j = 1}^{\bar{j}}\P(\pr^{S\times \Pi_h Z_h}_{S\times Z_j})(\beta_j)\mright) &&\text{by assumption}\\
        &= \bigvee_{j = 1}^{\bar{j}} \R(\pr^{M(S)\times \Pi_ M(Z_h)}_{M(S)\times M(Z_j)})(\m_{S\times Z_j} (\beta_j))&&\text{by naturality of $\m$}.
    \end{align*}
    By the adjunction $\R(\pr^{M(S)\times \Pi_h M(Z_h)}_{M(S)})\dashv\fa{\Pi_h M(Z_h)}{M(S)}$, we get in $\R(M(S))$
    \begin{equation} \label{eq:last-equation}
        \bigwedge_{i=1}^{\bar{i}} \fa{M(S)}{M(Y_i)}\m_{S\times Y_i}(\alpha_{i}) \leq \fa{\Pi_h M(Z_h)}{M(S)}\mleft(\bigvee_{j=1}^{\bar{j}} \R(\pr^{M(S)\times \Pi_h M(Z_h)}_{M(S)\times M(Z_j)})(\m_{S\times Z_j}(\beta_{j}))\mright).
    \end{equation}
    Then we apply \cref{l:forall-join-univ-at-S} to the right-hand side of \eqref{eq:last-equation} to conclude the proof. 
\end{proof}

\begin{theorem} \label{t:description-P1}
    Let $\P\colon\C\op \to \BA$ be a Boolean doctrine with $\C$ small, and let $(\id_\C, \mathfrak{i}) \colon \P \hookrightarrow \P^\EA$ be a quantifier completion of $\P$.
    For all $S, Y_1, \dots, Y_{\bar{i}}, Z_1, \dots, Z_{\bar{j}} \in \C$, $(\alpha_i \in \P(S \times Y_i))_{i = 1, \dots,  \bar{i}}$ and $(\beta_j \in \P(S \times Z_j))_{j = 1, \dots,  \bar{j}}$, the following are equivalent.
    \begin{enumerate}
        \item \label{i:inequality-in-free}
        In $\P^\EA(S)$ we have
        \[
            \bigwedge_{i=1}^{\bar{i}} \fa{Y_i}{S}\mathfrak{i}_{S\times Y_i}(\alpha_{i}) \leq \bigvee_{j=1}^{\bar{j}} \fa{Z_j}{S}\mathfrak{i}_{S\times Z_j}(\beta_{j}).
        \]
        \item \label{i:existence-of-terms-in-P}
        There are $n \in \N$, $l_1, \dots, l_n \in \{1, \dots, \bar{i}\}$ and $(g_{i} \colon S \times \prod_{j =1}^{\bar{j}}Z_j \to Y_{l_i})_{i = 1, \dots, n}$ such that
        in $\P(S \times \prod_{j =1}^{\bar{j}}Z_j)$
        \[
        \bigwedge_{i = 1}^{n}\P(\ple{\pr^{S\times \Pi_h Z_h}_S, g_i})(\alpha_{l_i}) \leq \bigvee_{j = 1}^{\bar{j}}\P(\pr^{S\times \Pi_h Z_h}_{S\times Z_j})(\beta_j).
        \]
    \end{enumerate}
\end{theorem}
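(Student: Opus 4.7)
My plan is to prove the two directions separately, with the reverse direction being the main content and the forward direction essentially a citation.

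\textbf{Direction \eqref{i:existence-of-terms-in-P} $\Rightarrow$ \eqref{i:inequality-in-free}.} This is immediate from \cref{l:implication} applied to the Boolean doctrine morphism $(\id_\C, \mathfrak{i}) \colon \P \to \P^\EA$, with $\R \coloneqq \P^\EA$. The conclusion of that lemma, transported along $(\id_\C, \mathfrak{i})$, reads exactly as the desired inequality.

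\textbf{Direction \eqref{i:inequality-in-free} $\Rightarrow$ \eqref{i:existence-of-terms-in-P}.} The strategy is to derive the model-theoretic hypothesis (1) of \cref{t:sequent-at-S}, from which the terms $g_i$ follow by invoking that theorem (whose conclusion (2) is literally condition \eqref{i:existence-of-terms-in-P}). So the task reduces to the following: fix an arbitrary propositional model $(M,\m,s)$ of $\P$ at $S$, assume for every $i$ and every $y \in M(Y_i)$ that $(s,y) \in \m_{S \times Y_i}(\alpha_i)$, and produce some $j$ such that $(s,z) \in \m_{S \times Z_j}(\beta_j)$ for every $z \in M(Z_j)$.

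To do this, I would use the universal property of the quantifier completion: the Boolean doctrine morphism $(M,\m) \colon \P \to \mathscr{P}$ extends uniquely to a first-order Boolean doctrine morphism $(N,\n) \colon \P^\EA \to \mathscr{P}$ with $N = M$ and $\n \circ \mathfrak{i} = \m$ componentwise. Since $(N,\n)$ preserves $\forall$, applying $\n_S$ to both sides of the inequality in \eqref{i:inequality-in-free} yields, in $\mathscr{P}(M(S))$,
\[
\bigwedge_{i=1}^{\bar{i}} \forall_{M(S)}^{M(Y_i)} \m_{S \times Y_i}(\alpha_{i}) \ \leq\  \bigvee_{j=1}^{\bar{j}} \forall_{M(S)}^{M(Z_j)} \m_{S \times Z_j}(\beta_{j}).
\]
Evaluating this inclusion at $s \in M(S)$ and unfolding the explicit description of $\forall$ in the subsets doctrine ($s \in \forall_{M(S)}^{M(Y)}T$ iff $(s,y) \in T$ for every $y \in M(Y)$) is precisely the hypothesis of \cref{t:sequent-at-S}\,(1). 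An application of \cref{t:sequent-at-S} then furnishes $n$, $l_1,\dots,l_n$ and $g_1,\dots,g_n$ satisfying \eqref{i:existence-of-terms-in-P}.

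The heavy lifting, and hence the principal obstacle, is entirely absorbed into \cref{t:sequent-at-S} (and ultimately into \cref{t:extension-to-rich} and the universal ultrafilter lemma); once those are available, the present theorem reduces to transporting the inequality \eqref{i:inequality-in-free} through the universal property of $\P^\EA$ to every propositional model of $\P$ at $S$. The only verifications one needs are that a first-order Boolean doctrine morphism preserves $\forall$ (by definition) and that $\forall$ in $\mathscr{P}$ has the pointwise ``for all $y$'' interpretation.
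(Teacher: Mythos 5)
Your proposal is correct and follows essentially the same route as the paper: the reverse direction is delegated to \cref{l:implication}, and the forward direction reduces to \cref{t:sequent-at-S} by transporting the inequality through the universal property of the quantifier completion and the preservation of $\forall$ by first-order Boolean doctrine morphisms. The only cosmetic difference is that you argue this direction directly (every model satisfying the hypotheses satisfies the conclusion, hence condition (1) of \cref{t:sequent-at-S} holds), whereas the paper phrases it contrapositively (a counterexample model produced by \cref{t:sequent-at-S} violates the inequality in $\P^\EA(S)$); the content is identical.
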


\begin{proof}
    \eqref{i:inequality-in-free} $\Rightarrow$ \eqref{i:existence-of-terms-in-P}.
    We prove the contrapositive.
    Suppose \eqref{i:existence-of-terms-in-P} does not hold.
   
    By \cref{t:sequent-at-S}, there is a propositional model $(M, \m, s)$ of $\P$ at $S$ such that for all $i=1,\dots\bar{i}$ and for all $y\in M(Y_i)$ we have $(s,y)\in \m_{S \times Y_i}(\alpha_{i})$ and there is no $j\in\{1,\dots,\bar{j}\}$ such that for all $z \in M(Z_j)$ $(s,z)\in \m_{S \times Z_j}(\beta_{j})$. Thus
     \begin{align}
        &\bigcap_{i=1}^{\bar{i}} \{s' \in M(S) \mid \text{for all } y\in M(Y_i),\, (s',y)\in \m_{S \times Y_i}(\alpha_{i} )\} \notag\\
        &\nsubseteq \bigcup_{j=1}^{\bar{j}} \{ s' \in M(S) \mid \text{for all } z \in M(Z_j), \, (s',z)\in \m_{S \times Z_j}(\beta_{j} )\}, \label{eq:two-lines-in-free}
    \end{align}
    because $s$ belongs to the intersection on the left of \eqref{eq:two-lines-in-free} but not to the union on the right.
    
    By the universal property of the quantifier completion, there is a unique first-order Boolean doctrine morphism $(M,\n)$ such that the following triangle commutes:
    \[
    \begin{tikzcd}
         \P\arrow[r,"{(\id_{\C},\mathfrak{i})}"',swap,hook]\arrow[dr,"{(M,\m)}",swap] & \P^\EA\arrow[d,"{(M,\n)}"',dashed,swap]\\& \mathscr{P}.
    \end{tikzcd}
    \]
    The condition in \eqref{eq:two-lines-in-free} is equivalent to 
    \begin{equation*}
        \bigcap_{i=1}^{\bar{i}}\fa{M(Y_i)}{M(S)} \m_{S \times Y_i}(\alpha_{i})\nsubseteq \bigcup_{j=1}^{\bar{j}} \fa{M(Z_j)}{M(S)}\m_{S \times Z_j}(\beta_{j}),
    \end{equation*}
    which we rewrite as
    \begin{equation*}
        \bigcap_{i=1}^{\bar{i}}\fa{M(Y_i)}{M(S)} \n_{S \times Y_i}(\mathfrak{i}_{S \times Y_i}(\alpha_{i}))\nsubseteq \bigcup_{j=1}^{\bar{j}} \fa{M(Z_j)}{M(S)}\n_{S \times Z_j}(\mathfrak{i}_{S \times Z_j}(\beta_{j})),
    \end{equation*}
    which, by the commutativity of \eqref{eq:preserves-forall} in the definition of a first-order Boolean doctrine morphism (\cref{d:ABA-morphism}) is equivalent to
    \begin{equation*}
        \bigcap_{i=1}^{\bar{i}}\n_{S }(\fa{Y_i}{S} \mathfrak{i}_{S \times Y_i}(\alpha_{i}))\nsubseteq \bigcup_{j=1}^{\bar{j}} \n_{S }(\fa{Z_j}{S}\mathfrak{i}_{S \times Z_j}(\beta_{j})),
    \end{equation*}    
    which, since ${\n}_S$ is a Boolean homomorphism, is equivalent to
    \[
        {\n}_S\mleft(\bigwedge_{i=1}^{\bar{i}} \fa{Y_i}{S}\mathfrak{i}_{S \times Y_i}(\alpha_{i})\mright) \nsubseteq {\n}_S \mleft(\bigvee_{j=1}^{\bar{j}} \fa{Z_j}{S}\mathfrak{i}_{S \times Z_j}(\beta_{j})\mright).
    \]
    Thus, by monotonicity of $\n_S$, in $\P^\EA(S)$ we have
    \[
        \bigwedge_{i=1}^{\bar{i}} \fa{Y_i}{S}\mathfrak{i}_{S\times Y_i}(\alpha_{i}) \nleq \bigvee_{j=1}^{\bar{j}} \fa{Z_j}{S}\mathfrak{i}_{S\times Z_j}(\beta_{j}).
    \]

    \eqref{i:existence-of-terms-in-P} $\Rightarrow$ \eqref{i:inequality-in-free}. This implication follows from \cref{l:implication}.
\end{proof}

\begin{remark}\label{r:translate-description-of-order}
    We translate \cref{t:description-P1} to the classical syntactic setting.
    For this, we fix $k \in \N$.
    Let $\{s_1, \dots, s_k, x_1, x_2, \dots\}$ be a set of variables and $\mathcal{T}$ a universal theory.
    Let $\bar{i}, \bar{j}\in\N$, let $p_1, \dots, p_{\bar{i}}$, $q_1, \dots, q_{\bar{j}} \in \N$, for each $i = 1, \dots, \bar{i}$ let $\alpha_i(s_1, \dots, s_k, x_1, \dots, x_{p_i})$ be a quantifier-free formula, and for each $j = 1, \dots, \bar{j}$ let $\beta_j(s_1, \dots, s_k, x_1, \dots, x_{q_{j}})$ be a quantifier-free formula.
    The following are equivalent.
    \begin{enumerate}
        \item 
        \[
            \bigwedge_{i=1}^{\bar{i}} \forall x_1 \dots \forall x_{p_i}\, \alpha_{i}(s_1, \dots, s_k, x_1, \dots, x_{p_i}) \vdash_\mathcal{T} \bigvee_{j=1}^{\bar{j}} \forall x_1 \dots \forall x_{q_j}\, \beta_{j}(s_1, \dots, s_k, x_1, \dots, x_{q_j}).
        \]
        \item There are $n \in \N$, $l_1, \dots, l_n \in \{1, \dots, \bar{i}\}$ and terms $(g^{h}_{i}(s_1, \dots, s_k,x_1, \dots, x_{\Sigma_{j}q_j}))_{i \in\{ 1, \dots, n\},\,h\in\{1,\dots,p_{l_i}\}}$ such that
        \begin{align*}
        &\bigwedge_{i = 1}^{n}\alpha_{l_i}(s_1, \dots, s_k, g^{1}_{i}(s_1, \dots, s_k,x_1, \dots, x_{\Sigma_{j}q_j}), \dots, g^{p_{l_i}}_{i}(s_1, \dots, s_k,x_1, \dots, x_{\Sigma_{j}q_j})) \\
        &\vdash_\mathcal{T} \bigvee_{j = 1}^{\bar{j}}\beta_{j}(s_1, \dots, s_k, x_{1 + \Sigma_{t =1}^{j-1} q_t}, \dots, x_{\Sigma_{t =1}^{j} q_t}).
        \end{align*}
    \end{enumerate}
\end{remark}

The following gives a doctrinal version of Herbrand's theorem \cite{Herbrand1930} for formulas of quantifier alternation depth at most one, modulo a universal theory.

\begin{theorem} [Doctrinal version of Herbrand's theorem] \label{t:description-P1-forall-exists}
    Let $\P\colon\C\op \to \BA$ be a Boolean doctrine with $\C$ small, and let $(\id_\C, \mathfrak{i}) \colon \P \hookrightarrow \P^\EA$ be a quantifier completion of $\P$.
    For all $S, Y_1, \dots, Y_{\bar{i}},W_1,\dots,W_{\bar{h}}$, $Z_1, \dots, Z_{\bar{j}}$, $V_1,\dots,V_{\bar{k}} \in \C$, $(\alpha_i \in \P(S \times Y_i))_{i = 1, \dots,  \bar{i}}$, $(\gamma_h \in \P(S \times W_h))_{h = 1, \dots,  \bar{h}}$, $(\beta_j \in \P(S \times Z_j))_{j = 1, \dots,  \bar{j}}$ and $(\delta_k \in \P(S \times V_k))_{k = 1, \dots,  \bar{k}}$, the following are equivalent.
    \begin{enumerate}
        \item \label{i:inequality-in-free-with-exists}
        In $\P^\EA(S)$ we have
        \[
            \mleft(\bigwedge_{i=1}^{\bar{i}} \fa{Y_i}{S}\mathfrak{i}_{S\times Y_i}(\alpha_{i})\mright)
            \land
            \mleft(\bigwedge_{h=1}^{\bar{h}} \ex{W_h}{S}\mathfrak{i}_{S\times W_h}(\gamma_{h}) \mright)
            \leq 
            \mleft(\bigvee_{j=1}^{\bar{j}} \fa{Z_j}{S}\mathfrak{i}_{S\times Z_j}(\beta_{j})\mright)
            \lor 
            \mleft(\bigvee_{k=1}^{\bar{k}} \ex{V_k}{S}\mathfrak{i}_{S\times V_k}(\delta_{k})\mright).
        \]
        
        \item \label{i:existence-of-terms-in-P-with-exists}
        There are $n,n' \in \N$, $l_1, \dots, l_n \in \{1, \dots, \bar{i}\}$, $l_1', \dots, l_n' \in \{1, \dots, \bar{k}\}$, $(g_{i} \colon S \times \prod_{j =1}^{\bar{j}}Z_j \times \prod_{h = 1}^{\bar{h}} W_h \to Y_{l_i})_{i = 1, \dots, n}$ and $(g_{k}' \colon S \times \prod_{j =1}^{\bar{j}}Z_j  \times \prod_{h = 1}^{\bar{h}} W_h \to V_{l'_k})_{k = 1, \dots, n'}$ such that
        in $\P(S \times \prod_{j =1}^{\bar{j}}Z_j\times \prod_{h =1}^{\bar{h}}W_h)$
        \begin{align*}
        & \bigwedge_{i = 1}^{n}\P(\ple{\pr^{S\times \Pi_p Z_p\times \Pi_p W_p}_S, g_i})(\alpha_{l_i})
        \land 
        \bigwedge_{h = 1}^{\bar{h}} \P(\pr^{S\times \Pi_p Z_p\times \Pi_p W_p}_{S\times W_h})(\gamma_h) \\
        & \leq \bigvee_{j = 1}^{\bar{j}}\P(\pr^{S\times \Pi_p Z_p\times \Pi_p W_p}_{S\times Z_j})(\beta_j)
        \lor
        \bigvee_{k = 1}^{n'}\P(\langle \pr^{S\times \Pi_p Z_p\times \Pi_p W_p}_S, g'_k\rangle)(\delta_{l'_k}).
        \end{align*}
    \end{enumerate}
\end{theorem}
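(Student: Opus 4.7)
The plan is to reduce \eqref{i:inequality-in-free-with-exists} to an instance of \cref{t:description-P1}, apply that theorem, and then convert the output back to the shape of \eqref{i:existence-of-terms-in-P-with-exists}. The only conceptual ingredient is the elementary Boolean equivalence
\[
p \land \lnot q \leq r \quad \Longleftrightarrow \quad p \leq q \lor r
\]
valid in any Boolean algebra, combined with the identity $\exists_S^X = \lnot \forall_S^X \lnot$ (valid in any first-order Boolean doctrine because $\forall_S^X \dashv \P(\pr_1)$ and the fibers are Boolean) and the fact that $\mathfrak{i}$, being a Boolean doctrine morphism, commutes with $\lnot$ componentwise.

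Concretely, I would first rewrite each $\exists_S^{W_h}\mathfrak{i}_{S\times W_h}(\gamma_h)$ as $\lnot \forall_S^{W_h}\mathfrak{i}_{S\times W_h}(\lnot \gamma_h)$ and each $\exists_S^{V_k}\mathfrak{i}_{S\times V_k}(\delta_k)$ as $\lnot \forall_S^{V_k}\mathfrak{i}_{S\times V_k}(\lnot \delta_k)$. Using the Boolean equivalence above, I would then move the negated universals across the inequality: the $\lnot \forall_S^{W_h}\mathfrak{i}(\lnot \gamma_h)$ from the left to the right (as $\forall_S^{W_h}\mathfrak{i}(\lnot \gamma_h)$), and the $\lnot \forall_S^{V_k}\mathfrak{i}(\lnot \delta_k)$ from the right to the left (as $\forall_S^{V_k}\mathfrak{i}(\lnot \delta_k)$). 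This rewrites \eqref{i:inequality-in-free-with-exists} equivalently as
\begin{equation*}
\bigwedge_{i=1}^{\bar{i}} \forall_S^{Y_i}\mathfrak{i}_{S\times Y_i}(\alpha_i) \land \bigwedge_{k=1}^{\bar{k}} \forall_S^{V_k}\mathfrak{i}_{S\times V_k}(\lnot \delta_k)
\leq
\bigvee_{j=1}^{\bar{j}} \forall_S^{Z_j}\mathfrak{i}_{S\times Z_j}(\beta_j) \lor \bigvee_{h=1}^{\bar{h}} \forall_S^{W_h}\mathfrak{i}_{S\times W_h}(\lnot \gamma_h),
\end{equation*}
which is exactly of the form \eqref{i:inequality-in-free} in \cref{t:description-P1}, with $\bar{i}+\bar{k}$ universal conjuncts on the left and $\bar{j}+\bar{h}$ universal disjuncts on the right.

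Applying \cref{t:description-P1} yields some $N \in \N$, indices $(L_a)_{a=1}^N$ in $\{1, \dots, \bar{i}\} \sqcup \{1, \dots, \bar{k}\}$, and morphisms from $S \times \prod_{j=1}^{\bar{j}} Z_j \times \prod_{h=1}^{\bar{h}} W_h$ to either $Y_{L_a}$ or $V_{L_a}$ (depending on which summand $L_a$ belongs to), together with the associated inequality in $\P(S \times \prod_j Z_j \times \prod_h W_h)$. Splitting these indices and morphisms according to whether they point to a $Y$-factor or a $V$-factor produces the data $n$, $n'$, $(l_i)$, $(l'_k)$, $(g_i)$, $(g'_k)$ of \eqref{i:existence-of-terms-in-P-with-exists}; the projections onto the $Z_j$ and $W_h$ coordinates produced by \cref{t:description-P1} match those in \eqref{i:existence-of-terms-in-P-with-exists} after the obvious identification of factors.

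To conclude, I would apply the same Boolean equivalence once more, this time inside the fiber $\P(S \times \prod_j Z_j \times \prod_h W_h)$, to push each $\P(\langle \pr_1, \pr_{h+\bar{j}+1}\rangle)(\lnot \gamma_h)$ from the right side back to the left as $\P(\langle \pr_1, \pr_{h+\bar{j}+1}\rangle)(\gamma_h)$, and each $\P(\langle \pr_1, g'_k\rangle)(\lnot \delta_{l'_k})$ from the left back to the right as $\P(\langle \pr_1, g'_k\rangle)(\delta_{l'_k})$, obtaining precisely the inequality in \eqref{i:existence-of-terms-in-P-with-exists}. Each step of this chain is an equivalence, so the converse direction requires no separate argument. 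I do not expect any serious obstacle here; the only delicate point is bookkeeping, namely tracking how the projection indices in the product $S \times \prod_j Z_j \times \prod_h W_h$ delivered by \cref{t:description-P1} correspond to the explicit projections displayed in \eqref{i:existence-of-terms-in-P-with-exists}.
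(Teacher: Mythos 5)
Your proposal is correct and is essentially the paper's own argument: the paper likewise rewrites each $\exists$ as $\lnot\forall\lnot$, uses the Boolean equivalence to move the negated universals across the inequality (obtaining an instance of \cref{t:description-P1} with $\bar i + \bar k$ conjuncts and $\bar j + \bar h$ disjuncts), applies that theorem, and then reverses the Boolean manipulation inside the fiber $\P(S\times\prod_j Z_j\times\prod_h W_h)$. The only detail worth making explicit in a final write-up is that $\mathfrak{i}_{S\times W_h}(\lnot\gamma_h)=\lnot\,\mathfrak{i}_{S\times W_h}(\gamma_h)$ because each component of $\mathfrak{i}$ is a Boolean homomorphism, which you already note.
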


\begin{proof}
    Item \eqref{i:inequality-in-free-with-exists} holds if and only if
    \[
        \mleft(\bigwedge_{i=1}^{\bar{i}} \fa{Y_i}{S}\mathfrak{i}_{S \times Y_i}(\alpha_{i})\mright) 
        \land 
        \mleft(\bigwedge_{k=1}^{\bar{k}} \fa{V_k}{S}\mathfrak{i}_{S \times V_k}(\lnot\delta_{k})\mright)
        \leq 
        \mleft(\bigvee_{j=1}^{\bar{j}} \fa{Z_j}{S}\mathfrak{i}_{S \times Z_j}(\beta_{j})\mright) 
        \lor
        \mleft(\bigvee_{h=1}^{\bar{h}} \fa{W_h}{S}\mathfrak{i}_{S \times W_h}(\lnot\gamma_{h})\mright).
    \]    
   By \cref{t:description-P1}, this is equivalent to the existence of $n,n' \in \N$, $l_1, \dots, l_n \in \{1, \dots, \bar{i}\}$, $l_1', \dots, l_n' \in \{1, \dots, \bar{k}\}$, $(g_{i} \colon S \times \prod_{j =1}^{\bar{j}}Z_j \times \prod_{h = 1}^{\bar{h}} W_h \to Y_{l_i})_{i = 1, \dots, n}$ and $(g_{k}' \colon S \times \prod_{j =1}^{\bar{j}}Z_j  \times \prod_{h = 1}^{\bar{h}} W_h \to V_{l'_k})_{k = 1, \dots, n'}$ such that in $\P(S \times \prod_{j =1}^{\bar{j}}Z_j\times \prod_{h = 1}^{\bar{h}} W_h)$
   \begin{align*}
        & \bigwedge_{i = 1}^{n}\P(\ple{\pr^{S\times \Pi_p Z_p\times \Pi_p W_p}_S, g_i})(\alpha_{l_i})
        \land 
        \bigwedge_{k = 1}^{n'} \P(\langle \pr^{S\times \Pi_p Z_p \times \Pi_p W_p}_S, g'_k\rangle)(\lnot\delta_{l'_k})\\
        & \leq 
        \bigvee_{j =1}^{\bar{j}} \P(\pr^{S\times \Pi_p Z_p \times \Pi_p W_p}_{S\times Z_j})(\beta_j)
        \lor
        \bigvee_{h = 1}^{\bar{h}} \P(\pr^{S\times \Pi_p Z_p \times \Pi_p W_p}_{S\times W_h})(\lnot\gamma_h),
    \end{align*}
    which is equivalent to \eqref{i:existence-of-terms-in-P-with-exists}.
\end{proof}

Roughly speaking, in the simple case in which $\bar{i} = \bar{h} = \bar{j}= \bar{k} = 1$ and no free variables, this means that, given a universal theory $\mathcal{T}$, the statement
\[
    \forall y\,\alpha(y)
    \land
    \exists w \,\gamma(w)
    \vdash_\mathcal{T} 
    \forall z\,\beta(z)
    \lor 
    \exists v\, \delta(v)
\]
holds (where $\alpha$, $\gamma$, $\beta$ and $\delta$ are quantifier-free) if and only if (supposing the variables $w$ and $z$ to be distinct) there are terms $t_1(z,w), \dots, t_n(z,w)$ and $s_1(z,w), \dots, s_m(z,w)$ such that
\[
    \bigwedge_{i = 1}^{n}\alpha(t_i(z,w))
    \land 
    \gamma(w)
    \vdash_\mathcal{T}
    \beta(z)
    \lor
    \bigvee_{k = 1}^{m}\delta(s_k(z,w)).
\]
In the more general case with arbitrary $\bar{i}, \bar{h}, \bar{j}, \bar{k} \in \N$, \cref{t:description-P1-forall-exists} means that, given a universal theory $\mathcal{T}$, the statement
\[
    \mleft(\bigwedge_{i=1}^{\bar{i}} \forall y_i\,\alpha_{i}(y_i)\mright)
    \land
    \mleft(\bigwedge_{h=1}^{\bar{h}} \exists w_h \,\gamma_{h}(w_h) \mright)
    \vdash_\mathcal{T} 
    \mleft(\bigvee_{j=1}^{\bar{j}} \forall z_j\,\beta_{j}(z_j)\mright)
    \lor 
    \mleft(\bigvee_{k=1}^{\bar{k}} \exists v_k\, \delta_{k}(v_k)\mright)
\]
holds (where all the $\alpha_i$'s, $\gamma_h$'s, $\beta_j$'s and $\delta_k$'s are quantifier-free) if and only if (supposing the variables $w_1,\dots,w_{\bar h}, z_1, \dots, z_{\bar j}$ to be distinct) there are $n,n' \in \N$, $l_1, \dots, l_n \in \{1, \dots, \bar{i}\}$, $l_1', \dots, l_n' \in \{1, \dots, \bar{k}\}$, $g_{1} (\vec z,\vec w), \dots g_{n} (\vec z,\vec w)$ and $g'_{1} (\vec z,\vec w), \dots g'_{n'} (\vec z,\vec w)$
such that
\[
    \bigwedge_{i = 1}^{n}\alpha_{l_i}(t_i(\vec z,\vec w))
    \land 
    \bigwedge_{h = 1}^{\bar{h}}\gamma_h(w_h)
    \vdash_\mathcal{T}
    \bigvee_{j = 1}^{\bar{j}} \beta_j(z_j)
    \lor
    \bigvee_{k = 1}^{m}\delta_{l'_k}(s_k(\vec z,\vec w)).
\]

\begin{example}[Herbrand's theorem]\label{ex:herbrand}
    Let $\mathcal{T}$ be a universal theory and let $\delta(x_1, \dots, x_v)$
    be a quantifier-free formula, where $v \in \N$.
    By \cref{t:description-P1-forall-exists}, the condition
    \[
        \top \vdash_\mathcal{T} \exists x_1 \dots \exists x_v \, \delta(x_1, \dots, x_v)
    \]
    holds if and only if there are $n \in \N$ and lists of nullary terms $(g^{1}_{k}, \dots, g^v_k)_{k =1, \dots, n}$
    such that
    \[
        \top \vdash_\mathcal{T} \bigvee_{k = 1}^{n}\delta(g^{1}_{k}, \dots, g^{v}_{k}).
    \]
    This is the well-known restriction to existential sentences of Herbrand's theorem. (We recall that Herbrand's theorem in its full generality covers all first-order formulas.)
\end{example}

\begin{remark} \label{c:super-description-congruence-P-forall}
The results obtained so far allow us to characterize when a Boolean combination of universal formulas implies another Boolean combination of universal formulas, i.e., to completely describe the order on $\P^\EA_1$.
Indeed, it is enough to rewrite the premise in disjunctive normal form and the conclusion in conjunctive normal form.
For example, the inequality
\[
\forall y\, \alpha(y) \lor \lnot \forall w\, \gamma(w) \leq \forall z\, \beta(z) \land \lnot \forall v \, \delta (v)
\]
(in which the left- and right-hand sides are already in disjunctive and conjunctive normal form, respectively)
is equivalent to the system of four inequalities
\[
    \begin{cases}
        \forall y\, \alpha(y) \leq \forall z\, \beta(z)\\
        \forall y\, \alpha(y) \leq \exists v\, \lnot\delta (v)\\
        \exists w\, \lnot\gamma(w) \leq \forall z\, \beta(z) \\
        \exists w\, \lnot\gamma(w) \leq \exists v\, \lnot\delta (v),
    \end{cases}
\]
for each of which we can apply \cref{t:description-P1-forall-exists}.
\end{remark}

\begin{example}[Failure of the usual reduction to prenex normal form]
    In the usual reduction to prenex normal form, one uses the fact that a formula of the form
    \[
    (\exists x \,\alpha(x))\lor\beta
    \]
    is equivalent to
    \[
    \exists x\, (\alpha(x)\lor\beta).
    \]
    However, in the version of classical first-order logic whose semantics admits the empty structure, only one of the two implications always holds, namely
    \[
    \exists x \, (\alpha(x)\lor\beta) \vdash_{\mathcal{T}} (\exists x \,\alpha(x)) \lor\beta,
    \]
    with $\T$ any theory.
    
    We characterize when the converse holds in the following setting: let $\mathcal{T}$ be a universal theory,
    let $\alpha(x)$ be a quantifier-free formula with free variable $x$ and let $\beta$ be a closed quantifier-free formula.
    We write $\beta(x)$ when we consider $x$ to be a dummy variable of $\beta$.
    We show that the converse direction, namely
    \begin{equation} \label{eq:pnf}
        (\exists x \,\alpha(x)) \lor\beta \vdash_{\mathcal{T}} \exists x \, (\alpha(x)\lor\beta(x)),
    \end{equation}
    holds in the empty context if and only if $\beta\vdash_\T \bot$ or the language has at least a constant symbol.
    (Note that, if the language has a constant symbol, then every model is nonempty.)
    
    At first, we rewrite \eqref{eq:pnf} in a simpler form:
    \begin{align}
        (\exists x \,\alpha(x))\lor\beta\vdash_{\mathcal{T}}\exists x\, (\alpha(x)\lor\beta(x)) &\iff \exists x \,\alpha(x)\vdash_{\mathcal{T}}\exists x\, (\alpha(x)\lor\beta(x))\text{ and } \beta\vdash_{\mathcal{T}}\exists x \,(\alpha(x)\lor\beta(x)) \notag\\
        & \iff \beta\vdash_{\mathcal{T}}\exists x\, (\alpha(x)\lor\beta(x)) \label{eq:last}.
    \end{align}
    By \cref{t:description-P1-forall-exists}, seeing $\beta$ as $\exists() \,\beta$, \eqref{eq:last} is equivalent to the existence of $n\in\N$ and nullary terms $(f_i)_{i=1,\dots,n}$ such that
    \[
    \beta \vdash_{\mathcal{T}} \bigvee_{i=1}^n(\alpha(f_i)\lor\beta).
    \]
    This trivially holds if $\beta\vdash_{\mathcal{T}}\bot$ (take $n= 0$). If instead $\beta\not\vdash_{\mathcal{T}}\bot$ (so that we cannot take $n = 0$), this holds if and only if there is at least a nullary term in the language, i.e., if the language has at least a constant symbol.
\end{example}

\section{The construction of the free QA-one-step Boolean doctrine} \label{s:construction}
In \cite{AbbadiniGuffanti}, we introduced \emph{QA-one-step Boolean doctrines}.
This notion is meant to capture abstractly the pairs $(\P_0, \P_1)$ where $\P_0$ is the fragment of quantifier-free formulas modulo some theory $\T$, and $\P_1$ (which extends $\P_0$) is the fragment of formulas of quantifier alternation depth at most $1$ modulo $\T$.

In this section, we describe the free QA-one-step Boolean doctrine $(\P, \Free_1^\P)$ over a Boolean doctrine $\P$ (over a small category), i.e., the value at $\P$ of the left adjoint of the forgetful functor $(\P_0, \P_1) \mapsto \P_0$ from QA-one-step Boolean doctrines to Boolean doctrines.

This section does \emph{not} depend on \cref{sec:completenes-first-layer,s:Herbrand}; in particular, it does not rely on our discussion on universal ultrafilters and Herbrand's theorem.

In the next section, we will combine the results of this section with the doctrinal version of Herbrand's theorem to show that the free QA-one-step Boolean doctrine over a given Boolean doctrine $\P$ coincides with $(\P, \P_1^\EA)$.

\subsection{QA-one-step Boolean doctrines}

\begin{definition}[QA-one-step Boolean doctrine, {\cite[Def.~3.11]{AbbadiniGuffanti}}] \label{d:one-step-doctrine}
    A \emph{QA-one-step Boolean doctrine}\footnote{In future work, we plan to show that every QA-one-step Boolean doctrine over a small category embeds in some first-order Boolean doctrine. (\cref{t:section-6} below entails that this happens at least for the free QA-one-step Boolean doctrines.) This will characterize QA-one-step Boolean doctrines as the pairs $(\P_0, \P_1)$ appearing in some \emph{QA-stratification} (\cite[p.~12]{AbbadiniGuffanti}) of some first-order Boolean doctrine.  To reach these aims, we plan to adapt the proof of G\"odel's completeness theorem to show that every QA-one-step Boolean doctrine has enough models (= morphisms to the QA-one-step subset doctrine) to order-separate the formulas. This will assess that the notion of QA-one-step Boolean doctrine is ``the right one'', i.e., no axiom is missing.} over a category $\C$ with finite products is a componentwise injective natural transformation $i \colon \P_0 \hookrightarrow \P_1$ between two functors $\P_0, \P_1 \colon \C\op \to  \BA$ with the following properties.
    \begin{enumerate}
        \item \label{i:one-step-universal}
        (One-step universal)
        For every projection $\pr^{X\times Y}_X \colon X \times Y \to X$ in $\C$ and every $\beta \in \P_0(X \times Y)$, there is an element $\fa{Y}{X} \beta \in \P_{1}(X)$ such that, for all $\alpha \in \P_{1}(X)$, 
        \[
            \alpha \leq \fa{Y}{X} \beta \text{ in $\P_{1}(X)$} \Longleftrightarrow \P_{1}(\pr^{X\times Y}_X)(\alpha) \leq i_{X \times Y}(\beta) \text{ in $\P_{1}(X \times Y)$}.
        \]
        (Note that one such element $\fa{Y}{X} \beta$ is unique.)
        
        In other words, for every $X,Y\in\C$, $\P_1(\pr^{X\times Y}_X)\colon\P_1(X)\to\P_1(X\times Y)$ has a right $i_{X\times Y}$-relative adjoint, namely $\fa{Y}{X}\colon \P_0(X\times Y)\to \P_1(X)$ (\cite[Def.~2.2]{Ulmer1968}).
        
        \item \label{i:one-step-Beck-Chevalley}
        (One-step Beck-Chevalley)
        For every morphism $f\colon X'\to X$ in $\C$ and every $Y \in \C$, the following diagram in $\Pos$ commutes.
        \[
            \begin{tikzcd}
                {\P_0(X\times Y)} & {\P_{1}(X)}  \\
                {\P_0(X'\times Y)} & {\P_{1}(X')} 
                \arrow["{\P_0(f\times\id_{Y})}", from=1-1, to=2-1, swap]
                \arrow["{\P_{1}(f)}", from=1-2, to=2-2]
                \arrow["{\fa{Y}{X'}}"', from=2-1, to=2-2]
                \arrow["{\fa{Y}{X}}", from=1-1, to=1-2]
            \end{tikzcd}
        \]
        \item (One-step generation)
        For all $X \in \C$, $\P_{1}(X)$ is generated as a Boolean algebra by the union of the images of the functions $\fa{Y}{X} \colon \P_0(X \times Y) \to \P_{1}(X)$ for $Y$ ranging over $\C$.
    \end{enumerate}

\end{definition}

Since the natural transformation $i \colon \P_0 \hookrightarrow \P_1$ is componentwise injective, we will often suppose it to be a componentwise inclusion; then, we will avoid mentioning $i$, or we will present $i \colon \P_0 \hookrightarrow \P_1$ as a pair $(\P_0, \P_1)$.

\begin{example}
    Let $\LT^\T$ be the syntactic doctrine of first-order formulas modulo a theory $\T$ in a first-order language $(\F, \mathbb{P})$ as in \cref{fbf}. For every $n \in \N$ and context $\vec{x}$, we define the Boolean algebra $\LT^\T_n(\vec{x})$ as the set of equivalence classes of first-order formulas with quantifier alternation depth modulo $\T$ at most $n$. For every $n$, the assignment $\LT^\T_n$ can be extended to a functor
    \begin{equation} \label{eq:ltn}
        \LT^\T_n\colon\Ctx_\F\op\to \BA   
    \end{equation}
    by defining the reindexing $\LT^\T_n(\vec t(\vec x))\colon \LT^\T_n(\vec{y})\to\LT^\T_n(\vec{x})$ along the tuple of terms $\vec{t}(\vec{x})\colon \vec x\to \vec{y}$ as the restriction of $\LT^\T(\vec t(\vec x))$.
    
    The pair $(\LT^\T_0,\LT^\T_1)$ is the motivating example of a QA-one-step Boolean doctrine.
    We note that, for every $n \in \N$, the pair $(\LT^\T_n,\LT^\T_{n +1})$ is a QA-one-step Boolean doctrine, too.
\end{example}

\begin{remark}
    For every Boolean doctrine $\P \colon \C\op \to \BA$ with $\C$ small, the pair $(\P, \P_1^\EA)$ is a QA-one-step Boolean doctrine. 
    (See \cref{n:first-layer} for the definition of $\P_1^\EA$.)
    We will see that it is the free QA-one-step Boolean doctrine over $\P$.
\end{remark}

\begin{definition}[Morphism of QA-one-step Boolean doctrines, {\cite[Def.~3.15]{AbbadiniGuffanti}}]
    Let $\P_0 \hookrightarrow \P_1$ and $\R_0 \hookrightarrow \R_1$ be QA-one-step Boolean doctrines over $\C$ and $\D$, respectively.
    A \emph{morphism of QA-one-step Boolean doctrines} from $(\P_0, \P_1)$ to $(\R_0, \R_1)$ is a triple $(M, j_0,j_1)$ with $M \colon \C \to \D$ a functor preserving finite products and $j_0 \colon \P_0 \to \R_0 \circ M\op$ and $j_1 \colon \P_1 \to \R_1 \circ M\op$ natural transformations making the following diagram commute
    \[
    \begin{tikzcd}
        \P_0 \arrow[hook]{r}{} \arrow[swap]{d}{j_0} & \P_1 \arrow{d}{j_1}\\
        \R_0\circ M\op \arrow[swap,hook]{r}{} & \R_1\circ M\op
    \end{tikzcd}
    \]
    and such that, for all $X, Y \in \C$, the following diagram in $\Pos$ commutes.
    \[
        \begin{tikzcd}[column sep = 5em]
            \P_0(X \times Y) \arrow{r}{\fa{Y}{X}} \arrow[swap]{d}{(j_0)_{X \times Y}} & \P_{1}(X)\arrow{d}{(j_1)_{X}}\\
            \R_{0}(M(X) \times M(Y)) \arrow[swap]{r}{\fa{M(Y)}{M(X)}} & \R_{1}(M(X))
        \end{tikzcd}
    \]
\end{definition}

QA-one-step Boolean doctrines and morphisms form the category $\QA_{\leq 1}$ of \emph{QA-one-step Boolean doctrines}.

\subsection{The construction of the fibers (of the free QA-one-step Boolean doctrine)}\label{s:fibers-free}

We now exhibit how to freely add one layer of quantifier alternation to a Boolean doctrine over a small base category via generators and relations.
Let $\P \colon \C\op \to \BA$ be a Boolean doctrine, with $\C$ small, and let $S \in \C$.
Let $B_S$ be the free Boolean algebra over 
\begin{equation*}
A_S \coloneqq \bigsqcup_{Y \in \C}\P(S \times Y),
\end{equation*}
and, for each $Y \in \C$ and $\alpha \in \P(S \times Y)$, let $\fa{Y}{S} \alpha$ denote the image of $\alpha$ under the free map $A_S \to B_S$.
Let $\sim_S$ be the Boolean congruence on $B_S$ generated by the following relations:
for all $\bar{i}, \bar{j} \in \N$, $Y_1, \dots, Y_{\bar{i}}, Z_1,\dots,Z_{\bar{j}}\in\C$, $(\alpha_i \in \P(S \times Y_i))_{i = 1, \dots,  \bar{i}}$, $(\beta_j \in \P(S \times Z_j))_{j = 1, \dots,  \bar{j}}$ we impose the relation
\begin{equation} \label{eq:relation}
  \mleft[\bigwedge_{i=1}^{\bar{i}} \fa{Y_i}{S}\alpha_{i} \mright] \leq \mleft[\bigvee_{j=1}^{\bar{j}} \fa{Z_j}{S}\beta_{j}\mright]  
\end{equation}
in $B_S / {\sim_S}$ whenever there are
$n\in \N$, $l_1, \dots, l_n \in \{1, \dots, \bar{i}\}$ and $(g_{i} \colon S \times \prod_{j =1}^{\bar{j}}Z_j \to Y_{l_i})_{i = 1, \dots, n}$ such that in $\P(S \times \prod_{j =1}^{\bar{j}}Z_j)$
\begin{equation}\label{eq:def-sim-S}
    \bigwedge_{i = 1}^{n}\P(\ple{\pr^{S\times \Pi_h Z_h}_S, g_i})(\alpha_{l_i}) \leq \bigvee_{j = 1}^{\bar{j}}\P(\pr^{S\times \Pi_h Z_h}_{S\times Z_j})(\beta_j).
\end{equation}

\begin{notation}[Free one-step construction on objects]\label{n:free-on-obj}
    We let 
    \[
    \mathrm{Free}_1^{\P}(S)
    \]
    denote the quotient $B_S / {{\sim}_S}$. 
\end{notation}

\begin{lemma} \label{l:vdash-is-entailment-relation}
    Let $\P \colon \C\op \to \BA$ be a Boolean doctrine, with $\C$ small, and let $S \in \C$.
    Consider the following binary relation $\vdash$ on the set of finite subsets of the set $A_S \coloneqq \bigsqcup_{Y \in \C}\P(S \times Y)$:
    \begin{align*}
        &\{\alpha_i \in \P(S \times Y_i)\}_{i=1,\dots,\bar i}\vdash\{\beta_j \in \P(S \times Z_j)\}_{j=1,\dots,\bar j}\\
        &\iff \text{there are } n  \in \N, l_1, \dots, l_n \in \{1, \dots, \bar{i}\}  \text{ and } (g_{i} \colon S \times \prod_{j =1}^{\bar{j}}Z_j \to Y_{l_i})_{i = 1, \dots, n}\text{ such that in  }\P(S \times \prod_{j =1}^{\bar{j}}Z_j)\\
        & \phantom{\iff}  \bigwedge_{i = 1}^{n}\P(\ple{\pr^{S\times \Pi_h Z_h}_S, g_i})(\alpha_{l_i}) \leq \bigvee_{j = 1}^{\bar{j}}\P(\pr^{S\times \Pi_h Z_h}_{S\times Z_j})(\beta_j).
    \end{align*}
    The relation $\vdash$ is an entailment relation (in the sense of \cref{d:entailment-relation}).
\end{lemma}

\begin{proof}
    Let us prove that $\vdash$ is an entailment relation, i.e.\ it satisfies reflexivity, monotonicity and the cut rule.

    Let us prove reflexivity, i.e.\ that for every $Y\in\C$ and $\alpha\in\P(S\times Y)$, $\{\alpha\}\vdash\{\alpha\}$.
    Take $n = 1$, $l_1=1$ and $g_1=\pr^{S\times Y}_Y \colon S\times Y\to Y$.
    Then,
    \[\P(\ple{\pr^{S\times Y}_S, \pr^{S\times Y}_Y})(\alpha) = \alpha = \P(\pr^{S\times Y}_{S\times Y})(\alpha).
    \]

    Let us prove monotonicity: suppose $\{\alpha_i \in \P(S \times Y_i)\}_{i=1,\dots,\bar i}\vdash\{\beta_j \in \P(S \times Z_j)\}_{j=1,\dots,\bar j}$, and let us show $\{\alpha_i\}_{i=1,\dots,\bar i}\cup \{\gamma_h \in \P(S \times W_h)\}_{h=1,\dots,\bar h}\vdash\{\beta_j \}_{j=1,\dots,\bar j}\cup\{\delta_k \in \P(S \times V_k)\}_{k=1,\dots,\bar k}$. The latter trivially holds by taking the same $n$, $l_i$'s and $g_i$'s witnessing $\{\alpha_i \}_{i=1,\dots,\bar i}\vdash\{\beta_j \}_{j=1,\dots,\bar j}$.

    Let us prove the cut rule: let $\alpha_i \in \P(S \times Y_i)$ for $i \in \{1, \dots, \bar{i}\}$, $\beta_j \in \P(S \times Z_j)$ for $j \in \{1, \dots, \bar j\}$ and $\sigma \in \P(S \times W)$ and let us suppose that
    \begin{equation} \label{eq:hypothesis-of-cut}
        \{\alpha_i\}_{i=1,\dots,\bar i} \vdash\{\sigma\} \cup \{\beta_j \}_{j=1,\dots,\bar j} \quad \text{ and } \quad \{\alpha_i\}_{i=1,\dots,\bar i} \cup \{\sigma\} \vdash \{\beta_j \}_{j=1,\dots,\bar j}.
    \end{equation}
    Then, by the first condition in \eqref{eq:hypothesis-of-cut} there are
    $n  \in \N$, $l_1, \dots, l_n \in \{1, \dots, \bar{i}\}$  and $(g_{i} \colon S \times W\times \prod_{j =1}^{\bar{j}}Z_j \to Y_{l_i})_{i = 1, \dots, n}$ such that in $\P(S \times W\times \prod_{j =1}^{\bar{j}}Z_j)$
        \begin{equation}\label{eq:transitivity-1}
        \bigwedge_{i = 1}^{n}\P(\ple{\pr^{S\times W\times \Pi_h Z_h}_S, g_i})(\alpha_{l_i}) \leq \P(\pr^{S\times W\times \Pi_h Z_h}_{S\times W})(\sigma)\lor \bigvee_{j = 1}^{\bar{j}}\P(\pr^{S\times W\times \Pi_h Z_h}_{S\times Z_j})(\beta_j).
        \end{equation}
    Moreover, by the second condition in \eqref{eq:hypothesis-of-cut} there are 
    $n',m  \in \N$, $l'_1, \dots, l'_{n'} \in \{1, \dots, \bar{i}\}$, $(g'_{i} \colon S \times \prod_{j =1}^{\bar{j}}Z_j \to Y_{l_i})_{i = 1, \dots, n'}$ and $(t_{k} \colon S \times \prod_{j =1}^{\bar{j}}Z_j \to W)_{k = 1, \dots, m}$ such that in $\P(S \times \prod_{j =1}^{\bar{j}}Z_j)$
        \begin{equation}\label{eq:a'-s-b}
        \underbracket{\bigwedge_{i = 1}^{n'}\P(\ple{\pr^{S\times \Pi_h Z_h}_S, g'_i})(\alpha_{l'_i})}_{\eqqcolon A'}\land \bigwedge_{k = 1}^{m}\underbracket{\P(\ple{\pr^{S\times \Pi_h Z_h}_S, t_k})(\sigma)}_{\eqqcolon S_k} \leq \underbracket{\bigvee_{j = 1}^{\bar{j}}\P(\pr^{S\times \Pi_h Z_h}_{S\times Z_j})(\beta_j)}_{\eqqcolon B}.
        \end{equation}
    For every $k=1,\dots, m$, by applying $\P(\ple{\pr^{S\times\Pi_jZ_j}_S,t_k,\pr^{S\times\Pi_jZ_j}_{\Pi_jZ_j}})$ to \eqref{eq:transitivity-1}, we get in $\P(S\times \prod_{j=1}^{\bar j}Z_j)$ 
    \begin{equation*}
        \underbracket{\bigwedge_{i = 1}^{n}\P(\ple{\pr^{S\times \Pi_h Z_h}_S, g_i\circ \ple{\pr^{S\times\Pi_hZ_h}_S,t_k,\pr^{S\times\Pi_hZ_h}_{\Pi_hZ_h}}})(\alpha_{l_i})}_{\eqqcolon A_k} 
        \leq \P(\ple{\pr^{S\times \Pi_h Z_h}_{S},t_k})(\sigma)\lor \bigvee_{j = 1}^{\bar{j}}\P(\pr^{S\times \Pi_h Z_h}_{S\times Z_j})(\beta_j),
        \end{equation*}
        i.e.
        \begin{equation}\label{eq_a-s-b}
            A_k\leq S_k\lor B.
        \end{equation}
        Let us prove that $A'\land\bigwedge_{k = 1}^{m}A_k\leq B$:
        \begin{align*}
            A'\land\bigwedge_{k = 1}^{m}A_k&\leq A'\land\bigwedge_{k = 1}^{m} (S_k\lor B)&&\text{by \eqref{eq_a-s-b}}\\
            &=(A'\land\bigwedge_{k = 1}^{m}S_k)\lor(A'\land B)\\
            &\leq B&&\text{by \eqref{eq:a'-s-b}},
        \end{align*}
        as desired.
        Let us rewrite the inequality $A'\land\bigwedge_{k = 1}^{m}A_k\leq B$ as:
        \begin{align*}
        &\bigwedge_{i = 1}^{n'}\P(\ple{\pr^{S\times \Pi_h Z_h}_S, g'_i})(\alpha_{l'_i})\land \bigwedge_{k = 1}^{m}\bigwedge_{i = 1}^{n}\P(\ple{\pr^{S\times \Pi_h Z_h}_S, g_i\circ \ple{\pr^{S\times\Pi_hZ_h}_S,t_k,\pr^{S\times\Pi_hZ_h}_{\Pi_hZ_h}}})(\alpha_{l_i})\\
        &\leq \bigvee_{j = 1}^{\bar{j}}\P(\pr^{S\times \Pi_h Z_h}_{S\times Z_j})(\beta_j),
        \end{align*}
    which proves $\{\alpha_i \}_{i=1,\dots,\bar i}\vdash\{\beta_j \}_{j=1,\dots,\bar j}$, as desired.    
\end{proof}

\begin{theorem}\label{t:characterization-fibers}
    Let $\P \colon \C\op \to \BA$ be a Boolean doctrine, with $\C$ small, and let $S \in \C$. Let $\bar{i}, \bar{j} \in \N$ $Y_1, \dots, Y_{\bar{i}}, Z_1,\dots,Z_{\bar{j}}\in\C$, $(\alpha_i \in \P(S \times Y_i))_{i = 1, \dots,  \bar{i}}$, $(\beta_j \in \P(S \times Z_j))_{j = 1, \dots,  \bar{j}}$. The following are equivalent:
    \begin{enumerate}
        \item\label{i:meets-under-joins} In $B_S / {\sim_S}$
        \begin{equation*} 
          \mleft[\bigwedge_{i=1}^{\bar{i}} \fa{Y_i}{S}\alpha_{i} \mright] \leq \mleft[\bigvee_{j=1}^{\bar{j}} \fa{Z_j}{S}\beta_{j}\mright] .
        \end{equation*}
        
        \item \label{i:existence-of-things}
        There are $n  \in \N$, $l_1, \dots, l_n \in \{1, \dots, \bar{i}\}$  and $(g_{i} \colon S \times \prod_{j =1}^{\bar{j}}Z_j \to Y_{l_i})_{i = 1, \dots, n}$ such that in $\P(S \times \prod_{j =1}^{\bar{j}}Z_j)$
        \begin{equation*}
        \bigwedge_{i = 1}^{n}\P(\ple{\pr^{S\times \Pi_h Z_h}_S, g_i})(\alpha_{l_i}) \leq \bigvee_{j = 1}^{\bar{j}}\P(\pr^{S\times \Pi_h Z_h}_{S\times Z_j})(\beta_j).
        \end{equation*}

    \end{enumerate}
\end{theorem}
\begin{proof}
    By \cref{l:vdash-is-entailment-relation}, together with the version for Boolean algebras of the fundamental theorem of entailment relations, which we postpone to \cref{s:entailment} (\cref{t:fundamental-theorem-of-entailment-relations-boolean-algebras}).
\end{proof}

\begin{remark}
    \Cref{t:characterization-fibers} is all we need to understand the order on $B_S/{\sim_S}$. Indeed, knowing when a finite meet of generators is below a finite join of generators of a Boolean algebra, we can deduce when any given Boolean combination of generators is below any given Boolean combination of generators.
\end{remark}

\begin{remark}
    Comparing \cref{t:characterization-fibers,t:description-P1}, we can already see that, for every $S \in \C$, $\Free_1^\P(S)$ and $\P_1^\EA(S)$ are isomorphic Boolean algebras.
    Since $(\P,\P_1^\EA)$ is a QA-one-step Boolean doctrine, one could already deduce that, via the aforementioned fiberwise isomorphisms, also the assignment $S \mapsto\Free_1^\P(S)$ can be extended to a QA-one-step Boolean doctrine $(\P, \Free_1^\P)$.
    However, in this section, we give a direct description of $\Free_1^\P$ as a Boolean doctrine and a direct proof that $(\P,\Free^\P_1)$ is indeed a QA-one-step Boolean doctrine, without relying on our results about $\P_1^\EA$.
\end{remark}

\begin{remark}
    If desired, one could obtain a more explicit construction of $\Free_1^\P(S)$ as the poset reflection of a preordered set with underlying set the finite powerset of the finite powerset of two copies of $A_S=\bigsqcup_{Y \in \C} \P(S \times Y)$; the idea being that one copy of $\alpha \in \P(S \times Y)$ means $\fa{Y}{S} \alpha$, the second copy means $\lnot \fa{Y}{S} \alpha$, and a finite set of finite sets of these elements expresses a bounded lattice combination of them in disjunctive normal form.
    See \cref{r:entailment-DNF,r:two-copies} for more details.
\end{remark}

\subsection{The construction gives a QA-one-step Boolean doctrine}\label{s:qa-free}

\begin{notation}[Free one-step construction on morphisms] \label{n:one-step-on-morphisms}
    Let $\P\colon\C\op\to\BA$ be a Boolean doctrine, with $\C$ small.
    For a morphism $f \colon S \to S'$ in $\C$ we denote by
    \[
    \Free_1^{\P}(f) \colon \mathrm{Free}_1^{\P}(S') \to \mathrm{Free}_1^{\P}(S)
    \]
    the unique Boolean homomorphism that, for all $Y\in \C$ and $\alpha \in \P(S' \times Y)$, maps the generator $[\fa{Y}{S'}\alpha]$ to $[\fa{Y}{S} (\P(f \times \id_{Y})(\alpha))]$. (The next lemma shows that such a homomorphism indeed exists and is unique.)
\end{notation}

\begin{lemma}
    Let $\P\colon\C\op\to\BA$ be a Boolean doctrine with $\C$ small and $f \colon S \to S'$ a morphism in $\C$. There is a unique Boolean homomorphism 
    \[
    \mathrm{Free}_1^{\P}(S') \to \mathrm{Free}_1^{\P}(S)
    \]
    that, for all $Y\in \C$ and $\alpha \in \P(S' \times Y)$, maps the generator $[\fa{Y}{S'}\alpha]$ to $[\fa{Y}{S} \P(f \times \id_{Y})(\alpha)]$.
\end{lemma}
\begin{proof}
    We shall prove that there is a unique Boolean homomorphism $\mathrm{Free}_1^{\P}(S') \to \mathrm{Free}_1^{\P}(S)$ such that the following diagram commutes
    \[
    \begin{tikzcd}
        A_{S'} \arrow[swap]{d}{\alpha \in \P(S' \times Y) \mapsto \P(f \times \id_Y)(\alpha) \in \P(S \times Y)} \arrow{r}{\iota_{S'}} & B_{S'}/{\sim_{S'}} = \Free_1^{\P}(S')\arrow[dashed, shift left=13pt]{d}\\
         A_S\arrow{r}{\iota_S}   & B_{S}/{\sim_{S}} = \Free_1^{\P}(S),
    \end{tikzcd}
    \]
    where $\iota_S$ maps $\alpha\in\P(Y\times S)$ to $[\fa{Y}{S}\alpha]\in\Free_1^{\P}(S)$ (and $\iota_{S'}$ has a similar definition).

    By the universal property of algebras given by generators and relations, proving this amounts to proving that, roughly speaking, every relation of the form \eqref{eq:relation} defining $\Free_1^\P(S')$ holds in $\Free_1^\P(S)$ in an appropriate sense.
    More precisely, let $n, \bar{i}, \bar{j} \in \N$, $l_1, \dots, l_n \in \{1, \dots, \bar{i}\}$, $Z_1,\dots,Z_{\bar{j}}, Y_1, \dots, Y_{\bar{i}}\in\C$, $(\alpha_i \in \P(S' \times Y_i))_{i = 1, \dots,  \bar{i}}$, $(\beta_j \in \P(S' \times Z_j))_{j = 1, \dots,  \bar{j}}$ and $(g_{i} \colon S' \times \prod_{j =1}^{\bar{j}}Z_j \to Y_{l_i})_{i = 1, \dots, n}$ be such that in $\P(S' \times \prod_{j =1}^{\bar{j}}Z_j)$
    \begin{equation}\label{eq:def-sim-S-thm}
        \bigwedge_{i = 1}^{n}\P(\ple{\pr^{S'\times \Pi_h Z_h}_{S'}, g_i})(\alpha_{l_i}) \leq \bigvee_{j = 1}^{\bar{j}}\P(\pr^{S'\times \Pi_h Z_h}_{S'\times Z_j})(\beta_j).
    \end{equation}
    (This induces the relation $[\bigwedge_{i=1}^{\bar{i}} \fa{Y_i}{S'}\alpha_{i}] \leq [\bigvee_{j=1}^{\bar{j}} \fa{Z_j}{S'}\beta_{j}]$ in $\Free_1^\P(S')$).
    We shall prove that
    \begin{equation} \label{eq:relations-after-f}
        \mleft[\bigwedge_{i=1}^{\bar{i}} \fa{Y_i}{S}\P(f\times \id_{Y_i})(\alpha_{i}) \mright] \leq \mleft[\bigvee_{j=1}^{\bar{j}} \fa{Z_j}{S}\P(f\times \id_{Z_j})(\beta_{j})\mright]
    \end{equation}
    in $\Free_1^\P(S) = B_S/{\sim_S}$.
    By the definition of $\Free_1^\P(S)$ via generators and relations, to prove \eqref{eq:relations-after-f} it suffices to take
    $n, \bar{i}, \bar{j} \in \N$, $l_1, \dots, l_n \in \{1, \dots, \bar{i}\}$, $Z_1,\dots,Z_{\bar{j}}, Y_1, \dots, Y_{\bar{i}}\in\C$, $(\P(f\times\id_{Y_i})(\alpha_i )\in \P(S \times Y_i))_{i = 1, \dots,  \bar{i}}$, $(\P(f\times\id_{Y_i})(\beta_j) \in \P(S \times Z_j))_{j = 1, \dots,  \bar{j}}$ and $(g_{i}\circ(f\times \id_{\Pi_j Z_j}) \colon S \times \prod_{j =1}^{\bar{j}}Z_j \to Y_{l_i})_{i = 1, \dots, n}$ and show that in $\P(S \times \prod_{j =1}^{\bar{j}}Z_j)$
    \begin{equation*}
        \bigwedge_{i = 1}^{n}\P(\ple{\pr^{S\times \Pi_h Z_h}_{S}, g_{i}\circ(f\times \id_{\Pi_h Z_h}})(\P(f\times\id_{Y_{l_i}})(\alpha_{l_i})) \leq \bigvee_{j = 1}^{\bar{j}}\P(\pr^{S\times \Pi_h Z_h}_{S\times Z_j})(\P(f\times\id_{Z_j})(\beta_j)),
    \end{equation*}
    or, equivalently,
    \begin{align*}
        \bigwedge_{i = 1}^{n}\P(\ple{f\circ\pr^{S\times \Pi_h Z_h}_S,g_{i}\circ(f\times \id_{\Pi_h Z_h})})(\alpha_{l_i})) \leq \bigvee_{j = 1}^{\bar{j}}
        \P(f\times\pr^{\Pi_h Z_h}_{Z_j})(\beta_j).
    \end{align*}
    The latter holds:
    \begin{align*}
        &\bigwedge_{i = 1}^{n}\P(\ple{f\circ\pr^{S\times \Pi_h Z_h}_S,g_{i}\circ(f\times \id_{\Pi_h Z_h})})(\alpha_{l_i}))\\
        &=\bigwedge_{i = 1}^{n}\P(\ple{\pr^{S'\times \Pi_h Z_h}_{S'}\circ (f\times \id_{\Pi_h Z_h}), g_i\circ (f\times \id_{\Pi_h Z_h})})(\alpha_{l_i}) \\
        &=\bigwedge_{i = 1}^{n}\P(f\times \id_{\Pi_h Z_h})(\P(\ple{\pr^{S'\times \Pi_h Z_h}_{S'}, g_i})(\alpha_{l_i}))\\
        & \leq \bigvee_{j = 1}^{\bar{j}}\P(f\times \id_{\Pi_h Z_h})(\P(\pr^{S'\times \Pi_h Z_h}_{S'\times Z_j})(\beta_j)) && \text{applying $\P(f\times \id_{\Pi_h Z_h})$ to \eqref{eq:def-sim-S-thm}}\\
        &= \bigvee_{j = 1}^{\bar{j}}    \P(f\times\pr^{\Pi_h Z_h}_{Z_j})(\beta_j).
    \end{align*}
    This concludes the proof.    
\end{proof}

\begin{lemma}
    Let $\P\colon\C\op\to\BA$ be a Boolean doctrine with $\C$ small; the assignment on objects described in \cref{n:free-on-obj} and the assignment on morphisms described in \cref{n:one-step-on-morphisms} is a functor, denoted $\Free_1^\P$.
\end{lemma}

\begin{proof}
    Let us prove that it preserves the identities.
    Let $S \in \C$.
    The morphism $\Free_1^\P(\id_S)$ is the unique Boolean homomorphism that, for every $Y \in \C$ and $\alpha \in \P(S' \times Y)$, maps $[\fa{Y}{S} \alpha]$ to $[\fa{Y}{S} \P(\id_S \times \id_Y)(\alpha)]$, i.e.\ to $[\fa{Y}{S} (\alpha)]$.
    Since also the identity on $\Free_1^\P(S)$ satisfies this property, $\Free_1^\P(\id_S)$ is the identity on $\Free_1^\P(S)$.
    This proves that $\Free_1^\P$ preserves the identities.

    Let us prove it preserves composition.
    Let $f \colon S \to S'$ and $f' \colon S' \to S''$ be morphisms in $\C$.
    The morphism $\Free_1^\P(f' \circ f)$ is the unique Boolean homomorphism that, for every $Y \in \C$ and $\alpha \in \P(S'' \times Y)$, maps $[\fa{Y}{S''} \alpha]$ to $[\fa{Y}{S} \P((f' \circ f) \times \id_Y)(\alpha)]$.
    Let us prove that also $\Free_1^\P(f) \circ \Free_1^\P(f')$ satisfies this property.
    Let $Y \in \C$ and $\alpha \in \P(S'' \times Y)$.
    The function $\Free_1^\P(f')$ maps the element $[\fa{Y}{S''} \alpha]$ to $[\fa{Y}{S'} \P(f' \times \id_Y)(\alpha)]$; the latter element is mapped by $\Free_1^\P(f)$ to $[\fa{Y}{S'} \P(f \times \id_{Y})(\P(f' \times \id_Y)(\alpha))]$, i.e.\ to $[\fa{Y}{S'} (\P((f' \circ f) \times \id_Y)(\alpha))]$, as desired.
\end{proof}

Recall that our goal is to define the free QA-one-step Boolean doctrine over $\P$. We have already defined the functor $\Free_1^{\P}$, but we need to define the connecting natural transformation $\forall\tmn \colon \P \hookrightarrow \Free_1^{\P}$. For every $S\in\C$, let us define the component at $S$ as
\begin{align}\label{eq:nat-transf-qa}
    \P(S) & \longrightarrow \Free_1^{\P}(S)\\
    \alpha & \longmapsto [\fa{\tmn}{S} \alpha].\notag
\end{align}
This will allow us to define a Boolean doctrine morphism $(\id_\C,\forall\tmn) \colon \P \hookrightarrow \Free_1^{\P}$.

\begin{lemma}
    Let $\P\colon \C\op \to \BA$ be a Boolean doctrine, with $\C$ small.
    $\forall\tmn \colon \P \to \Free_1^{\P}$ is indeed a natural transformation.
    Therefore, $(\id_\C,\forall\tmn) \colon \P \to \Free_1^{\P}$ is a Boolean doctrine morphism.
\end{lemma}

\begin{proof}
    Let $f \colon S \to S'$ be a morphism in $\C$.
    We shall prove that the following diagram commutes.
    \[
    \begin{tikzcd}[column sep = 5em]
        \P(S') \arrow{r}{[\fa{\tmn}{S'}-]} \arrow[swap]{d}{\P(f)} & \Free_1^\P(S') \arrow{d}{\Free_1^\P(f)}\\
        \P(S) \arrow[swap]{r}{[\fa{\tmn}{S}-]} & \Free_1^\P(S)
    \end{tikzcd}
    \]
    Let $\alpha \in \P(S')$.
    Indeed, we have
    \begin{equation*}
        \Free_1^\P(f)([\fa{\tmn}{S'}\alpha]) = [\fa{\tmn}{S}\P(f \times \id_\tmn)(\alpha)] = [\fa{\tmn}{S}\P(f)(\alpha)]. \qedhere
    \end{equation*}
\end{proof}

\begin{lemma}
    Let $\P\colon \C\op \to \BA$ be a Boolean doctrine, with $\C$ small.
    For all $S \in \C$, the map 
    \begin{align*}
        [\fa{\tmn}{S}-] \colon \P(S) & \longrightarrow \Free_1^{\P}(S)\\
        \alpha & \longmapsto [\fa{\tmn}{S} \alpha]
    \end{align*}
    is injective.
\end{lemma}

\begin{proof}
    By \cref{t:characterization-fibers}, for all $\alpha, \beta \in \P(S)$ we have $[\fa{\tmn}{S} \alpha] \leq [\fa{\tmn}{S} \beta]$ if and only if there are $n \in \N$, $(g_i \colon S \times \tmn \to \tmn)_{i = 1, \dots, n}$ such that in $\P(S \times \tmn)$
    \[
    \bigwedge_{i=1}^n \P(\langle \pr_S^{S \times \tmn}, g_i \rangle)(\alpha) \leq \P(\pr_{S \times \tmn}^{S \times \tmn})(\beta)
    \]
    i.e., $\alpha \leq \beta$.
\end{proof}

\begin{proposition}\label{p:free-is-one-step}
    For every Boolean doctrine $\P \colon \C\op \to \BA$ with $\C$ small, $\forall\tmn\colon \P \hookrightarrow \Free_1^\P$ (as in \eqref{eq:nat-transf-qa}) is a QA-one-step Boolean doctrine. For every $X,Y\in \C$, for every $\beta\in \P(X\times Y)$, its quantification $\fa{Y}{X}\beta\in\Free_1^\P(X)$ is given by $[\fa{Y}{X}\beta]$.
\end{proposition}
\begin{proof}
    We prove the one-step universal condition.
    Let $\pr^{X\times Y}_X \colon X \times Y \to X$ be a projection in $\C$, let $\beta \in \P(X \times Y)$.
    Let us prove that, for all $\alpha \in \Free_1^\P(X)$,
    \begin{equation}\label{eq:qa-one-step-univers-condition}
        \alpha \leq [\fa{Y}{X} \beta] \text{ in $\Free_1^\P(X)$} \Longleftrightarrow \Free_1^\P(\pr^{X\times Y}_X)(\alpha) \leq [\fa{\tmn}{X \times Y}\beta] \text{ in $\Free_1^\P(X \times Y)$}.
    \end{equation}
    Every $\alpha\in \Free_1^\P(X)$ is a finite join of finite meets of generators and negations of generators of the Boolean algebra $\Free_1^\P(X)$.
    It is easy to see that the set of $\alpha$'s satisfying \eqref{eq:qa-one-step-univers-condition} is closed under finite joins.
    Therefore, it is enough to prove \eqref{eq:qa-one-step-univers-condition} for a finite meet of generators and negations of generators of $\Free_1^\P(X)$.
    Let then $\alpha = \bigwedge_{i = 1}^{\bar i} [\fa{W_i}{X}\gamma_i] \land \bigwedge_{j = 1}^{\bar j}  \lnot [\fa{Z_j}{X}\delta_j]$, with $\gamma_i\in\P(X\times W_i)$ for $i=1,\dots,{\bar i}$ and $\delta_j\in\P(X\times Z_j)$ $j=1,\dots,{\bar j}$.
    We shall then prove
    \begin{align*}
        &\bigwedge_{i = 1}^{\bar i} [\fa{W_i}{X}\gamma_i] \leq [\fa{Y}{X} \beta] \lor \bigvee_{j = 1}^{\bar j} [\fa{Z_j}{X}\delta_j] \text{ in $\Free_1^\P(X)$} \\
        &\Longleftrightarrow \bigwedge_{i = 1}^{\bar i} \Free_1^\P(\pr^{X\times Y}_X)([\fa{W_i}{X}\gamma_i]) \leq [\fa{\tmn}{X \times Y}\beta]  \lor \bigvee_{j = 1}^{\bar j} \Free_1^\P(\pr^{X\times Y}_X)([\fa{Z_j}{X}\delta_j])\text{ in $\Free_1^\P(X \times Y)$},
    \end{align*}
    i.e.
    \begin{align}
        &\bigwedge_{i = 1}^{\bar i} [\fa{W_i}{X}\gamma_i] \leq [\fa{Y}{X} \beta] \lor \bigvee_{j = 1}^{\bar j} [\fa{Z_j}{X}\delta_j] \text{ in $\Free_1^\P(X)$} \label{eq:equivalence-1}\\
        &\Longleftrightarrow \bigwedge_{i = 1}^{\bar i} [\fa{W_i}{X\times Y}\P(\pr^{X\times Y\times W_i}_{X\times W_i})(\gamma_i)] \notag\\
        &\phantom{\Longleftrightarrow}\leq [\fa{\tmn}{X \times Y}\beta]  \lor \bigvee_{j = 1}^{\bar j} [\fa{Z_j}{X\times Y}\P(\pr^{X\times Y\times Z_j}_{X\times Z_j})(\delta_j)]\text{ in $\Free_1^\P(X \times Y)$}\label{eq:equivalence-2}.
    \end{align}
    By \cref{t:characterization-fibers}, \eqref{eq:equivalence-1} is equivalent to the existence of 
    $n  \in \N$, $l_1, \dots, l_n \in \{1, \dots, \bar{i}\}$  and $(g_{i} \colon X \times Y\times  \prod_{j =1}^{\bar{j}}Z_j \to W_{l_i})_{i = 1, \dots, n}$ such that in $\P(X \times Y\times \prod_{j =1}^{\bar{j}}Z_j)$
        \begin{equation*}
        \bigwedge_{i = 1}^{n}\P(\ple{\pr^{X\times Y\times \Pi_h Z_h}_X, g_i})(\gamma_{l_i}) \leq \P(\pr^{X\times Y\times \Pi_h Z_h }_{X\times Y})(\beta)\lor \bigvee_{j = 1}^{\bar{j}}\P(\pr^{X\times Y\times \Pi_h Z_h}_{X\times Z_j})(\delta_j),
        \end{equation*}
    which we rewrite as
    \begin{align*}
        &\bigwedge_{i = 1}^{n}\P(\ple{\pr^{X\times Y\times \tmn\times \Pi_h Z_h}_{X\times Y}, g_i})(\P(\pr^{X\times Y\times W_{l_i}}_{X\times W_{l_i}})(\gamma_{l_i})) \\
        &\leq \P(\pr^{X\times Y\times \tmn\times \Pi_h Z_h }_{X\times Y\times \tmn})(\beta)\lor \bigvee_{j = 1}^{\bar{j}}\P(\pr^{X\times Y\times \tmn\times \Pi_h Z_h}_{X\times Y\times Z_j})(\P(\pr^{X\times Y\times Z_j}_{X\times Z_j})(\delta_j)).
        \end{align*}
    which allows us to see, by using \cref{t:characterization-fibers} again, that such existence is equivalent to \eqref{eq:equivalence-2}.

    The one-step Beck-Chevalley condition trivially follows from the definition of $\Free_1^\P$ on morphisms.

    The one-step generation follows from the definition of $\Free_1^\P(X)$ and the fact that, by the one-step universal condition above, the element $[\fa{Y}{X} \alpha]$ is the quantification $\fa{Y}{X}[\fa{\tmn}{X \times Y}\alpha]$ of $[\fa{\tmn}{X \times Y}\alpha]$.
\end{proof}

\subsection{The universal property}\label{s:univ-prop}

To prove that our free construction is the free QA-one-step Boolean doctrine, we need a slight modification of \cref{l:implication}, namely \cref{l:implication-QA} below.
In this version, instead of a first-order Boolean doctrine we have a QA-one-step Boolean doctrine. The proof is essentially the same as the proof of \cref{l:implication}\footnote{One only needs to use the property \eqref{i:one-step-universal} in \cref{d:one-step-doctrine} instead of the adjunction \eqref{i:h3} in \cref{d:bool_ex_doc}, and to observe that elements in the 0-th layer of a QA-one-step Boolean doctrine can be quantified.}. To prove \cref{l:implication-QA}, similar variations of \cref{l:universal-join-universal,l:forall-join-univ-at-S} may be used, which we state as \cref{l:universal-join-universal-QA,l:forall-join-univ-at-S-QA}, respectively. They, too,  are proved similarly to their first-order versions.

In passing, we note that the versions for first-order Boolean doctrines (\cref{l:universal-join-universal,l:forall-join-univ-at-S,l:implication}) follow from the ones for QA-one-step Boolean doctrines (\cref{l:universal-join-universal-QA,l:forall-join-univ-at-S-QA,l:implication-QA}) by viewing a first-order Boolean doctrine $\R$ as a QA-one-step Boolean doctrine $(\R, \R)$.

\begin{lemma}\label{l:universal-join-universal-QA}
    Let $(\P_0,\P_1)$ be a QA-one-step Boolean doctrine over a category $\C$. Let $X,Y \in \C$, $\alpha,\gamma\in\P_1(X)$ and $\beta\in \P_0(X \times Y)$. Then
    \[
    \alpha \leq \gamma \lor \fa{Y}{X} \beta \text{ in $\P_1(X)$} \Longleftrightarrow \P_1(\pr^{X\times Y}_X)(\alpha) \leq \P_1(\pr^{X\times Y}_X)(\gamma) \lor \beta \text{ in $\P_1(X \times Y)$}.
    \]
\end{lemma}
\begin{proof}
    The proof is similar to the proof of \cref{l:universal-join-universal}.
\end{proof}

\begin{lemma}[$\forall$ distributes over $\bigvee$ with disjoint variables]\label{l:forall-join-univ-at-S-QA}
    Let $(\P_0,\P_1)$ be a QA-one-step Boolean doctrine over a category $\C$, let $S,X_1,\dots,X_n\in\C$, and let $\alpha_i\in \P_0(S\times  X_i)$ for $i=1,\dots,n$. Then in $\P_1(S)$
    \[
    \bigvee_{i=1}^n \fa{X_i}{S} \alpha_i = \fa{\Pi_j X_j}{S}\mleft(\bigvee_{i=1}^n\P_0(\pr^{S\times \Pi_j X_j}_{S\times X_i})(\alpha_i)\mright).
    \]
\end{lemma}
\begin{proof}
    The proof is similar to the proof of \cref{l:forall-join-univ-at-S}.
\end{proof}

\begin{lemma} \label{l:implication-QA}
    Let $\P\colon\C\op \to \BA$ be a Boolean doctrine, let $(\R_0,\R_1)$ be a QA-one-step Boolean doctrine over a category $\D$, and let $(M, \m) \colon \P \to \R_0$ be a Boolean doctrine morphism. Let $\bar{i}, \bar{j}\in\N$, let $S, Y_1, \dots, Y_{\bar{i}}, Z_1, \dots, Z_{\bar{j}} \in \C$, $(\alpha_i \in \P(S \times Y_i))_{i = 1, \dots,  \bar{i}}$ and $(\beta_j \in \P(S \times Z_j))_{j = 1, \dots,  \bar{j}}$. Suppose there are $n \in \N$, $l_1, \dots, l_n \in \{1, \dots, \bar{i}\}$ and $(g_{i} \colon S \times \prod_{j =1}^{\bar{j}}Z_j \to Y_{l_i})_{i = 1, \dots, n}$ such that in $\P(S \times \prod_{j =1}^{\bar{j}}Z_j)$
    \[
        \bigwedge_{i = 1}^{n}\P(\ple{\pr^{S\times \Pi_h Z_h}_S, g_i})(\alpha_{l_i}) \leq \bigvee_{j = 1}^{\bar{j}}\P(\pr^{S\times \Pi_h Z_h}_{S\times Z_j})(\beta_j).
    \]
    Then in $\R_1(M(S))$ we have
    \[
        \bigwedge_{i=1}^{\bar{i}} \fa{M(Y_i)}{M(S)}\m_{S\times Y_i}(\alpha_{i}) \leq \bigvee_{j=1}^{\bar{j}} \fa{M(Z_j)}{M(S)}\m_{S\times Z_j}(\beta_{j}).
    \]
\end{lemma}
\begin{proof}
    The proof is similar to the proof of \cref{l:implication}.
\end{proof}

There is a forgetful functor $U$ from the category $\QA_{\leq 1}$ of QA-one-step Boolean doctrines to the category of Boolean doctrines mapping a morphism $(M,j_0,j_1)\colon(\P_0, \P_1)\to (\R_0,\R_1)$ to $(M,j_0)\colon\P_0\to \R_0$. In what follows, we shall prove that the restriction of $U$ to the subcategories in which the base categories are small has a left adjoint, defined thanks to our free one-step construction.

\begin{theorem}\label{t:univ-prop-counit-QA}
    For every Boolean doctrine $\P \colon \C\op \to \BA$ with $\C$ small, every QA-one-step Boolean doctrine $(\R_0, \R_1)$ over a category $\D$, and every Boolean doctrine morphism $(M, \m) \colon \P \to \R_0$, there is a unique QA-one-step Boolean doctrine morphism $(\bar{M},\bar{\m}_0,\bar{\m}_1) \colon (\P, \Free_1^\P) \to (\R_0, \R_1)$  such that $(\bar{M},\bar{\m}_0)=(M,\m)$.
    \[
        \begin{tikzcd}[column sep = 4em, row sep = 4em]
            \P \arrow[swap]{dr}{(M, \m)} \arrow{r}{\id_\P}& \P = U(\P, \Free_1^\P) \arrow[dashed]{d}{(\bar{M}, \bar{\m}_0) = U(\bar{M}, \bar{\m}_0, \bar{\m}_1)} & (\P, \Free_1^\P) \arrow[dashed]{d}{\exists! (\bar{M}, \bar{\m}_0, \bar{\m}_1)}\\
            & \R_0 = U(\R_0, \R_1) & (\R_0, \R_1)
        \end{tikzcd}
    \]
\end{theorem}

\begin{proof}
    Let $\P \colon \C\op \to \BA$ be a Boolean doctrine with $\C$ small, let  $(\R_0, \R_1)$ be a QA-one-step Boolean doctrine over a category $\D$, and let $(M, \m) \colon \P \to \R_0$ be a Boolean doctrine morphism.
    We shall prove that there is a unique natural transformation $\bar{\m}_1\colon \Free_1^\P \to \R_1\circ M\op$ such that $({M},{\m},\bar{\m}_1) \colon (\P, \Free_1^\P) \to (\R_0, \R_1)$ is a QA-one-step Boolean doctrine morphism.

    Let us prove the existence.
    For every $S\in\C$, let us show that there is a unique Boolean homomorphism $(\bar\m_1)_S\colon\Free_1^\P(S)\to \R_1(M(S))$ that, for every $Y\in\C$ and $\alpha\in\P(Y\times S)$, maps the generator $[\fa{Y}{S}\alpha]$ to $\fa{M(Y)}{M(S)}\m_{S\times Y}(\alpha)$; in other words, such that the following diagram in $\Set$ commutes.
    \[
    \begin{tikzcd}[column sep = 6em, row sep = 6em]
        A_S \coloneqq \bigsqcup_{Y \in \C} \P(S \times Y) \arrow{r}{\alpha \mapsto [\fa{Y}{S}\alpha]} \arrow[swap]{rd}{\alpha \mapsto \fa{M(Y)}{M(S)}\m_{S\times Y}(\alpha)} & \Free_1^\P(S) \arrow[dashed]{d}{\exists! (\bar\m_1)_S}\\
        & \R_1(M(S))
    \end{tikzcd}
    \]

    For this, we use the universal property of the Boolean algebra $\Free_1^\P(S)$ as given via generators and relations.
    It is enough to show that the relations defining $\Free_1^\P(S)$ hold in $\R_1(M(S))$ once the generators are interpreted in $\R_1(M(S))$ rather than in $\Free_1^\P(S)$.
    We recall the definition of $\Free_1^\P$ on an object $S\in\C$ from \cref{n:free-on-obj} and the paragraph that precedes it.
    Let $\sim_S$ be the Boolean congruence on $B_S$ generated by the following relations: for all $n, \bar{i}, \bar{j} \in \N$, $l_1, \dots, l_n \in \{1, \dots, \bar{i}\}$, $Z_1,\dots,Z_{\bar{j}}$, $Y_1, \dots, Y_{\bar{i}}\in\C$, $(\alpha_i \in \P(S \times Y_i))_{i = 1, \dots,  \bar{i}}$, $(\beta_j \in \P(S \times Z_j))_{j = 1, \dots,  \bar{j}}$ and $(g_{i} \colon S \times \prod_{j =1}^{\bar{j}}Z_j \to Y_{l_i})_{i = 1, \dots, n}$ such that in $\P(S \times \prod_{j =1}^{\bar{j}}Z_j)$
    \begin{equation*}
        \bigwedge_{i = 1}^{n}\P(\ple{\pr^{S\times \Pi_h Z_h}_S, g_i})(\alpha_{l_i}) \leq \bigvee_{j = 1}^{\bar{j}}\P(\pr^{S\times \Pi_h Z_h}_{S\times Z_j})(\beta_j),
    \end{equation*}
    we impose the relation 
    \[
        \mleft[\bigwedge_{i=1}^{\bar{i}} \fa{Y_i}{S}\alpha_{i} \mright] \leq \mleft[\bigvee_{j=1}^{\bar{j}} \fa{Z_j}{S}\beta_{j}\mright]
    \]
    in $B_S / {\sim_S}=\Free_1^{\P}(S)$.

    Let $n, \bar{i}, \bar{j} \in \N$, $l_1, \dots, l_n \in \{1, \dots, \bar{i}\}$, $Z_1,\dots,Z_{\bar{j}}$, $Y_1, \dots, Y_{\bar{i}}\in\C$, $(\alpha_i \in \P(S \times Y_i))_{i = 1, \dots,  \bar{i}}$, $(\beta_j \in \P(S \times Z_j))_{j = 1, \dots,  \bar{j}}$ and $(g_{i} \colon S \times \prod_{j =1}^{\bar{j}}Z_j \to Y_{l_i})_{i = 1, \dots, n}$ be such that in $\P(S \times \prod_{j =1}^{\bar{j}}Z_j)$
    \begin{equation*}
        \bigwedge_{i = 1}^{n}\P(\ple{\pr^{S\times \Pi_h Z_h}_S, g_i})(\alpha_{l_i}) \leq \bigvee_{j = 1}^{\bar{j}}\P(\pr^{S\times \Pi_h Z_h}_{S\times Z_j})(\beta_j).
    \end{equation*}
    By \cref{l:implication-QA} in $\R_1(M(S))$ we have
    \[
        \bigwedge_{i=1}^{\bar{i}} \fa{M(Y_i)}{M(S)}\m_{S\times Y_i }(\alpha_{i}) \leq \bigvee_{j=1}^{\bar{j}} \fa{M(Z_j)}{M(S)}\m_{S\times Z_j}(\beta_{j}).
    \]
    This proves the existence of a unique Boolean homomorphism $(\bar\m_1)_S\colon\Free_1^\P(S)\to \R_1(M(S))$ such that, for all $Y\in\C$ and $\alpha\in\P(Y\times S)$,
    \begin{equation} \label{eq:generator-to-something}
        (\bar\m_1)_S[\fa{Y}{S}\alpha]=\fa{M(Y)}{M(S)}\m_{S\times Y}(\alpha).
    \end{equation}

    We now prove that $\bar\m_1 \colon \Free_1^\P\to \R_1\circ M\op$ is a natural transformation, i.e., such that, for every morphism $f \colon S\to S'$ in $\C$, the following diagram commutes.
    \[
    \begin{tikzcd}
        \Free_1^\P(S')\arrow[d,"\Free_1^\P(f)"']\arrow[r, "(\bar\m_1)_{S'}"]    &     \R_1(M(S'))\arrow[d,"\R_1(M(f))"]\\
        \Free_1^\P(S)\arrow[r, "(\bar\m_1)_{S}"]   &     \R_1(M(S))
    \end{tikzcd}
    \]
    It is enough to show that the two composites agree on each generator, i.e., on each element of the form $[\fa{Y}{S'}\alpha]$ for $Y \in \C$ and $\alpha \in \P(S' \times Y)$.
    Let $Y \in \C$ and $\alpha \in \P(S' \times Y)$.
    We have
    \begin{align*}
        & \R_1(M(f))((\bar{\m}_1)_{S'}([\fa{Y}{S'}\alpha]))\\
        & = \R_1(M(f))(\fa{M(Y)}{M(S')}\m_{S'\times Y}(\alpha))&& \text{by \eqref{eq:generator-to-something}}\\
        & = \fa{M(Y)}{M(S)}\R_0(M(f)\times \id_{M(Y)})(\m_{S'\times Y}(\alpha))&& \text{by one-step Beck-Chevalley}\\
        & = \fa{M(Y)}{M(S)}\R_0(M(f\times \id_{Y}))(\m_{S'\times Y}(\alpha))&& \text{since $M$ preserves finite products}\\
        & = \fa{M(Y)}{M(S)}\m_{S\times Y}(\P(f\times \id_{Y})(\alpha))&& \text{by naturality of $\m$}\\
        & = (\bar{\m}_1)_S([\fa{Y}{S}\P(f \times \id_Y)(\alpha)]) && \text{by \eqref{eq:generator-to-something}}\\
        & = (\bar{\m}_1)_S(\Free_1^\P(f)([\fa{Y}{S'}\alpha])) && \text{by def.\ of $\Free_1^\P(f)$, in \cref{n:one-step-on-morphisms}.}
    \end{align*}
    This proves that $\bar\m_1$ is a natural transformation.

    To show that $(M, \m, \bar{\m}_1)$ is a QA-one-step Boolean doctrine morphism, we are left to show that for every $S,Y\in\C$ the following diagrams commute:
    \begin{equation}\label{diag:prop-univ-free}
    \begin{tikzcd}[column sep = 5em]
        \P(S) \arrow[hook]{r}{[\fa{\tmn}{S}-]} \arrow[swap]{d}{\m_S} & \Free_1^\P(S) \arrow{d}{(\bar\m_1)_S}&  \P(S \times Y) \arrow{r}{\fa{Y}{S}} \arrow[swap]{d}{\m_{S \times Y}} & \Free_1^\P(S)\arrow{d}{(\bar\m_1)_{S}}\\
        \R_0(M(S))\arrow[swap,hook]{r}{} & \R_1(M(S))&\R_{0}(M(S) \times M(Y)) \arrow[swap]{r}{\fa{M(Y)}{M(S)}} & \R_{1}(M(S))
    \end{tikzcd}
    \end{equation}
    To prove the commutativity of the first diagram, let $\alpha\in\P(S)$. We have:
    \begin{align*}
        (\bar\m_1)_S([\fa{\tmn_\C}{S}\alpha])&=\fa{\tmn_\D}{M(S)}\m_S(\alpha)&&\text{by \eqref{eq:generator-to-something}}\\
        &=\m_S(\alpha).
    \end{align*}

    To prove the commutativity of the second diagram, let $\alpha\in\P(S\times Y)$. We have:
    \begin{align*}
        (\bar\m_1)_S(\fa{Y}{S}\alpha)&=        (\bar\m_1)_S([\fa{Y}{S}\alpha])&&\text{by \cref{p:free-is-one-step}}\\
        &=\fa{M(Y)}{M(S)}\m_{S\times Y}(\alpha)&&\text{by \eqref{eq:generator-to-something}}.
    \end{align*}

    This shows that $(M, \m, \bar{\m}_1)$ is a QA-one-step Boolean doctrine morphism, thus settling the existence.
    
    Uniqueness is clear, because such a natural transformation $\bar\m_1$ has a prescribed behavior on the generators: indeed, the commutativity of the diagram on the right in \eqref{diag:prop-univ-free} implies \eqref{eq:generator-to-something}.
\end{proof}

\begin{corollary}
    The forgetful functor from the category of QA-one-step Boolean doctrines over a small category to the category of Boolean doctrines over a small category has a left adjoint.
    On objects, the left adjoint maps $\P$ to $(\P,\Free_1^\P)$, and, on morphisms, the left adjoint maps $(M,\m)\colon \P\to\R$ to $(M,\m, \bar\m_1)\colon (\P,\Free_1^\P)\to(\R,\Free_1^\R)$, with $\bar\m_1$ defined as in the proof of \cref{t:univ-prop-counit-QA}.
\end{corollary}

\section{The first layer of the quantifier completion is the free QA-one-step Boolean doctrine}\label{sec:qc-is-free-qa}

In the previous section, we described the free QA-one-step Boolean doctrine $(\P,\Free_1^\P)$ over a given Boolean doctrine $\P$ over a small category.
Using Herbrand's theorem, we now connect this description with $\P_1^\EA$.
Recall from \cref{n:first-layer} that $\P_1^\EA$ is not defined directly via a universal property, but rather as a certain subfunctor of the quantifier completion of $\P$.
In this brief section, we show that the free QA-one-step Boolean doctrine over $\P$ is nothing else than the pair $(\P, \P^\EA_1)$.
Thus, $\Free_1^\P$ constitutes an explicit construction of $\P_1^\EA$ in terms of $\P$.

\begin{theorem}\label{t:section-6}
    Let $\P\colon\C\op\to\BA$ be a Boolean doctrine with $\C$ small. The QA-one-step Boolean doctrines $(\P,\Free_1^\P)$ and $(\P,\P^\EA_1)$ are isomorphic. In other words, $(\P,\P^\EA_1)$ is the free QA-one-step Boolean doctrine over $\P$ (i.e., it satisfies the universal property in \cref{t:univ-prop-counit-QA}).
\end{theorem}
\begin{proof}
    By the universal property of $(\P,\Free_1^\P)$ (\cref{t:univ-prop-counit-QA}) there is a unique natural transformation $\bar{\m}_1\colon\Free^\P_1\to\P^\EA_1$ such that $(\id_\C,\id_\P,\bar{\m}_1)\colon (\P,\Free_1^\P)\to(\P,\P^\EA_1)$ is a morphism of QA-one-step Boolean doctrines. In other words, there is a unique natural transformation $\bar{\m}_1\colon\Free^\P_1\to\P^\EA_1$ such that the following diagrams commute for all $S, Y \in \C$.
     \[
    \begin{tikzcd}[column sep = 4em]
        \P  \arrow[r, hook, "{[\forall{\tmn}]}"] \arrow[dr, hook, "\mathfrak{i}"']& \Free_1^{\P} \arrow[d, "{\bar{\m}_1}"]&\P(S\times Y)  \arrow[r, "{[\fa{Y}{S}-]}"] \arrow[dr, "{\fa{Y}{S}}"']& \Free_1^{\P}(S) \arrow[d, "{(\bar{\m}_1)_S}"]\\
            & \P^\EA_1&&\P^\EA_1(S)
    \end{tikzcd}
    \]
    In order to prove that it is an isomorphism, it is enough to show that it is fiberwise bijective.
    Let $S \in \C$, and let us prove that the function $(\bar{\m}_1)_S \colon \Free_1^{\P}(S) \to \P_1^\EA(S)$ is an isomorphism.
    Surjectivity follows from the fact that the Boolean homomorphism $(\bar{\m}_1)_S \colon \Free_1^{\P}(S) \to \P_1^\EA(S)$ maps the generating subset $\{[\fa{Y}{S} \alpha] \mid Y\in\C, \alpha\in\P(S\times Y)\}$ of the Boolean algebra $\Free_1^{\P}(S)$ to the generating subset $\{\fa{Y}{S} \alpha \mid Y\in\C, \alpha\in\P(S\times Y)\}$ of $\P^\EA_1(S)$.
    It is easily seen that, given a Boolean homomorphism $f \colon A \to B$ and a generating subset $T$ of $A$, the function $f$ is injective if and only if, for all finite subsets $V$ and $U$ of $A$, if $\bigwedge_{v\in V} f(v)\leq \bigvee_{u\in U} f(u)$ then $\bigwedge_{v\in V} v\leq \bigvee_{u\in U} u$. Thus, injectivity of $(\bar{\m}_1)_S$ follows from \cref{t:description-P1}[\eqref{i:inequality-in-free}$\Rightarrow$\eqref{i:existence-of-terms-in-P}] and the definition of $\Free_1^\P$ (\cref{n:free-on-obj}).
\end{proof}

\section{Future work}\label{s:future-work}

One long-term goal is to provide a step-by-step construction of the quantifier completion of a Boolean doctrine; in this paper, we have addressed the first step.
For the remaining steps, we must show how to freely add further layers of quantifiers while keeping a specified set of quantifiers already defined in the previous steps.

Towards this aim, we plan to first establish an axiomatization of the tuples of the form $(\P_0, \dots, \P_m)$, where, for a certain theory $\T$, each $\P_i$ represents the sets of $\T$-equivalence classes of formulas of quantifier alternation depth less than or equal to $i$ (see \cite[Question~7.1]{AbbadiniGuffanti} for more details).

Furthermore, as it happens for adding layers of modality to Boolean algebras, the step-by-step construction of the quantifier completion might also have a useful Stone dual in terms of Joyal's polyadic spaces \cite{Joyal1971,Marques2023,vanGoolMarques2024}.
We plan to use the present work to show how to freely add the first layer of quantifiers, dually.

\appendix

\makeatletter
\begingroup
\let\addcontentsline\@gobblethree

\section*{Appendices}
Let us clarify the dependency between the appendices and the previous sections: on the one hand, in \cref{s:app-char,s:char-filter-ideal-at-S,s:equality} we use notions and results from the rest of the manuscript, while results from \cref{s:app-char,s:char-filter-ideal-at-S,s:equality} are not used in the main body of the paper, and are only mentioned there for motivational purposes. On the other hand, \cref{s:entailment} does not rely on any result of the core of the manuscript, as it is an overview on some results about entailment relations, that we use in \cref{s:construction}.
\endgroup
\makeatother

\section{Semantic characterizations of universal filters and ideals} \label{s:app-char}

\Cref{t:characterization-ultrafilters} shows that the notion of universal ultrafilter is meaningful.
To prove the theorem, we used universal filters and ideals.
In this section, we will show that these two notions are not just auxiliary technical notions, but are also meaningful since they have a semantic characterization.
In \cref{t:characterization-filters} we prove that universal filters are precisely the families of all formulas that are universally valid in all models of some class of models.
Similarly, in \cref{t:characterization-ideals}, we prove that universal ideals are precisely the families of all formulas that are universally invalid in all models of some class of models.
For the sake of completeness, we also characterize the pairs consisting of a filter and an ideal that arise from a common family of models. This is obtained in \cref{t:characterization-filter-ideal-pair}. Such pairs are called \emph{filter-ideal pairs}.

In \cref{s:char-filter-ideal-at-S} we will generalize the results in \cref{sec:completenes-first-layer} and in this appendix to the case where there are some fixed free variables exempt from universal closure.

\subsection{Semantic characterization of universal filters} \label{s:app-sub-filter}

\begin{lemma}\label{l:ultrafilter-lemma-element}
    Let $\P \colon \C\op \to \BA$ be a Boolean doctrine, $F = (F_X)_{X \in \C}$ a universal filter, $Y \in \C$ and $\alpha \in \P(Y) \setminus F_Y$.
    There is a universal ultrafilter that extends $F$ and does not contain $\alpha$.
\end{lemma}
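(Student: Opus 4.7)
The plan is to apply the universal ultrafilter lemma (\cref{t:gen-ult-lem}) to the given $F$ and to the universal ideal $I$ generated by the singleton family whose only element is $\alpha \in \P(Y)$. Once $F$ and $I$ are shown to be componentwise disjoint, the lemma produces a universal ultrafilter $F'$ extending $F$ and disjoint from $I$; since $\alpha \in I_Y$, this $F'$ automatically omits $\alpha$, which is exactly what is required.

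The real work is to verify $F_X \cap I_X = \varnothing$ for every $X \in \C$. I would argue by contradiction: assume there exists $\varphi \in F_X \cap I_X$. By \cref{l:desc-filter-ideal-generated}\eqref{l:desc-generaliz-ideal} applied to the singleton family at $(Y,\alpha)$, there must be $n, m \in \N$ and morphisms $(f_j \colon Y^n \to X)_{j = 1, \dots, m}$ such that
\[
    \bigwedge_{j=1}^m \P(f_j)(\varphi) \leq \bigvee_{i=1}^n \P(\pr_i)(\alpha) \quad \text{in } \P(Y^n).
\]
In the main case $n \geq 1$, the plan is to reindex both sides along the diagonal $\Delta \colon Y \to Y^n$: the right-hand side collapses to $\alpha$ because $\pr_i \circ \Delta = \id_Y$, while the left-hand side becomes $\bigwedge_{j} \P(f_j \circ \Delta)(\varphi)$, a finite meet of reindexings of $\varphi \in F_X$, hence an element of $F_Y$ by the closure properties of a universal filter. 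Upward closure of $F_Y$ then gives $\alpha \in F_Y$, contradicting the hypothesis. The degenerate case $n = 0$ will be handled separately by reindexing along $!_Y \colon Y \to \tmn$: the right-hand side is then $\bot_{\P(Y)}$, forcing $\bot_{\P(Y)} \in F_Y$ and hence $F_Y = \P(Y) \ni \alpha$, again a contradiction.

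The hard part, such as it is, will just be the bookkeeping of this diagonal reindexing together with separating out the degenerate case $n = 0$. No additional machinery beyond the universal ultrafilter lemma and the explicit description of the universal ideal generated by a family is required, and the resulting $F'$ inherits all desired properties directly from \cref{t:gen-ult-lem}.
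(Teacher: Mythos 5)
Your proposal is correct and follows essentially the same route as the paper: take the universal ideal $I$ generated by $\alpha$, verify that $F$ and $I$ are componentwise disjoint, and invoke the universal ultrafilter lemma (\cref{t:gen-ult-lem}) to get an ultrafilter extending $F$ and disjoint from $I$ (hence omitting $\alpha$). Your disjointness check via the diagonal $\Delta \colon Y \to Y^n$ (with the degenerate case $n=0$ handled by $!_Y$) is if anything more careful than the paper's, which quotes a simplified single-morphism description of the generated ideal; both reduce to the same use of closure of $F$ under reindexing, finite meets, and upward closure.
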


\begin{proof}
    Let $I$ be the universal ideal generated by $\alpha\in\P(Y)$. By \cref{l:desc-filter-ideal-generated}(\ref{i:desc-generaliz-ideal}), for each $X\in\C$ we have
    \[
    I_X = \{\varphi \in \P(X) \mid \text{there is } f \colon Y\to X \text{ such that } \P(f)(\varphi)\leq\alpha\}.
    \]
    Observe that $F$ and $I$ are componentwise disjoint: indeed, suppose $\varphi\in I_X\cap F_X$, so that there is $f\colon Y\to X$ such that $\P(f)(\varphi)\leq\alpha$ in $\P(Y)$. Since $F$ is closed under reindexing and upward closed, we get $\alpha\in F_Y$, a contradiction.
    By \cref{t:gen-ult-lem}, there is a universal ultrafilter $G$ extending $F$ and disjoint from $I$. In particular, $\alpha$ does not belong to $G$, as desired.
\end{proof}

\begin{remark}
    \Cref{l:ultrafilter-lemma-element} is similar to the version of the classical ultrafilter lemma stating that every filter not containing an element $a$ can be extended to an ultrafilter not containing $a$.
\end{remark}

\begin{definition}
    A universal filter $(F_X)_{X \in \C}$ for a Boolean doctrine $\P \colon \C\op \to \BA$ is \emph{consistent} if $\bot_{\P(\tmn)}\notin F_\tmn$.
\end{definition}

\begin{lemma}\label{l:extension-consistent-filterultrafilter}
    Every consistent universal filter
    can be extended to a universal ultrafilter.
\end{lemma}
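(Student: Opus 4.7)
The plan is to apply the universal ultrafilter lemma (\cref{t:gen-ult-lem}) to the given universal filter $F$ against the \emph{smallest} universal ideal $I$, i.e.\ the one generated by the empty family. Everything then reduces to verifying that consistency of $F$ forces $F$ and $I$ to be componentwise disjoint, after which \cref{t:gen-ult-lem} immediately produces a universal ultrafilter extending $F$.

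First I would invoke \cref{l:desc-filter-ideal-generated}\eqref{l:desc-generaliz-ideal} with the empty generating family ($n=0$, so $\prod_{i=1}^n Y_i = \tmn$ and the disjunction on the right-hand side is $\bot_{\P(\tmn)}$) to get the explicit description: $\varphi \in I_X$ if and only if there exist $m \in \N$ and morphisms $(f_j \colon \tmn \to X)_{j = 1, \dots, m}$ with $\bigwedge_{j=1}^m \P(f_j)(\varphi) \leq \bot_{\P(\tmn)}$ in $\P(\tmn)$.

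Next I would check the disjointness claim. Suppose, for contradiction, that $\varphi \in F_X \cap I_X$ for some $X \in \C$; pick witnessing morphisms $(f_j \colon \tmn \to X)_{j=1,\dots,m}$ as above. Since $F$ is closed under reindexings (\cref{d:universal filter}\eqref{forall-reindex}), each $\P(f_j)(\varphi)$ lies in $F_\tmn$; since $F_\tmn$ is a filter, the meet $\bigwedge_{j=1}^m \P(f_j)(\varphi)$ lies in $F_\tmn$ as well (in the edge case $m=0$ this meet is $\top_{\P(\tmn)}$, which belongs to $F_\tmn$ since $F_\tmn$ is a filter). Upward-closure of $F_\tmn$ together with $\bigwedge_{j=1}^m \P(f_j)(\varphi) \leq \bot_{\P(\tmn)}$ then yields $\bot_{\P(\tmn)} \in F_\tmn$, contradicting consistency of $F$.

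Finally, with $F$ and $I$ componentwise disjoint, \cref{t:gen-ult-lem} supplies a universal ultrafilter that extends $F$ componentwise (and is disjoint from $I$), which is the desired conclusion. The argument is entirely routine; the only step requiring any care is the unpacking of the smallest universal ideal via \cref{l:desc-filter-ideal-generated}\eqref{l:desc-generaliz-ideal}, including the $n = 0$ and $m = 0$ boundary cases.
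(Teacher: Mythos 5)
Your proof is correct and takes essentially the same route as the paper: the paper simply invokes \cref{l:ultrafilter-lemma-element} with $Y=\tmn$ and $\alpha=\bot_{\P(\tmn)}$, whose proof forms the universal ideal generated by $\bot_{\P(\tmn)}$ --- which is exactly your smallest universal ideal --- verifies disjointness from $F$ by the same reindexing/filter computation, and then applies \cref{t:gen-ult-lem}. You have merely inlined that lemma rather than citing it.
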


\begin{proof}
    It suffices to apply \cref{l:ultrafilter-lemma-element} with $Y=\tmn$ and $\alpha=\bot_{\P(\tmn)}$ (where $\P$ is the Boolean doctrine).
\end{proof}

\begin{theorem} \label{t:characterization-filters}
    Let $\P \colon \C\op \to \BA$ be a Boolean doctrine, with $\C$ small.
    Let $F = (F_X)_{X \in \C}$ be a family with $F_X \subseteq \P(X)$ for each $X \in \C$.
    The following are equivalent.
    \begin{enumerate}
        \item \label{i:exist-class-ne}
        There is a class $\mathcal{M}$ of propositional models of $\P$ such that, for every $X \in \C$,
        \[
            F_X = \{\alpha \in \P(X) \mid \text{for all }(M,\m) \in \mathcal{M},\text{ for all }x \in M(X),\, x \in \m_X(\alpha)\}.
        \]
        
        \item \label{i:prop-univ-filt-ne}
        $F$ is a universal filter for $\P$.
        
        \item \label{i:filter-is-intersection-ne}
        $F$ is the intersection of the universal ultrafilters for $\P$ containing $F$.
    \end{enumerate}
    Equivalent are also the statements obtained by additionally requiring: nonemptiness of $\mathcal{M}$ in \eqref{i:exist-class-ne}, consistency of $F$ in \eqref{i:prop-univ-filt-ne}, and the existence of a universal ultrafilter for $\P$ containing $F$ in \eqref{i:filter-is-intersection-ne}. 
\end{theorem}
\begin{proof}
    \eqref{i:exist-class-ne} $\Rightarrow$ \eqref{i:filter-is-intersection-ne}.
    Let $\mathcal{G}$ be the set of universal ultrafilters containing $F$.
    Fix $X \in \C$.
    The inclusion $F_X\subseteq\bigcap_{G\in\mathcal{G}} G_X$ is immediate by definition of $\mathcal{G}$. 
    For the converse inclusion, let $\alpha\in\bigcap_{G\in\mathcal{G}} G_X$.
    To prove $\alpha \in F_X$, we check that for all $(M,\m)\in\mathcal{M}$ and $x\in M(X)$ we have $x\in\m_X(\alpha)$. Let $(M,\m)\in\mathcal{M}$.
    For all $Y\in\C$, we set
    \begin{equation*}
        H_Y \coloneqq \{\beta \in \P(Y) \mid \text{for all }x \in M(Y),\, x \in \m_Y(\beta)\}.
    \end{equation*}
    By \cref{t:characterization-ultrafilters}, $(H_Y)_{Y \in \C}$ is a universal ultrafilter, and it is easy to see that it belongs to $\mathcal{G}$.
    Then, $\alpha\in \bigcap_{G\in\mathcal{G}} G_X \subseteq H_X$, as desired.
    
    \eqref{i:filter-is-intersection-ne} $\Rightarrow$ \eqref{i:exist-class-ne}
     Let $\mathcal{M}$ be the class of propositional models $(M,\m)$ of $\P$ such that
    \[
        F_X  \subseteq \{\alpha \in \P(X) \mid \text{for all }x \in M(X),\, x \in \m_X(\alpha)\}.
    \]
    Let 
    \[
        \beta\in \bigcap_{(M,\m)\in\mathcal{M}}\{\alpha \in \P(X) \mid \text{for all }x \in M(X),\, x \in \m_X(\alpha)\}.
    \]
    We show that $\beta\in F_X$, i.e.\ that $\beta$ belongs to all universal ultrafilters $G$ containing $F$. 
    Let $G$ be any such universal ultrafilter.
    By \cref{t:characterization-ultrafilters} there is a propositional model $(M,\m)$ of $\P$ such that, for all $Y\in\C$, $G_Y=\{\gamma \in \P(Y) \mid \text{for all }y \in M(Y),\, y \in \m_Y(\gamma)\}$. It is then easy to see that the propositional model $(M,\m)$ belongs to $\mathcal{M}$. By hypothesis on $\beta$, we have $\beta\in G_X$, as desired.

    \eqref{i:prop-univ-filt-ne} $\Rightarrow$ \eqref{i:filter-is-intersection-ne}.
    Clearly, $F$ is contained in the intersection of the universal ultrafilters containing $F$.    
    For the converse inclusion, let $Y\in\C$ and $\alpha\in \P(Y)\setminus F_Y$.
    By \cref{l:ultrafilter-lemma-element}, there is a universal ultrafilter extending $F$ and not containing $\alpha$.

    \eqref{i:filter-is-intersection-ne} $\Rightarrow$ \eqref{i:prop-univ-filt-ne}.
    The componentwise intersection of universal (ultra)filters is a universal filter.

    This proves that the statements \eqref{i:exist-class-ne}, \eqref{i:prop-univ-filt-ne} and \eqref{i:filter-is-intersection-ne} are equivalent.

    Let us now prove that the statements (1'), (2') and (3') obtained from \eqref{i:exist-class-ne}, \eqref{i:prop-univ-filt-ne} and \eqref{i:filter-is-intersection-ne} as in the final paragraph of the theorem are equivalent.

    (1') $\Rightarrow$ (3').
    Since $\mathcal{M}$ is nonempty, there is $(M,\m)\in\mathcal{M}$. The family $(H_Y)_{Y \in \C}$ defined by
    \[
        H_Y \coloneqq \{\beta \in \P(Y) \mid \text{for all }x \in M(Y),\, x \in \m_Y(\beta)\}
    \] 
    is a universal ultrafilter for $\P$ (by \cref{t:characterization-ultrafilters}) containing $F$.

    (3') $\Rightarrow$ (1').
    The class $\mathcal{M}$ is nonempty because, if $\mathcal{M}$ were empty, we would have $F_\tmn = \P(\tmn)$, contradicting the existence of a universal ultrafilter for $\P$ containing $F$.

    (2') $\Rightarrow$ (3').
    This follows from \cref{l:extension-consistent-filterultrafilter}.
    
    (3') $\Rightarrow$ (2').
    This is immediate.
\end{proof}

\subsection{Semantic characterization of universal ideals}

\begin{definition}[Universal ultraideal]
    Let $\P \colon \C\op \to \BA$ be a Boolean doctrine. A \emph{universal ultraideal for $\P$} is a family $(I_X)_{X \in \C}$, with $I_X \subseteq \P(X)$ for each $X \in \C$, such that 
    \begin{enumerate}
        \item\label{forall-reindex-ultraideal} 
        For all $f \colon X \to Y$ and $\alpha \in \P(Y)$, if $\P(f)(\alpha) \in I_X$ then $\alpha \in I_Y$.
        
        \item\label{i:uf-ideal} For all $X \in \C$, $\P(X) \setminus I_X$ is a filter of $\P(X)$.
        
        \item\label{i:uf-join-ideal}
        For all $\alpha_1 \in I_{X_1}$ and $\alpha_2\in I_{X_2}$, we have $\P(\pr^{X_1 \times X_2}_{X_1})(\alpha_1)\lor \P(\pr^{X_1 \times X_2}_{X_2})(\alpha_2)\in I_{X_1\times X_2}$.
        
        \item \label{i:uf-bot-ideal}
        $\bot_{\P(\tmn)}\in I_\tmn$. 
    \end{enumerate}
\end{definition}

\begin{remark} \label{r:complement}
    A universal ultraideal is simply the componentwise complement of a universal ultrafilter.
\end{remark}

\begin{lemma}\label{l:ultraideal-lemma-element}
    Let $\P \colon \C\op \to \BA$ be a Boolean doctrine, $I = (I_X)_{X \in \C}$ a universal ideal, $Y \in \C$ and $\alpha \in \P(Y) \setminus I_Y$.
    There is a universal ultraideal that extends $I$ and does not contain $\alpha$.
\end{lemma}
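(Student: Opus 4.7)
The plan is to mirror the proof of \cref{l:ultrafilter-lemma-element}, exchanging the roles of filters and ideals. The key observation is that, by \cref{r:complement} (combined with \cref{l:ultrafilter-as-pair}), universal ultraideals are exactly the componentwise complements of universal ultrafilters; so it suffices to produce a universal ultrafilter $G$ that is componentwise disjoint from $I$ and contains $\alpha$, and then take $G^c \coloneqq (\P(X) \setminus G_X)_{X \in \C}$.

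First I would let $F$ be the universal filter for $\P$ generated by the singleton family $\{\alpha\} \subseteq \P(Y)$. By \cref{l:desc-filter-ideal-generated}\eqref{l:descr-filter-generated}, for every $X \in \C$,
\[
F_X = \mleft\{\varphi \in \P(X) \mid \exists\, n \in \N,\ \exists\, (f_i \colon X \to Y)_{i = 1, \dots, n} \text{ such that } \bigwedge_{i=1}^{n} \P(f_i)(\alpha) \leq \varphi\mright\}.
\]
Next I would verify that $F$ and $I$ are componentwise disjoint. Suppose, for contradiction, that $\varphi \in F_X \cap I_X$ for some $X \in \C$. Choose witnesses $n \in \N$ and $(f_i \colon X \to Y)_{i=1,\dots,n}$ as above. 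Since $I_X$ is downward closed (\cref{d:univ-ideal}\eqref{i:downward-closed}), we obtain $\bigwedge_{i=1}^{n} \P(f_i)(\alpha) \in I_X$. Applying \cref{d:univ-ideal}\eqref{i:conjunction} (which allows $n = 0$ via the empty conjunction convention $\top_{\P(X)}$) with the morphisms $f_1, \dots, f_n$ and the element $\alpha$ yields $\alpha \in I_Y$, contradicting the hypothesis $\alpha \in \P(Y) \setminus I_Y$.

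Now I would apply the universal ultrafilter lemma (\cref{t:gen-ult-lem}) to extend $F$ to a universal ultrafilter $G = (G_X)_{X \in \C}$ that is componentwise disjoint from $I$. In particular, $\alpha \in F_Y \subseteq G_Y$. Finally, by \cref{l:ultrafilter-as-pair} (or \cref{r:complement}), the componentwise complement $G^c$ is a universal ultraideal; the disjointness $G \cap I = \varnothing$ gives $I \subseteq G^c$ componentwise, while $\alpha \in G_Y$ gives $\alpha \notin G^c_Y$, as required.

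There is no real obstacle in this argument: the two nontrivial ingredients (the description of the universal filter generated by a family and the universal ultrafilter lemma) have already been established. The only mildly subtle point is handling the $n = 0$ case in the disjointness argument, which is accommodated by reading \cref{d:univ-ideal}\eqref{i:conjunction} with $m = 0$.
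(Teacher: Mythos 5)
Your proposal is correct and follows essentially the same route as the paper's proof: generate the universal filter $F$ from $\alpha$, use downward closure of $I$ together with \cref{d:univ-ideal}\eqref{i:conjunction} to show $F$ and $I$ are componentwise disjoint, then invoke the universal ultrafilter lemma and pass to the complement via \cref{r:complement}. The remark about the $n=0$ case is a fine extra precision but changes nothing substantive.
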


\begin{proof}
    Let $F$ be the universal filter generated by $\alpha\in\P(Y)$. By the description in \cref{l:desc-filter-ideal-generated}\eqref{i:descr-filter-generated}, for every $X\in\C$,
    \[
        F_X = \mleft\{\beta \in \P(X) \mid \text{there are } (f_i \colon X\to Y)_{i=1,\dots,n} \text{ such that } \bigwedge_{i=1}^n\P(f_i)(\alpha)\leq\beta\mright\}.
    \]
    Observe that $F$ and $I$ are componentwise disjoint: indeed, by way of contradiction, suppose $\beta\in I_X\cap F_X$, so that there are $(f_i \colon X\to Y)_{i=1,\dots,n}$ such that $\bigwedge_{i=1}^n\P(f_i)(\alpha)\leq\beta$ in $\P(X)$. Since $I$ is dowward closed, we get $\bigwedge_{i=1}^n\P(f_i)(\alpha)\in I_X$.
    Therefore, by \cref{d:univ-ideal}\eqref{i:conjunction}, we obtain $\alpha \in I_Y$, a contradiction.

    By the universal ultrafilter lemma (\cref{t:gen-ult-lem}), there is a universal ultrafilter $G$ containing $F$ and disjoint from $I$. By \cref{r:complement}, the complement of $G$ is an ultraideal, and it has the desired properties.
\end{proof}

\begin{definition}
    A universal ideal $(I_X)_{X \in \C}$ for a Boolean doctrine $\P \colon \C\op \to \BA$ is \emph{consistent} if $\top_{\P(\tmn)}\notin I_\tmn$.
\end{definition}

\begin{lemma}\label{l:extension-consistent-idealultraideal}
    Every consistent universal ideal 
    can be extended to a universal ultraideal.
\end{lemma}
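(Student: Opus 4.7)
The plan is to mirror the proof of \cref{l:extension-consistent-filterultrafilter}, using the ``element-level'' extension lemma for universal ideals, namely \cref{l:ultraideal-lemma-element}, in place of \cref{l:ultrafilter-lemma-element}. The key observation is that consistency of a universal ideal $I$ means precisely that $\top_{\P(\tmn)} \notin I_\tmn$, which is exactly the hypothesis required to invoke \cref{l:ultraideal-lemma-element} at the object $Y = \tmn$ with the element $\alpha = \top_{\P(\tmn)}$.

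Concretely, I would argue as follows. Let $I = (I_X)_{X \in \C}$ be a consistent universal ideal for $\P$. By consistency, $\top_{\P(\tmn)} \in \P(\tmn) \setminus I_\tmn$. Applying \cref{l:ultraideal-lemma-element} with $Y \coloneqq \tmn$ and $\alpha \coloneqq \top_{\P(\tmn)}$ yields a universal ultraideal $J = (J_X)_{X \in \C}$ that extends $I$ componentwise (and moreover does not contain $\top_{\P(\tmn)}$, although this extra information is not needed here). This $J$ is the desired universal ultraideal.

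There is essentially no obstacle, since all the real work has been done in \cref{l:ultraideal-lemma-element} (and, underneath it, in the universal ultrafilter lemma \cref{t:gen-ult-lem}). The only small conceptual point worth mentioning is that consistency is not just sufficient but also necessary: any universal ultraideal $J$ satisfies, by \eqref{i:uf-ideal} of its definition, that $\P(\tmn) \setminus J_\tmn$ is a filter and thus contains $\top_{\P(\tmn)}$, so $\top_{\P(\tmn)} \notin J_\tmn$; hence any universal ideal extendable to a universal ultraideal must itself avoid $\top_{\P(\tmn)}$, i.e.\ be consistent. This makes the statement sharp and parallels the Boolean-algebraic intuition that proper ideals are exactly those extendable to prime (equivalently, maximal) ideals.
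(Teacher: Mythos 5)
Your proof is correct and is essentially identical to the paper's: both apply \cref{l:ultraideal-lemma-element} with $Y = \tmn$ and $\alpha = \top_{\P(\tmn)}$, which consistency guarantees lies outside $I_\tmn$. The extra remark on necessity of consistency is accurate but not needed for the statement.
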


\begin{proof}
    Let $\P$ denote the Boolean doctrine and $I$ the ideal.
    Applying \cref{l:ultraideal-lemma-element} (with $Y=\tmn$ and $\alpha=\top_{\P(\tmn)}\notin I_\tmn$), we obtain that there is a universal ultraideal that extends $I$ (and does not contain $\top_{\P(\tmn)}$).
\end{proof}

\begin{theorem} \label{t:characterization-ideals}
    Let $\P \colon \C\op \to \BA$ be a Boolean doctrine, with $\C$ small.
    Let $I = (I_X)_{X \in \C}$ be a family with $I_X \subseteq \P(X)$ for each $X \in \C$. 
    The following are equivalent.
    \begin{enumerate}
        \item \label{i:exist-class-ne-id}
        There is a class $\mathcal{M}$ of propositional models of $\P$ such that, for every $X \in \C$,
        \[
            I_X = \{\alpha \in \P(X) \mid \text{for all }(M,\m) \in \mathcal{M},  \text{ not all }x \in M(X) \text{ satisfy } x \in \m_X(\alpha)\}.
        \]
        
        \item \label{i:prop-univ-filt-ne-id}
        $I$ is a universal ideal for $\P$.
        
        \item \label{i:filter-is-intersection-ne-id}
        $I$ is the intersection of the universal ultraideals for $\P$ containing $I$.
    \end{enumerate}
    Equivalent are also the statements obtained by additionally requiring:  nonemptiness of $\mathcal{M}$ in \eqref{i:exist-class-ne-id},  consistency of $I$ in \eqref{i:prop-univ-filt-ne-id}, and the existence of a universal ultraideal for $\P$ containing $I$ in \eqref{i:filter-is-intersection-ne-id}.    
\end{theorem}

\begin{proof}
    \eqref{i:exist-class-ne-id} $\Rightarrow$ \eqref{i:filter-is-intersection-ne-id}.    
    Let $\mathcal{J}$ be the family of universal ultraideals containing $I$.
    Fix $X \in \C$.
    The inclusion $I_X\subseteq\bigcap_{J\in\mathcal{J}} J_X$ is immediate by definition of $\mathcal{J}$. 
    For the converse inclusion, let $\alpha\in\bigcap_{J\in\mathcal{J}} J_X$.
    To prove that $\alpha \in I_X$, we check that, for all $(M,\m)\in\mathcal{M}$, not all $x\in M(X)$ satisfy $x\in\m_X(\alpha)$. Let $(M,\m) \in \mathcal{M}$.
    For all $Y \in \C$, set
    \begin{equation*}
        L_Y = \{\beta \in \P(Y) \mid \text{not all }x \in M(Y) \text{ satisfy } x \in \m_Y(\beta)\}.
    \end{equation*}
    By \cref{t:characterization-ultrafilters} and \cref{r:ultrafilter-as-pair}, $(L_Y)_{Y \in \C}$ is a universal ultraideal, and it is easy to see that it belongs to $\mathcal{J}$.
    Then, $\alpha\in \bigcap_{J\in\mathcal{J}} J_X \subseteq L_X$, as desired.

    \eqref{i:filter-is-intersection-ne-id} $\Rightarrow$ \eqref{i:exist-class-ne-id}.
    Let $\mathcal{M}$ be the class of propositional models $(M,\m)$ of $\P$ such that
    \[
        I_X  \subseteq \{\alpha \in \P(X) \mid \text{not all }x \in M(X)\text{ satisfy } x \in \m_X(\alpha)\}.
    \]
    Let 
    \[
    \beta\in \bigcap_{(M,\m)\in\mathcal{M}}\{\alpha \in \P(X) \mid \text{not all }x \in M(X)\text{ satisfy } x \in \m_X(\alpha)\}.
    \]
    We show $\beta\in I_X$, i.e.\ that $\beta$ belongs to every universal ultraideal containing $I$. 
    Let $J$ be any such universal ultraideal and $G$ the componentwise complement of $J$. In particular, $G$ is a universal ultrafilter.
    By \cref{t:characterization-ultrafilters}, there is a propositional model $(M,\m)$ of $\P$ such that, for all $Y\in\C$, $G_Y=\{\gamma \in \P(Y) \mid \text{for all }y \in M(Y),\, y \in \m_Y(\gamma)\}$. It is then easy to see that the propositional model $(M,\m)$ belongs to $\mathcal{M}$. By hypothesis on $\beta$, we have $\beta\notin G_X$ and hence $\beta\in J_X$, as desired.    

    \eqref{i:prop-univ-filt-ne-id} $\Rightarrow$ \eqref{i:filter-is-intersection-ne-id}.
    It is easy to see that $I$ is contained in the intersection of the universal ultraideals that contain $I$.
    For the converse inclusion, let $Y\in\C$ and $\alpha\in \P(Y)\setminus I_Y$. Apply \cref{l:ultraideal-lemma-element} to get a universal ultraideal $J$ that extends $I$ and does not contain $\alpha$, as desired.
    
    \eqref{i:filter-is-intersection-ne-id} $\Rightarrow$ \eqref{i:prop-univ-filt-ne-id}.
    The componentwise intersection of universal (ultra)ideals is a universal ideal.

    This proves that the statements \eqref{i:exist-class-ne-id}, \eqref{i:prop-univ-filt-ne-id} and \eqref{i:filter-is-intersection-ne-id} are equivalent.
    
    Let us now prove that the statements (1'), (2') and (3') obtained from \eqref{i:exist-class-ne-id}, \eqref{i:prop-univ-filt-ne-id} and \eqref{i:filter-is-intersection-ne-id} as in the final paragraph of the theorem are equivalent.

    (1') $\Rightarrow$ (3').
    Since $\mathcal{M}$ is nonempty, there is $(M,\m)\in\mathcal{M}$. The family $(L_Y)_{Y \in \C}$ defined by
    \[
        L_Y = \{\beta \in \P(Y) \mid \text{not all }x \in M(Y) \text{ satisfy } x \in \m_Y(\beta)\}.
    \] 
    is a universal ultraideal (by \cref{t:characterization-ultrafilters} and \cref{r:ultrafilter-as-pair}) containing $I$.

    (3') $\Rightarrow$ (1').
    The class $\mathcal{M}$ is nonempty because, if $\mathcal{M}$ were empty, we would have $I_\tmn = \P(\tmn)$, contradicting the existence of a universal ultraideal containing $I$.

    (2') $\Rightarrow$ (3').
    This follows from \cref{l:extension-consistent-idealultraideal}.

    (3') $\Rightarrow$ (2').
    This is immediate.
\end{proof}

\subsection{Semantic characterization of universal filter-ideal pairs}

\begin{definition}[Universal filter-ideal pair]\label{d:fil-id-pair}
    A \emph{universal filter-ideal pair for a Boolean doctrine $\P$} is a pair $(F, I)$ where $F = (F_X)_{X \in \C}$ is a universal filter for $\P$, $I = (I_X)_{X \in \C}$ is a universal ideal for $\P$, and the following conditions hold for all $Y\in\C$ and $\alpha \in \P(Y)$.
    \begin{enumerate}
        \item \label{i:connecting-one} For all $X\in\C$, $n \in \N$, $(f _i \colon X \to Y)_{i=1,\dots,n}$ and $\beta \in F_X$, if $\beta \land \bigwedge_{i = 1}^n \P(f_i)(\alpha) \in I_X$, then $\alpha \in I_Y$.

        \item\label{i:connecting-two} For all $Z\in\C$ and $\gamma \in I_Z$, if $\P(\pr^{Y\times Z}_Y)(\alpha) \lor \P(\pr^{Y\times Z}_Z)(\gamma) \in F_{Y \times Z}$, then $\alpha \in F_Y$. 
    \end{enumerate}
\end{definition}

\begin{definition} \label{d:universal-filter-ideal-pair}
    We say that a universal filter-ideal pair $(F, I)$ for a Boolean doctrine $\P \colon \C\op \to \BA$ is \emph{consistent} when for every $X \in \C$ we have $F_X \cap I_X = \varnothing$.
    Otherwise, we say it is \emph{inconsistent}.
\end{definition}

\begin{lemma} \label{l:inconsistent-is-all}
    Let $(F, I)$ be an inconsistent universal filter-ideal pair for a Boolean doctrine $\P \colon \C\op \to \BA$.
    For every $Y \in \C$, $F_Y = I_Y = \P(Y)$.
\end{lemma}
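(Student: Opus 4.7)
The plan is to exploit the two connecting conditions in \cref{d:fil-id-pair}, both fed by the witness of inconsistency. By hypothesis there is some $X_0 \in \C$ and $\varphi \in F_{X_0} \cap I_{X_0}$. I will show that $\varphi$ alone forces every fiber of $F$ and every fiber of $I$ to be the whole Boolean algebra.

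Fix $Y\in\C$ and $\alpha\in\P(Y)$. For $F_Y = \P(Y)$, I use condition~\eqref{i:connecting-two} with $Z = X_0$ and $\gamma = \varphi \in I_{X_0}$. Since $\varphi \in F_{X_0}$ and $F$ is closed under reindexings (\cref{d:universal filter}\eqref{forall-reindex}), we get $\P(\pr_2)(\varphi) \in F_{Y\times X_0}$; because $F_{Y\times X_0}$ is upward closed (being a filter),
\[
\P(\pr_1)(\alpha) \lor \P(\pr_2)(\varphi) \in F_{Y\times X_0},
\]
and condition~\eqref{i:connecting-two} then yields $\alpha \in F_Y$.

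For $I_Y = \P(Y)$, I use condition~\eqref{i:connecting-one} with $X = X_0$, $\beta = \varphi \in F_{X_0}$, and the empty family of morphisms (i.e.\ $n = 0$, which is admissible since $0 \in \N$). The empty meet is $\top_{\P(X_0)}$, so the hypothesis
\[
\beta \land \bigwedge_{i=1}^{0} \P(f_i)(\alpha) = \varphi \in I_{X_0}
\]
is verified by $\varphi \in I_{X_0}$, and condition~\eqref{i:connecting-one} gives $\alpha \in I_Y$.

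There is no real obstacle here: the only point meriting a remark is that one must be allowed to take $n = 0$ in \eqref{i:connecting-one}, but this is covered by the convention that $\N$ contains $0$. The proof is therefore a two-line application of the connecting axioms.
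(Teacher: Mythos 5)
Your proof is correct and takes essentially the same route as the paper: both arguments feed the witness of inconsistency into the two connecting conditions of \cref{d:fil-id-pair}, with the $n=0$ instance of \eqref{i:connecting-one} handling the ideal side. The only (harmless) difference is that the paper first derives $\bot_{\P(\tmn)}\in F_\tmn$ as an intermediate step before concluding, whereas you apply \eqref{i:connecting-two} directly to each $\alpha\in\P(Y)$, which is marginally more direct.
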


\begin{proof}
    By inconsistency, there are $Z \in \C$ and $\gamma \in F_Z \cap I_Z$.   
    Since $\gamma \in I_Z$ and
    \[
        \P(\pr^{\tmn\times Z}_\tmn)(\bot_{\P(\tmn)}) \lor \P(\pr^{\tmn\times Z}_Z)(\gamma) = \gamma \in F_Z = F_{\tmn \times Z},
    \]
    we have $\bot_{\P(\tmn)} \in F_\tmn$ by \cref{d:fil-id-pair}\eqref{i:connecting-two}.
    Let $Y \in \C$ and $\alpha \in \P(Y)$.
    The fact that $\alpha \in F_Y$ follows from $\bot_{\P(\tmn)} \in F_\tmn$ and the properties of universal filters.
    Since $\bot_{\P(\tmn)} \in F_\tmn \cap I_\tmn$, by \cref{d:fil-id-pair}\eqref{i:connecting-one} (applied with $n = 0$ and $\beta = \bot_{\P(\tmn)}$), $\alpha \in I_Y$.
\end{proof}

The following lemma explains the purpose of \eqref{i:connecting-one} and \eqref{i:connecting-two} in \cref{d:fil-id-pair}.

\begin{lemma}\label{l:const-filt-id-sep}
    Let $(F,I)$ be a universal filter-ideal pair for a Boolean doctrine $\P \colon \C\op \to \BA$, let $Y \in \C$ and let $\alpha \in \P(Y)$.
    \begin{enumerate}
        \item \label{i:separatation-1} 
        If $\alpha \notin I_Y$, then $I$ is componentwise disjoint from the universal filter generated by $F$ and $\alpha$, and thus there is a universal ultrafilter extending $F$, containing $\alpha$, and disjoint from $I$.
        
        \item \label{i:separation-2}
        If $\alpha \notin F_Y$, then $F$ is componentwise disjoint from the universal ideal generated by $I$ and $\alpha$, and thus there is a universal ultrafilter extending $F$, not containing $\alpha$, and disjoint from $I$.
    \end{enumerate}
\end{lemma}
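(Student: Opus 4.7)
The plan is to reduce both parts to the universal ultrafilter lemma (\cref{t:gen-ult-lem}) after checking that in each case the enlarged filter and ideal remain componentwise disjoint. The two connecting conditions in \cref{d:fil-id-pair} are tailor-made to match exactly the intersection criteria in \cref{l:char-gen-intersects}, so the verification should be essentially a combinatorial bookkeeping exercise rather than a technical obstacle.

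For part \eqref{i:separatation-1}, I would let $G$ denote the universal filter generated by $F$ and $\alpha$, and suppose towards a contradiction that $G$ intersects $I$ at some fiber. By \cref{l:char-gen-intersects}\eqref{i:intersect-1} this gives $X\in\C$, $n\in\N$, $(f_i\colon X\to Y)_{i=1,\dots,n}$, and $\beta\in F_X$ with $\beta\land\bigwedge_{i=1}^n \P(f_i)(\alpha)\in I_X$. But then \cref{d:fil-id-pair}\eqref{i:connecting-one} yields $\alpha\in I_Y$, contradicting the hypothesis. Hence $G$ is componentwise disjoint from $I$, and \cref{t:gen-ult-lem} produces a universal ultrafilter extending $G$ (so extending $F$ and containing $\alpha$) and disjoint from $I$.

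For part \eqref{i:separation-2}, I would let $J$ denote the universal ideal generated by $I$ and $\alpha$, and suppose towards a contradiction that $F$ intersects $J$ at some fiber. By \cref{l:char-gen-intersects}\eqref{i:intersect-2}, either there is $X\in\C$ with $F_X\cap I_X\neq\varnothing$, or there are $Z\in\C$ and $\gamma\in I_Z$ with $\P(\pr_1)(\alpha)\lor\P(\pr_2)(\gamma)\in F_{Y\times Z}$. In the first case $(F,I)$ is inconsistent, and \cref{l:inconsistent-is-all} forces $F_Y=\P(Y)$ so $\alpha\in F_Y$, contradicting the hypothesis. In the second case \cref{d:fil-id-pair}\eqref{i:connecting-two} directly yields $\alpha\in F_Y$, again a contradiction. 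Thus $F$ is componentwise disjoint from $J$, and \cref{t:gen-ult-lem} applied to $F$ and $J$ delivers a universal ultrafilter extending $F$ and disjoint from $J$; since $\alpha\in J_Y$ and $I$ is contained componentwise in $J$, this ultrafilter avoids $\alpha$ and is disjoint from $I$, as required.

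I do not foresee a genuine obstacle here: the statement is essentially a direct application of the machinery already in place. The only point that requires a moment of attention is the first alternative of \cref{l:char-gen-intersects}\eqref{i:intersect-2} in part \eqref{i:separation-2}, which has no analogue in \eqref{i:intersect-1} and therefore has no counterpart in part \eqref{i:separatation-1}; this alternative is neutralized using \cref{l:inconsistent-is-all} together with the standing assumption $\alpha\notin F_Y$.
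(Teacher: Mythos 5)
Your proof is correct and follows essentially the same route as the paper's: both parts reduce the disjointness claim to \cref{l:char-gen-intersects} and the connecting conditions of \cref{d:fil-id-pair} (with \cref{l:inconsistent-is-all} handling the extra alternative in part \eqref{i:separation-2}), and then invoke the universal ultrafilter lemma. Your explicit justification of the final step of \eqref{i:separation-2} — applying \cref{t:gen-ult-lem} to $F$ and the generated ideal $J$, then using $\alpha\in J_Y$ and $I\subseteq J$ — is exactly the argument the paper leaves implicit with ``similarly to \eqref{i:separatation-1}''.
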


\begin{proof}
    \eqref{i:separatation-1}.
    We prove the contrapositive of the implication ``If $\alpha \notin I_Y$, then $I$ is componentwise disjoint from the universal filter generated by $F$ and $\alpha$''.
    Suppose that $I$ intersects the universal filter generated by $F$ and $\alpha$ in some fiber.
    By \cref{l:char-gen-intersects}\eqref{i:intersect-1}, there are $X \in \C$, $n\in\N$, $(f_i \colon X \to Y)_{i= 1, \dots, n}$ and $\beta \in F_X$ such that $\beta \land \bigwedge_{i = 1}^n\P(f_i)(\alpha) \in I_X$.
    By \cref{d:fil-id-pair}\eqref{i:connecting-one}, $\alpha \in I_Y$.
    This proves that, if $\alpha \notin I_Y$, $I$ is componentwise disjoint from the universal filter generated by $F$ and $\alpha$.
    Therefore, if $\alpha \notin I_Y$, by \cref{t:gen-ult-lem} there is a universal ultrafilter extending $F$, containing $\alpha$, and disjoint from $I$.

    \eqref{i:separation-2}. 
    We prove the contrapositive of the implication ``If $\alpha \notin F_Y$, then $F$ is componentwise disjoint from the universal ideal generated by $I$ and $\alpha$''.
    Suppose that $F$ intersects the universal ideal generated by $I$ and $\alpha$.
    By \cref{l:char-gen-intersects}\eqref{i:intersect-2}, there is $X\in\C$ such that $I_X\cap F_X\neq\varnothing$ or there are $Z\in\C$ and $\gamma \in I_Z$ such that $\P(\pr^{Y\times Z}_Y)(\alpha) \lor \P(\pr^{Y\times Z}_Z)(\gamma) \in F_{Y \times Z}$.
    In the first case, $(F, I)$ is inconsistent, and thus $\alpha \in F_Y$ by \cref{l:inconsistent-is-all}.
    In the second case, by \cref{d:fil-id-pair}\eqref{i:connecting-two}, $\alpha\in F_Y$.
    If $\alpha \notin F_Y$, then $F$ is componentwise disjoint from the universal ideal generated by $I$ and $\alpha$.
    Similarly to \eqref{i:separatation-1}, one then proves that if $\alpha \notin F_Y$, then there is a universal ultrafilter extending $F$, not containing $\alpha$, and disjoint from $I$.
\end{proof}

\begin{lemma} \label{l:ultrafilter-ideal-is-pair}
    Let $\P \colon \C\op \to  \BA$ be a Boolean doctrine.
    Let $(F_X)_{X \in \C}$ be a universal ultrafilter, and for each $X \in \C$ set $I_X \coloneqq \P(X) \setminus F_X$.
    The pair $((F_X)_{X \in \C}, (I_X)_{X \in \C})$ is a universal filter-ideal pair.
\end{lemma}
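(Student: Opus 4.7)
The plan is as follows. By Lemma \ref{l:ultrafilter-as-pair}, since $F$ is a universal ultrafilter, it is automatically a universal filter, and its componentwise complement $I$ is a universal ideal. So the only remaining task is to verify conditions \eqref{i:connecting-one} and \eqref{i:connecting-two} of Definition \ref{d:fil-id-pair}. Both will be established by contrapositive, using the key fact that $F$ and $I$ are componentwise complementary.

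For \eqref{i:connecting-one}, I would suppose $\alpha \notin I_Y$, i.e.\ $\alpha \in F_Y$, and show $\beta \land \bigwedge_{i=1}^n \P(f_i)(\alpha) \notin I_X$. Since $F$ is closed under reindexings (\cref{d:uf}\eqref{forall-reindex-ultrafilter}), each $\P(f_i)(\alpha) \in F_X$. Since $F_X$ is a filter (\cref{d:uf}\eqref{i:uf-filter}), it is closed under finite meets and contains $\beta$, so $\beta \land \bigwedge_{i=1}^n \P(f_i)(\alpha) \in F_X$, hence it is not in $I_X = \P(X) \setminus F_X$.

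For \eqref{i:connecting-two}, I would suppose $\alpha \notin F_Y$, i.e.\ $\alpha \in I_Y$, and show $\P(\pr_1)(\alpha) \lor \P(\pr_2)(\gamma) \notin F_{Y \times Z}$. Since both $\alpha \in I_Y$ and $\gamma \in I_Z$, by the closure property of universal ideals under binary joins of projections (\cref{d:univ-ideal}\eqref{i:ideal-closed-binary-joins}), we get $\P(\pr_1)(\alpha) \lor \P(\pr_2)(\gamma) \in I_{Y \times Z}$, so this element does not belong to $F_{Y \times Z}$.

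There is no real obstacle here; the lemma is essentially a straightforward bookkeeping consequence of the complementarity between $F$ and $I$ together with the already-established closure properties packaged in \cref{d:uf} and \cref{d:univ-ideal}. The proof should be short, amounting to two contrapositive arguments of a couple of lines each.
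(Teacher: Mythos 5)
Your proposal is correct and takes essentially the same route as the paper: invoke \cref{l:ultrafilter-as-pair} for the filter/ideal structure, then verify the two connecting conditions of \cref{d:fil-id-pair} using complementarity together with closure of $F$ under reindexings and meets (for the first) and the join-splitting property (for the second). The paper phrases the two verifications as arguments by contradiction rather than contrapositives, but the content is identical.
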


\begin{proof}
    The family $(I_X)_{X \in \C}$ is a universal ideal (as already mentioned in \cref{r:ultrafilter-as-pair}).
    
    We prove the conditions \eqref{i:connecting-one} and \eqref{i:connecting-two} in \cref{d:fil-id-pair}.
    
    \eqref{i:connecting-one}.
    Let $X,Y\in\C$, $\alpha \in \P(Y)$, let $n\in\N$, let $(f_i \colon X \to Y)_{i=1,\dots,n}$, let $\beta \in F_X$, and suppose $\beta\land\bigwedge_{i=1}^n\P(f_i)(\alpha) \in I_X$.
    We shall prove $\alpha \in I_Y$, i.e., $\alpha \notin F_Y$.
    We suppose $\alpha \in F_Y$ and we seek a contradiction.
    From $\alpha \in F_Y$ we deduce that for all $i=1,\dots,n$, $\P(f_i)(\alpha) \in F_X$, and hence $\beta\land\bigwedge_{i=1}^n\P(f_i)(\alpha)\in F_X$.
    Thus $\beta\land\bigwedge_{i=1}^n\P(f_i)(\alpha)  \in F_X \cap I_X$, a contradiction.

    \eqref{i:connecting-two}.
    Let $Y, Z\in\C$, $\alpha \in \P(Y)$, $\gamma \in I_Z$ and suppose $\P(\pr^{Y\times Z}_Y)(\alpha)\lor\P(\pr^{Y\times Z}_Z)(\gamma)\in F_{Y\times Z}$. We check $\alpha\in F_Y$. We suppose $\alpha\notin F_Y$ and we seek a contradiction. From $\gamma \in I_Z$ we deduce $\gamma\notin F_Z$. Using condition \eqref{i:uf-join} in \cref{d:uf} we obtain $\P(\pr^{Y\times Z}_Y)(\alpha)\lor\P(\pr^{Y\times Z}_Z)(\gamma)\notin F_{Y \times Z}$, a contradiction.
\end{proof}

\begin{theorem} \label{t:characterization-filter-ideal-pair}
    Let $\P \colon \C\op \to \BA$ be a Boolean doctrine, with $\C$ small.
    Let $F = (F_X)_{X \in \C}$ and $I = (I_X)_{X \in \C}$ be families with $F_X \subseteq \P(X)$ and $I_X \subseteq \P(X)$ for each $X \in \C$.
    The following are equivalent.
    \begin{enumerate}
        \item \label{i:fi-arbitrary-class}
        There is a class $\mathcal{M}$ of propositional models of $\P$ such that, for every $X \in \C$,
        \begin{align*}
            F_X & = \{\alpha \in \P(X) \mid \text{for all }(M,\m) \in \mathcal{M},\text{ for all }x \in M(X),\, x \in \m_X(\alpha)\},\\
            I_X & = \{\alpha \in \P(X) \mid \text{for all }(M,\m) \in \mathcal{M},  \text{ not all }x \in M(X) \text{ satisfy } x \in \m_X(\alpha)\}.
        \end{align*}
        \item \label{i:fi-arbitrary-pair}
        $(F, I)$ is a universal filter-ideal pair for $\P$.
        \item\label{i:fi-arbitrary-intersection} $F$ is the intersection of the universal ultrafilters for $\P$ containing $F$ and disjoint from $I$, and $I$ is the intersection of the universal ultraideals for $\P$ containing $I$ and disjoint from $F$.
    \end{enumerate}
    Equivalent are also the statements obtained by additionally requiring:  nonemptiness of $\mathcal{M}$ in \eqref{i:fi-arbitrary-class}, consistency of $(F, I)$ in \eqref{i:fi-arbitrary-pair}, and the existence of a universal ultrafilter for $\P$ containing $F$ and disjoint from $I$  in \eqref{i:fi-arbitrary-intersection}. 
\end{theorem}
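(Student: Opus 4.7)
The argument parallels the proofs of \cref{t:characterization-filters,t:characterization-ideals}, but now using both the universal filter and the universal ideal at once, connected by the two compatibility conditions in \cref{d:fil-id-pair}. The technical engine is \cref{l:const-filt-id-sep}, which packages the universal ultrafilter lemma (\cref{t:gen-ult-lem}) into a separation statement tailored to filter-ideal pairs.

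For \eqref{i:fi-arbitrary-class} $\Rightarrow$ \eqref{i:fi-arbitrary-pair}, I would first invoke the verifications already done in the proofs of \cref{t:characterization-filters,t:characterization-ideals} to see that the given $F$ is a universal filter and the given $I$ is a universal ideal. The only new content is to check conditions \eqref{i:connecting-one} and \eqref{i:connecting-two} of \cref{d:fil-id-pair}. For \eqref{i:connecting-one}, suppose $\beta \land \bigwedge_{i=1}^n \P(f_i)(\alpha)\in I_X$ with $\beta\in F_X$; if by contradiction $\alpha\notin I_Y$, then there is $(M,\m)\in\mathcal M$ with $\m_Y(\alpha)=M(Y)$, hence $\m_X(\P(f_i)(\alpha))=M(X)$ for every $i$, and combining with $\m_X(\beta)=M(X)$ we get that the whole conjunction is $M(X)$, contradicting $\alpha\in I_X$ (witnessed by this same $(M,\m)$). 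Condition \eqref{i:connecting-two} is handled symmetrically by a case analysis on a witness for $\alpha\notin F_Y$.

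For \eqref{i:fi-arbitrary-pair} $\Rightarrow$ \eqref{i:fi-arbitrary-intersection}, the inclusions $F_X\subseteq\bigcap G_X$ and $I_X\subseteq\bigcap J_X$ (over suitable ultrafilters $G$ and ultraideals $J$) are immediate. For the reverse, let $\alpha\in\P(Y)\setminus F_Y$. Then \cref{l:const-filt-id-sep}\eqref{i:separation-2} supplies a universal ultrafilter $G$ extending $F$, disjoint from $I$, and missing $\alpha$; such a $G$ is a member of the intersection, so $\alpha$ is excluded. Dually, if $\alpha\in\P(Y)\setminus I_Y$, \cref{l:const-filt-id-sep}\eqref{i:separatation-1} gives a universal ultrafilter $G$ extending $F$, disjoint from $I$, with $\alpha\in G_Y$; its complementary ultraideal $J$ (see \cref{r:complement}) extends $I$, is disjoint from $F$ (since $F\subseteq G$), and does not contain $\alpha$. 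This shows $F$ and $I$ are precisely those intersections.

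For \eqref{i:fi-arbitrary-intersection} $\Rightarrow$ \eqref{i:fi-arbitrary-class}, to each universal ultrafilter $G$ appearing in the intersection defining $F$ and disjoint from $I$, apply \cref{t:characterization-ultrafilters} to get a propositional model $(M_G,\m_G)$ realizing $G$ as the family of ``universally valid'' formulas; by \cref{l:ultrafilter-ideal-is-pair} the complementary $J=\P(-)\setminus G$ is a universal ultraideal containing $I$, so $(M_G,\m_G)$ also realizes $J$ as the family of ``universally invalid'' formulas. Take $\mathcal M$ to be the class of all such $(M_G,\m_G)$; the required equalities of $F$ and $I$ then fall out of \eqref{i:fi-arbitrary-intersection}.

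Finally, for the additional ``primed'' equivalence adding nonemptiness, consistency, and existence of a witnessing ultrafilter-ultraideal pair: the implications go through exactly as in the final paragraphs of \cref{t:characterization-filters,t:characterization-ideals}. The key new ingredient is \cref{l:const-filt-id-sep} (used either with $\alpha = \bot_{\P(\tmn)}\notin I_\tmn$ or $\alpha = \top_{\P(\tmn)}\notin F_\tmn$, which fails by consistency) to extend any consistent filter-ideal pair to a universal ultrafilter-ultraideal pair. I expect the main (though still modest) obstacle to be verifying the two connecting conditions \eqref{i:connecting-one}--\eqref{i:connecting-two} in \eqref{i:fi-arbitrary-class} $\Rightarrow$ \eqref{i:fi-arbitrary-pair}, since these are the parts genuinely beyond what \cref{t:characterization-filters,t:characterization-ideals} already deliver.
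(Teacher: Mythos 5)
Your proposal is correct, but it is organized differently from the paper's proof. You close the cycle as \eqref{i:fi-arbitrary-class} $\Rightarrow$ \eqref{i:fi-arbitrary-pair} $\Rightarrow$ \eqref{i:fi-arbitrary-intersection} $\Rightarrow$ \eqref{i:fi-arbitrary-class}, whereas the paper proves the two biconditionals \eqref{i:fi-arbitrary-class} $\Leftrightarrow$ \eqref{i:fi-arbitrary-intersection} and \eqref{i:fi-arbitrary-pair} $\Leftrightarrow$ \eqref{i:fi-arbitrary-intersection}, routing everything through the intersection condition. The substantive difference is that the paper never verifies the connecting conditions \eqref{i:connecting-one}--\eqref{i:connecting-two} of \cref{d:fil-id-pair} semantically: it obtains \eqref{i:fi-arbitrary-pair} from \eqref{i:fi-arbitrary-intersection} by observing (via \cref{l:ultrafilter-ideal-is-pair}) that an ultrafilter together with its complement is a filter-ideal pair and that filter-ideal pairs are closed under componentwise intersection. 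Your direct model-theoretic check of \eqref{i:connecting-one} and \eqref{i:connecting-two} is sound (and arguably more explanatory of where those axioms come from), at the cost of an extra argument; both proofs lean on \cref{l:const-filt-id-sep} for the reverse inclusions in \eqref{i:fi-arbitrary-pair} $\Rightarrow$ \eqref{i:fi-arbitrary-intersection}, and your \eqref{i:fi-arbitrary-intersection} $\Rightarrow$ \eqref{i:fi-arbitrary-class} is a legitimate variant of the paper's (you take $\mathcal{M}$ to be the models realizing the witnessing ultrafilters, the paper takes the class of all models compatible with $F$ and $I$).

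Three small repairs. First, in your check of \eqref{i:connecting-one} the contradiction is with $\beta \land \bigwedge_{i=1}^n \P(f_i)(\alpha) \in I_X$, not with ``$\alpha \in I_X$''. Second, in \eqref{i:fi-arbitrary-intersection} $\Rightarrow$ \eqref{i:fi-arbitrary-class} you should make explicit that complementation is a bijection between the ultrafilters containing $F$ and disjoint from $I$ and the ultraideals containing $I$ and disjoint from $F$ (this is \cref{r:complement} plus the two disjointness observations); otherwise the claimed identity for $I$ does not quite ``fall out''. Third, in the primed part your non-membership claims are crossed: the axioms force $\bot_{\P(\tmn)} \in I_\tmn$ and $\top_{\P(\tmn)} \in F_\tmn$, so what consistency gives you is $\bot_{\P(\tmn)} \notin F_\tmn$ and $\top_{\P(\tmn)} \notin I_\tmn$, and these are the hypotheses under which \cref{l:const-filt-id-sep} applies. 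In fact the detour through that lemma is unnecessary here: consistency of $(F,I)$ is exactly the hypothesis of the universal ultrafilter lemma (\cref{t:gen-ult-lem}), which directly yields an ultrafilter containing $F$ and disjoint from $I$, as the paper does.
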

\begin{proof}
    \eqref{i:fi-arbitrary-class} $\Rightarrow$ \eqref{i:fi-arbitrary-intersection}. 
    We prove that $F$ is the componentwise intersection of the family $\mathcal{G}$ of all universal ultrafilters containing $F$ and disjoint from $I$.
    Fix $X \in \C$.
    The inclusion $F_X\subseteq\bigcap_{G\in\mathcal{G}} G_X$ is immediate by definition of $\mathcal{G}$. 
    For the converse inclusion, let $\alpha\in\bigcap_{G\in\mathcal{G}} G_X$.
    To prove that $\alpha \in F_X$, we check that for all $(M,\m)\in\mathcal{M}$ and $x\in M(X)$ we have $x\in\m_X(\alpha)$. Let $(M,\m)\in\mathcal{M}$.
    For all $Y\in\C$, set
    \begin{equation*}
    H_Y = \{\beta \in \P(Y) \mid \text{for all }x \in M(Y),\, x \in \m_Y(\beta)\}
    \end{equation*}
    By \cref{t:characterization-ultrafilters}, $(H_Y)_{Y \in \C}$ is a universal ultrafilter, and it is easy to see that it belongs to $\mathcal{G}$.
    We have $\alpha\in \bigcap_{G\in\mathcal{G}} G_X \subseteq H_X$, as desired.

    A similar argument shows that $I$ is the intersection of the universal ultraideals for $\P$ containing $I$.
    
    \eqref{i:fi-arbitrary-intersection} $\Rightarrow$ \eqref{i:fi-arbitrary-class}.
    Let $\mathcal{M}$ be the class of propositional models $(M,\m)$ of $\P$ such that
    \begin{align*}
        F_X & \subseteq \{\alpha \in \P(X) \mid \text{for all }x \in M(X),\, x \in \m_X(\alpha)\}, \text{ and}\\
        I_X & \subseteq \{\alpha \in \P(X) \mid \text{not all }x \in M(X) \text{ satisfy } x \in \m_X(\alpha)\}.
    \end{align*}
    Let 
    \[
        \beta\in \bigcap_{(M,\m)\in\mathcal{M}}\{\alpha \in \P(X) \mid \text{for all }x \in M(X),\, x \in \m_X(\alpha)\}.
    \]
    We show that $\beta\in F_X$, i.e.\ that $\beta$ belongs to all universal ultrafilters $G$ containing $F$ and disjoint from $I$. 
    Let $G$ be any such universal ultrafilter.
    By \cref{t:characterization-ultrafilters} there is a propositional model $(M,\m)$ of $\P$ such that, for all $Y\in\C$, $G_Y=\{\gamma \in \P(Y) \mid \text{for all }y \in M(Y),\, y \in \m_Y(\gamma)\}$. It is then easy to see that the propositional model $(M,\m)$ belongs to $\mathcal{M}$. By hypothesis on $\beta$, we have $\beta\in G_X$, as desired.

    A similar argument shows the desired condition on $I$.

    \eqref{i:fi-arbitrary-pair} $\Rightarrow$ \eqref{i:fi-arbitrary-intersection}.
    It is easy to see that $F$ is contained in the intersection of the universal ultrafilters for $\P$ that contain $F$ and are disjoint from $I$.
    The converse inclusion is precisely \cref{l:const-filt-id-sep}\eqref{i:separation-2}.
    Analogously, $I$ is the intersection of the universal ultraideals for $\P$ containing $I$ and disjoint from $F$.

    \eqref{i:fi-arbitrary-intersection} $\Rightarrow$ \eqref{i:fi-arbitrary-pair}.
    By \cref{l:ultrafilter-ideal-is-pair}, if $(G_X)_{X \in \C}$ is a universal ultrafilter and for each $X \in \C$ we set $J_X \coloneqq \P(X) \setminus G_X$, then the pair $((G_X)_{X \in \C}, (J_X)_{X \in \C})$ is a universal filter-ideal pair.
    Thus, $(F, I)$ is an intersection of universal filter-ideal pairs $(G, J)$ (with the property that $G$ is a universal ultrafilter and $J$ is componentwise complementary), and so it is a universal filter-ideal pair.

    This proves that the statements \eqref{i:fi-arbitrary-class}, \eqref{i:fi-arbitrary-pair} and \eqref{i:fi-arbitrary-intersection} are equivalent.
    
    Let us now prove that the statements (1'), (2') and (3') obtained from \eqref{i:exist-class-ne-id}, \eqref{i:prop-univ-filt-ne-id} and \eqref{i:filter-is-intersection-ne-id} as in the final paragraph of the theorem are equivalent.

    (1') $\Rightarrow$ (3').
    Since $\mathcal{M}$ is nonempty, there is $(M,\m)\in\mathcal{M}$. The family $(H_Y)_{Y \in \C}$ defined by
    \[
        H_Y \coloneqq \{\beta \in \P(Y) \mid \text{for all }x \in M(Y),\, x \in \m_Y(\beta)\}
    \] 
    is a universal ultrafilter (by \cref{t:characterization-ultrafilters}) containing $F$ and disjoint from $I$.
    
    (3') $\Rightarrow$ (1'). 
    The class $\mathcal{M}$ is nonempty because, if $\mathcal{M}$ were empty, we would have $F_X = \P(X)$ for all $X \in \C$, contradicting the existence of a universal ultrafilter for $\P$ containing $F$.

    (2') $\Rightarrow$ (3').
    This follows from the universal ultrafilter lemma (\cref{t:gen-ult-lem}).

    (3') $\Rightarrow$ (2').
    This is immediate.
\end{proof}

\section{Semantic characterizations over fixed free variables}\label{s:char-filter-ideal-at-S}

In \cref{sec:completenes-first-layer}, we characterized the classes of formulas whose universal closure (with respect to \emph{all} free variables) is valid in some fixed model.
In this appendix, we do something similar, but we fix some free variables that are exempt from universal closure. 
To illustrate this, we introduce the following notation.

\subsection{Semantic characterization of universal ultrafilters over fixed free variables}
\begin{notation}\label{n:FM-IM-one-model-at-S}
    Let $(M,\m,s)$ be a propositional model of a Boolean doctrine $\P \colon \C\op \to  \BA$ at an object $S\in\C$ (see \cref{d:bool-mod-at-S}).
    For each $X \in \C$, define
    \[
        F_X^{S,(M,\m,s)} \coloneqq \{\alpha \in \P(S\times X) \mid \text{for all }x \in M(X),\, (s,x) \in \m_{S\times X}(\alpha)\},
    \]
    where we made implicit use of the isomorphism $M(S \times X) \cong M(S) \times M(X)$ in writing $(s, x) \in \m_{S\times X}(\alpha)$.
\end{notation}

\begin{remark}\label{r:univ-vald-fmlas-one-model-at-S}
    We translate \cref{n:FM-IM-one-model-at-S} to the classical syntactic setting.
    For this, we fix $k \in \N$.
    Let $\{s_1, \dots, s_k, x_1, x_2, \dots\}$ be a set of variables, $\mathcal{T}$ a universal theory, $M$ a model of $\mathcal{T}$, and $c_1, \dots, c_k \in M$.
    For each $n \in \N$ we define
    \[
        F_n^{k,M,c_1, \dots, c_k} \coloneqq \{ \alpha(s_1, \dots, s_k, x_1, \dots, x_{n}) \text{ q.-free}\mid M,[(s_i \mapsto c_i)_i] \vDash \forall x_1 \dots \forall x_n\, \alpha(s_1, \dots, s_k, x_1, \dots, x_n)\},
    \]
    where by $M,[(s_i \mapsto c_i)_i] \vDash \forall x_1 \dots \forall x_n\, \alpha(s_1, \dots, s_k, x_1, \dots, x_n)$ we mean that, under the variable assignment $[(s_i \mapsto c_i)_{i= 1, \dots, k}]$, the formula $\forall x_1 \dots \forall x_n\, \alpha(s_1, \dots, s_k, x_1, \dots, x_n)$ is valid in $M$.
\end{remark}

In \cref{t:characterization-ultrafilters-at-object} below we characterize the families of the form $(F_X^{S,(M,\m,s)})_{X \in \C}$ for some model $(M,\m,s)$ at $S$, at least in the case where the base category $\C$ is small; these families are captured axiomatically by the notion of a \emph{universal ultrafilter at $S$}, introduced in \cref{d:uf-at-X} below.

\begin{definition} [Universal ultrafilter at an object] \label{d:uf-at-X}
    A \emph{universal ultrafilter for a Boolean doctrine $\P \colon \C\op \to \BA$ at $S \in \C$} is a family $(F_X)_{X \in \C}$, with $F_X \subseteq \P(S \times X)$ for $X \in \C$, with the following properties.
    \begin{enumerate}
        \item\label{i:uf-at-object-substitution} 
        For all $f\colon S\times X\to Y$ and $\alpha \in F_Y$, we have $\P(\langle \pr^{S\times Y}_S,f\rangle)(\alpha) \in F_X$.
        
        \item \label{i:uf-at-object-filter}
        For all $X \in \C$, $F_X$ is a filter of $\P(S \times X)$.
        
        \item\label{i:uf-at-object-join}
        For all $\alpha_1 \in \P(S \times X_1) \setminus F_{X_1}$ and $\alpha_2 \in \P(S \times X_2) \setminus F_{X_2}$, in $\P(S\times X_1\times X_2)$ we have $\P(\pr^{S\times X_1\times X_2}_{S\times X_1})(\alpha_1)\lor \P(\pr^{S\times X_1\times X_2}_{S\times X_2})(\alpha_2)\notin F_{X_1\times X_2}$.
        
        \item \label{i:uf-at-object-bot}
        $\bot_{\P(S)}\notin F_\tmn$.
    \end{enumerate}
\end{definition}

\begin{remark} \label{r:uf-translation-to-classic-at-object}
    We translate \cref{d:uf-at-X} to the classical syntactic setting.
    For this, we fix $k \in \N$.
    Let $\{s_1, \dots, s_k, x_1, x_2, \dots\}$ be a set of variables and $\mathcal{T}$ a universal theory.
    A \emph{universal ultrafilter for $\mathcal{T}$ at $k$} is a family $(F_n)_{n \in \N}$, with $F_n$ a set of quantifier-free formulas with $s_1, \dots, s_k, x_1, \dots, x_n$ as (possibly dummy) free variables, with the following properties.
    \begin{enumerate}
        \item
        For every $n,m \in \N$, for every formula $\alpha(s_1, \dots, s_k, x_1, \dots, x_m) \in F_m$ and for every $m$-tuple $(f_i(s_1, \dots, s_k, x_1, \dots,x_n))_{i = 1, \dots, m}$ of $(k + n)$-ary terms, 
        \[
        \alpha(s_1, \dots, s_k, f_1(s_1, \dots, s_k, x_1, \dots, x_n), \dots, f_m(s_1, \dots, s_k, x_1, \dots, x_n)) \in F_n.
        \]
        
        \item For all $n \in \N$, 
        \begin{enumerate}
            \item for all quantifier-free formulas $\alpha(s_1, \dots, s_k, x_1, \dots, x_n)$ and $\beta(s_1, \dots, s_k, x_1, \dots, x_n)$, if we have $\alpha(s_1, \dots, s_k, x_1, \dots, x_n) \in F_n$ and $\alpha(s_1, \dots, s_k, x_1, \dots, x_n) \vdash_\T \beta(s_1, \dots, s_k, x_1, \dots, x_n)$, then we have $\beta(s_1, \dots, s_k, x_1, \dots, x_n) \in F_n$;
            \item 
            for all $\alpha_1(s_1, \dots, s_k, x_1, \dots, x_n), \alpha_2(s_1, \dots, s_k, x_1, \dots, x_n) \in F_n$, we have 
            \[\alpha_1(s_1, \dots, s_k, x_1, \dots, x_n) \land \alpha_2(s_1, \dots, s_k, x_1, \dots, x_n) \in F_n;\]
            \item $\top(s_1, \dots, s_k, x_1, \dots, x_n) \in F_n$.
        \end{enumerate}
            
        \item 
        For every $n_1,n_2 \in \N$ and for every pair of quantifier-free formulas $\alpha_1(s_1, \dots, s_k, x_1, \dots, x_{n_1})$ and $\alpha_2(s_1, \dots, s_k, x_1, \dots, x_{n_2})$, if
        \[
        \alpha_1(s_1, \dots, s_k, x_1, \dots, x_{n_1}) \lor \alpha_2(s_1, \dots, s_k, x_{n_1 +1}, \dots, x_{n_1 + n_2}) \in F_{n_1 + n_2},
        \]
        then $\alpha_1(s_1, \dots, s_k, x_1, \dots, x_{n_1}) \in F_{n_1}$ or $\alpha_2(s_1, \dots, s_k, x_1, \dots, x_{n_2}) \in F_{n_2}$.
        
        \item $\bot(s_1, \dots, s_k) \notin F_0$.
    \end{enumerate}
    
    For any model $M$ of $\T$ and any $c_1, \dots, c_k \in M$, it is easy to check that the family $(F_n)_{n\in\N}$ defined by
    \[
    F_n \coloneqq \{ \alpha(s_1, \dots, s_k, x_1, \dots, x_{n}) \text{ quantifier-free}\mid M, [(s_i \mapsto c_i)_i] \vDash \forall x_1 \dots \forall x_n\, \alpha(s_1, \dots, s_k, x_1, \dots, x_n)\},
    \]
    is a universal filter for $\T$ at $k$ in the sense above.
\end{remark}

\begin{remark}
    A universal ultrafilter for a Boolean doctrine $\P \colon \C\op \to  \BA$ at $S \in \C$ is a universal ultrafilter for the Boolean doctrine $\P_S\colon \C_S\op \to \BA$ obtained by adding a constant of type $S$ for $\P$ (see \cref{r:const}).
\end{remark}

\begin{theorem} \label{t:characterization-ultrafilters-at-object}
    Let $\P \colon \C\op \to \BA$ be a Boolean doctrine, with $\C$ small, and let $S \in \C$.
    Let $F = (F_X)_{X \in \C}$ be a family with $F_X \subseteq \P(S \times X)$ for each $X \in \C$.
    The following are equivalent.
    \begin{enumerate}
        \item \label{i:exists-model-at-object}
        There is a propositional model $(M, \m, s)$ of $\P$ at $S$ such that, for every $X \in \C$,
        \[
        F_X=\{\alpha \in \P(S \times X) \mid \text{for all }x \in M(X),\, (s , x) \in \m_{S \times X}(\alpha)\}.
        \] 
        
        \item \label{i:is-universal-ultrafilter-at-object}
        $F$ is a universal ultrafilter for $\P$ at $S$.
    \end{enumerate}
\end{theorem}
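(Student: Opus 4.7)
The plan is to reduce this theorem directly to \cref{t:characterization-ultrafilters} applied to the Boolean doctrine $\P_S \colon \C_S\op \to \BA$ obtained from $\P$ by freely adding a constant of type $S$, as recalled in \cref{r:const}. Since $\C$ is small and $\C_S$ has the same objects as $\C$ with $\C_S(X, Y) = \C(S \times X, Y)$, the category $\C_S$ is small, so the hypotheses of \cref{t:characterization-ultrafilters} are met by $\P_S$. The remark stated just before the theorem already asserts that a universal ultrafilter for $\P$ at $S$ is exactly a universal ultrafilter for $\P_S$, and the corresponding statement for models is provided by \cref{l:model-model-at-S}; the proof is then a matter of cleanly invoking these two translations.

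For the axiom match, I would unfold the definitions. A morphism $X \to Y$ in $\C_S$ is a morphism $f \colon S \times X \to Y$ in $\C$, and the induced reindexing on $\P_S$ is $\P(\ple{\pr_1, f}) \colon \P(S \times Y) \to \P(S \times X)$; so condition~\eqref{i:uf-at-object-substitution} of \cref{d:uf-at-X} matches the reindexing closure \cref{d:uf}\eqref{forall-reindex-ultrafilter} for $\P_S$. Conditions~\eqref{i:uf-at-object-filter} and~\eqref{i:uf-at-object-bot} of \cref{d:uf-at-X} translate directly, using $\P_S(\tmn) = \P(S \times \tmn) \cong \P(S)$. For condition~\eqref{i:uf-at-object-join}, the two product projections of $X_1 \times X_2$ in $\C_S$ correspond to the morphisms $\ple{\pr_1,\pr_2}, \ple{\pr_1,\pr_3} \colon S \times X_1 \times X_2 \to S \times X_i$ in $\C$, so the condition becomes exactly the contrapositive of the prime condition \cref{d:uf}\eqref{i:uf-join} for $\P_S$. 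For the model match, by the universal property of $\P_S$ (see \cref{t:univ-prop-const}), propositional models $(N, \n)$ of $\P_S$ are in bijection with triples $(M, \m, s)$, i.e.\ with propositional models of $\P$ at $S$, and \cref{l:model-model-at-S} gives, for each $X \in \C$,
\[
\{\alpha \in \P_S(X) \mid \forall y \in N(X),\ y \in \n_X(\alpha)\} = \{\alpha \in \P(S \times X) \mid \forall x \in M(X),\ (s, x) \in \m_{S \times X}(\alpha)\}.
\]

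With these two dictionaries in place, condition~\eqref{i:exists-model-at-object} here is exactly condition~\eqref{i:exists-model} of \cref{t:characterization-ultrafilters} applied to the Boolean doctrine $\P_S$, and condition~\eqref{i:is-universal-ultrafilter-at-object} is exactly condition~\eqref{i:is-universal-ultrafilter} of that theorem. The equivalence therefore follows immediately. I do not anticipate any real obstacle: the technical core (the universal ultrafilter lemma \cref{t:gen-ult-lem} and the extension-to-richness of \cref{t:extension-to-rich}, synthesized in \cref{t:characterization-ultrafilters}) has already been done, and the only remaining work is the routine bookkeeping of unfolding the definitions of $\C_S$, $\P_S$ and the associated reindexings to verify the axiom correspondence above.
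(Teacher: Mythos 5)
Your proof is correct and follows exactly the paper's route: the paper's own proof is precisely to apply \cref{t:characterization-ultrafilters} to the doctrine $\P_S$ obtained by adding a constant of type $S$ and to invoke \cref{l:model-model-at-S} for the model correspondence. Your more detailed unfolding of the axiom dictionary between \cref{d:uf-at-X} and \cref{d:uf} for $\P_S$ is accurate and merely makes explicit what the paper leaves to the reader.
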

\begin{proof}
    This follows from \cref{t:characterization-ultrafilters} applied to the Boolean doctrine $\P_S$ obtained from $\P$ by adding a constant of type $S$ and from \cref{l:model-model-at-S}.
\end{proof}

\begin{remark}
    We translate \cref{t:characterization-ultrafilters-at-object} to the classical syntactic setting.
    For this, we fix $k \in \N$.
    Let $\{s_1, \dots, s_k, x_1, x_2, \dots\}$ be a set of variables and $\mathcal{T}$ a universal theory.
    Let $(F_n)_{n \in \N}$ be a family with $F_n$ a set of quantifier-free formulas with $s_1, \dots, s_k, x_1, \dots, x_n$ as free (possibly dummy) variables.
    The following are equivalent.
    \begin{enumerate}
        \item There are a model $M$ of $\mathcal{T}$ and $c_1, \dots, c_k \in M$ such that, for every $n \in \N$,
        \[
        F_n = \{\alpha(s_1, \dots, s_k, x_1, \ldots, x_n) \text{ quantifier-free} \mid M , [(s_i \mapsto c_i)_i] \vDash \forall x_1 \ldots \forall x_n\, \alpha(s_1, \dots, s_k, x_1, \dots, x_n)\}.
        \]

        \item $(F_n)_{n \in \N}$ is a universal ultrafilter for $\mathcal{T}$ at $k$ (in the sense of \cref{r:uf-translation-to-classic-at-object}).
    \end{enumerate}
\end{remark}

\subsection{Semantic characterization of universal filters over fixed free variables}

Analogously to \cref{sec:completenes-first-layer}, we introduce the notions of \emph{universal filters} and \emph{universal ideals} at a given object.
To motivate these notions, we extend \cref{n:FM-IM-one-model-at-S} as follows.

\begin{notation}\label{n:FM-IM-at-object}
    Let $\mathcal{M}$ be a class of propositional models of a Boolean doctrine $\P \colon \C\op \to  \BA$ at an object $S \in \C$.
    For each $X \in \C$, define
    \begin{align*}
        F_X^{S, \mathcal{M}} &\coloneqq \{\alpha \in \P(S \times X) \mid  \text{for all }(M,\m, s) \in \mathcal{M},\text{ for all }x \in M(X),\, (s , x) \in \m_{S\times X}(\alpha)\},\\
        I_X ^{S, \mathcal{M}} &\coloneqq \{\alpha \in \P(S \times X) \mid \text{for all }(M,\m, s) \in \mathcal{M},  \text{ not all }x \in M(X) \text{ satisfy } (s , x) \in \m_{S\times X}(\alpha)\}.
    \end{align*}
\end{notation}

Roughly speaking, 
\begin{itemize}
    \item $F^{S,\mathcal{M}}$ consists of all the formulas $\alpha(S , X)$ such that $M,[S \mapsto s]\vDash\forall X\, \alpha(S, X)$ is valid in all elements $(M ,s)$ of $\mathcal{M}$,
    
    \item $I^{S,\mathcal{M}}$ consists of all the formulas $\alpha(S , X)$ such that $M,[S \mapsto s]\vDash\lnot (\forall X\, \alpha(S, X))$ is valid in all elements $(M ,s)$ of $\mathcal{M}$.
\end{itemize}

\begin{remark}\label{r:univ-vald-fmlas-at-object}
    We translate \cref{n:FM-IM-at-object} to the classical syntactic setting.
    For this, we fix $k \in \N$.
    Let $\{s_1, \dots, s_k, x_1, x_2, \dots\}$ be a set of variables and $\mathcal{T}$ a universal theory.
    Let $\mathcal{M}$ be a class of tuples $(M, c_1, \dots, c_k)$ where $M$ is a model of $\mathcal{T}$ and $c_1, \dots, c_k \in M$.
    For each $n \in \N$ we define
    \begin{align*}
        F_n^{k,\mathcal{M}}
        \coloneqq \{ \alpha(s_1, \dots, s_k, x_1, \dots, x_{n}) \text{ quantifier-free}\mid{}& \text{for all } (M, c_1, \dots, c_k) \in \mathcal{M},\\
        & M, [(s_i \mapsto c_i)_i] \vDash \forall x_1 \dots \forall x_n\, \alpha(s_1, \dots, s_k, x_1, \dots, x_n)\},\\
        I_n^{k,\mathcal{M}} \coloneqq \{ \alpha(s_1, \dots, s_k, x_1, \dots, x_{n}) \text{ quantifier-free}\mid{}& \text{for all } (M, c_1, \dots, c_k) \in \mathcal{M},\\
        & M, [(s_i \mapsto c_i)_i] \nvDash \forall x_1 \dots \forall x_n\, \alpha(s_1, \dots, s_k, x_1, \dots, x_n)\}.
    \end{align*}
\end{remark}

We introduce \emph{universal filters at an object $S$}, meant to characterize the families of the form $F^{S,\mathcal{M}}$ for $\mathcal{M}$ an arbitrary class of propositional models at $S$ (see \cref{n:FM-IM-at-object}).

\begin{definition}[Universal filter at an object]\label{d:filter-at-object}
    A \emph{universal filter for a Boolean doctrine $\P \colon \C\op \to \BA$ at $S \in \C$} is a family $(F_X)_{X \in \C}$, with $F_X \subseteq \P(S \times X)$ for $X \in \C$, with the following properties.
    \begin{enumerate}
        \item\label{forall-reindex-at-object} For all $f\colon S\times X\to Y$ and $\alpha \in F_Y$, $\P(\langle \pr^{S\times X}_S,f\rangle)(\alpha) \in F_X$.
        
        \item For all $X \in \C$, $F_X$ is a filter of $\P(S \times X)$.
    \end{enumerate}
\end{definition}

\begin{remark} 
    We translate \cref{d:filter-at-object} to the classical syntactic setting.
    For this, we fix $k \in \N$.
    Let $\{s_1, \dots, s_k, x_1, x_2, \dots\}$ be a set of variables and $\mathcal{T}$ a universal theory.
    A \emph{universal filter for $\mathcal{T}$ at $k$} is a family $(F_n)_{n \in \N}$, with $F_n$ a set of quantifier-free formulas with $s_1, \dots, s_k, x_1, \dots, x_n$ as (possibly dummy) free variables, with the following properties.
    \begin{enumerate}
        \item For every $n,m \in \N$, for every formula $\alpha(s_1, \dots, s_k, x_1, \dots, x_m) \in F_m$ and for every $m$-tuple $(f_i(s_1, \dots, s_k, x_1, \dots,x_n))_{i = 1, \dots, m}$ of $(k + n)$-ary terms, 
        \[
        \alpha(s_1, \dots, s_k, f_1(s_1, \dots, s_k, x_1, \dots, x_n), \dots, f_m(s_1, \dots, s_k, x_1, \dots, x_n)) \in F_n.
        \]
        \item For all $n \in \N$, 
        \begin{enumerate}
            \item for all quantifier-free formulas $\alpha(s_1, \dots, s_k, x_1, \dots, x_n)$ and $\beta(s_1, \dots, s_k, x_1, \dots, x_n)$, if we have $\alpha(s_1, \dots, s_k, x_1, \dots, x_n) \in F_n$ and $\alpha(s_1, \dots, s_k, x_1, \dots, x_n) \vdash_\T \beta(s_1, \dots, s_k,x_1, \dots, x_n)$, then we have $\beta(s_1, \dots, s_k,x_1, \dots, x_n) \in F_n$;
            \item 
            for all $\alpha_1(s_1, \dots, s_k, x_1, \dots, x_n), \alpha_2(s_1, \dots, s_k, x_1, \dots, x_n) \in F_n$, we have 
            \[\alpha_1(s_1, \dots, s_k, x_1, \dots, x_n) \land \alpha_2(s_1, \dots, s_k, x_1, \dots, x_n) \in F_n;\]
            \item $\top(s_1, \dots, s_k, x_1, \dots, x_n) \in F_n$.
        \end{enumerate}
    \end{enumerate}
    
    These conditions are satisfied by any family $(F^{k,\mathcal{M}}_n)_{n\in\N}$ defined by a class $\mathcal{M}$ as in \cref{r:univ-vald-fmlas-at-object}.
\end{remark}

\begin{remark}
    A universal filter for a Boolean doctrine $\P \colon \C\op \to \BA$ at $S \in \C$ is a universal filter for the Boolean doctrine $\P_S\colon \C_S\op \to \BA$ obtained by adding a constant of type $S$ for $\P$ (see \cref{r:const}).
\end{remark}

\begin{definition}
    A universal filter for a Boolean doctrine $\P \colon \C\op \to  \BA$ at an object $S\in \C$  is \emph{consistent} if $\bot_{\P(S)}\notin F_\tmn$.
\end{definition}

\begin{theorem} \label{t:characterization-filters-at-object}
    Let $\P \colon \C\op \to \BA$ be a Boolean doctrine, with $\C$ small, and let $S \in \C$.
    Let $F = (F_X)_{X \in \C}$ be a family with $F_X \subseteq \P(S \times X)$ for each $X \in \C$.
    The following are equivalent.
    \begin{enumerate}
        \item \label{i:exist-class-ne-at-object}
        There is a class (resp.\ nonempty class) $\mathcal{M}$ of propositional models of $\P$ at $S$ such that, for every $X \in \C$,
        \[
            F_X = \{\alpha \in \P(S \times X) \mid{}  \text{for all }(M,\m, s) \in \mathcal{M},\text{ for all }x \in M(X),\,(s , x) \in \m_{S\times X}(\alpha)\},
        \]
        
        \item \label{i:prop-univ-filt-ne-at-object}
        $F$ is a universal filter (resp.\ consistent universal filter) for $\P$ at $S$.
    \end{enumerate}
\end{theorem}
\begin{proof}
This follows from \cref{t:characterization-filters} applied to the Boolean doctrine $\P_S$ obtained from $\P$ by adding a constant of type $S$ and from \cref{l:model-model-at-S}.
\end{proof}

\subsection{Semantic characterization of universal ideals over fixed free variables}

We introduce \emph{universal ideals at an object $S$}, meant to characterize the families of the form $I^{S,\mathcal{M}}$ for $\mathcal{M}$ an arbitrary class of models at $S$ (see \cref{n:FM-IM-at-object}).

\begin{definition}[Universal ideal at an object] \label{d:univ-ideal-at-object}
    A \emph{universal ideal for a Boolean doctrine $\P$ at $S \in \C$} is a family $(I_X)_{X \in \C}$, with $I_X \subseteq \P(S \times X)$ for $X \in \C$, with the following properties.
    \begin{enumerate}
        \item \label{i:conjunction-at-object} 
        For all $m \in \N$, $(f_j \colon S \times X \to Y)_{j = 1, \dots, m}$ and $\alpha \in \P(S \times Y)$, if $\bigwedge_{j = 1}^m \P(\langle \pr^{S\times X}_S, f_j \rangle)(\alpha) \in I_X$ then $\alpha \in I_Y$.
        
        \item 
        For all $X\in\C$, $I_X$ is downward closed.
        
        \item \label{i:ideal-closed-binary-joins-at-object} 
        For all $\alpha_1 \in I_{X_1}$ and $\alpha_2 \in I_{X_2}$, in $\P(S\times X_1\times X_2)$ we have $\P( \pr^{S\times X_1\times X_2}_{S\times X_1})(\alpha_1)\lor \P(\pr^{S\times X_1\times X_2}_{S\times X_2})(\alpha_2) \in I_{X_1 \times X_2}$.
        
        \item \label{i:ideal-bot-tmn-at-object}
        $\bot_{\P(S)} \in I_\tmn$.
    \end{enumerate}
\end{definition}

\begin{remark}
    We translate \cref{d:univ-ideal-at-object} to the classical syntactic setting.
    For this, we fix $k \in \N$.
    Let $\{s_1, \dots, s_k, x_1, x_2, \dots\}$ be a set of variables and $\mathcal{T}$ a universal theory.
    A \emph{universal ideal for $\mathcal{T}$ at $k$} is a family $(I_n)_{n \in \N}$, with $I_n$ a set of quantifier-free formulas with $s_1, \dots, s_k, x_1, \dots, x_n$ as (possibly dummy) free variables, with the following properties.
    \begin{enumerate}
        \item 
        For all $p,q,m \in \N$, every $(m\cdot q)$-tuple $(f_{j,k}(s_1, \dots, s_k, x_1, \dots, x_p))_{j \in \{1, \dots, m\},\, k \in \{1, \dots, q\}}$ of $(k + p)$-ary terms and every quantifier-free formula $\alpha(s_1, \dots, s_k, x_1, \dots, x_q)$, if 
        \[
            \bigwedge_{j = 1}^m \alpha(s_1, \dots, s_k, f_{j,1}(s_1, \dots, s_k, x_1, \dots, x_p), \dots, f_{j,q}(s_1, \dots, s_k, x_1, \dots, x_p)) \in I_p,
        \]
        then 
        \[
            \alpha(s_1, \dots, s_k, 
            x_1, \dots, x_q) \in I_q.
        \]
        
        \item 
        For all $n \in \N$, all quantifier-free formulas $\alpha(s_1, \dots, s_k, x_1, \dots, x_n)$ and $\beta(s_1, \dots, s_k, x_1, \dots, x_n)$, if $\beta(s_1, \dots, s_k, x_1, \dots, x_n) \in I_n$ and $\alpha(s_1, \dots, s_k, x_1, \dots, x_n) \vdash_\T \beta(s_1, \dots, s_k, x_1, \dots, x_n)$, then 
        \[
        \alpha(s_1, \dots, s_k, x_1, \dots, x_n) \in I_n.
        \]
        
        \item 
        For all $n_1,n_2 \in \N$, $\alpha_1(s_1, \dots, s_k, x_1, \dots, x_{n_1}) \in I_{n_1}$ and $\alpha_2(s_1, \dots, s_k, x_1, \dots, x_{n_2}) \in I_{n_2}$,  we have
        \[
        \alpha_1(s_1, \dots, s_k, x_1, \dots, x_{n_1}) \lor \alpha_2(s_1, \dots, s_k, x_{n_1 +1}, \dots, x_{n_1 + n_2}) \in I_{n_1 + n_2};
        \]
        
        \item $\bot(s_1, \dots, s_k) \in I_0$.
    \end{enumerate}
    
    These conditions are satisfied by any family $(I^{k,\mathcal{M}}_n)_{n\in\N}$ defined by a class $\mathcal{M}$ as in \cref{r:univ-vald-fmlas-at-object}.
\end{remark}

\begin{remark}
   A universal ideal for a Boolean doctrine $\P \colon \C\op \to  \BA$ at an object $S \in \C$ is a universal ideal for the Boolean doctrine $\P_S\colon \C_S\op \to \BA$ obtained by adding a constant of type $S$ for $\P$ (see \cref{r:const}).
\end{remark}

\begin{definition}
    A universal ideal $(I_X)_{X \in \C}$ for a Boolean doctrine $\P \colon \C\op \to \BA$ at an object $S \in \C$ is \emph{consistent} if $\top_{\P(S)}\notin I_\tmn$.
\end{definition}

\begin{theorem} \label{t:characterization-ideals-at-object}
    Let $\P \colon \C\op \to \BA$ be a Boolean doctrine, with $\C$ small, and let $S \in \C$.
    Let $I = (I_X)_{X \in \C}$ be a family with $I_X \subseteq \P(S \times X)$ for each $X \in \C$. 
    The following are equivalent.
    \begin{enumerate}
        \item \label{i:exist-class-ne-id-at-object}
        There is a class (resp.\ nonempty class) $\mathcal{M}$ of propositional models of $\P$ at $S$ such that, for every $X \in \C$,
        \[
            I_X = \{\alpha \in \P(S \times X) \mid \text{for all }(M,\m, s) \in \mathcal{M},  \text{ not all }x \in M(X) \text{ satisfy }(s, x) \in \m_{S \times X}(\alpha)\}.
        \]
        
        \item \label{i:prop-univ-filt-ne-id-at-object}
        $I$ is a universal ideal (resp.\ consistent universal ideal) for $\P$ at $S$.
    \end{enumerate}
\end{theorem}

\begin{proof}
This follows from \cref{t:characterization-ideals} applied to the Boolean doctrine $\P_S$ obtained from $\P$ by adding a constant of type $S$ and from \cref{l:model-model-at-S}.
\end{proof}

\subsection{Semantic characterization of universal filter-ideal pairs over fixed free variables}

\begin{definition}[Universal filter-ideal pair at an object]\label{d:fil-id-pair-at-obj}
    A \emph{universal filter-ideal pair for a Boolean doctrine $\P \colon \C\op \to  \BA$ at an object $S \in \C$} is a pair $(F, I)$ where $F$ is a universal filter for $\P$ at $S$, $I$ is a universal ideal for $\P$ at $S$, and the following conditions hold for all $Y\in\C$ and $\alpha \in \P(S \times Y)$.
    \begin{enumerate}
        \item \label{i:connecting-oneatobj} 
        For all $X\in\C$, $n \in \N$, $(f _i \colon S \times X \to Y)_{i=1,\dots,n}$ and $\beta \in F_X$, if $\beta \land \bigwedge_{i = 1}^n \P(\langle \pr^{S\times X}_S, f_i \rangle)(\alpha) \in I_X$, then $\alpha \in I_Y$.

        \item\label{i:connecting-two-at-obj} 
        For all $Z\in\C$ and $\gamma \in I_Z$, if $\P(\pr^{S\times Y\times Z}_{S\times Y})(\alpha) \lor \P(\pr^{S\times Y\times Z}_{S\times Z})(\gamma) \in F_{Y \times Z}$, then $\alpha \in F_Y$. 
    \end{enumerate}
\end{definition}

\begin{remark}
   A universal filter-ideal for a Boolean doctrine $\P \colon \C\op \to  \BA$ at an object $S \in \C$ is a universal filter-ideal pair for the Boolean doctrine $\P_S\colon \C_S\op \to \BA$ obtained by adding a constant of type $S$ for $\P$ (see \cref{r:const}).
\end{remark}

\begin{theorem} \label{t:characterization-filter-ideal-pair-at-obj}
    Let $\P \colon \C\op \to \BA$ be a Boolean doctrine, with $\C$ small, and let $S \in \C$.
    Let $F = (F_X)_{X \in \C}$ and $I = (I_X)_{X \in \C}$ be families with $F_X, I_X \subseteq \P(S \times X)$ for each $X \in \C$.
    The following are equivalent.
    \begin{enumerate}
        \item \label{i:fi-arbitrary-class-at-obj}
        There is a class (resp.\ nonempty class) $\mathcal{M}$ of propositional models of $\P$ at $S$ such that, for every $X \in \C$,
        \begin{align*}
            F_X & = \{\alpha \in \P(S \times X) \mid \text{for all }(M,\m, s) \in \mathcal{M},\text{ for all }x \in M(X),\,(s , x) \in \m_{S\times X}(\alpha)\},\\
            I_X &= \{\alpha \in \P(S \times X) \mid \text{for all }(M,\m, s) \in \mathcal{M},  \text{ not all }x \in M(X) \text{ satisfy }(s, x) \in \m_{S \times X}(\alpha)\}.
        \end{align*}
        
        \item \label{i:fi-arbitrary-pair-at-obj}
        $(F, I)$ is a universal filter-ideal pair (resp.\ consistent universal filter-ideal pair) for $\P$ at $S$.
    \end{enumerate}
\end{theorem}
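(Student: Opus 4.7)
The plan is to reduce \cref{t:characterization-filter-ideal-pair-at-obj} to \cref{t:characterization-filter-ideal-pair} by passing to the Boolean doctrine $\P_S\colon \C_S\op\to\BA$ obtained from $\P$ by adding a constant of type $S$, in complete analogy with the proofs of \cref{t:characterization-ultrafilters-at-object,t:characterization-filters-at-object,t:characterization-ideals-at-object}. Two dictionary items make the reduction work. First, as noted in the remark preceding the theorem, a universal filter-ideal pair for $\P$ at $S$ is, by unwinding \cref{d:fil-id-pair-at-obj} and comparing with \cref{d:fil-id-pair}, exactly the same data as a universal filter-ideal pair for $\P_S$; moreover, componentwise disjointness of $F$ and $I$ (hence consistency in the sense of \cref{d:universal-filter-ideal-pair}) is invariant under this reinterpretation.

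Second, \cref{l:model-model-at-S} provides a bijection between propositional models $(M,\m,s)$ of $\P$ at $S$ and propositional models $(N,\n)$ of $\P_S$, given by $(N,\n)=(M,\m)\circ (L_S,\mathfrak{l}_S)$ together with the choice of the point $s\in M(S)$ corresponding to $N(\id_S\colon\tmn\rightsquigarrow S)$. Under this bijection, for every $Y\in\C$ and $\alpha\in\P_S(Y)=\P(S\times Y)$,
\[
y\in\n_Y(\alpha)\iff (s,y)\in\m_{S\times Y}(\alpha).
\]
Consequently, for each $X\in\C$, the set of $\alpha\in\P(S\times X)$ that is ``universally valid at $(M,\m,s)$'' in the sense of \eqref{i:fi-arbitrary-class-at-obj} of the theorem coincides with the set $F_X^{(N,\n)}$ of \cref{n:FM-IM-one-model}, applied to $\P_S$; likewise for the ideal piece.

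With these two translations in place, applying \cref{t:characterization-filter-ideal-pair} to $\P_S$ and transporting back along the bijection on models yields the equivalence between \eqref{i:fi-arbitrary-class-at-obj} and \eqref{i:fi-arbitrary-pair-at-obj} immediately, in both the bare and the parenthesized versions: nonemptiness of a class $\mathcal{M}$ of propositional models of $\P$ at $S$ corresponds to nonemptiness of the associated class for $\P_S$, and consistency of $(F,I)$ at $S$ corresponds to consistency of $(F,I)$ viewed as a pair for $\P_S$. I do not expect a real technical obstacle here; the only thing that needs care is to check, once and for all, that the dictionary between $\P$-at-$S$ and $\P_S$ is literal on every notion involved in the statement---filters, ideals, the two connecting conditions of \cref{d:fil-id-pair-at-obj}, models, and consistency---and each of these is already a routine consequence of the definitions of $\C_S$, $\P_S$ and the remark identifying fibers of $\P_S(X)$ with $\P(S\times X)$.
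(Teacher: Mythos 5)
Your reduction to $\P_S$ via the remark identifying universal filter-ideal pairs at $S$ with those for $\P_S$, combined with the model correspondence of \cref{l:model-model-at-S}, is exactly the paper's own proof, which simply cites \cref{t:characterization-filter-ideal-pair} applied to $\P_S$ together with \cref{l:model-model-at-S}. Your write-up is correct and just makes the same dictionary explicit.
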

\begin{proof}
This follows from \cref{t:characterization-filter-ideal-pair} applied to the Boolean doctrine $\P_S$ obtained from $\P$ by adding a constant of type $S$ and from \cref{l:model-model-at-S}.
\end{proof}

\section{The case with equality (``nothing changes'')}
\label{s:equality}

In this appendix, we adapt the results of the previous sections to the context with equality.
However, ``nothing changes'':
\begin{itemize}
    \item The notions of universal ultrafilters, filters and ideals still work perfectly without any adjustment: their semantic characterization remains unchanged if we restrict to models preserving equality.

    \item
    The free one-step construction is the same:
    the way to add the first layer of quantifier alternation depth to an elementary Boolean doctrine $\P$ coincides with the way to add the first layer of quantifier alternation depth to $\P$ seen simply as a Boolean doctrine.
\end{itemize}

We start by recalling a modern definition of elementarity in the context of Boolean doctrines\footnote{We give the definitions in the Boolean case, since this is the context we are interested in in this paper. However, these definitions are usually stated in the \emph{primary} context (where a \emph{primary doctrine} is a functor $\P \colon \C\op \to \mathsf{InfSL}$ where $\C$ is a category with finite products and $\mathsf{InfSL}$ is the category of inf-semilattices), see \cite{MaiettiRosolini13,MaiettiRosolini15}. }; this notion, which originates in Lawvere's work \cite{Lawvere70}, amounts to the possibility of interpretating equality.

\begin{definition}[\cite{MaiettiRosolini13}]\label{def:equality-classical}
    A Boolean doctrine $\P\colon\C\op\to\BA$ is \emph{elementary} if for every $X\in \C$ there is an element $\delta_X\in\P(X\times X)$ such that, for every $Y\in\C$, the assignment
    \begin{align*}
       \text{\AE}^X_Y\colon \P(Y \times X)&\longrightarrow \P(Y\times X\times X)\\
       \alpha &\longmapsto\P(\ple{\pr^{Y\times X\times X}_1,\pr^{Y\times X\times X}_2})(\alpha)\land \P(\ple{\pr^{Y\times X\times X}_2,\pr^{Y\times X\times X}_3})(\delta_X)
    \end{align*}
        is a left adjoint of the function 
    \[  \P(\id_Y\times\Delta_X) \colon \P(Y\times X\times X) \to \P(Y \times X).\]
\end{definition}

An equivalent definition, as shown in \cite[Prop.~2.5]{EmPaRo20}, is the following one.

\begin{definition}\label{def:equality}
    A Boolean doctrine $\P\colon\C\op\to\BA$ is \emph{elementary} if there is a family of elements $(\delta_X)_{X\in\C}$, with $\delta_X\in\P(X\times X)$ for each $X\in \C$, such that, for all $X,Y\in\C$,
    \begin{enumerate}
        \item\label{i:def=1} $\top_{\P(X)}\leq\P(\Delta_X)(\delta_X)$ in $\P(X)$;
        \item\label{i:def=2} for every $\alpha\in\P(X)$, $\P(\pr^{X\times X}_1)(\alpha)\land \delta_X\leq \P(\pr^{X\times X}_2)(\alpha)$ in $\P(X\times X)$;
        \item\label{i:def=3} $\P(\pr^{X\times Y\times X\times Y}_{X\times X})(\delta_X)\land \P(\pr^{X\times Y\times X\times Y}_{Y\times Y})(\delta_Y)\leq \delta_{X\times Y}$ in $\P(X\times Y\times X\times Y)$.
    \end{enumerate}
\end{definition}

The proof of \cite[Prop.~2.5]{EmPaRo20} shows more: a family $(\delta_X)_{X\in\C}$ satisfies the condition in \cref{def:equality-classical} if and only if it satisfies the conditions in \cref{def:equality}.

Informally, condition \eqref{i:def=1} in \cref{def:equality} is the reflexivity of the equality relation, i.e.\ $\vdash x=x$. Condition \eqref{i:def=2} is the substitutivity property, i.e.\ $\alpha(x)\land (x=x')\vdash \alpha(x')$. Condition \eqref{i:def=3} can be roughly interpreted as $``(x=x')\land (y=y')\vdash (x,y)=(x',y')"$, meaning that two pairs coincide if both entries coincide.

\begin{remark}[{E.g.\ \cite[Rem.~6.3]{AbbadiniGuffanti}}]
    There is a unique family $(\delta_X)_{X\in\C}$ satisfying the conditions in \cref{def:equality-classical} (or, equivalently, in \cref{def:equality}).
\end{remark}

For every $X\in\C$, the element $\delta_X$ is called \emph{fibered equality on $X$}.

\begin{example}\label{fbf=}
    For a first-order theory $\mathcal{T}$ in a language with equality, one can define a syntactic doctrine $\LT_{=}^\T$ similarly to \cref{fbf}. In this case, one has to consider also the formulas involving equality, and take the equivalence relation of equiprovability modulo $\T$ in first-order logic with equality.
    The syntactic doctrine $\LT_{=}^\T$ is an elementary first-order Boolean doctrine: for every context $\vec x = (x_1,\dots,x_n)$, the fibered equality $\delta_{\vec x}\in \LT_=^\mathcal{T}(x_1,\dots,x_n,x'_1,\dots,x'_n)$ is the formula
    \[
    (x_1=x'_1)\land\dots\land (x_n=x'_n).
    \]
\end{example}

\begin{example}
    The subset doctrine $\mathscr{P}\colon\Set\op\to\BA$ is elementary: the fibered equality on $X \in \Set$ is
    \[
    \Delta_X=\{(x,x) \in X \times X\mid x \in X\}\in \mathscr{P}(X \times X).
    \] 
\end{example}

\begin{remark}[{E.g.\ \cite[Rem.~6.4]{AbbadiniGuffanti}}]\label{r:delta-symm}
    For every elementary Boolean doctrine $\P \colon \C\op \to \BA$ and $X \in \C$,
    \[
    \delta_X \leq \P(\ple{\pr^{X\times X}_2,\pr^{X\times X}_1})(\delta_X).
    \]
\end{remark}

\begin{remark}\label{r:inverse_3_equality}
    Also the converse direction of \cref{def:equality}\eqref{i:def=3} holds. This is easily seen from the equality $\delta_{X\times Y}=\text{\AE}^{X\times Y}_\tmn(\top_{\P(X\times Y)})$ and the adjunction $\text{\AE}^{X\times Y}_\tmn\dashv\P(\ple{\pr^{X\times Y}_X,\pr^{X\times Y}_Y,\pr^{X\times Y}_X,\pr^{X\times Y}_Y})$.
\end{remark}

\begin{definition}
    Let $\P\colon\C\op\to \BA$ and $\mathbf{R}\colon \cat{D}\op\to \BA$ be two elementary Boolean doctrines. 
    An \emph{elementary Boolean doctrine morphism from $\P$ to $\R$} is a Boolean doctrine morphism $(M,\m)\colon \P\to\R$ that preserves fibered equalities, i.e.\ such that for every $X\in\C$
    \[\m_{X\times X}(\delta^\P_X)=\delta^\R_{M(X)}.\]
\end{definition}

\begin{definition}
    An \emph{elementary propositional model of a Boolean doctrine $\P \colon \C\op \to \BA$}
    is an elementary Boolean doctrine morphism from $\P$ to the subset doctrine $\mathscr{P}$.
\end{definition}

\begin{definition}
    An \emph{elementary first-order Boolean doctrine}
    $\P\colon\C\op\to\BA$ is a first-order Boolean doctrine $\P$ that is elementary.
    
    Given two elementary first-order Boolean doctrines $\P$ and $\R$, an \emph{elementary first-order Boolean doctrine morphism from $\P$ to $\R$} is a first-order Boolean doctrine morphism $(M,\m)\colon \P\to\R$ that is elementary.
\end{definition}

\begin{corollary}[{\cite[Thm.~6.16]{AbbadiniGuffanti}}]\label{c:equality-only-p0}
    Let $(\id_\C,\mathfrak{i})\colon \P\hookrightarrow\P^\EA$ be a quantifier completion of a Boolean doctrine $\P$. If $\P$ is elementary, then $\P^\EA$ is elementary, and the Boolean doctrine morphism $(\id_\C,\mathfrak{i})$ is elementary.
\end{corollary}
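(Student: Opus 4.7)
The natural candidate for the fibered equality on $\P^\EA$ is $\delta^{\P^\EA}_X \coloneqq \mathfrak{i}_{X\times X}(\delta^\P_X)$, which automatically ensures that $(\id_\C, \mathfrak{i})$ preserves fibered equalities, so elementarity of the morphism reduces to elementarity of $\P^\EA$. With this choice, the reflexivity condition \eqref{i:def=1} and the product condition \eqref{i:def=3} of \cref{def:equality} are obtained by applying the Boolean homomorphism $\mathfrak{i}$ (using naturality to commute with reindexings) to the corresponding inequalities in $\P$. The substantive content lies in the substitutivity condition \eqref{i:def=2}, which must be established for every $\alpha \in \P^\EA(X)$, not just for elements in the image of $\mathfrak{i}$.

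To handle substitutivity, I would fix $X \in \C$ and introduce
\[
    A_X \coloneqq \{\alpha \in \P^\EA(X) \mid \P^\EA(\pr_1)(\alpha) \land \delta^{\P^\EA}_X \leq \P^\EA(\pr_2)(\alpha) \text{ in } \P^\EA(X \times X)\},
\]
and argue that the family $A = (A_X)_{X\in\C}$ is a sub-first-order-Boolean-doctrine of $\P^\EA$ that contains $\mathfrak{i}[\P]$. The inclusion $\mathfrak{i}_X[\P(X)] \subseteq A_X$ follows by applying $\mathfrak{i}$ to substitutivity in $\P$. Each $A_X$ is a Boolean subalgebra: closure under $\land$ and $\top$ is routine, while closure under $\lnot$ rests on the symmetry of $\delta^\P_X$ (\cref{r:delta-symm}) transferred to $\P^\EA$ via $\mathfrak{i}$. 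Closure under reindexing along $f \colon X' \to X$ reduces, by naturality, to the inequality $\delta^{\P^\EA}_{X'} \leq \P^\EA(f \times f)(\delta^{\P^\EA}_X)$, whose analogue in $\P$ is obtained by transposing reflexivity for $\delta^\P_X$ along the adjunction $\text{\AE}^{X'}_\tmn \dashv \P(\Delta_{X'})$.

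The main obstacle is closure under universal quantification: for $\beta \in A_{X\times Y}$, I need $\forall^Y_X \beta \in A_X$. My plan is to rewrite both sides of the target inequality using Beck--Chevalley as $\P^\EA(\pr_i)(\forall^Y_X \beta) = \forall^Y_{X \times X}(\P^\EA(\pr_i \times \id_Y)(\beta))$, then invoke the Frobenius identity $\forall^Y_Z(\gamma \land \P^\EA(\pr_1)(\zeta)) = \forall^Y_Z(\gamma) \land \zeta$ (valid in any first-order Boolean doctrine because the unit of $\P^\EA(\pr_1) \dashv \forall^Y_Z$ is an isomorphism) in order to absorb $\delta^{\P^\EA}_X$ inside the universal quantifier over $X\times X$. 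What remains is an inequality of the form ``hypothesis of $A_{X\times Y}$ after a change of variable'', which I would obtain from the substitutivity of $\beta$ with respect to $\delta^{\P^\EA}_{X\times Y}$ by dominating the relevant reindexing of $\delta^{\P^\EA}_X$ by a reindexing of $\delta^{\P^\EA}_{X\times Y}$ using the product condition \eqref{i:def=3} (already established) together with reflexivity to handle the $Y$-component.

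Finally, once $A$ is known to be a sub-first-order Boolean doctrine containing $\mathfrak{i}[\P]$, the universal property of the quantifier completion applied to the Boolean doctrine morphism $\P \to A$ (corestriction of $\mathfrak{i}$) produces a first-order Boolean doctrine morphism $\P^\EA \to A$; post-composing with the inclusion $A \hookrightarrow \P^\EA$ yields a first-order Boolean doctrine morphism $\P^\EA \to \P^\EA$ that extends $\mathfrak{i}$, and by the uniqueness clause of the universal property, it must equal $\id_{\P^\EA}$. This forces $A = \P^\EA$, completing the verification of substitutivity and hence establishing both the elementarity of $\P^\EA$ and (by construction of $\delta^{\P^\EA}_X$) of the morphism $(\id_\C, \mathfrak{i})$.
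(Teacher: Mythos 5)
The paper itself does not prove this corollary; it imports it from \cite[Corollary~A.10]{AbbadiniGuffanti}, so there is no in-text argument to compare against. Judged on its own terms, your strategy is the natural one and is essentially complete: take $\delta^{\P^\EA}_X \coloneqq \mathfrak{i}_{X\times X}(\delta^\P_X)$ (which makes elementarity of the morphism automatic), transfer conditions \eqref{i:def=1} and \eqref{i:def=3} of \cref{def:equality} along the Boolean homomorphisms $\mathfrak{i}$ using naturality, and prove substitutivity \eqref{i:def=2} for \emph{all} of $\P^\EA$ by showing that the elements satisfying it form a sub-first-order Boolean doctrine containing $\mathfrak{i}[\P]$, which must be everything by the uniqueness clause of the universal property. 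All the closure checks you sketch (negation via symmetry of $\delta$, reindexing via $\delta_{X'}\leq\P^\EA(f\times f)(\delta_X)$, and the universal quantifier via Beck--Chevalley) go through.

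The one genuine error is the Frobenius step. The identity $\forall^Y_Z(\gamma\land\P^\EA(\pr_1)(\zeta))=\forall^Y_Z(\gamma)\land\zeta$ is \emph{false} in a general first-order Boolean doctrine, and so is your justification that the unit of $\P^\EA(\pr_1)\dashv\forall^Y_Z$ is an isomorphism: in the subsets doctrine with $Y=\varnothing$ one has $\forall^Y_Z(\pr_1^{-1}[S])=Z\neq S$, and this paper explicitly allows empty carriers (cf.\ \cref{r:model-classical}). Fortunately only one inequality is needed to absorb $\delta_X$ under the quantifier, namely $\forall^Y_Z(\gamma)\land\zeta\leq\forall^Y_Z(\gamma\land\P^\EA(\pr_1)(\zeta))$, and that direction is a one-line consequence of the adjunction: $\P^\EA(\pr_1)(\forall^Y_Z(\gamma)\land\zeta)=\P^\EA(\pr_1)\forall^Y_Z(\gamma)\land\P^\EA(\pr_1)(\zeta)\leq\gamma\land\P^\EA(\pr_1)(\zeta)$, then transpose. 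Replace the appeal to the (false) equality by this inequality and the argument is sound; after that, combining it with the substitutivity of $\beta$ reindexed along $(x_1,x_2,y)\mapsto((x_1,y),(x_2,y))$, together with \eqref{i:def=3} and reflexivity for the $Y$-component, closes the $\forall$-case exactly as you describe.
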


\begin{remark}[{\cite[Rem.~6.17]{AbbadiniGuffanti}}]
    A quantifier completion $(\id_\C,\mathfrak{i})\colon \P\hookrightarrow\P^\EA$ of an elementary Boolean doctrine $\P$ has the following property: for every elementary first-order Boolean doctrine $\R$ and every elementary Boolean doctrine morphism $(M,\m)\colon\P\to\R$, the unique first-order Boolean doctrine morphism $(N,\n)\colon \P^\EA\to\R$ such that $(M,\m)=(N,\n)\circ(\id_\C,\mathfrak{i})$ is elementary.
\end{remark}

\begin{corollary}[{\cite[Cor.~6.18]{AbbadiniGuffanti}}]
    The forgetful functor from the category of elementary first-order Boolean doctrines over a small category to the category of elementary Boolean doctrines
    over a small category has a left adjoint.
\end{corollary}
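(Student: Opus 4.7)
The plan is to construct the left adjoint directly using the quantifier completion. Given an elementary Boolean doctrine $\P \colon \C\op \to \BA$ with $\C$ small, define $F(\P) \coloneqq \P^\EA$, where $(\id_\C,\mathfrak{i})\colon \P\hookrightarrow\P^\EA$ is its quantifier completion (which exists by the cited theorem). By \cref{c:equality-only-p0}, $\P^\EA$ is itself an elementary first-order Boolean doctrine, and the unit $(\id_\C,\mathfrak{i})$ is an elementary Boolean doctrine morphism. So the assignment $\P \mapsto F(\P)$ lands in the target category.

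To verify the universal property, let $\R$ be any elementary first-order Boolean doctrine and let $(M,\m)\colon \P \to \R$ be an elementary Boolean doctrine morphism (in particular, a Boolean doctrine morphism from $\P$ to the underlying Boolean doctrine of $\R$). By the universal property of the quantifier completion, there is a unique first-order Boolean doctrine morphism $(N,\n)\colon \P^\EA\to \R$ with $(M,\m) = (N,\n)\circ(\id_\C,\mathfrak{i})$. The key point is that this unique $(N,\n)$ is \emph{automatically} elementary, which is the content of \cite[Remark~A.11]{AbbadiniGuffanti} as quoted above. Thus $(N,\n)$ is the unique elementary first-order Boolean doctrine morphism making the triangle commute, which is exactly the required bijection
\[
    \mathrm{Hom}_{\mathrm{elem.\ FO}}(F(\P),\R) \;\cong\; \mathrm{Hom}_{\mathrm{elem.}}(\P,\R)
\]
natural in $\R$.

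Extending $F$ to morphisms is routine: given an elementary Boolean doctrine morphism $(M,\m)\colon \P \to \P'$, compose with the unit of $\P'$ to obtain an elementary Boolean doctrine morphism $\P \to (\P')^\EA$, and use the universal property above to obtain a unique elementary first-order Boolean doctrine morphism $F(\P) \to F(\P')$. Functoriality and naturality of the bijection follow from the uniqueness built into the universal property. The main step, and the only non-formal input, is the preservation of elementarity at the level of both the completion and the induced morphism, but these are supplied by \cref{c:equality-only-p0} and \cite[Remark~A.11]{AbbadiniGuffanti}, so no genuine obstacle remains.
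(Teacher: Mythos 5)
Your proposal is correct and follows essentially the intended route: the paper derives this corollary (as in the non-elementary case) from the existence of the quantifier completion, using \cref{c:equality-only-p0} for elementarity of $\P^\EA$ and of the unit, and the quoted remark for automatic elementarity of the induced morphism, so that the universal arrows assemble into a left adjoint in the standard way. Nothing is missing; uniqueness among elementary morphisms is inherited from uniqueness among all first-order Boolean doctrine morphisms, exactly as you implicitly use.
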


\begin{example}[{\cite[Example~6.19]{AbbadiniGuffanti}}]\label{ex:sanitycheck=}
    Let $\mathcal{T}$ be a universal theory in a first-order language $(\F,\mathbb{P})$ with equality. 
    Let $\LT_ = ^\mathcal{T}\colon\Ctx_\F\op\to\BA$ be the syntactic doctrine of $\mathcal{T}$ as in \cref{fbf=} and $\LT^\mathcal{T}_{=,0}$ be the subfunctor of $\LT_=^\mathcal{T}$ consisting of quantifier-free formulas modulo $\T$. Let $i\colon\LT^\mathcal{T}_{=,0}\to\LT_=^\mathcal{T}$ be the componentwise inclusion. Then $(\id_{\Ctx_\F},i)\colon\LT^\mathcal{T}_{=,0}\hookrightarrow\LT_=^\mathcal{T}$ is a quantifier completion of $\LT^\mathcal{T}_{=,0}$.
\end{example}

In the case with equality, the notion of a universal ultrafilter does not need any adjustment.
Indeed, universal ultrafilters of elementary Boolean doctrines still capture the classes of universally valid formulas in some model that preserves equality.
To prove this, we can piggyback on the results where we don't have equality. The key idea for this piggybacking is that, whenever I have a model that does not necessarily respect equality (so that equality is interpreted as an equivalence relation, but not necessarily as the identity), we can obtain a propositional model that preserves the equality by modding out modulo the equivalence relation in which the equality is interpreted.

\begin{lemma} \label{l:from-prop-model-to-elementary-prop-model}
    Let $\P \colon \C\op \to \BA$ be an elementary Boolean doctrine. For every propositional model $(N,\n)$ of $\P$ there is an elementary propositional model $(M,\m)$ of $\P$ such that $F^{(M,\m)}=F^{(N,\n)}$ (in the sense of \cref{n:FM-IM-one-model}), i.e.\ such that, for every $X\in\C$,
    \[
        \{\alpha \in \P(X) \mid \text{for all }x \in M(X),\, x \in \m_X(\alpha)\}=\{\alpha \in \P(X) \mid \text{for all }x \in N(X),\, x \in \n_X(\alpha)\}.
    \]
\end{lemma}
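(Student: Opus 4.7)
The strategy is to perform the classical quotient-by-equality construction from model theory. For each $X\in\C$, define a relation ${\sim_X}$ on $N(X)$ by $x \sim_X x'$ iff $(x,x') \in \n_{X\times X}(\delta_X)$, where we use the canonical identification $N(X\times X) \cong N(X)\times N(X)$ coming from $N$ preserving finite products. Set $M(X) \coloneqq N(X)/{\sim_X}$. The resulting elementary propositional model $(M,\m)$ will be the image of $(N,\n)$ under this quotient.

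The first step is to show each ${\sim_X}$ is an equivalence relation. Reflexivity comes from applying $\n$ to $\top_{\P(X)} \leq \P(\Delta_X)(\delta_X)$ (condition~(1) in \cref{def:equality}); symmetry comes from \cref{r:delta-symm}; transitivity comes from the inequality $\P(\ple{\pr_1,\pr_2})(\delta_X) \land \P(\ple{\pr_2,\pr_3})(\delta_X) \leq \P(\ple{\pr_1,\pr_3})(\delta_X)$ in $\P(X\times X\times X)$, which is obtained by applying the adjunction in \cref{def:equality-classical} with $Y=X$ and $\alpha=\delta_X$. Next, for any morphism $f\colon X \to Y$ in $\C$, the assignment $M(f)([x]) \coloneqq [N(f)(x)]$ is well-defined because $\delta_X \leq \P(f\times f)(\delta_Y)$ in $\P(X\times X)$; indeed, by the adjunction characterising $\delta_X$ as the left-adjoint image of $\top_{\P(X)}$ along $\P(\Delta_X)$, it suffices to check $\P(\Delta_X)\P(f\times f)(\delta_Y) = \top$, and this holds since $(f\times f)\circ\Delta_X = \Delta_Y\circ f$. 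Applying $\n$ then shows $N(f)\times N(f)$ carries $\sim_X$-classes into $\sim_Y$-classes, so $M$ is a well-defined functor.

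To see that $M$ preserves finite products, combine condition~(3) in \cref{def:equality} with its converse (\cref{r:inverse_3_equality}): together they yield $\delta_{X\times Y} \dashv \vdash \P(\ple{\pr_1,\pr_3})(\delta_X) \land \P(\ple{\pr_2,\pr_4})(\delta_Y)$, hence $(x,y) \sim_{X\times Y} (x',y')$ iff $x\sim_X x'$ and $y \sim_Y y'$, so $[(x,y)] \mapsto ([x],[y])$ gives a natural bijection $M(X\times Y) \cong M(X)\times M(Y)$ (and preservation of the terminal is immediate). Now define $\m_X\colon \P(X) \to \mathscr{P}(M(X))$ by $\m_X(\alpha) \coloneqq \{[x] \in M(X) \mid x \in \n_X(\alpha)\}$; this is well-defined because condition~(2) in \cref{def:equality} applied to $\alpha$ and translated through $\n$ gives exactly that $\n_X(\alpha)$ is a union of $\sim_X$-classes. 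Since $\m_X$ is the factorisation of the Boolean homomorphism $\n_X$ through the $\sim_X$-invariant kernel, it is itself a Boolean homomorphism, and naturality of $\m$ follows directly from naturality of $\n$.

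It remains to check elementarity and the equality of $F$-families. Under the identification $M(X\times X) \cong M(X)\times M(X)$ constructed above, one has
\[
\m_{X\times X}(\delta_X) = \{([x],[x']) \mid (x,x') \in \n_{X\times X}(\delta_X)\} = \{([x],[x']) \mid [x]=[x']\} = \Delta_{M(X)},
\]
so $(M,\m)$ is elementary. Finally, because the quotient map $N(X)\twoheadrightarrow M(X)$ is surjective, the condition ``for all $[x]\in M(X)$, $[x]\in\m_X(\alpha)$'' is equivalent to ``for all $x\in N(X)$, $x\in\n_X(\alpha)$'', yielding $F^{(M,\m)}_X = F^{(N,\n)}_X$ for every $X\in\C$. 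I do not expect a genuine obstacle here: every step is forced by the elementary axioms, and the main care is just in keeping the product identifications $N(X\times Y)\cong N(X)\times N(Y)$ and $M(X\times Y)\cong M(X)\times M(Y)$ straight so that the reindexings by pair and projection maps act as expected.
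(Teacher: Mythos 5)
Your proposal is correct and follows essentially the same route as the paper: quotient each $N(X)$ by the equivalence relation $\n_{X\times X}(\delta_X)$, check functoriality, product preservation, saturation of the $\n_X(\alpha)$, and then read off elementarity and the equality of the $F$-families from surjectivity of the quotient maps. The only (harmless) differences are that you derive transitivity and the inequality $\delta_X \leq \P(f\times f)(\delta_Y)$ via the adjunction of \cref{def:equality-classical}, whereas the paper deduces them from the explicit inequalities of \cref{def:equality}; both derivations are valid since the same family $(\delta_X)_{X\in\C}$ satisfies both characterizations.
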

\begin{proof}
    For each object $X\in\C$, we define the binary relation ${\sim_X}\coloneqq\n_{X\times X}(\delta_X)$ on $N(X)$ and we prove that it is an equivalence relation.
    \begin{itemize}
        \item 
        Reflexivity: by \cref{def:equality}\eqref{i:def=1}, in $\P(X)$ we have $\top_{\P(X)}\leq\P(\Delta_X)(\delta_X)$, and so in $\mathscr{P}(N(X))$ we have
        \[
        N(X)=\n_X(\top_{\P(X)})\subseteq \n_X(\P(\Delta_X)(\delta_X))=\ple{\id_{N(X)},\id_{N(X)}}^{-1}[\n_{X\times X}(\delta_X)],
        \]
        i.e., for every $a\in N(X)$ we have $a\sim_X a$.
        
        \item 
        Symmetry: by \cref{r:delta-symm}, in $\P(X\times X)$  we have $\delta_X \leq \P(\ple{\pr^{X\times X}_2,\pr^{X\times X}_1})(\delta_X)$, and so in $\mathscr{P}(N(X)\times N(X))$ we have
        \[
            \n_{X\times X}(\delta_X) \subseteq \n_{X\times X}(\P(\ple{\pr^{X\times X}_2,\pr^{X\times X}_1})(\delta_X))=\ple{\pr^{N(X)\times N(X)}_2,\pr^{N(X)\times N(X)}_1}^{-1}[\n_{X\times X}(\delta_X)],
        \]    
        i.e., for every $a,b\in N(X)$ such that $a\sim_X b$ we have $b\sim_X a$.
        
        \item 
        Transitivity: let $a,b,c\in N(X)$ be such that $a\sim_X b \sim_X c$. First of all, we observe that in $\P(X\times X\times X\times X)$
        \begin{align*}
            &\P(\ple{\pr_1,\pr_2})(\delta_X)\land \P(\ple{\pr_1,\pr_3})(\delta_X)\land \P(\ple{\pr_2,\pr_4})(\delta_X)\\
            &\leq \P(\ple{\pr_1,\pr_2})(\delta_X)\land \delta_{X\times X}&&\text{by \cref{def:equality}\eqref{i:def=3}}\\
            &\leq \P(\ple{\pr_3,\pr_4})(\delta_X)&&\text{by \cref{def:equality}\eqref{i:def=2}},
        \end{align*} 
        where $\pr_i\coloneqq\pr^{X\times X\times X\times X}_i$, for $i=1,\dots,4$.
        Applying $\n_{X\times X\times X\times X}$ to the outmost inequality, and using naturality of $\n$, in $\mathscr{P}(N(X)\times N(X)\times N(X)\times N(X))$ we get
        \begin{equation*}
            \ple{\pr'_1,\pr'_2}^{-1}[\n_{X\times X}(\delta_X)]\cap \ple{\pr'_1,\pr'_3}^{-1}[\n_{X\times X}(\delta_X)]\cap \ple{\pr'_2,\pr'_4}^{-1}[\n_{X\times X}(\delta_X)]\subseteq \ple{\pr'_3,\pr'_4}^{-1}[\n_{X\times X}(\delta_X)],
        \end{equation*} 
        where $\pr'_i\coloneqq\pr^{N(X)\times N(X)\times N(X)\times N(X)}_i$, for $i=1,\dots,4$.
        By reflexivity and symmetry, $(b,b,a,c)$ belongs to the intersection on the left, and thus $a\sim_X c$.
    \end{itemize}
    Thus, ${\sim_X} = \n_{X\times X}(\delta_X)$ is an equivalence relation, as claimed.
    
    Let $f\colon X\to Y$ be a morphism in $\C$. We prove that the function $N(f)\colon N(X)\to N(Y)$ is well-defined on the quotients.
    Let $a,b\in N(X)$ be such that $a\sim_X b$; we show that $N(f)(a)\sim_Y N(f)(b)$.
    For every $Z\in\C$ and $\gamma\in\P(Z\times Z)$ we have
    \begin{equation*}
        \P(\ple{\pr^{Z\times Z}_1,\pr^{Z\times Z}_1})(\gamma)\land\delta_Z\leq\gamma;
    \end{equation*}
    (see e.g.\ eq.\ (1) in the proof of \cite[Prop.~2.5]{EmPaRo20}).
    Thus, in $\P(X\times X)$ we have $\P(\ple{\pr^{X\times X}_1,\pr^{X\times X}_1})(\P(f\times f)(\delta_Y))\land\delta_X\leq \P(f\times f)(\delta_Y)$.
    Since $(f\times f)\circ\ple{\pr^{X\times X}_1,\pr^{X\times X}_1}=\Delta_Y\circ f\circ\pr^{X\times X}_1$, by \cref{def:equality}\eqref{i:def=1} we get $\delta_X\leq \P(f\times f)(\delta_Y)$.
    It follows that in $\mathscr{P}(N(X)\times N(X))$ we have \[\n_{X\times X}(\delta_X)\subseteq \n_{X\times X}(\P(f\times f)(\delta_Y))=(N(f)\times N(f))^{-1}[\n_{Y\times Y}(\delta_Y)].\]
    Since $a\sim_X b$, we obtain $N(f)(a)\sim_Y N(f)(b)$, as desired.
    
    Hence, we can define the functor
    \begin{equation*}
        M\coloneqq N/{\sim}\colon\C\to\Set.
    \end{equation*}
    Next, we show that $M$ preserves finite products.
    The terminal object is trivially preserved since $M(\tmn)$ is a quotient of the singleton $N(\tmn)$.
    Applying $\n_{X\times Y\times X\times Y}$ to both sides of the equality $\P(\pr^{X\times Y\times X\times Y}_{X\times X})(\delta_X)\land \P(\pr^{X\times Y\times X\times Y}_{Y\times Y})(\delta_Y)= \delta_{X\times Y}$ (see \cref{r:inverse_3_equality}), and using naturality of $\n$, we get
    \[
    (\pr^{N(X)\times N(Y)\times N(X)\times N(Y)}_{N(X)\times N(X)})^{-1}(\n_{X\times X}(\delta_X))\cap(\pr^{N(X)\times N(Y)\times N(X)\times N(Y)}_{N(Y)\times N(Y)})^{-1}(\n_{Y\times Y}(\delta_Y))=\n_{X\times Y}(\delta_{X\times Y}),
    \]
    i.e.\ for all $a,c\in N(X)$ and $b,d\in N(Y)$ we have $(a,b)\sim_{X\times Y}(c,d)$ if and only if $a\sim_X c$ and $b\sim_Y d$. So,
    \[
    M(X\times Y)=N(X\times Y)/{\sim_{X\times Y}} = N(X)\times N(Y)/{\sim_{X\times Y}} = N(X)/{\sim_X} \times N(Y)/{\sim_Y} = M(X)\times M(Y).
    \]
    Next we prove that, for all $X\in\C$ and $\alpha\in\P(X)$, the subset $\n_X(\alpha)$ of $N(X)$ is closed under $\sim_X$. Let $a\in\n_X(\alpha)$ and $a\sim_X b$; we show that $b\in\n_X(\alpha)$.
    This is obtained by applying $\n_{X\times X}$ to both sides of the inequality $\P(\pr^{X\times X}_1)(\alpha)\land\delta_X\leq \P(\pr^{X\times X}_2)(\alpha)$ and using naturality of $\n$, from which we get
    \[(\pr^{N(X)\times N(X)}_1)^{-1}[\n_X(\alpha)]\cap\n_{X\times X}(\delta_X)\subseteq (\pr^{N(X)\times N(X)}_2)^{-1}[\n_X(\alpha)].\]
    
    Then, for every $X\in\C$, the function 
    \begin{align*}
        \m_X\colon\P(X)& \longrightarrow\mathscr{P}(M(X))\\
        \alpha & \longmapsto \{[a]_{\sim_X}\mid a\in\n_X(\alpha)\}
    \end{align*}
    is well-defined.
    
    It is easy to see that $(M,\m)$ is a propositional model of $\P$.
    It is also an elementary propositional model, since the fibered equalities are preserved:
    \begin{equation*}
        \m_{X\times X}(\delta_X)=\{[(a,b)]_{\sim_{X\times X}}=([a]_{\sim_X},[b]_{\sim_X})\in M(X)\times M(X)\mid a \sim_X b\}=\Delta_{M(X)}.
    \end{equation*}

    To conclude, we observe that for all $X\in\C$ and $\alpha\in\P(X)$ we have $\n_X(\alpha)= N(X)$ if and only if $\m_X(\alpha)=M(X)$; thus,  $F^{(M,\m)}=F^{(N,\n)}$, as desired.
\end{proof}

\begin{theorem}[Semantic characterizations of universal ultrafilters, filters and ideals, with equality]
    Let $\P \colon \C\op \to \BA$ be a Boolean doctrine, with $\C$ small.
    Let $A = (A_X)_{X \in \C}$ be a family with $A_X \subseteq \P(X)$ for each $X \in \C$. Then
    \begin{enumerate} 
    \item $A$ is a universal ultrafilter for $\P$ if and only if there is an elementary propositional model $(M,\m)$ of $\P$ such that, for every $X \in \C$,
        \[
        A_X=\{\alpha \in \P(X) \mid \text{for all }x \in M(X),\, x \in \m_X(\alpha)\}.
        \] 
    \item 
        $A$ is a universal filter for $\P$ if and only if there is a class $\mathcal{M}$ of elementary propositional models of $\P$ such that, for every $X \in \C$,
        \[
            A_X = \{\alpha \in \P(X) \mid \text{for all }(M,\m) \in \mathcal{M},\text{ for all }x \in M(X),\, x \in \m_X(\alpha)\}.
        \]
    \item $A$ is a universal ideal for $\P$ if and only if there is a class $\mathcal{M}$ of elementary propositional models of $\P$ such that, for every $X \in \C$,
        \[
            A_X = \{\alpha \in \P(X) \mid \text{for all }(M,\m) \in \mathcal{M},  \text{ not all }x \in M(X) \text{ satisfy } x \in \m_X(\alpha)\}.
        \]
    \end{enumerate}
\end{theorem}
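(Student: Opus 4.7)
The strategy is to reduce each of the three statements to its non-elementary counterpart (namely \cref{t:characterization-ultrafilters}, \cref{t:characterization-filters}, and \cref{t:characterization-ideals}) and then bridge between arbitrary propositional models and elementary ones using \cref{l:from-prop-model-to-elementary-prop-model}.

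For the ``if'' direction of each item, I will simply observe that every elementary propositional model is in particular a propositional model. Thus, if $A_X$ is given by universal validity in a (family of) elementary propositional model(s), then it is also of the form described in the corresponding theorem without equality, and the conclusion ($A$ is a universal ultrafilter, filter, or ideal) follows immediately from \cref{t:characterization-ultrafilters}, \cref{t:characterization-filters}, or \cref{t:characterization-ideals}, respectively.

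For the ``only if'' direction, I will invoke the same theorems to produce a (class of) possibly non-elementary propositional model(s), and then apply \cref{l:from-prop-model-to-elementary-prop-model} to replace each one by an elementary propositional model without changing the class $F^{(M,\m)}$ of formulas whose universal closure is valid. Concretely: if $A$ is a universal ultrafilter, by \cref{t:characterization-ultrafilters} there is a propositional model $(N,\n)$ with $A_X=F^{(N,\n)}_X$ for every $X$; applying \cref{l:from-prop-model-to-elementary-prop-model} to $(N,\n)$ yields an elementary propositional model $(M,\m)$ with $F^{(M,\m)}=F^{(N,\n)}$, hence $A_X=\{\alpha\in\P(X)\mid\text{for all }x\in M(X),\,x\in\m_X(\alpha)\}$ as required. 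The filter and ideal cases are handled identically: apply \cref{t:characterization-filters} or \cref{t:characterization-ideals} to obtain a class $\mathcal{N}$ of propositional models, then replace each member of $\mathcal{N}$ by an elementary propositional model with the same $F$ (and consequently the same $I$, since $I^{(M,\m)}_X=\{\alpha\in\P(X)\mid\P(X)\setminus\{\top\} \text{ stuff}\}$ is determined componentwise by which universal closures fail, which is preserved by \cref{l:from-prop-model-to-elementary-prop-model}); the resulting class $\mathcal{M}$ of elementary propositional models computes the same family $A$.

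There is essentially no obstacle beyond correctly citing \cref{l:from-prop-model-to-elementary-prop-model}, whose statement already guarantees equality of the $F^{(M,\m)}$-classes; in particular, since $\m_X(\alpha)=M(X)$ iff $\alpha\in F^{(M,\m)}_X$ and ``$\m_X(\alpha)\neq M(X)$'' iff $\alpha\in I^{(M,\m)}_X$ (as a direct consequence of the quotienting construction in the proof of that lemma), both the filter characterization and the ideal characterization are preserved simultaneously when we pass from $(N,\n)$ to $(M,\m)$. The proof is therefore short and modular, consisting of three essentially identical applications of the pattern ``apply the non-elementary theorem, then apply \cref{l:from-prop-model-to-elementary-prop-model}''.
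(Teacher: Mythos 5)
Your proposal is correct and is exactly the paper's argument: the paper's proof is a one-line citation of \cref{t:characterization-ultrafilters,t:characterization-filters,t:characterization-ideals} together with \cref{l:from-prop-model-to-elementary-prop-model}, which is precisely the reduction you spell out (elementary models are in particular models for the easy direction; replace each model by an elementary one with the same $F^{(M,\m)}$ for the converse, noting that the ideal data is the componentwise complement of the filter data for a single model and is therefore also preserved). The only blemish is the garbled placeholder expression in your description of $I^{(M,\m)}_X$, which should simply read $I^{(M,\m)}_X=\P(X)\setminus F^{(M,\m)}_X$.
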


\begin{proof}
    By \cref{t:characterization-ultrafilters,t:characterization-filters,t:characterization-ideals,l:from-prop-model-to-elementary-prop-model}.
\end{proof}

What about the free one-step construction in the elementary case? The description of how to freely add one layer of quantifier alternation depth to an elementary Boolean doctrine $\P\colon \C\op \to \BA$ over a small base category $\C$ does not change from the non-elementary case. Indeed, let $(\id_\C,\mathfrak{i})\colon\P\hookrightarrow\P^\EA$ be the quantifier completion of $\P$; by \cref{c:equality-only-p0}, the first-order Boolean doctrine $\P^\EA$ and the Boolean doctrine morphism $(\id_\C,\mathfrak{i})$ are both elementary.

We recall from \cref{n:first-layer} the definition of the subfunctor $\P^\EA_1$ of $\P^\EA$, consisting of the ``formulas in $\P^\EA$ of quantifier alternation depth $\leq 1$'': for every $S\in\C$, the fiber $\P^\EA_1(S)$ is the Boolean subalgebra of $\P^\EA(S)$ generated by the union of the images of $\P(S\times Y)$ under $\fa{Y}{S}\colon \P^\EA(S\times Y)\to\P^\EA(S)$, for $Y$ ranging over objects of $\C$.
The Boolean doctrine $\P^\EA_1$ is elementary as well: it is easy to see that conditions (\ref{i:def=1}--\ref{i:def=3}) in \cref{def:equality} hold in $\P_1^\EA$ if they hold in $\P^\EA$ (with respect to the same fibered equalities).
As a consequence, in light of the isomorphism $\bar \m_1\colon \Free_1^{\P}\to \P^\EA_1$ (\cref{t:section-6}), the Boolean doctrine $\Free_1^{\P}$ is elementary, and the Boolean doctrine morphism $(\id_\C,[\forall\tmn]) \colon \P \hookrightarrow\Free_1^{\P}$ is elementary.

In conclusion, adding the first layer of quantifier alternation depth to an elementary Boolean doctrine $\P$ is the same as adding the first layer of quantifier alternation depth to $\P$ seen simply as a Boolean doctrine.

\section{Entailment relations} \label{s:entailment}

In this section, we provide some background on entailment relations that we use in \cref{s:construction}.
In \cref{s:construction}, we define certain Boolean algebras via generators and relations.
But it comes handy that, roughly speaking, no other relation of a similar form can be deduced from the given ones.
For this, the ``Fundamental Theorem of Entailment Relations'', which we now recall, is a useful tool.

\subsection{Entailment relations and bounded distributive lattices}

Entailment relations are due to Dana Scott.

\begin{definition} \label{d:entailment-relation}
    An \emph{entailment relation} on a set $S$ is a binary relation $\vdash$ on the set of finite subsets of $S$ satisfying the following properties.
    \begin{enumerate}
        \item (Reflexivity)
        For all $x \in S$, $\{x\} \vdash \{x\}$.

        \item (Weakening / Monotonicity)
        For all finite subsets $X$ and $Y$ of $S$, if $X \vdash Y$ then $X \cup X' \vdash Y \cup Y'$.

        \item (Cut / Distributivity)
        For all finite subsets $X$ and $Y$ of $S$ and all $s \in S$, if $X \vdash \{s\} \cup Y$ and $X \cup \{s\} \vdash Y$, then $X \vdash Y$.
    \end{enumerate}
\end{definition}

This notion of entailment relation can be seen as an abstract generalisation of Gentzen’s sequent calculus.

Given a set $S$ with a binary relation $R$ on finite subsets of $S$ one says that a map $f \colon S \to D$ from $S$ to a bounded distributive lattice $D$ \emph{preserves} $R$ if $X \mathrel{R} Y$ implies $\bigwedge_{x \in X} f(x) \leq \bigvee_{y \in Y} f(y)$.
We are interested in the following universal problem: a bounded distributive lattice $D$ together with a map $i \colon S \to D$ preserving $R$ such that for any other map $f \colon S \to L$ preserving $R$ there is a unique bounded lattice homomorphism $f' \colon D \to L$ such that $f' \circ i = f$.
One says that $D,\, i \colon S \to D$ is \emph{generated} by $S, R$.
Since the theory of bounded distributive lattices is equational, there is a solution to this universal problem.

\begin{theorem}[Fundamental theorem of entailment relations {\cite[Thm.~1]{CederquistCoquand1998}}] \label{t:fundamental-theorem-of-entailment-relations}
    Let $S$ be a set with an entailment relation $\vdash$. If $D,\, i \colon S \to D$ is the bounded distributive lattice generated by $S, \vdash$, then $X \vdash Y$ if and only if $\bigwedge_{x \in X} i(x) \leq \bigvee_{y \in Y} i(y)$.
\end{theorem}

The interesting implication in \cref{t:fundamental-theorem-of-entailment-relations} is the right-to-left one, which says that in $D$ not too many things collapse.

\begin{remark}\label{r:entailment-DNF}
    The fundamental theorem of entailment relations is obtained by constructing the bounded distributive lattice generated by $S, \vdash$ as the poset reflection of a preordered set whose underlying set is the finite powerset of the finite powerset of $S$ and whose preorder is explicitly given. The idea is that a finite subset of a finite subset of $S$ expresses a bounded lattice combination of elements of $S$ in disjunctive normal form.
\end{remark}

\subsection{Entailment relations and Boolean algebras}

We can piggyback on \cref{t:fundamental-theorem-of-entailment-relations} to obtain a similar version for Boolean algebras.

Given a set $S$ with a binary relation $R$ on finite subsets of $S$ one says that a map $f \colon S \to A$ from $S$ to a Boolean algebra $A$ \emph{preserves} $R$ if $X \mathrel{R} Y$ implies $\bigwedge_{x \in X} f(x) \leq \bigvee_{y \in Y} f(y)$.
We are interested in the following universal problem: a Boolean algebra $A$ together with a map $i \colon S \to A$ preserving $R$ such that for any other map $f \colon S \to B$ preserving $R$ there is a unique Boolean homomorphism $f' \colon A \to B$ such that $f' \circ i = f$.
One says that $A,\, i \colon S \to A$ is \emph{generated} by $S, R$.
Since the theory of Boolean algebra is equational, there is a solution to this universal problem.

\begin{theorem}[Fundamental theorem of entailment relations, for Boolean algebras]\label{t:fundamental-theorem-of-entailment-relations-boolean-algebras}
    Let $S$ be a set with an entailment relation $\vdash$. If $A,\, i \colon S \to A$ is the Boolean algebra generated by $S, \vdash$, then $X \vdash Y$ if and only if $\bigwedge_{x \in X} i(x) \leq \bigvee_{y \in Y} i(y)$.
\end{theorem}

\begin{proof}
    The map $i \colon S \to A$ in the Boolean algebra generated by $S, \vdash$ is, up to isomorphism, the composite of the following two maps:
    \[
        S \xrightarrow{j} D \xrightarrow{f} B
    \]
    where $D, \,j \colon S \to D$ is the bonded distributive lattice generated by $S, \vdash$, and $B,\, D \xrightarrow{f} B$ is the free Boolean algebra over the bounded distributive lattice $D$, i.e., the Boolean envelope of $B$.
    It is known that $f$ is injective (and hence order-preserving and reflecting); this for example was derived from \cref{t:fundamental-theorem-of-entailment-relations} in \cite[Cor.~2]{CederquistCoquand1998}.
    Then the desired statement follows from \cref{t:fundamental-theorem-of-entailment-relations}.
\end{proof}

\begin{remark}\label{r:two-copies}
    For a proof that does not pass via that the Boolean envelope, one can obtain the Boolean algebra $A,\, i \colon S \to A$ generated by $S, \vdash$ as the bounded distributive lattice generated by two copies $S_1$ and $S_2$ of $S$ (the first one to be thought of as the set of generators of the Boolean algebra, and the second one as the set of negations of generators), together with the entailment $\vdash'$ defined by setting, for $X, Z \subseteq S_1$ and $Y, W \subseteq S_2$, $X \cup Y \vdash' Z \cup W$ if and only if $X \cup W \vdash Z \cup Y$. (Compare with \cite[Rem.~3]{CederquistCoquand1998}.)
    Then (by \cref{r:entailment-DNF}), the Boolean algebra generated by $S, \vdash$ can be obtained as the poset reflection of a certain preordered set whose underlying set consists of the finite powerset of the finite powerset of the disjoint union of two copies of $S$ and whose preorder is explicitly given.
\end{remark}

\makeatletter
\begingroup
\let\addcontentsline\@gobblethree
\section*{Acknowledgments}
We would like to thank the logic group at the University of Salerno for supporting a sequence of seminars in logic, concluded with a workshop on doctrines which gave us the opportunity to advance this collaboration.
Moreover, some important breakthroughs in our work occurred during a visit in November 2023 at the University of Luxembourg, for which we are grateful to Bruno Teheux.

We thank J\'er\'emie Marqu\`es for suggesting to highlight the fact that the main result is a doctrinal version of Herbrand's theorem.

Finally, we are grateful to the referee for their valuable suggestion, which improved the quality and depth of the manuscript.

\section*{Funding}
The first author's research was funded by UK Research and Innovation (UKRI) under the UK government’s Horizon Europe funding guarantee (grant number EP/Y015029/1, Project ``DCPOS''). (The ``Horizon Europe guarantee'' scheme provides funding to researchers and innovators who were unable to receive their Horizon Europe funding---in this case, for a Marie Skłodowska-Curie Actions grant---while the UK was in the process of associating.) The first author's research was also funded by the Italian Ministry of University and Research through the PRIN project n.~20173WKCM5 \emph{Theory and applications of resource sensitive logics}.
The second author's research was supported by the Luxembourg National Research Fund (FNR), ``COMOC'' Project (ref. C21/IS/16101289).

 \endgroup
 \makeatother

\bibliographystyle{plain}
\bibliography{Biblio}

\end{document}